\newtheorem{thm}{Theorem}[section]
\newtheorem*{thm*}{Theorem}
\newtheorem{prop}[thm]{Proposition}
\newtheorem*{prop*}{Proposition}
\newtheorem{lem}[thm]{Lemma}
\newtheorem*{lem*}{Lemma}
\newtheorem{cor}[thm]{Corollary}
\theoremstyle{definition}
\newtheorem{definition}[thm]{Definition}
\theoremstyle{remark}
\newtheorem{notation}[thm]{Notation}
\newtheorem{remark}[thm]{Remark}
\numberwithin{equation}{section}
\newcommand{\Span}{\mathrm{Span}} 
\newcommand{\Hit}{\mathrm{Hit}} 
\newcommand{\Hom}{\mathrm{Hom}} 
\newcommand{\id}{\mathrm{id}}
\newcommand{\Amc}{\mathcal{A}} 
\newcommand{\Bmc}{\mathcal{B}} 
\newcommand{\Dmc}{\mathcal{D}} 
\newcommand{\Emc}{\mathcal{E}} 
\newcommand{\Fmc}{\mathcal{F}} 
\newcommand{\Hmc}{\mathcal{H}} 
\newcommand{\Jmc}{\mathcal{J}}
\newcommand{\Pmc}{\mathcal{P}} 
\newcommand{\Qmc}{\mathcal{Q}} 
\newcommand{\Smc}{\mathcal{S}} 
\newcommand{\Tmc}{\mathcal{T}} 
\newcommand{\Vmc}{\mathcal{V}} 
\newcommand{\Xmc}{\mathcal{X}} 
\newcommand{\Ymc}{\mathcal{Y}}
\newcommand{\Zmc}{\mathcal{Z}} 
\newcommand{\Hbbb}{\mathbb{H}} 
\newcommand{\Pbbb}{\mathbb{P}} 
\newcommand{\Rbbb}{\mathbb{R}}
\newcommand{\PGL}{\mathrm{PGL}} 
\newcommand{\PSL}{\mathrm{PSL}} 
\newcommand{\SL}{\mathrm{SL}}
\newcommand{\Gr}{\mathrm{Gr}}
\begin{document}


\title{Flows on the $\PGL(V)$-Hitchin component}

\author{Zhe Sun}
\address{Department of mathematics, University of Luxembourg, Maison du nombre, 6 avenue de la Fonte, L-4364 Esch-sur-Alzette, Luxembourg}
\email{zhe.sun@uni.lu}

\author{Anna Wienhard}
\address{Ruprecht-Karls Universit\"at Heidelberg, Mathematisches Institut, Im Neuenheimer Feld~288, 69120 Heidelberg, Germany
\newline HITS gGmbH, Heidelberg Institute for Theoretical Studies, Schloss-Wolfs\-brunnen\-weg 35, 69118 Heidelberg, Germany }
\email{wienhard@uni-heidelberg.de}

\author{Tengren Zhang}
\address{Mathematics Department, National University of Singapore, 10 Lower Kent Ridge Road, Singapore 119076
}
\email{matzt@nus.edu.sg}

\thanks{ZS was partially supported by the Luxembourg National Research Fund(FNR) AFR bilateral grant COALAS 11802479-2. AW was partially supported by the National Science Foundation under agreements DMS-1536017 and 1566585, by the Sloan Foundation, by the Deutsche Forschungsgemeinschaft, by the European Research Council under ERC-Consolidator grant 614733, and by the Klaus Tschira Foundation. TZ was partially supported by the National Science Foundation under agreements DMS-1536017, and by the NUS-MOE grant R-146-000-270-133. The authors acknowledge support from U.S. National Science Foundation grants DMS 1107452, 1107263, 1107367 "RNMS: GEometric structures And Representation varieties" (the GEAR Network)}

%
%
%
\maketitle

\begin{abstract}
In this article we define new flows on the Hitchin components for $\PGL(V)$. Special examples of these flows are associated to simple closed curves on the surface and give generalized twist flows. Other examples, so called eruption flows, are associated to pair of pants in $S$ and capture new phenomena which are not present in the case when $n=2$. We determine a global coordinate system on the Hitchin component. Using the computation of the Goldman symplectic form on the Hitchin component, that is developed by two of the authors in a companion paper to this article \cite{SunZhang}, this gives a global Darboux coordinate system on the Hitchin component. 
\end{abstract}
\tableofcontents

\section{Introduction} 

Let $S$ be a closed oriented surface of genus at least $2$, and let $\mathcal{H}(S)$ denote the space of marked hyperbolic structures on $S$. By the uniformization theorem $\mathcal{H}(S)$ can be identified with the Teichm\"uller space of $S$, which is a smooth cover of the moduli space of Riemann surfaces. There is a natural symplectic structure on $\mathcal{H}(S)$, given by the Weil-Petersson symplectic form. 
Given a simple closed curve $c$ on $S$ the length function with respect to $c$ is the function on $\mathcal{H}(S)$ which associates to a marked hyperbolic structure the hyperbolic length of the unique geodesic in the free homotopy class of $c$. The Fenchel-Nielsen twist flow associated to the simple closed curve $c$ is one of the simplest flows on $\mathcal{H}(S)$. Geometrically it can be described by cutting $S$ along the curve $c$ and regluing after a twist. Wolpert \cite{Wolpert1,Wolpert2} showed that the Fenchel-Nielsen twist flow associated to a simple closed curve $c$ is precisely the Hamiltonian flow of the length function with respect to $c$. 
Furthermore, he proved that the twist flows associated to non-intersecting simple closed curves have Poisson commuting Hamiltonian functions. In particular, given a maximal family of pairwise non-intersecting simple closed curves, the twist flows generate a Lagrangian submanifold of $\mathcal{H}(S)$. 

A maximal family of pairwise non-intersecting simple closed curves gives a decomposition of $S$ into pairs of pants. Fixing a pair of pants decomposition and a transversal to each pants curve, $\mathcal{H}(S)$ can be parametrized by Fenchel-Nielsen coordinates, which consist of the length functions $l_i$ of the $3g-3$ pants curves, and $3g-3$ twist functions $\theta_i$, which measure the twisting across each pants curve. 
Wolpert \cite{Wolpert1,Wolpert2} gave a beautiful explicit description of the Weil-Petersson symplectic structure in terms the Fenchel-Nielsen coordinates. He showed that the length and twist functions give global Darboux coordinates for Teichm\"uller space, i.e. the Weil-Petersson symplectic form can be written as
\begin{equation}\label{eqn:magic formula}\omega = \sum_{i=1}^{3g-3} d \ell_i \wedge d \theta_i.\end{equation}

Goldman \cite{Goldmansymplectic} showed that for any semisimple Lie group $G$, there is a natural symplectic structure on the set of smooth points on $\Hom(\pi_1(S),G)/G$. We will refer to this symplectic structure as the \emph{Goldman symplectic structure}. He also defined generalized twist flows and showed that they are precisely the Hamiltonian flows associated to generalized length functions on $\Hom(\pi_1(S),G)/G$ \cite{Goldman_twist}. Associating to a marked hyperbolic structure its holonomy provides an embedding of $\mathcal{H}(S)$ into $\Hom(\pi_1(S),\PGL_2(\Rbbb))/\PGL_2(\Rbbb)$ as the connected component consisting entirely of discrete and faithful representations, which is smooth. Goldman then showed that via this embedding, the restriction of the Goldman symplectic form to $\mathcal{H}(S)$ is (up to scaling) the Weil-Petersson symplectic form.

In recent years there has been a lot of interest in studying more general representations varieties $\Hom(\pi_1(S),G)/G$, where $G$ is a semisimple Lie group. For some Lie groups $G$ (of higher rank) there exist connected components in $\Hom(\pi_1(S),G)/G$ that consist entirely of discrete and faithful representations. 
The study of these connected components is the central theme of the burgeoning field of higher Teichm\"uller theory. Hitchin \cite{Hitchin} introduced the first such component, which is now called the $G$-Hitchin component, and proved that it is homeomorphic to a cell of dimension $(2g-2)\dim G$.

In this article, and its companion paper \cite{SunZhang}, we extend the results of Wolpert to the setting of the $\PGL(V)$-Hitchin component $\Hit_V(S)$. 
The main goal of this paper is to define new flows on the Hitchin component. These flows are defined with respect to an ideal triangulation of $S$ and a set of transversals. We give a clean description of these flows in terms of a mild but non-trivial reparametrization of the Bonahon-Dreyer parametrization of $\Hit_V(S)$ \cite{BD1,BD2}. 
The flows we define pairwise commute and provide a trivialization of the tangent bundle of $\Hit_V(S)$. 

When the ideal triangulation and the set of transverals are subordinate to a pair of pants decomposition of $S$, we use a special family of these new flows to construct a new coordinate system on $\Hit_V(S)$ that generalizes Fenchel-Nielsen coordinates on $\mathcal{H}(S)$ in the setting of the $\PGL(V)$-Hitchin component. One of the key new features is that the flows and coordinates are not only associated to simple closed curves, but also to pairs of pants in $S$ given by the pants decomposition.

In the companion paper \cite{SunZhang} it is proved that the flows defined here are in fact Hamiltonian flows, and that the coordinate system we construct here is a global Darboux coordinate system for $\Hit_V(S)$.

\subsection{Frenet curves} 
A key tool we use in our approach is a theorem due to \cite{Labourie2006} and Guichard \cite{Guichard}, which says that there is a canonical embedding of the Hitchin component $\Hit_V(S)$ into the space $\mathrm{Fre}(V)$ of projective classes of Frenet curves. Frenet curves are maps from $S^1$ into the space of (complete) flags $\Fmc(V)$ of $V$, which satisfy strong continuity and transversality properties (see Definition~\ref{def:Frenet}). More precisely, they show that using the identification of the Gromov boundary $\partial \pi_1(S)$ of $\pi_1(S)$ with $S^1$, the $\PGL(V)$-Hitchin component $\Hit_V(S)$ can be identified with projective classes of Frenet curves from $S^1 = \partial \pi_1(S)$ into $\Fmc(V)$ that are $\rho$-equivariant for some representation $\rho:\pi_1(S) \to \PGL(V)$. In the case when $\dim(V)=2$, this is just the classical fact that a representation $\rho:\pi_1(S)\to\PGL_2(\Rbbb)$ is the holonomy of a hyperbolic surface if and only if there is a $\rho$-equivariant, continuous, injective map $\xi:\partial\pi_1(S)\to\Rbbb\Pbbb^1$.

We first introduce two types of flows, the \emph{elementary shearing flows} and the \emph{elementary eruption flows}, on the space of projective classes of Frenet curves $\mathrm{Fre(V)}$, that do not preserve the subset $\Hit_V(S)$. 

The elementary shearing flows generalize the shear along a geodesic in the Poincar\'e disk. They are associated to a pair of distinct points $\mathbf z:=(z_1,z_2)$ in $S^1$ and a pair of positive integers $\mathbf k:=(k_1,k_2)$ that sum to $n$. Given a Frenet curve $\xi:S^1\to\Fmc(V)$, the pair $\mathbf k$ determines a one-parameter family of projective transformations that fix the two flags $\xi(z_1)$ and $\xi(z_2)$. We then apply this one-parameter family of projective transformations to one of the two connected components of $\xi(S^1\setminus\{z_1,z_2\})$, and its inverse to the other connected component (see Section \ref{sec:elementaryshearing} for the precise definition). The elementary shearing flow keeps each component of $\xi(S^1\setminus\{z_1,z_2\})$ projectively invariant, but changes the way the two components are glued together at $\xi(\mathbf z)$. 

The elementary eruption flows are a new feature that only arises when $\dim(V)>2$. They are associated to a triple $\mathbf x:=(x_1,x_2,x_3)$ of pairwise distinct points in $S^1$ and a triple of positive integers $\mathbf i:=(i_1,i_2,i_3)$ that sum to $n$. Given a Frenet curve $\xi:S^1\to\Fmc(V)$, we use $\mathbf i$ to specify three one-parameter families of projective transformations, and deform the three connected components of $\xi(S^1\setminus\{x_1,x_2,x_3\})$ using these three one-parameter families (see Section \ref{sec:elementaryeruption} for the precise definition). The elementary eruption flows change the projective class of the triple of flags $\xi(\mathbf x)$. In the case when $\dim(V) = 3$, these flows have a particularly nice geometric description as changing the gluing parameter of a triple of flags, see \cite{Wienhard_Zhang}.

In order to define flows that preserve the Hitchin component $\Hit_V(S)$ inside of $\mathrm{Fre(V)}$, we fix an ideal triangulation $\Tmc$ of $S$ and a set of transversals $\Jmc$, that we call a compatible bridge system. We lift this to an ideal triangulation $\widetilde{\Tmc}$ of $\widetilde{S}$. Note that any edge of $\widetilde{\Tmc}$ corresponds to a pair of distinct points in $S^1$, and any triangle of $\widetilde{\Tmc}$ correponds to a triple of pairwise distinct points in $S^1$. 
To define a flow on the Hitchin component $\Hit_V(S)$ we thus want to perform an infinite $\pi_1(S)$-invariant family of elementary shearing flows along the edges of $\widetilde{\Tmc}$, and of elementary eruption flows along the ideal triangles of $\widetilde{\Tmc}$. This is a simple idea, but it turns out that formally defining these flows is rather delicate; one has to compose this infinite family of elementary flows in a certain order to ensure convergence. These technical difficulties are addressed in Section~\ref{sec:Hitchin} and Section~\ref{sec:main theorem}. In fact, we construct, for any equivariant Frenet curve $\xi$ and any tangent vector $\mu$ in $T_{[\xi]}\Hit_V(S)$, such a flow whose tangent vector at $[\xi]$ is $\mu$. The flow that we construct this way is called the \emph{$(\Tmc,\Jmc)$-parallel flow associated to $\mu$} (see Definition \ref{def:parallelflow}).

A key ingredient to prove that the $(\Tmc,\Jmc)$-parallel flows are well-defined is that the choice of an ideal triangulation and a set of transversal gives a real analytic parametrization of $\Hit_V(S)$. Such a parametrization was first given by Bonahon and Dreyer \cite{BD1, BD2}, based on work of Fock and Goncharov in the case of surface with punctures \cite{FockGoncharov}. See Section \ref{sec:Bonahon-Dreyer} for more details.

The first main result of the paper can be condensed into the following theorem.

\begin{thm}\label{thm:main_intro}[Theorem \ref{thm:reparametrization},Theorem \ref{thm:main theorem}]
Let $\Tmc$ be an ideal triangulation on $S$ and $\Jmc$ a compatible bridge system. Denote by $\Theta$ the set of ideal triangles of $\Tmc$. Then there is a $(n^2-1)(2g-2)$-dimensional subspace 
\[W_\Tmc\subset\Rbbb^{|\Tmc|(n-1)+|\Theta|\frac{(n-1)(n-2)}{2}},\] 
an open convex polytope $C_\Tmc\subset W_\Tmc$, and a real analytic diffeomorphism
\[\Omega=\Omega_{\Tmc,\Jmc}:\Hit_V(S)\to C_\Tmc\]
suchsuch that that the $(\Tmc,\Jmc)$-parallel flow associated to $\mu$, $\phi_t^\mu:\Hit_V(S)\to\Hit_V(S)$, is given by
\[\phi_t^\mu[\xi]=\Omega^{-1}(\Omega[\xi]+t\mu)\]
where the vector $\mu$ in $T_{[\xi]}\Hit_V(S)$ is viewed as a vector in $W_\Tmc$ via $\Omega$. In particular, any two $(\Tmc,\Jmc)$-parallel flows commute. 
\end{thm}

The parametrization $\Omega$ in the above theorem is a slight reparametrization of the Bonahon-Dreyer parametrization of $\Hit_V(S)$ in \cite{BD1}, where we replace their edge invariants associated to the closed curves in $\Tmc$ with a new invariant that we call the \emph{symplectic closed edge invariants}. 

Theorem \ref{thm:main_intro} implies that $\phi^\mu_t[\xi]$ is a well-defined projective class of Frenet curves in $\Hit_V(S)$, as long as $\Omega[\xi]+t\mu$ lies in $C_\Tmc$. 

More conceptually, Theorem \ref{thm:main_intro} states that any pair $(\Tmc,\Jmc)$ of an ideal triangulation and a compatible bridge system determines a trivialization 
of the tangent bundle $T\Hit_V(S)$ of $\Hit_V(S)$, and the $(\Tmc,\Jmc)$-parallel flows are exactly the flows generated by the vector fields parallel to this trivialization.

As a by-product of the proof of Theorem \ref{thm:main_intro}, we can explicitly specify the projective transformations that govern the $(\Tmc,\Jmc)$-parallel flows (see Section \ref{sec:closedform}). 

\subsection{Coordinates on $\Hit_V(S)$.}
Each choice of an ideal triangulation $\Tmc$ and a compatible bridge system $\Jmc$ determines a trivialization of the tangent bundle $T\Hit_V(S)$ of $\Hit_V(S)$. Via the parametrization $\Omega_{\Tmc,\Jmc}$, any basis of the vector space $W_\Tmc$ determines a family of $(\Tmc,\Jmc)$-parallel flows on $\Hit_V(S)$, with the property that their tangent fields $\{\Xmc_1,\dots,\Xmc_{(n^2-1)(2g-2)}\}$ form a basis of the tangent space to $\Hit_V(S)$ at every point in $\Hit_V(S)$. If one can show that these vector fields are Hamiltonian vector fields and that they give a Darboux basis of the tangent space at every point in $\Hit_V(S)$, then the potential functions of these vector fields will define a Darboux coordinate system on $\Hit_V(S)$. With this in mind, we construct an explicit global coordinate system on $\Hit_V(S)$ as follows.

Choose a pants decomposition on $S$. We then fix an appropriate ideal triangulation that is subordinate to a pair of pants decomposition of $S$, and an appropriate compatible bridge system $\Jmc$, see Section \ref{sec:special1} for more details. With these choices we define four types of \emph{special} $(\Tmc, \Jmc)$-parallel flows, namely the \emph{twist flows}, \emph{length flows}, \emph{eruption flows}, and \emph{hexagon flows}. There are $n-1$ twist flows and $n-1$ length flows associated to each simple closed curve in the pants decomposition, and $\frac{(n-1)(n-2)}{2}$ eruption flows, and $\frac{(n-1)(n-2)}{2}$ hexagon flows associated to each pair of pants in the pants decomposition. 

The twist flows associated to a simple closed curve $c$ do not change the Hitchin representation restricted to $\pi_1(S\setminus c)$ (up to conjugation). The length flows associated to the simple closed curve $c$ change the $n-1$ generalized length functions associated to $c$, but they do not change the Hitchin representation restricted to the fundamental groups of the connected components of $S\setminus (P_1\cup P_2)$, where $P_1$ and $P_2$ are the two pairs of pants that share $c$ as a common boundary component (it is possible that $P_1=P_2$). When $\dim(V)=2$, the twist flow agrees (up to scaling) with the Fenchel-Nielsen twist flow, and the length flow increases the length of the curve it is associated to while keeping the lengths of all the other curves in the pants decomposition (and the way the pairs of pants are glued together) unchanged.

The eruption and hexagon flows associated to a pair of pants $P$ change the Hitchin representation restricted to $\pi_1(P)$, while keeping the Hitchin representation restricted to the fundamental group of each connected component of $S\setminus P$ unchanged. When $\dim V = 2$, the holonomies of the boundary curves uniquely determine the hyperbolic structure on a pair of pants, therefore the eruption and hexagon flows are not present in Teichm\"uller space. They are a new feature arising when $\dim V>2$. In the case when $\dim(V)=3$, the eruption, twist, and hexagon flows were previously described in a more geometric way in \cite{Wienhard_Zhang}.

To any pair of positive integers $\mathbf k:=(k_1,k_2)$ that sum to $n$ and any closed curve in the pants decomposition we can associate a twist flow and a length flow. We denote their tangent fields by $\Smc^\mathbf k_\mathbf c$ and $\Ymc^\mathbf k_\mathbf c$ respectively.
To any triple of positive integers $\mathbf i:=(i_1,i_2,i_3)$ that sum to $n$ and any pair of pants of the pants decomposition, we can associate an eruption flow and a hexagon flow. We denote their tangent fields by $\Emc^\mathbf i_\mathbf x$ and $\Hmc^\mathbf i_\mathbf x$ respectively. We call these tangent vector fields the \emph{special $(\Tmc,\Jmc)$-parallel vector fields}, and set 
\[\Xmc^*:=\left\{\begin{array}{ll}
\Smc^\mathbf k_\mathbf c&\text{if }\Xmc=\Ymc^\mathbf k_\mathbf c;\\
-\Ymc^\mathbf k_\mathbf c&\text{if }\Xmc=\Smc^\mathbf k_\mathbf c;\\
\Emc^\mathbf i_\mathbf x&\text{if }\Xmc=\Hmc^\mathbf i_\mathbf x;\\
-\Hmc^\mathbf i_\mathbf x&\text{if }\Xmc=\Emc^\mathbf i_\mathbf x.
\end{array}\right.\]

\begin{thm}[Corollary \ref{cor:coordinates}, Theorem \ref{thm:Darboux}, Theorem \ref{thm:Darboux2}]\label{thm:coordinates_intro}
For any special $(\Tmc,\Jmc)$-parallel vector field $\Xmc$, there is a real analytic function 
\[H(\Xmc):\Hit_V(S)\to\Rbbb\] 
whose derivative in the direction of $\Xmc^*$ is $1$, and whose derivative in the direction of any other special $(\Tmc,\Jmc)$-parallel vector field is $0$. In particular, the collection of functions
\[\{H(\Xmc):\Xmc\text{ is a special }(\Tmc,\Jmc)\text{-parallel vector field}\}\]
defines a global coordinate system on $\Hit_V(S)$. Furthermore, $H(\Xmc)$ can be described explicitly in terms of the coordinate functions of $\Omega: \Hit_V(S)\to C_\Tmc\subset W_\Tmc$. 
\end{thm}

This coordinate system on $\Hit_V(S)$ generalizes the Fenchel-Nielsen coordinates. Compared to other such coordinate systems, which have been constructed by Goldman \cite{Goldman_convex} when $\dim(V)=3$, and by one of the authors \cite{Zhang_internal, Zhang_thesis} for the general case, this new coordinate system has the additional advantage that it is compatible with the symplectic structure on $\Hit_V(S)$.
In the companion paper \cite{SunZhang}, two of the authors show that any $(\Tmc,\Jmc)$-parallel flow is a Hamiltonian flow, and the vector fields $\Smc^\mathbf k_\mathbf c$ and $\Ymc^\mathbf k_\mathbf c$, and $\Emc^\mathbf i_\mathbf x$ and $\Hmc^\mathbf i_\mathbf x$ are dual to each other with respect to the Goldman symplectic structure. This implies that the coordinate system given in Theorem \ref{thm:coordinates_intro} is a global Darboux coordinate system, and $H(\Xmc)$ is the Hamiltonian function of the integral flow of the vector field $\Xmc$.

\begin{remark}
Theorem \ref{thm:main_intro} suggests that we use the coordinate system $\Omega$ to define the $(\Tmc,\Jmc)$-parallel flows. However, we in fact define them more geometrically and then show that they can be described very naturally in the coordinate system $\Omega$. 
The geometric definition plays an important role 
in order to compute the symplectic pairing between a pair of $(\Tmc,\Jmc)$-parallel vector fields, and to show that $(\Tmc,\Jmc)$-parallel flows are Hamiltonian.
\end{remark}

{\bf Structure of the paper:} 
In Section~\ref{sec:projective_top}, we review definitions of cross ratios, triple ratios, and Frenet curves. The elementary shearing and eruption flows for Frenet curves are introduced in Section~\ref{sec:elementary}. In Section \ref{sec:comb}, we describe a combinatorial description of any pair of vertices in of $\Tmc$, which is a key tool that we use repeatedly in our proofs. In Section~\ref{sec:Hitchin_param}, we describe the Bonahon-Dreyer parametrization of the Hitchin component as well as the (re)parametrization $\Omega$. In Section~\ref{sec:Hitchin}, we define the $(\Tmc, \Jmc)$-parallel flows and state our main results that imply Theorem \ref{thm:main_intro}, before we prove the technical convergence results in Section \ref{sec:main theorem}. Finally, the twist flows, length flows, eruption flows, hexagon flows are defined in Section~\ref{sec:Goldman}, where we also prove Theorem \ref{thm:coordinates_intro}. The appendix contains proofs of a few technical statements.

\section{The Hitchin component and Frenet curves} \label{sec:projective_top}
In this section we recall the definition of the $\PGL(V)$-Hitchin component and of Frenet curves, review some of their properties, and state an important theorem due to Labourie and Guichard relating these two objects. We introduce two important projective invariants, the \emph{cross ratio} and \emph{triple ratio}, and recall the definition of a positive curve, which was used by Fock and Goncharov to give a characterization of the $\PGL(V)$-Hitchin component.

\subsection{The $\PGL(V)$-Hitchin component} 
Let $\Gamma:=\pi_1(S)$ be the fundamental group of a closed, connected, oriented surface $S$ of genus at least $2$. We consider the space $\Hom(\Gamma,\PGL(V))$ of homomorphisms of $\Gamma$ to $\PGL(V)$. Since $\PGL(V)$ is a real algebraic group, $\Hom(\Gamma,\PGL(V))$ is naturally a real algebraic variety. Thus, it has a natural real analytic structure away from its singular points, which is compatible with the compact-open topology on $\Hom(\Gamma,\PGL(V))$.

\begin{definition}\label{def:Hitchin component}
A representation $\rho:\Gamma\to\PGL(V)$ is a $\PGL(V)$-\emph{Hitchin representation} if it is a continuous deformation in $\Hom(\Gamma,\PGL(V))$ (with respect to the compact-open topology) of a faithful representation whose image is discrete and lies in the image of an irreducible representation from $\PSL(2,\Rbbb)$ to $\PGL(V)$.
\end{definition}

\begin{remark}
Since $\PSL(2,\Rbbb)$ is connected, the image of any $\PGL(V)$-Hitchin representation must in fact lie in the identity component of $\PGL(V)$, which is $\PSL(V)$.
\end{remark}

The set $\widetilde{\Hit}_V(S)$ of Hitchin representations form two connected components in $\Hom(\Gamma,\PGL(V))$. 
Under the quotient map 
\[\Hom(\Gamma,\PGL(V))\to\Hom(\Gamma,\PGL(V))/\PGL(V),\] 
$\widetilde{\Hit}_V(S)$ is mapped to a single connected component of $\Hom(\Gamma,\PGL(V))/\PGL(V)$, called the $\PGL(V)$-\emph{Hitchin component}. We denote the $\PGL(V)$-Hitchin component by $\Hit_V(S)$. 

The Hitchin component was introduced by Hitchin \cite{Hitchin}, who, using techniques from the theory of Higgs bundle, proved that $\Hit_V(S)$ is homeomorphic to $\Rbbb^{(2g-2)(n^2-1)}$. 
In the case when $\dim(V)=2$, $\Hit_V(S)$ is exactly the space of conjugacy classes of faithful representations from $\Gamma$ to $\PGL(V)$ with discrete image. These representations are precisely holonomy representations of hyperbolic structures on $S$. Thus, in this case, $\Hit_V(S)$ can be identified with the space $\mathcal{H}(S)$ of marked hyperbolic structures on $S$. 

\begin{remark}
We work with $\PGL(V)$ instead of $\PGL(n,\mathbb{R})$ in order to avoid fixing a basis of $V$. We consider the Hitchin component as a connected component of the representation variety $\mathrm{Hom}(\Gamma, \PGL(V))/\PGL(V)$. If prefered, the reader can also consider the representation variety $\mathrm{Hom}(\Gamma, \PSL(V))/\PSL(V)$. Then, when $n$ is even, there are two connected components homeomorphic to $\Hit_V(S)$, and our results apply to each component separately. 
\end{remark}

\subsection{Frenet curves} 
Let $V$ be an $n$-dimensional real vector space. A (complete) \emph{flag} $F$ in $V$ is a sequence of properly nested subspaces $F^{(1)}\subset$ $\dots$ $\subset F^{(n-1)}$, where $\dim F^{(i)}=i$. We denote the set of (complete) flags in $V$ by $\Fmc(V)$. 
The space $\Fmc(V)$ can be equipped with a real-analytic structure such that the inclusion $\Fmc(V)\to\prod_{i=1}^{n-1}\Gr_i(V)$ given by $F\mapsto (F^{(1)},\dots,F^{(n-1)})$ is a real-analytic embedding. 

\begin{definition}\
A $k$-tuple of flags $(F_1,\dots,F_k)$ is said to be \emph{generic} if for all positive integers $i_1$, $\dots$, $i_k$ that sum to $n$, one has $F_1^{(i_1)}+\dots+F_k^{(i_k)}=V$. 
The set of generic $k$-tuples of flags is denoted by $\Fmc(V)^{[k]}$. When $k=2$, a pair of generic flags is also called a \emph{transverse} pair of flags.
\end{definition}

\begin{remark}
Note that requiring a triple of flags to be generic is strictly stronger than requiring the triple to be pairwise transverse. Most of our later constructions involving triples of flags do not work for a pairwise transverse triple of flags; we do indeed need that the triple of flags is generic.
\end{remark}

\begin{definition}\label{def:Frenet}
A continuous curve $\xi:S^1\to\Fmc(V)$ is said to be \emph{Frenet} if the following conditions hold for all positive integers $n_1$, $\dots$, $n_k$ that sum to $d\leq n$.
\begin{enumerate}
\item For all pairwise distinct points $x_1$, $\dots$, $x_k$ in $S^1$ , the subspace 
\[\sum_{j=1}^k\xi^{(n_j)}(x_j)\subset V\] 
is of dimension $d$.
\item Let $x$ be a point in $S^1$. For all sequences $\{(x_{i,1},\dots,x_{i,k})\}_{i=1}^\infty$ of pairwise distinct $k$-tuples in $S^1$ such that $\lim_{i\to\infty}x_{i,j}=x$ for all $j=1$, $\dots$, $k$, we have 
\[\lim_{i\to\infty}\sum_{j=1}^k\xi^{(n_j)}(x_{i,j})=\xi^{(d)}(x).\]
\end{enumerate}
The space of Frenet curves from $S^1$ to $\Fmc(V)$ is denoted by $\widetilde{\mathrm{Fre}}(V)$.
\end{definition}

Note that every Frenet curve is injective, and moreover sends every k-tuple of pairwise distinct points in $S^1$ to a generic k-tuple of flags. 

The space $\widetilde{\mathrm{Fre}}(V)$ can be equipped with the topology of uniform convergence. With this topology, the continuous action of $\PGL(V)$ on $\Fmc(V)$ induces a continuous action of $\PGL(V)$ on $\widetilde{\mathrm{Fre}}(V)$. We denote the quotient space $\widetilde{\mathrm{Fre}}(V)/\PGL(V)$ by $\mathrm{Fre}(V)$, and equip $\mathrm{Fre}(V)$ with the quotient topology.

\begin{remark}\label{rem:transitive}
We denote the projective classes of non-zero vectors in $V$ by $\Pbbb(V)$, and identify this with the space of lines through the origin in $V$. Similarly, we identify the space of hyperplanes through the origin in $V$ with $\Pbbb(V^*)$, the space of projective classes of non-zero covectors in $V^*$. 

It is an elementary linear algebra argument that $\PGL(V)$ acts transitively and freely on the set
\[\{(F,G,P)\in\Fmc(V)^{[2]}\times\Pbbb(V):F^{(i)}+G^{(n-i-1)}+P=V\text{ for all }i=0,\,\dots,\,n-1\}.\] 
In particular, if we choose a generic triple of flags $F,G,H$ in $\Fmc(V)$ and a triple of distinct points $x,y,z$ in $S^1$, then any projective class of Frenet curves $[\xi]$ in $\mathrm{Fre}(V)$ has a unique representative $\xi$ such that $\xi(x)=F$, $\xi(y)=G$ and $\xi^{(1)}(z)=H^{(1)}$. We make use of this normalization throughout the paper. 
\end{remark}

Labourie \cite{Labourie2006} proved that for every $\PGL(V)$-Hitchin representation $\rho$, there exists a unique $\rho$-equivariant Frenet curve $\xi:\partial\Gamma\to\Fmc(V)$. (Here $\partial \Gamma$ is the Gromov boundary of $\Gamma$. It ss topologically a circle.) Guichard \cite{Guichard} later proved the converse to this statement. Their combined work gives the following result that is crucial for this paper. 

\begin{thm} \cite[Theorem 1.4]{Labourie2006},\cite[Theorem 1]{Guichard}\label{thm:Labourie,Guichard}
Let $\rho:\Gamma\to\PGL(V)$ be a representation. Then the conjugacy class $[\rho]$ lies in $\Hit_V(S)$ if and only if there is a $\rho$-equivariant Frenet curve $\xi_\rho:\partial\Gamma\to\Fmc(V)$. Moreover, if such a Frenet curve exists, then it is necessarily unique. Furthermore, if $\xi_\rho=\xi_{\rho'}$, then $\rho=\rho'$. In particular, the Hitchin component $\Hit_V(S)$ is naturally emdedded into $\mathrm{Fre}(V)$ as the locus of Frenet curves that are $\rho$-equivariant for some representation $\rho:\Gamma\to\PGL(V)$.
\end{thm}

\subsection{Projective invariants}\label{sec:projective}
We recall the definition of the cross ratio and triple ratio, as well as some of their basic properties.

\subsubsection{Cross ratio} We begin with the definition of the cross ratio.
\begin{definition}
Let $P_1,P_2\in\Pbbb(V)$ be two lines in $V$ and let $K_1,K_2\in\Pbbb(V^*)$ be two hyperplanes in $V$ such that $P_i$ is not contained in $K_j$ for all $i$, $j$. For $j=1$, $2$ choose a non-zero vector $v_j$ in $P_j$ and a non-zero covector $\alpha_j$ such that $\ker(\alpha_j) = K_j$. Then the \emph{cross ratio} of $(K_1,P_1,P_2,K_2)$ is
\[C(K_1,P_1,P_2,K_2):=\frac{\alpha_1(v_2)\alpha_2(v_1)}{\alpha_1(v_1)\alpha_2(v_2)}.\]
\end{definition}
Note that the cross ratio $C(K_1,P_1,P_2,K_2)$ does not depend on the choices of $v_j$ and $\alpha_i$ for $i$, $j =1$, $2$.

The cross ratio is a projective invariant. For this recall that a projective transformation $g$ in $\PGL(V)$ acts on $\Pbbb(V)$ by $g\cdot [v] = [g\cdot v]$, and on $\Pbbb(V^*)$ by $g\cdot[\alpha] = [ \alpha \circ g^{-1}]$. Thus the definition implies immediately that
\[C(K_1,P_1,P_2,K_2)=C(g\cdot K_1,g\cdot P_1,g\cdot P_2,g\cdot K_2).\]
The following symmetries of the cross ratio are also direct consequences of the definition:
\[C(K_1,P_1,P_2,K_2)=C(K_2,P_2,P_1,K_1),\] 
\[C(K_1,P_1,P_2,K_2)\cdot C(K_1,P_2,P_3,K_2)=C(K_1,P_1,P_3,K_2),\]
and 
\[C(K_1,P_1,P_2,K_2)\cdot C(K_1,P_2,P_1,K_2)=1,\]
whenever the expressions are defined. 

We apply the cross ratio most commonly in the following setting. Let $\mathbf H:=(F_1,G_1,F_2,G_2)$ be a quadruple of flags in $\Fmc(V)$ such that $(F_1,G_1,F_2)$ and $(F_1,F_2,G_2)$ are both generic triples of flags, and let $\mathbf k:=(k_1,k_2)$ be a pair of positive integers that sum to $n$. Then define
\[C^{\mathbf k}(\mathbf H):=C\big(F_1^{(k_1)}+F_{2}^{(k_2-1)},G_1^{(1)},G_2^{(1)},F_1^{(k_1-1)}+F_2^{(k_2)}\big).\]

Observe that if $\mathbf k_m:=(k_m,k_{m+1})$ and $\mathbf H_m:=(F_m,G_m,F_{m+1},G_{m+1})$ for $m=1$, $2$, where the arithmetic in the subscripts are done modulo $2$, then 
\[C^{\mathbf k_1}(\mathbf H_1)=C^{\mathbf k_{2}}(\mathbf H_2).\]

\subsubsection{Triple ratio} 
We now introduce the triple ratio. Suppose first that $\dim(V) =3$. Let $(F_1, F_2, F_3)$ be a generic triple of flags in $\Fmc(V)^{[3]}$. Write $F_i = (P_i, K_i)$, where $P_i \in \Pbbb(V)$ is a line in $V$ and $K_i \in \Pbbb(V^*)$ a hyperplane in $V$ containing $P_i$. For $i=1$, $2$, $3$, choose non-zero vectors $v_i$ in $P_i$ and non-zero covectors $\alpha_i$ such that $\ker(\alpha_i) = K_i$. Then $\alpha_i(v_i) = 0$, but by genericity $\alpha_i(v_j) \neq 0 $ if $i\neq j$. 
The triple ratio of $(F_1, F_2, F_3)$ is then defined by 
\[T(F_1, F_2, F_3)=\frac{\alpha_1(v_2)\alpha_2(v_3)\alpha_3(v_1)}{\alpha_1(v_3)\alpha_3(v_2)\alpha_2(v_1)},\] 
which is independent of the choices of vectors and covectors. In this case, the triple ratio determines generic triples of flags up to projective equivalence.

The definition of the triple ratio generalizes to the case when $\dim(V)\geq 3$ in the following way. 

\begin{definition}
Let $K_1$, $K_2$, $K_3\in\Pbbb(V^*)$ be hyperplanes in $V$ and let $P_1$, $P_2$, $P_3\in\Pbbb(V)$ be lines such that for all $i=1$, $2$, $3$, $P_i$ is contained in $K_i$, but not in $K_{i-1}\cup K_{i+1}$ (here, arithmetic in the subscripts are done modulo $3$). Choose non-zero vectors $v_i$ in $P_i$ and non-zero covectors $\alpha_i$ such that $\ker(\alpha_i) = K_i$. Then the \emph{triple ratio} of $(K_1,P_1,K_2,P_2,K_3,P_3)$ is 
\[T(K_1,P_1,K_2,P_2,K_3,P_3)=\frac{\alpha_1(v_2) \alpha_2(v_3)\alpha_3(v_1)}{\alpha_1(v_3) \alpha_3(v_2) \alpha_2(v_1)}.\]
\end{definition}

The triple ratio is a projective invariant: 
\[T(g\cdot K_1,g\cdot P_1,g\cdot K_2, g\cdot P_2, g\cdot K_3,g \cdot P_3) = T(K_1,P_1,K_2,P_2,K_3,P_3)\] for all projective transformations $g$ in $\PGL(V)$. It also satisfies the symmetry 
\[T(K_1,P_1,K_2,P_2,K_3,P_3)=T(K_2,P_2,K_3,P_3,K_1,P_1)=\frac{1}{T(K_3,P_3,K_2,P_2,K_1,P_1)}.\]

We apply the triple ratio to give projective invariants of generic triples of flags as follows. Let $\mathbf F:=(F_1, F_2, F_3)$ be a generic triple of flags in $\Fmc(V)^{[3]}$, and let $\mathbf i:=(i_1,i_2,i_3)$ be a triple of positive integers that sum to $n$. For all $m=1$, $2$, $3$, let 
\[K_m:=F_m^{(i_m+1)}+F_{m+1}^{(i_{m+1}-1)}+F_{m-1}^{(i_{m-1}-1)},\] 
and let $P_m$ be a line such that $P_m+F_m^{(i_m-1)}=F_m^{(i_m)}$. Clearly, $P_m$ lies in $K_m$, and the genericity of $\mathbf F$ also implies that $P_m$ does not lie in $K_{m-1}$ and $K_{m+1}$. Then define
\[T^{\mathbf i}(\mathbf{F}) :=T(K_1,P_1,K_2,P_2,K_3,P_3).\]
Observe that $T^{\mathbf i}(\mathbf F)$ does not depend on the choice of $P_m$. Also, if we set $\mathbf i_m:=(i_m,i_{m+1},i_{m-1})$ and $\mathbf F_m:=(F_m,F_{m+1},F_{m-1})$ for $m=1$, $2$, $3$, then 
\[T^{\mathbf i_1}(\mathbf F_1)=T^{\mathbf i_2}(\mathbf F_2)=T^{\mathbf i_3}(\mathbf F_3).\]

Using ideas from Fock-Goncharov \cite{FockGoncharov}, one can prove that the triple ratios in fact give a parametrization of the space $\Fmc(V)^{[3]}/\PGL(V)$ of generic triples of flags in $V$ considered up to projective transformations (see \cite[Lemma 2.3.7]{Zhang_thesis}). 

\begin{prop}\label{prop:parameterize triple}
Let $\Bmc$ be the set of triples of positive integers that sum to $n$. 
The map 
\[\Fmc(V)^{[3]}/\PGL(V)\to(\Rbbb\setminus\{0\})^{\frac{(n-1)(n-2)}{2}}\] 
given by $[\mathbf F]\mapsto\big(T^{\mathbf i}(\mathbf F)\big)_{\mathbf i\in\Bmc}$ is a real-analytic diffeomorphism.
\end{prop}

\subsection{Positive maps}
In this section we briefly discuss the notion of positive maps, introduced by Fock and Goncharov \cite{FockGoncharov} and discuss its relation to Frenet curves. 
The notion of positive maps relies on the notion of totally positive unipotent matrices. 

\begin{definition}
Let $B:=\{f_1,\dots,f_n\}$ be a basis of $V$. A projective transformation $u$ in $\PGL(V)$ is a unipotent projective transformation is \emph{totally positive with respect to $B$} if in the basis $B$, it is represented by an upper-triangular matrix with ones on the diagonal, such that all the minors are positive except those that are forced to be zero by $u$ being upper-triangular.
\end{definition}

\begin{remark}Unipotent here means that $u_m$ has a representative that is conjugate to an upper triangular matrix with only $1$'s along the diagonal.\end{remark}

\begin{definition}
A $k$-tuple of flags $(F_1,\dots F_k)$ in $\Fmc(V)$ ($k\geq 3$) is said to be \emph{positive} if $F_1$ and $F_2$ are transverse, and for some fixed basis $B=\{f_1,\dots,f_n\}$ of $V$ such that $f_i$ lies in $F_1^{(i)}\cap F_2^{(n-i+1)}$ for all $i=1$, $\dots$, $n$, 
\begin{itemize}
\item there are projective transformations $u_1$, $\dots$, $u_{k-2}$ in $\PGL(V)$ that are totally positive with respect to $B$, and
\item there is some projective transformation $g$ in $\PGL(V)$ that fixes both $F_1$ and $F_2$,
\end{itemize}
such that 
\[g\cdot (F_1,F_2,F_3,\dots,F_k)=(F_1,F_2,F_3',\dots,F_k'),\]
where $F_i':=\left(u_1u_2\dots u_{i-2}\right)\cdot F_2$ for all $i=3$, $\dots$, $k$. 
We denote the space of positive $k$-tuples of flags in $\Fmc(V)$ by $\Fmc(V)_+^k$.
\end{definition}

The notion of a positive $k$-tuple of flags does not depend on the choice of basis $B$. Fock and Goncharov \cite[Theorem 1.2, Section 5.5--5.11]{FockGoncharov} proved that if $(F_1,\dots,F_k)$ is a positive $k$-tuple of flags, then so is $(F_2,\dots,F_k,F_1)$.

\begin{remark}\label{rem:remove}
If $(F_1,F_2,\dots,F_k)$ is a positive $k$-tuple of flags, then for any $1\leq i_1<i_2<\dots<i_l\leq k$ with $l\geq 3$, $(F_{i_1},\dots,F_{i_l})$ is also a positive $l$-tuple of flags.
\end{remark}

To define the notion of positive maps, we endow $S^1$ with a clockwise orientation.

\begin{definition}\label{def:cyclically oriented}
A $k$-tuple of pairwise distinct points $(x_1,\dots,x_k)$ in $S^1$ is \emph{cyclically ordered} if $x_1<\dots<x_k<x_1$, where $<$ denotes the cyclic ordering on $S^1$.
\end{definition}

\begin{definition}\cite{FockGoncharov} \label{def:positive}
A map $\phi:S^1\to\Fmc(V)$ is \emph{positive} if for any $k\geq 3$ and any cyclically ordered $k$-tuple of points $(x_1,\dots,x_k)$ in $S^1$, we have that $(\phi(x_1),\dots,\phi(x_k))$ is a positive $k$-tuple of flags. 
\end{definition}

Fock and Goncharov proved the following characterization of a positive triple of flags (this is a consequence of \cite[Theorem 9.1(a)]{FockGoncharov}).

\begin{thm}\cite{FockGoncharov}
Let $\mathbf F:=(F_1,F_2,F_3)$ be a triple of flags in $\Fmc(V)$. This triple is positive if and only if for any triple of positive integers $\mathbf i:=(i_1,i_2,i_3)$ that sum to $n$, we have $T^{\mathbf i}(\mathbf F)>0$. 
\end{thm}

More generally, Fock and Goncharov \cite[Section 9]{FockGoncharov} showed that the the space of projective classes of positive $k$-tuples of flags can be explicitly parametrized using cross ratios and triple ratios. To describe the parametrization of $\Fmc(V)_+^k/\PGL(V)$ consider an oriented planar polygon $M$ with $k$ vertices, and choose a triangulation $\Tmc$ of $M$ such that the vertices of the triangles in the triangulation are exactly the vertices of $M$ (see Figure \ref{fig:polygon}). We associate to a vertex $v$ of $M$ the flag $F(v)$.

\begin{figure}[ht]
\centering
\includegraphics[scale=0.7]{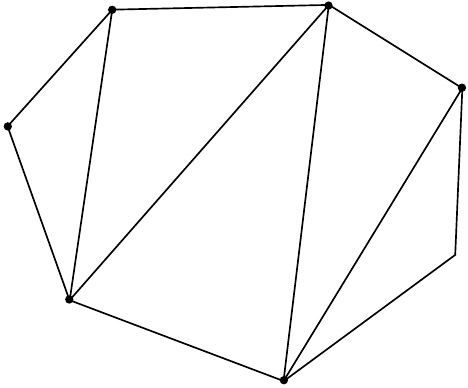}
\small
\put (-130, 131){$r_{e,1}$}
\put (-143, 24){$r_{e,2}$}
\put (-55, 132){$s_{e,1}=x_{T,3}$}
\put (-170, 92){$s_{e,2}$}
\put (0, 100){$x_{T,1}$}
\put (-70, -3){$x_{T,2}$}
\put (-40, 70){$T$}
\put (-132, 80){$e$}
\caption{The oriented planar polygon $M$, where the orientation induces a clockwise orientation on $\partial M$.}\label{fig:polygon}
\end{figure}

We fix the following notation to simplify the statements. For each interior edge $e$ of $\Tmc$ let $r_{e,1}$ and $r_{e,2}$ be the endpoints of $e$ respectively, and let $s_{e,1}$ and $s_{e,2}$ be the two vertices of $M$ such that the triangles with vertices $r_{e,1}$, $r_{e,2}$, $s_{e,1}$ and $r_{e,1}$, $r_{e,2}$, $s_{e,2}$ are both triangles of $\Tmc$, and $r_{e,1}<s_{e,1}<r_{e,2}<s_{e,2}<r_{e,1}$ according to the clockwise orientation on $\partial M$ (induced by the orientation on $M$, see Figure \ref{fig:polygon}). Then denote the quadruple of flags $\big(F(r_{e,1}),F(s_{e,1}),F(r_{e,2}),F(s_{e,2})\big)$ by $\mathbf F(e)$. 

 For each triangle $T$ of $\Tmc$, let $x_{T,1}$, $x_{T,2}$ and $x_{T,3}$ be the three vertices of $T$ such that $x_{T,1}<x_{T,2}<x_{T,3}<x_{T,1}$ according to the clockwise orientation on $\partial M$ (see Figure \ref{fig:polygon}). Then denote the triple of flags $\big(F(x_{T,1}),F(x_{T,2}),F(x_{T,3})\big)$ by $\mathbf F(T)$.

\begin{prop}\cite[Theorem 9.1(a)]{FockGoncharov}\label{prop:Fock-Goncharov parametrization}\label{thm:ktuple}
Let $M$ be an oriented planar $k$-gon ($k\geq 3$) and $\Tmc$ a triangulation of $M$ as described above. Let $T_1$, $\dots$, $T_{k-2}$ be the triangles given by $\Tmc$ and $e_1,\dots,e_{k-3}$ be the interior edges of $\Tmc$. Also, let $\Bmc$ denote the set of triples of positive integers that sum to $n$, and let $\Amc$ denote the set of pairs of positive integers that sum to $n$. The map 
\[\Fmc(V)_+^k/\PGL(V)\to(\Rbbb^+)^{\frac{(n-1)(n-2)(k-2)}{2}}\times(\Rbbb^+)^{(n-1)(k-3)}\] 
given by
\[[F(v_1),\dots, F(v_k)]\mapsto\left(\left(T^{\mathbf i}(\mathbf F(T_j))\right)_{j\in[1,k-2];\mathbf i\in\Bmc},\left(-C^{\mathbf k}(\mathbf F(e_j))\right)_{j\in[1,k-3];\mathbf k\in\Amc}\right)
\]
is a real analytic diffeomorphism.
\end{prop}

The next theorem relates Frenet curves to positive maps. Namely, the cross ratios and triple ratios of points along a Frenet curve are well-behaved.

\begin{thm}\label{prop:positive}
Let $\xi:S^1\to\Fmc(V)$ be a Frenet curve. 
\begin{enumerate}
\item For all pairwise distinct triples of points $\mathbf x:=(x_1,x_2,x_3)$ in $S^1$ and for all triples of positive integers $\mathbf i:=(i_1,i_2,i_3)$ that sum to $n$, we have 
\[T^{\mathbf i}\big(\xi(\mathbf x)\big)>0.\]
\item For all cyclically ordered quadruples $\mathbf a:=(r_1,s_1,r_2,s_2)$ of points in $S^1$, and for all pairs of positive integers $\mathbf k:=(k_1,k_2)$ that sum to $n$, we have
\[C^{\mathbf k}\big(\xi(\mathbf a)\big)<0.\]
\end{enumerate}
In particular every Frenet curve is a positive map. 
\end{thm}

This theorem can be proved using similar arguments to the ones Fock and Goncharov use in \cite[Theorem 9.1]{FockGoncharov}, see for example \cite[Proposition 2.5.7]{Zhang_thesis} or \cite[Appendix B]{LabourieMcShane}.

Conversely, the positivity of a $k$-tuple ensures a strong transversality condition, which we prove in Appendix \ref{app:Fock-Goncharov}. A slightly weaker statement was proven in \cite[Proposition~9.4.]{FockGoncharov}. 

\begin{prop}\cite{FockGoncharov}\label{prop:Fock-Goncharov}
Let $(F_1,\dots,F_k)$ be a positive $k$-tuple of flags in $\Fmc(V)_+^k$. Then for any positive integers $n_1$, $\dots$, $n_k$ that sum to $d\leq n$, we have that 
\[\dim\left(\sum_{j=1}^kF_j^{(n_j)}\right)=d.\]
\end{prop}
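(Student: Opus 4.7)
The plan is to induct on the length $k$ of the positive tuple, reducing the general case to a minor computation controlled by total positivity of unipotents.

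For $k=2$, positivity is pairwise transversality, and the inclusion $F_2^{(n_2)}\subseteq F_2^{(n-n_1)}$ combined with $F_1^{(n_1)}\cap F_2^{(n-n_1)}=\{0\}$ gives $\dim(F_1^{(n_1)}+F_2^{(n_2)})=d$. For $k=3$, Theorem~\ref{thm:ktuple} implies that a positive triple is transverse (all three triple ratios are nonzero), so $F_1^{(n_1)}+F_2^{(n_2)}+F_3^{(n_3')}=V$ whenever $n_1+n_2+n_3'=n$. Taking $n_3':=n-n_1-n_2\geq n_3$ and combining with the pair case forces $F_3^{(n_3')}$ to be complementary to $F_1^{(n_1)}+F_2^{(n_2)}$, and $F_3^{(n_3)}\subseteq F_3^{(n_3')}$ then also meets it trivially, giving dimension $d$.

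For the inductive step ($k\geq 4$), I would first note that Theorem~\ref{thm:ktuple} characterizes positivity of a $k$-tuple by positivity of all triple ratios of sub-triples, and this characterization is inherited by any cyclically ordered sub-tuple; hence $(F_1,\ldots,F_{k-1})$ is positive and the inductive hypothesis gives $\dim W=d-n_k$ for $W:=\sum_{j=1}^{k-1}F_j^{(n_j)}$. The remaining task is to show $W\cap F_k^{(n_k)}=\{0\}$, for which I plan to use the totally positive unipotent data $F_k=u_1\cdots u_{k-2}\cdot F_2$ from the definition of positivity, together with a basis $(f_1,\ldots,f_n)$ of $V$ adapted to $(F_1,F_2)$ in the sense that $F_1^{(i)}=\Span(f_1,\ldots,f_i)$ and $F_2^{(i)}=\Span(f_n,\ldots,f_{n-i+1})$. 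In this basis each $u_j$ is upper-triangular unipotent with all non-forced minors positive.

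I would then assemble an $n\times d$ matrix $M$ whose columns span in order $F_1^{(n_1)},F_2^{(n_2)},\ldots,F_k^{(n_k)}$, the columns corresponding to $F_j^{(n_j)}$ for $j\geq 3$ being the images under $u_1\cdots u_{j-2}$ of the last $n_j$ basis vectors. Proving $\dim(W+F_k^{(n_k)})=d$ reduces to exhibiting one $d\times d$ minor of $M$ that is nonzero. A multilinear expansion of such a minor writes it as a sum of products of minors of the $u_j$'s (with trivial factors coming from $F_1^{(n_1)}$ and $F_2^{(n_2)}$), and I would choose the $d$ rows of the minor so that every term in this expansion involves only \emph{non-forced} minors of the $u_j$'s, each of which is positive by total positivity. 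The main obstacle I anticipate is precisely this final combinatorial step: one must select the $d$ rows carefully so that every term in the expansion lies in the positive part, and then verify that these terms all carry a common sign, so that positivity of the individual minors implies non-vanishing of the sum. This is the same combinatorial heart as Fock--Goncharov's Proposition~9.4 in \cite{FockGoncharov}, whose ``similar argument'' the authors invoke; once that combinatorial step is carried out, total positivity of the $u_j$'s delivers the required non-vanishing and completes the induction.
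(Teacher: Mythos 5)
Your strategy (induct on $k$, then exhibit a nonzero $d\times d$ minor of a matrix assembled from the totally positive unipotents $u_1,\dots,u_{k-2}$) is genuinely different from the paper's, and your base cases $k=2,3$ are fine. But there is a real gap precisely at the step you yourself flag as "the main obstacle," and it is not a routine verification — it is the entire content of the proposition. The Laplace expansion of a $d\times d$ minor of $M$ over the column blocks is a \emph{signed} sum over partitions of the chosen row set into subsets $R_2,\dots,R_k$, each term being (up to the partition sign) a product of minors of the matrices $u_1\cdots u_{j-2}$ in rows $R_j$ and columns $\{n-n_j+1,\dots,n\}$. Total positivity gives you positivity of each individual non-forced minor, but it says nothing about the signs attached to the partitions, and for a general choice of $(n_1,\dots,n_k)$ many partitions contribute simultaneously. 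To close the argument you would need either (i) a row selection for which exactly one term in the expansion is nonzero, or (ii) a proof that all surviving terms carry a common sign; you propose to do one of these but carry out neither, and it is not at all clear that (i) is achievable or that (ii) holds term by term. (A further unaddressed point: your blocks involve minors of \emph{products} $u_1\cdots u_{j-2}$, so you also need that non-forced minors of such products are positive, which requires its own Cauchy--Binet argument.)

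For comparison, the paper's proof (Appendix A) sidesteps the sign problem entirely. It first lifts the flags to decorated flags, i.e. bases $\{f_{j,1},\dots,f_{j,n}\}$ for which all triple wedges $f_{j_1}^{i_1}\wedge f_{j_2}^{i_2}\wedge f_{j_3}^{i_3}$ are positive (this is the surjectivity of $\Amc_{D,\SL}(\Rbbb^+)\to\Xmc_{D,\PSL}(\Rbbb^+)$), and then runs a double induction on the three-term Pl\"ucker relation, in which the quantity to be shown positive appears in a product with a known-positive factor, equal to a sum of products of quantities already known to be positive. This yields positivity of every compound wedge $f_{j_1}^{i_1}\wedge\cdots\wedge f_{j_s}^{i_s}$ with no choice of rows and no sign bookkeeping, and directness of the sum follows immediately. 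If you want to rescue your route, you would in effect have to reprove this positivity of compound minors, which is exactly what the Pl\"ucker induction accomplishes; I would recommend adopting that mechanism rather than the direct minor expansion.
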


This proposition might make the the reader think that there is no difference between Frenet curves and positive maps. But positive maps $\phi: S^1 \to \Fmc(V)$ do not even have to be continuous in general, and thus are in general not Frenet. 
However, if we take a representation $\rho: \Gamma \to \PGL(V)$ into play, then the two notions agree: any $\rho$-equivariant positive map $\phi: \partial\Gamma \to \Fmc(V)$ is Frenet \cite[Theorem 1.15, Section 7.6-7.9]{FockGoncharov}.

\section{Elementary flows on Frenet curves}\label{sec:elementary}

In this section we define two types of flows, the \emph{elementary eruption flows} and \emph{elementary shearing flows} on the space of Frenet curves $\mathrm{Fre}(V)$. Later, we use infinite combinations of these elementary flows to define flows on the Hitchin component $\Hit_V(S)$. We equip $S^1$ with a clockwise orientation.

\subsection{The elementary eruption flow} \label{sec:elementaryeruption}
Let $\mathbf F:=(F_1,F_2,F_3)$ be a generic triple of flags in $\Fmc(V)^{[3]}$, and let $\mathbf i:=(i_1,i_2,i_3)$ be a triple of positive integers that sum to $n$. For any real number $t$ we define $b^{\mathbf i}_{\mathbf F}(t)$ to be the projective transformation in $\PGL(V)$ that is the projectivization of the linear automorphism of $V$ that acts as 
\begin{itemize}
\item the identity on $F_1^{(i_1)}$,
\item scaling by $e^{\frac{t}{3}}$ on $F_{2}^{(i_{2})}$, and
\item scaling by $e^{-\frac{t}{3}}$ on $F_{3}^{(i_{3})}$.
\end{itemize}
With respect to the decomposition $V = F_1^{(i_1)} + F_{2}^{(i_{2})}+F_{3}^{(i_{3})}$, the transformation $b^{\mathbf i}_{\mathbf F}(t)$ is represented by the matrix 
\[
\left(\begin{matrix}
\mathrm{Id}_{i_1} & 0 & 0 \\
0 & e^{\frac{t}{3}}\mathrm{Id}_{i_2} & 0 \\
0& 0& e^{-\frac{t}{3}}\mathrm{Id}_{i_3} \\
\end{matrix}\right). 
\]

\begin{lem}\label{lem:fixflag1}
Let $\mathbf i:=(i_1,i_2,i_3)$ be a triple of positive integers that sum to $n$, and let $\mathbf F:=(F_1,F_2,F_3)$ be a generic triple of flags. For $m=1$, $2$, $3$, set $\mathbf i_m:=(i_m,i_{m+1},i_{m-1})$ and $\mathbf F_m:=(F_m,F_{m+1},F_{m-1})$, where the arithmetic in the subscripts is done modulo $3$. Then for all $t\in \Rbbb$, 
\[b^{\mathbf i_{m-1}}_{\mathbf F_{m-1}}(t)\cdot F_m=b^{\mathbf i_{m+1}}_{\mathbf F_{m+1}}(t)\cdot F_m.\]
\end{lem}

\begin{proof}
Observe first that by the definition of $b^{\mathbf i}_{\mathbf F}(t)$, we have that for $l\leq i_m$, 
\[b^{\mathbf i_{m-1}}_{\mathbf F_{m-1}}(t)\cdot F_m^{(l)}=b^{\mathbf i_{m+1}}_{\mathbf F_{m+1}}(t)\cdot F_m^{(l)}.\]
Thus, we only need to show that the same is true for $l>i_m$. For $k=1$, $2$, $3$ and $j=1$, $\dots$, $n-1$, let $f_{k,j}$ be a vector in $F_k^{(j)}\setminus F_k^{(j-1)}$ (we use the convention that $F_k^{(0)}=\{0\}$). Since $\mathbf F$ is a generic triple,
\[\{f_{1,1},\dots,f_{1,i_1},f_{2,1},\dots,f_{2,i_2},f_{3,1},\dots,f_{3,i_3}\}\]
is a basis for $\Rbbb^n$. Thus, for any non-zero vector $f$ in $F_m^{(l)}$, we can write 
\[f=\sum_{j=1}^{i_1}\alpha_{1,j}f_{1,j}+\sum_{j=1}^{i_2}\alpha_{2,j}f_{2,j}+\sum_{j=1}^{i_3}\alpha_{3,j}f_{3,j},\]
for some real numbers $\alpha_{k,j}$. 
A direct computation gives that 
\[\begin{aligned}
b^{\mathbf i_{m-1}}_{\mathbf F_{m-1}}(t)\cdot [f]&=\left[\sum_{j=1}^{i_{m-1}}\alpha_{m-1,j}f_{m-1,j}+e^\frac{t}{3}\sum_{j=1}^{i_m}\alpha_{m,j}f_{m,j}+e^{-\frac{t}{3}}\sum_{j=1}^{i_{m+1}}\alpha_{m+1,j}f_{m+1,j}\right],
\end{aligned}\]
\[\begin{aligned}
b^{\mathbf i_{m+1}}_{\mathbf F_{m+1}}(t)\cdot [f]&=\left[e^\frac{t}{3}\sum_{j=1}^{i_{m-1}}\alpha_{m-1,j}f_{m-1,j}+e^{-\frac{t}{3}}\sum_{j=1}^{i_m}\alpha_{m,j}f_{m,j}+\sum_{j=1}^{i_{m+1}}\alpha_{m+1,j}f_{m+1,j}\right].
\end{aligned}\]
Hence, $b^{\mathbf i_{m-1}}_{\mathbf F_{m-1}}(t)\cdot [f]$ and $b^{\mathbf i_{m+1}}_{\mathbf F_{m+1}}(t)\cdot [f]$ are both lines that lie in the subspace of $V$ spanned by the $i_m+1$ vectors
\[f_{m,1}\,\,,\,\,\dots\,\,,\,\,f_{m,i_m}\,\,,\,\,e^\frac{t}{3}\sum_{j=1}^{i_{m-1}}\alpha_{m-1,j}f_{m-1,j}+\sum_{j=1}^{i_{m+1}}\alpha_{m+1,j}f_{m+1,j}.\]
In other words, 
\[b^{\mathbf i_{m-1}}_{\mathbf F_{m-1}}(t)\cdot \left(F_m^{(i_m)}+[f]\right)=b^{\mathbf i_{m+1}}_{\mathbf F_{m+1}}(t)\cdot \left(F_m^{(i_m)}+[f]\right)\]
for any non-zero vector $f$ in $F_m^{(l)}$. This concludes the proof.
\end{proof}

With Lemma \ref{lem:fixflag1}, we can now define the elementary eruption flow. 

\begin{definition}\label{def:elementary eruption flow}
Let $\mathbf i:=(i_1,i_2,i_3)$ be a triple of positive integers that sum to $n$, and let $\mathbf x:=(x_1,x_2,x_3)$ be a cyclically ordered triple of points in $S^1$. For $m=1$, $2$, $3$, let $\mathbf i_m:=(i_m,i_{m+1},i_{m-1})$ and $\mathbf x_m:=(x_m,x_{m+1},x_{m-1})$, where arithmetic in the subscripts are done modulo $3$. The $\mathbf i$-\emph{elementary eruption flow with respect to $\mathbf{x}$} is the continuous flow
\[\left(\epsilon^{\mathbf i}_{\mathbf x}\right)_t:\widetilde{\mathrm{Fre}}(V)\to\widetilde{\mathrm{Fre}}(V),\]
defined by $\xi \mapsto \xi_t = \left(\epsilon^{\mathbf i}_{\mathbf x}\right)_t(\xi)$, 
where 
\[\xi_t(p):=\left\{
\begin{array}{ll}
b^{\mathbf i_1}_{\xi(\mathbf x_1)}(t)\cdot \xi(p)&\text{if }x_{2}\leq p\leq x_{3};\\
b^{\mathbf i_{2}}_{\xi(\mathbf x_{2})}(t)\cdot \xi(p)&\text{if }x_{3}\leq p\leq x_1;\\
b^{\mathbf i_{3}}_{\xi(\mathbf x_{3})}(t)\cdot \xi(p)&\text{if }x_1\leq p\leq x_{2}.\\
\end{array}\right.\]
Here, the inequality $\leq$ is with respect to the clockwise orientation on $S^1$.
\end{definition}

\begin{figure}[ht]
\centering
\includegraphics[scale=0.4]{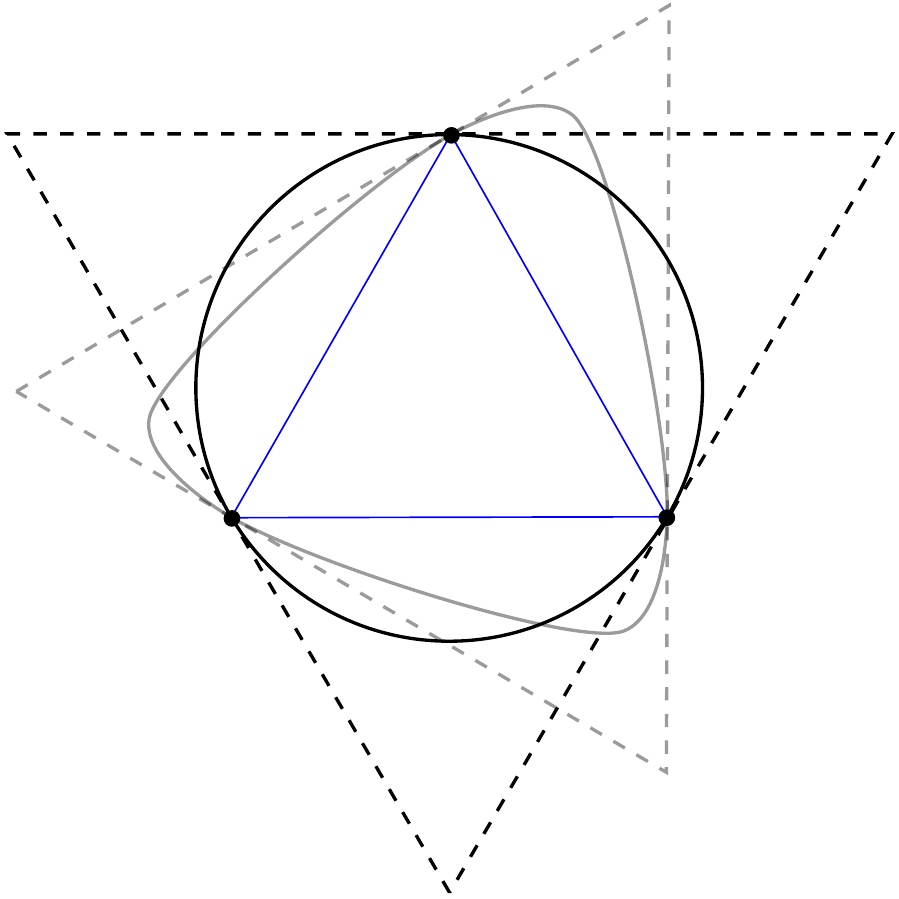}
\small
\put (-150, 67){$\xi(x_1)$}
\put (-103, 152){$\xi(x_2)$}
\put (-43, 67){$\xi(x_3)$}
\caption{When $n=3$, the dark conic is $\xi^{(1)}(S^1)\subset\Rbbb\Pbbb^2$ and the dark dotted lines are $\xi^{(2)}(x_1)$, $\xi^{(2)}(x_2)$ and $\xi^{(2)}(x_3)$. After applying an elementary eruption at $(x_1,x_2,x_3)$, $\xi_t^{(1)}(S^1)$ is the light curve and $\xi_t^{(2)}(x_1)$, $\xi_t^{(2)}(x_2)$ and $\xi_t^{(2)}(x_3)$ are the light dotted lines.}\label{fig:elementaryeruption}
\end{figure}

\begin{remark}
In the case when $n=3$ and $i_1=i_2=i_3=1$, the elementary eruption flow is (up to projective transformations) the eruption flow defined in \cite[Section~4.2.]{Wienhard_Zhang}. In this case, the eruption flow admits a nice geometric interpretation as changing the gluing parameter of a pair of nested triangles determined by $(\xi(x_1), \xi(x_2), \xi(x_3))$ (see Figure \ref{fig:elementaryeruption}). This geometric interpretation also suggested the name eruption flow. 
\end{remark}

It is not obvious that the elementary eruption flows are well defined. Lemma \ref{lem:fixflag1} implies that for all triples $\mathbf{i}:=(i_1,i_2,i_3)$ of positive integers that sum to $n$, and all triples $\mathbf x:=(x_1,x_2,x_3)$ of cyclically ordered points in $S^1$, the curve $\xi_t=\left(\epsilon^{\mathbf i}_{\mathbf x}\right)_t(\xi):S^1\to\Fmc(V)$ is a well-defined continuous curve. But to show that $\left(\epsilon^{\mathbf i}_{\mathbf x}\right)_t$ is well-defined as a flow on $\widetilde{\mathrm{Fre}}(V)$, we have show that $\xi_t$ is in fact a Frenet curve. We do so now.

For $i=1$, $2$, $3$, let $\mathbf i_m:=(i_m,i_{m+1},i_{m-1})$ and $\mathbf x_m:=(x_m,x_{m+1},x_{m-1})$, where arithmetic in the subscripts are done modulo $3$. For convenience, we consider the curve $b^{\mathbf i_m}_{\xi(\mathbf x_ m)}(-t)\cdot\xi_t$ which is projectively equivalent to $\xi_t$. Setting
\begin{equation}\label{eqn:a computation}
a^{\mathbf i_m}_{\mathbf F_m}(t):=b^{\mathbf i_{m+1}}_{\mathbf F_{m+1}}(-t)b^{\mathbf i_{m-1}}_{\mathbf F_{m-1}}(t),
\end{equation}
the curve $b^{\mathbf i_m}_{\xi(\mathbf x_m)}(-t)\cdot\xi_t:S^1\to\widetilde{\mathrm{Fre}}(V)$ is given by
\[b^{\mathbf i_m}_{\xi(\mathbf x_m)}(-t)\cdot\xi_t(p)=\left\{
\begin{array}{ll}
\xi(p)&\text{if }x_{m+1}\leq p\leq x_{m-1};\\
a^{\mathbf i_{m-1}}_{\xi(\mathbf x_{m-1})}(t)\cdot \xi(p)&\text{if }x_{m-1}\leq p\leq x_m;\\
a^{\mathbf i_{m+1}}_{\xi(\mathbf x_{m+1})}(t)^{-1}\cdot \xi(p)&\text{if }x_m\leq p\leq x_{m+1}.
\end{array}\right.\]
This has the advantage that for all $x_{m+1}\leq p\leq x_{m-1}$ we have $b^{\mathbf i_m}_{\xi(\mathbf x_m)}(-t)\cdot\xi_t(p)=\xi(p)$. Furthermore, we have the following lemma, which is useful in later computations. 

\begin{lem}\label{lem:other eruption}
Let $\mathbf F:=(F_1,F_2,F_3)$ be a generic triple of flags, and let $\mathbf i:=(i_1,i_2,i_3)$ be a triple of positive integers that sum to $n$. For $m=1$, $2$, $3$, let $\mathbf F_m:=(F_m,F_{m+1},F_{m-1})$ and $\mathbf i_m:=(i_m,i_{m+1},i_{m-1})$. Choose a basis $\{f_{m,1},\dots,f_{m,n}\}$ of $V$ such that $F_m^{(l)}=\Span_{\Rbbb}\{f_{m,1},\dots,f_{m,l}\}$ for all $l=1$, $\dots$, $n-1$. Then $a^{\mathbf i_m}_{\mathbf F_m}(t)$ is represented in this basis by an upper triangular matrix where the first $i_m$ entries down the diagonal are $e^{\frac{2t}{3}}$ and the last $n-i_m$ entries down the diagonal are $e^{-\frac{t}{3}}$.
\end{lem}

\begin{proof}
By Lemma \ref{lem:fixflag1}, $a^{\mathbf i_m}_{\mathbf F_m}(t)$ fixes the flag $F_m$. Therefore with respect to the basis $\{f_1,\dots,f_n\}$, $a^{\mathbf i_m}_{\mathbf F_m}(t)$ is represented by an upper triangular matrix. 
Furthermore, $a^{\mathbf i_m}_{\mathbf F_m}(t)$ is the projectivization of the linear map which acts as
\begin{itemize}
\item scaling by $e^{-\frac{t}{3}}$ on $F_{m-1}^{(i_{m-1})}+F_{m+1}^{(i_{m+1})}$ and
\item scaling by $e^{\frac{2t}{3}}$ on $F_{m}^{(i_{m})}$.
\end{itemize}
The lemma now follows. 
\end{proof}

As a preliminary step to prove that $\xi_t:=\left(\epsilon^{\mathbf i}_{\mathbf x}\right)_t(\xi)$ is a Frenet curve, we prove the following proposition, which describes how certain cross ratios and triple ratios along the image of $\xi_t$ change with $t$. 

\begin{prop}\label{prop:eruption projective invariants}
Let $\mathbf i:=(i_1,i_2,i_3)$ be a triple of positive integers that sum to $n$, and let $\mathbf x:=(x_1,x_2,x_3)$ be a cyclically ordered triple of points in $S^1$. Let $\left(\epsilon^{\mathbf i}_{\mathbf x}\right)_t$ be the $\mathbf{i}$-elementary eruption flow with respect to $\mathbf{x}$, and let $\xi_t:=\left(\epsilon^{\mathbf i}_{\mathbf x}\right)_t(\xi)$. Then we have the following:
\begin{enumerate}
\item For any triple $\mathbf j:=(j_1,j_2,j_3)$ of positive integers that sum to $n$, we have 
\[T^{\mathbf j}(\xi_t(\mathbf x))=\left\{
\begin{array}{ll}
T^{\mathbf j}(\mathbf x)&\text{if }\mathbf j\neq\mathbf i;\\
e^t\cdot T^{\mathbf j}(\mathbf x)&\text{if }\mathbf j=\mathbf i.
\end{array}
\right.\]
\item For any triple $\mathbf y:=(y_1,y_2,y_3)$ of pairwise distinct points in $S^1$ such that $y_1$, $y_2$, $y_3$ lie in the closure of a connected component of $S^1\setminus\{x_1,x_2,x_3\}$, we have $T^{\mathbf j}(\xi_t(\mathbf y))=T^{\mathbf j}(\xi(\mathbf y))$ for any triple $\mathbf j:=(j_1,j_2,j_3)$ of positive integers that sum to $n$. 
\item For any cyclically ordered quadruple $\mathbf{a}:=(r_1,s_1,r_2,s_2)$ of points in $S^1$ such that $r_1$, $s_1$, $r_2$ lie in the closure of a connected component of $S^1\setminus\{x_1,x_2,x_3\}$, and $s_2$ either lies in that same closure or is equal to the $x_i$ that is not a boundary point of that closure, we have $C^{\mathbf k}(\xi_t(\mathbf{a}))=C^{\mathbf k}(\xi(\mathbf {a}))$ for any pair $\mathbf k:=(k_1,k_2)$ of positive integers that sum to $n$. 
\end{enumerate}
\end{prop}

For the proof of this proposition we express the triple ratios using determinants. 
To describe this, let $\mathbf F:=(F_1,F_2,F_3)$ be generic triple of flags in $\Fmc(V)^{[3]}$ and let $\mathbf i:=(i_1,i_2,i_3)$ be a triple of positive integers that sum to $n$. For $m=1$, $2$, $3$, we choose an ordered basis $\{f_{m,1},\dots,f_{m,n}\}$ of $V$ such that 
$F_m^{(l)}=\Span_\Rbbb(f_{m,1},\dots, f_{m,l})$ for all $l=1$, $\dots$, $n-1$. By genericity of the triple, we have that 
\[F_1^{(i_1)}\wedge F_2^{(i_2)}\wedge F_3^{(i_3)}:=\det(f_{1,1},\dots,f_{1,i_1},f_{2,1},\dots,f_{2,i_2},f_{3,1},\dots,f_{3,i_3})\]
is non-zero. Of course, $F_1^{(i_1)}\wedge F_2^{(i_2)}\wedge F_3^{(i_3)}$ depends on the choice of the three bases, but one can verify that the ratio
\[\frac{F_1^{(i_1+1)}\wedge F_2^{(i_2)}\wedge F_3^{(i_3-1)}\cdot F_1^{(i_1-1)}\wedge F_2^{(i_2+1)}\wedge F_3^{(i_3)}\cdot F_1^{(i_1)}\wedge F_2^{(i_2-1)}\wedge F_3^{(i_3+1)}}{F_1^{(i_1+1)}\wedge F_2^{(i_2-1)}\wedge F_3^{(i_3)}\cdot F_1^{(i_1)}\wedge F_2^{(i_2+1)}\wedge F_3^{(i_3-1)}\cdot F_1^{(i_1-1)}\wedge F_2^{(i_2)}\wedge F_3^{(i_3+1)}}\]
does not, and in fact evaluates to the triple ratio $T^{\mathbf i}(\mathbf F)$. 

\begin{proof}[Proof of Proposition~\ref{prop:eruption projective invariants}]
Proof of (1). For $m=1$, $2$, $3$, let $\mathbf i_m:=(i_m,i_{m+1},i_{m-1})$, $\mathbf j_m:=(j_m,j_{m+1},j_{m-1})$, and $\mathbf x_m:=(x_m,x_{m+1},x_{m-1})$, where the arithmetic in the subscripts are done modulo $3$. From the definition of $a^{\mathbf i_m}_{\xi(\mathbf x_m)}(t)$ and that of $\xi_t$, it is clear that
\[b^{\mathbf i_{m+1}}_{\xi(\mathbf x_{m+1})}(-t)\cdot\xi_t(\mathbf x_m)=\big(\xi(x_m),a^{\mathbf i_m}_{\xi(\mathbf x_m)}(t)\cdot\xi(x_{m+1}),\xi(x_{m-1})\big).\]

Suppose first that $\mathbf j\neq\mathbf i$. Then $i_{m+1}\geq j_{m+1}+1$ for some $m=1$, $2$, $3$. By Lemma \ref{lem:other eruption}, $a^{\mathbf i_m}_{\xi(\mathbf x_m)}(t)$ acts on $\xi^{(j_{m+1}+1)}(x_{m+1})\subset \xi^{(i_{m+1})}(x_{m+1})$ by scaling by $\lambda:=e^{-\frac{t}{3}}$. 
Since the triple ratio is a projective invariant, we have
\begin{eqnarray*}
&&T^{\mathbf j}(\xi_t({\mathbf x}))= T^{\mathbf j_m}\left(b^{\mathbf i_{m+1}}_{\xi({\mathbf x}_{m+1})}(-t)\cdot \xi_t(\mathbf x_m)\right) \\
&=& T^{\mathbf j_m}\big(\xi(x_m),a^{\mathbf i_m}_{\xi(\mathbf x_m)}(t)\cdot\xi(x_{m+1}),\xi(x_{m-1})\big)\\
&=&\frac{\xi^{(j_{m-1}+1)}(x_{m-1})\wedge \xi^{(j_m)}(x_m)\wedge \xi^{(j_{m+1}-1)}(x_{m+1})\cdot\lambda^{j_{m+1}-1}}{\xi^{(j_{m-1}+1)}(x_{m-1})\wedge \xi^{(j_m-1)}(x_m)\wedge \xi^{(j_{m+1})}(x_{m+1})\cdot\lambda^{j_{m+1}}}\cdot\\
&&\frac{\xi^{(j_{m-1}-1)}(x_{m-1})\wedge \xi^{(j_m+1)}(x_m)\wedge \xi^{(j_{m+1})}(x_{m+1})\cdot\lambda^{j_{m+1}}}{\xi^{(j_{m-1})}(x_{m-1})\wedge \xi^{(j_m+1)}(x_m)\wedge \xi^{(j_{m+1}-1)}(x_{m+1})\cdot\lambda^{j_{m+1}-1}}\cdot\\
&&\frac{\xi^{(j_{m-1})}(x_{m-1})\wedge \xi^{(j_m-1)}(x_m)\wedge \xi^{(j_{m+1}+1)}(x_{m+1})\cdot\lambda^{j_{m+1}+1}}{\xi^{(j_{m-1}-1)}(x_{m-1})\wedge \xi^{(j_m)}(x_m)\wedge \xi^{(j_{m+1}+1)}(x_{m+1})\cdot\lambda^{j_{m+1}+1}}\\
&=&T^{\mathbf j_m}(\xi(\mathbf x_m))=T^{\mathbf j}(\xi(\mathbf x)).
\end{eqnarray*}

On the other hand, $a^{\mathbf i}_{\xi(\mathbf x)}(t)$ acts on 
\begin{itemize}
\item $\xi^{(i_2)}(x_2)$ as scaling by $e^{-\frac{t}{3}}$, 
\item $\left(\xi^{(i_2+1)}(x_2)+\xi^{(i_3)}(x_3)\right)\cap\xi^{(i_1)}(x_1)$ as scaling by $e^{\frac{2t}{3}}$, 
\item $\left(\xi^{(i_1)}(x_1)+\xi^{(i_2+1)}(x_2)\right)\cap\xi^{(i_3)}(x_3)$ as scaling by $e^{-\frac{t}{3}}$.
\end{itemize} 
Thus,
\begin{eqnarray*}
T^{\mathbf i}(\xi_t(\mathbf x))&=&T^{\mathbf i}\big(\xi(x_1),a^{\mathbf i}_{\xi(\mathbf x)}(t)\cdot\xi(x_{2}),\xi(x_{3})\big)\\
&=&\frac{\xi^{(i_1+1)}(x_{1})\wedge \xi^{(i_2)}(x_2)\wedge \xi^{(i_3-1)}(x_{3})\cdot e^{-\frac{i_2t}{3}}}{\xi^{(i_1+1)}(x_{1})\wedge \xi^{(i_2-1)}(x_2)\wedge \xi^{(i_3)}(x_{3})\cdot e^{-\frac{(i_2-1)t}{3}}}\cdot\\
&&\frac{\xi^{(i_1-1)}(x_{1})\wedge \xi^{(i_2+1)}(x_2)\wedge \xi^{(i_3)}(x_{3})\cdot e^{-\frac{i_2t}{3}}e^{\frac{2t}{3}}}{\xi^{(i_1)}(x_{1})\wedge \xi^{(i_2+1)}(x_2)\wedge \xi^{(i_3-1)}(x_{3})\cdot e^{-\frac{i_2t}{3}}e^{-\frac{t}{3}}}\cdot\\
&&\frac{\xi^{(i_1)}(x_{1})\wedge \xi^{(i_2-1)}(x_2)\wedge \xi^{(i_3+1)}(x_{3})\cdot e^{-\frac{(i_2-1)t}{3}}}{\xi^{(i_1-1)}(x_{1})\wedge \xi^{(i_2)}(x_2)\wedge \xi^{(i_3+1)}(x_{3})\cdot e^{-\frac{i_2t}{3}}}\\
&=&e^t\cdot T^{\mathbf i}(\xi(\mathbf x)).
\end{eqnarray*}

Proof of (2). Again, we may assume without loss of generality that the connected component of $S^1\setminus\{x_1,x_2,x_3\}$ whose closure contains $y_1$, $y_2$, $y_3$ is 
\[\{q\in S^1:x_3\leq q\leq x_1\}.\] 
Then $\xi_t(\mathbf y)=b^{\mathbf i_2}_{\xi(\mathbf x_2)}(t)\cdot\xi(\mathbf y)$. Since the triple ratio is a projective invariant, (2) follows immediately. 

Proof of (3). We may assume without loss of generality that the connected component of $S^1\setminus\{x_1,x_2,x_3\}$ whose closure contains $r_1$, $s_1$, $r_2$ is 
\[\{q\in S^1:x_3\leq q\leq x_1\}.\] 
Then either $x_3\leq s_2\leq x_1$, or $s_2=x_2$. Observe that $C^{\mathbf k}(\xi_t(\mathbf p))$ depends only on $\xi_t^{(1)}(s_2)$ (and not on the rest of the flag $\xi_t(s_2)$). From the definition of $b^{\mathbf i_2}_{\xi(\mathbf x_2)}(t)$ it is clear that $\xi_t^{(1)}(s_2)=b^{\mathbf i_2}_{\xi(\mathbf x_2)}(t)\cdot\xi^{(1)}(s_2)$, $\xi_t(r_1)=b^{\mathbf i_2}_{\xi(\mathbf x_2)}(t)\cdot\xi(r_1)$, $\xi_t(r_2)=b^{\mathbf i_2}_{\xi(\mathbf x_2)}(t)\cdot\xi(r_2)$, and $\xi_t(s_1)=b^{\mathbf i_2}_{\xi(\mathbf x_2)}(t)\cdot\xi(s_1)$. Statement (3) then follows from the projective invariance of the cross ratio.
\end{proof}

\begin{thm}\label{thm:eruption preserves Frenet}
Let $\mathbf{i}:=(i_1,i_2,i_3)$ be a triple of positive integers that sum to $n$, and let $\mathbf x:=(x_1,x_2,x_3)$ be a cyclically ordered triple of points in $S^1$. If $\xi$ is a Frenet curve, then for all $t \in \Rbbb$, the continuous curve $\xi_t=\left(\epsilon^{\mathbf i}_{\mathbf x}\right)_t(\xi)$ is also a Frenet curve. In particular, the $\mathbf i$-elementary shear flow with respect to $\mathbf x$, $\left(\epsilon^{\mathbf i}_{\mathbf x}\right)_t$, is a well-defined flow on $\widetilde{\mathrm{Fre}}(V)$.
\end{thm}

\begin{proof}
First, we prove that $\xi_t$ satisfies property (1) in the definition of Frenet curves (Definition \ref{def:Frenet}). Choose pairwise distinct points $p_1$, $\dots$, $p_k$ in $S^1$, which we can assume to be cyclically ordered with respect to the cyclic order on $S^1$, and choose positive integers $n_1$, $\dots$, $n_k$ that sum to $d\leq n$. 
We have to show that \[\dim\left(\sum_{j=1}^k\xi_t^{(n_j)}(p_j)\right)=d.\] 
By Proposition \ref{prop:Fock-Goncharov} it is sufficient to show that the $k$-tuple of flags $(\xi_t(p_1),\dots,\xi_t(p_k))$ is positive. This is what we are going to prove now. We extend the cyclic order on $\{p_1,\dots,p_k\}$ to a cyclic order on $\{p_1,\dots,p_k\}\cup\{x_1,x_2,x_3\}$ (it is possible that $x_i=p_j$ for some $i,j$). This allows us to consider $\{p_1,\dots,p_k\}\cup\{x_1,x_2,x_3\}$ as the vertices of a planar polygon $M$ inscribed in the circle $S^1$. Choose a triangulation $\Tmc$ of $M$ such that the vertices of $\Tmc$ are the vertices of $M$, and the triangle with vertices $x_1,x_2,x_3$ is a triangle in $\Tmc$.

For each interior edge $e$ of $\Tmc$, let $r_{e,1}$ and $r_{e,2}$ be the endpoints of $e$ respectively, and let $s_{e,1}$ and $s_{e,2}$ be the two vertices of $M$ such that the triangles with vertices $r_{e,1}$, $r_{e,2}$, $s_{e,1}$ and $r_{e,1}$, $r_{e,2}$, $s_{e,2}$ are both triangles of $\Tmc$, and $r_{e,1}<s_{e,1}<r_{e,2}<s_{e,2}<r_{e,1}$ according to the clockwise orientation on $\partial M$ (see Figure~\ref{fig:polygon}). We set $\mathbf F_t(e):=\big(\xi_t(r_{e,1}),\xi_t(s_{e,1}),\xi_t(r_{e,2}),\xi_t(s_{e,2})\big)$. Similarly, for each triangle $T$ of $\Tmc$, let $x_{T,1}$, $x_{T,2}$ and $x_{T,3}$ be the vertices of $T$ such that $x_{T,1}<x_{T,2}<x_{T,3}<x_{T,1}$ (see Figure~\ref{fig:polygon}), and set $\mathbf F_t(T):=\big(\xi_t(x_{T,1}),\xi_t(x_{T,2}),\xi_t(x_{T,3})\big)$. Since $\xi_0=\xi$, by Theorem \ref{prop:positive}, we have that:
\begin{itemize}
\item for every interior edge $e$ of $\Tmc$ and for all pairs of positive integers $\mathbf k$ that sum to $n$, we have $C^{\mathbf k}_{\mathbf F_0(e)}<0$.
\item for every triangle $T$ of the triangulation $\Tmc$ and for all triples of positive integers $\mathbf i$ that sum to $n$, the triple ratio satisfies $T^{\mathbf i}_{\mathbf F_0(T)}>0$.
\end{itemize}
It then follows from Proposition \ref{prop:eruption projective invariants} that the above two statements about the cross ratio and triple ratio above also hold for ${\mathbf F}_t(e)$ and $\mathbf{F}_t(T)$. Therefore Theorem \ref{prop:positive} implies that the $k$-tuple $(\xi_t(p_1),\dots,\xi_t(p_k))$ is positive, so Proposition \ref{prop:Fock-Goncharov} implies
\[\dim\left(\sum_{j=1}^k\xi_t^{(n_j)}(p_j)\right)=d.\] 

Next, we show that $\xi_t$ satisfies property (2) in the definition of Frenet curves (Definition \ref{def:Frenet}).
Let $p_0$ be a point in $S^1$, and consider any sequence $\{(p_{i,1},\dots,p_{i,k})\}_{i=1}^\infty$ of pairwise distinct $k$-tuples in $S^1$, such that $\lim_{i\to\infty}p_{i,j}=p_0$ for all $j=1$, $\dots$, $k$. If $x_{m+1}<p_0<x_{m-1}$ for some $m=1$, $2$, $3$, then $x_{m+1}<p_{i,j}<x_{m-1}$ for sufficiently large $i$ and for all $j=1$, $\dots$, $k$. Since $\xi_t(p)=b^{\mathbf i_m}_{\xi(\mathbf x_m)}(t)\cdot\xi(p)$ for all $x_{m+1}<p<x_{m-1}$, we have
\[\lim_{i\to\infty}\sum_{j=1}^k\xi_t^{(n_j)}(p_{i,j})=b^{\mathbf i_m}_{\xi(\mathbf x_m)}(t)\cdot\lim_{i\to\infty}\sum_{j=1}^k\xi^{(n_j)}(p_{i,j})=b^{\mathbf i_m}_{\xi(\mathbf x_m)}(t)\cdot\xi^{(d)}(p_0)=\xi_t^{(d)}(p_0),\]
where $\mathbf i_m:=(i_m,i_{m+1},i_{m-1})$ and $\mathbf x_m:=(x_m,x_{m+1},x_{m-1})$ for $m=1$, $2$, $3$, and arithmetic in the subscripts are done modulo $3$.

Now, suppose that $p_0=x_m$ for some $m=1$, $2$, $3$. Then for sufficiently large $i$ and for all $j=1$, $\dots$, $k$, we know that $x_{m-1}<p_{i,j}<x_{m+1}$. Observe that $\xi_t(p)=b^{\mathbf i_{m+1}}_{\xi(\mathbf x_{m+1})}(t)\cdot \xi(p)$ for all $x_{m-1}\leq p\leq x_m$, and $\xi_t(p)=b^{\mathbf i_{m-1}}_{\xi(\mathbf x_{m-1})}(t)\cdot \xi(p)$ for all $x_m\leq p\leq x_{m+1}$. Also, recall from Lemma \ref{lem:fixflag1} that $a^{\mathbf i_{m}}_{\xi(\mathbf x_{m})}(t):=b^{\mathbf i_{m+1}}_{\xi(\mathbf x_{m+1})}(-t)b^{\mathbf i_{m-1}}_{\xi(\mathbf x_{m-1})}(t)$ fixes the flag $\xi(x_m)$. Let 
\[A:=\{j:x_{m-1}<p_{i,j}\leq x_m\text{ for sufficiently large }i\}\] 
and let 
\[B:=\{j:x_m<p_{i,j}<x_{m+1}\text{ for sufficiently large }i\}.\] 
By taking subsequences, we may assume that $A\cup B=\{1,\dots,k\}$ is a disjoint union. Then
\begin{eqnarray*}
\lim_{i\to\infty}\sum_{j=1}^k\xi_t^{(n_j)}(p_{i,j})&=&b^{\mathbf i_{m+1}}_{\xi(\mathbf x_{m+1})}(t)\cdot\lim_{i\to\infty}\sum_{j\in A}\xi^{(n_j)}(p_{i,j})+b^{\mathbf i_{m-1}}_{\xi(\mathbf x_{m-1})}(t)\cdot\lim_{i\to\infty}\sum_{j\in B}\xi^{(n_j)}(p_{i,j})\\
&=&b^{\mathbf i_{m+1}}_{\xi(\mathbf x_{m+1})}(t)\cdot\lim_{i\to\infty}\left(\sum_{j\in A}\xi^{(n_j)}(p_{i,j})+a^{\mathbf i_{m}}_{\xi(\mathbf x_{m})}(t)\cdot\sum_{j\in B}\xi^{(n_j)}(p_{i,j})\right).
\end{eqnarray*}

Since $\xi$ is Frenet, 
\[\lim_{i\to\infty}\sum_{j\in A}\xi^{(n_j)}(p_{i,j})\subset\xi^{(d)}(p_0)\]
and
\[\lim_{i\to\infty}a^{\mathbf i_{m}}_{\xi(\mathbf x_{m})}(t)\cdot\sum_{j\in B}\xi^{(n_j)}(p_{i,j})\subset a^{\mathbf i_{m}}_{\xi(\mathbf x_{m})}(t)\cdot\xi^{(d)}(p_0)=\xi^{(d)}(p_0).\]
This implies that $\displaystyle\lim_{i\to\infty}\sum_{j=1}^k\xi_t^{(n_j)}(p_{i,j})\subset\xi^{(d)}(p_0)$. Since $\displaystyle\sum_{j=1}^k\xi_t^{(n_j)}(p_{i,j})$ has dimension $d$, we have 
\[\displaystyle\lim_{i\to\infty}\sum_{j=1}^k\xi_t^{(n_j)}(p_{i,j})=\xi_t^{(d)}(p_0).\]

Therefore $\xi_t$ is Frenet, and \[\left(\epsilon^{\mathbf i}_{\mathbf x}\right)_t:\widetilde{\mathrm{Fre}}(V)\to\widetilde{\mathrm{Fre}}(V)\]
is well-defined. The fact that $\left(\epsilon^{\mathbf i}_{\mathbf x}\right)_t$ is a flow, i.e. that 
$\left(\epsilon^{\mathbf i}_{\mathbf x}\right)_{t'} \circ \left(\epsilon^{\mathbf i}_{\mathbf x}\right)_t = \left(\epsilon^{\mathbf i}_{\mathbf x}\right)_{t+t'}$, is immediate from the definition. 
\end{proof}

\subsection{The elementary shearing flow} \label{sec:elementaryshearing}
To define the elementary shearing flow, consider a pair $\mathbf E:=(E_1,E_2)$ of transverse flags in $\Fmc(V)$, and a pair $\mathbf k:=(k_1,k_2)$ of positive integers that sum to $n$. For any real number $t$, we denote by $d^{\mathbf k}_{\mathbf E}(t)$ the projective transformation in $\PGL(V)$ that is the projectivization of the linear automorphism of $V$ which acts as
\begin{itemize}
\item the identity on $E_1^{(k_1)}$,
\item scaling by $e^{-\frac{t}{2}}$ on $E_2^{(k_2)}$.
\end{itemize}
So with respect to the decomposition $V = E_1^{(k_1)} + E_2^{(k_2)}$, the projective transformation $d^{\mathbf k}_{\mathbf E}(t)$ can be represented by the matrix 
\[
\left(\begin{matrix}
\mathrm{Id}_{k_1} & 0 \\
0 & e^{-\frac{t}{2}} \mathrm{Id}_{k_2}
\end{matrix}
\right)
.\]

\begin{lem}\label{lem:fixflag}
Let $\mathbf k:=(k_1,k_2)$ be a pair of positive integers that sum to $n$, and let $\mathbf E:=(E_1,E_2)$ be a transverse pair of flags. For $m=1$, $2$, set $\mathbf{k}_m := (k_m,k_{m+1})$ and $\mathbf E_m: = (E_m,E_{m+1})$, where the arithmetic in the subscripts are done modulo $2$. Then for all $t\in \Rbbb$, 
\begin{enumerate}
\item $d^{\mathbf k}_{\mathbf E}(t)$ fixes $\mathbf E$, and
\item$d^{\mathbf k_1}_{\mathbf E_1}(t) = d^{\mathbf k_2}_{\mathbf E_2}(-t)$.
\end{enumerate}
\end{lem}

\begin{proof}
Since $\mathbf E$ is a transverse pair of flags, we may choose a basis $\{e_1,\dots,e_n\}$ of $V$ such that $e_j$ is a non-zero vector in $E_1^{(j)}\cap E_2^{(n-1-j)}$ for all $j=1$, $\dots$, $n-1$. Observe from the definition that $d^{\mathbf k}_{\mathbf E}(t)$ fixes the point $[e_j]$ in $\Pbbb(V)$ for all $j$, so it necessarily fixes $\mathbf E$. This proves (1). (2) is obvious from the definition of $d^{\mathbf k}_{\mathbf E}(t)$.
\end{proof}

With this, we can define the elementary shearing flow. 

\begin{definition}\label{def:elementary shearing flow}
Let $\mathbf r:=(r_1,r_2)$ be a pair of distinct points in $S^1$ and let $\mathbf k:=(k_1,k_2)$ be a pair of positive integers that sum to $n$. For $m=1$, $2$, set $\mathbf r_m:=(r_m,r_{m+1})$ and $\mathbf k_m:=(k_m,k_{m+1})$, where the subscripts are done modulo $2$. The $\mathbf k$-\emph{elementary shearing flow with respect to $\mathbf{r}$} is the continuous flow
\[(\psi^{\mathbf k}_{\mathbf r})_t:\widetilde{\mathrm{Fre}}(V)\to\widetilde{\mathrm{Fre}}(V)\]
defined by $\xi \mapsto \xi_t = \left(\psi^{\mathbf k}_{\mathbf r}\right)_t (\xi)$, where 
\begin{equation}\label{eqn:elementaryshear}
\xi_t(p)=\left\{
\begin{array}{ll}
d^{\mathbf k_{2}}_{\xi(\mathbf r_{2})}(t)\cdot \xi(p)&\text{if }r_2\leq p\leq r_1;\\
d^{\mathbf k_1}_{\xi(\mathbf r_1)}(t)\cdot \xi(p)&\text{if }r_1\leq p\leq r_{2}.
\end{array}\right. 
\end{equation}
\end{definition}

\begin{remark}
 In the case when $\dim(V)=3$, Goldman \cite{Goldman_convex, Goldman_bulge} introduced the hyperbolic shear and the bulge deformations associated to $\mathbf r$. The hyperbolic shear associated to $\mathbf r$ is equal to $\left(\psi^{(1,2)}_{\mathbf r}\right)_{t}\circ\left( \psi^{(2,1)}_{\mathbf r}\right)_{t}$, and the bulge associated to $\mathbf r$ is equal to $\left(\psi^{(1,2)}_\mathbf r\right)_{-t} \circ \left(\psi^{(2,1)}_\mathbf r\right)_{t}$. These elementary shearing and bulging flows are also described \cite{Goldman_bulge} and also in \cite[Section 4.1]{Wienhard_Zhang}. 
 \end{remark}
 
By Lemma \ref{lem:fixflag}, for all pairs of positive integers $\mathbf k:=(k_1,k_2)$ that sum to $n$ and for all pairs of distinct points $\mathbf r:=(r_1,r_2)$ in $S^1$, $\xi_t:=\left(\psi^{\mathbf k}_{\mathbf r}\right)_t(\xi)$ is a well-defined continuous curve, obtained by deforming the two subsegments of $\xi(S^1)$ given by $r_1$ and $r_2$ using two different projective transformations. To prove that $(\psi^{\mathbf k}_{\mathbf r})_t$ is well-defined, we now verify that $\xi_t$ is indeed a well-defined Frenet curve for any $t$. The arguments are analogous to the arguments for the eruption flows. 

For $m=1$, $2$, let $\mathbf k_m:=(k_m,k_{m+1})$ and let $\mathbf r_m:=(r_m,r_{m+1})$, where arithmetic in the subscripts are done modulo $2$. For convenience, we consider the curve $d^{\mathbf k_{m+1}}_{\xi(\mathbf r_{m+1})}(-t)\cdot\xi_t$, which is projectively equivalent to $\xi_t$. Setting 
\begin{equation}\label{eqn:c computation}
c^{\mathbf k_m}_{\mathbf E_m}(t):=d^{\mathbf k_{m+1}}_{\mathbf E_{m+1}}(-t)d^{\mathbf k_{m}}_{\mathbf E_{m}}(t)=d^{\mathbf k_{m}}_{\mathbf E_{m}}(2t),
\end{equation}
the curve $d^{\mathbf k_{m+1}}_{\xi(\mathbf r_{m+1})}(-t)\cdot\xi_t:S^1\to\widetilde{\mathrm{Fre}}(V)$ has the following description:
\[d^{\mathbf k_{m+1}}_{\xi(\mathbf r_{m+1})}(-t)\cdot\xi_t(p)=\left\{
\begin{array}{ll}
\xi(p)&\text{if }r_{m+1}\leq p\leq r_m;\\
c^{\mathbf k_m}_{\xi(\mathbf r_m)}(t)\cdot \xi(p)&\text{if }r_m\leq p\leq r_{m+1}.
\end{array}\right.\]
Furthermore, the following lemma gives a useful matrix representation for the projective transformation $c^{\mathbf k_m}_{\mathbf E_m}(t)$ when we choose an appropriate basis for $V$. It follows immediately from Lemma \ref{lem:fixflag}.
\begin{lem}\label{lem:other shearing}
Let $\mathbf k:=(k_1,k_2)$ be a pair of positive integers that sum to $n$, and let $\mathbf E:=(E_1,E_2)$ be a transverse pair of flags. For $m=1$, $2$, let $\mathbf k_m:=(k_m,k_{m+1})$ and $\mathbf E_m:=(E_m,E_{m+1})$, where arithmetic in the subscripts are done modulo $2$. Let $\{e_{m,1},\dots,e_{m,n}\}$ be a basis of $V$ such that for all $l=1$, $\dots$, $n-1$, $E_m^{(l)}=\Span_{\Rbbb}\{e_{m,1},\dots,e_{m,l}\}$. Then $c^{\mathbf k_m}_{\mathbf E_m}(t)$ is represented in this basis by an upper triangular matrix where the first $i_m$ entries down the diagonal are $1$ and the last $i_{m+1}$ entries down the diagonal are $e^{-t}$.
\end{lem}

To prove that $\xi_t$ is a Frenet curve we first observe how the cross ratios and triple ratios evaluated at certain flags along $\xi_t$ change with $t$.

\begin{prop}\label{prop:shearing projective invariants}
Let $\mathbf k:=(k_1,k_2)$ be a pair of positive integers that sum to $n$, and let $\mathbf r:=(r_1,r_2)$ be a pair of distinct points in $S^1$. Let $ \left(\psi^{\mathbf k}_{\mathbf r}\right)_t$ be the $\mathbf{k}$-elementary shear flow with respect to $\mathbf{r}$, and let $\xi_t:=\left(\psi^{\mathbf k}_{\mathbf r}\right)_t(\xi)$. Then we have the following: 
\begin{enumerate}
\item Let $s_1,s_2$ be points in $S^1$ such that $\mathbf{a}:=(r_1,s_1,r_2,s_2)$ is a quadruple of cyclically ordered points in $S^1$. Then for any pair $\mathbf{l}:=(l_1,l_2)$ of positive integers that sum to $n$, we have $C^{\mathbf l}(\xi_t(\mathbf{a}))=e^{t\delta_{{\mathbf l}, {\mathbf k}}}C^{\mathbf k}(\xi(\mathbf{a}))$, where $\delta_{{\mathbf l}, {\mathbf k}} = 1$ if $\mathbf l ={ \mathbf k}$ and $0$ otherwise. 
\item For any quadruple of cyclically ordered points $\mathbf b:=(y_1,z_1,y_2,z_2)$ in $S^1$ such that $y_1,z_1,y_2,z_2$ lie in the closure of a connected component of $S^1\setminus\{r_1,r_2\}$, we have $C^{\mathbf l}(\xi_t(\mathbf{b}))=C^{\mathbf l}(\xi(\mathbf{b}))$ for any pair $\mathbf l:=(l_1,l_2)$ of positive integers that sum to $n$.
\item For any triple of cyclically ordered points $\mathbf x:=(x_1,x_2,x_3)$ in $S^1$ such that $x_1,x_2,x_3$ lie in the closure of a connected component of $S^1\setminus\{r_1,r_2\}$, we have $T^{\mathbf i}(\xi_t(\mathbf x))=T^{\mathbf i}(\xi(\mathbf x))$ for any triple $\mathbf i:=(i_1,i_2,i_3)$ of positive integers that sum to $n$.
\end{enumerate}
\end{prop}

For the proof we use a description of the cross ratio using determinants. Let $K_1, K_2\in\Pbbb(V^*)$ be hyperplanes in $V$ and let $P_1, P_2\in\Pbbb(V)$ be lines in $V$, such that $P_i$ is not contained in $K_j$ for all $i$, $j= 1$, $2$. For any basis $\{k_{i,1},\dots,k_{i,n-1}\}$ for the hyperplane $K_i$, there is a covector $\alpha_i$ with $\ker (\alpha_i) = K_i$ such that
\[\alpha_i(v_j)=\det(k_{i,1},\dots,k_{i,n-1},v_j).\] 
for any vector $v_j$ in $P_j$. We use this to express the cross ratio. 

\begin{proof}
Proof of (1). Let $\{f_1,\dots,f_n\}$ be a basis of $V$ such that $f_i$ is a non-zero vector in $\xi^{(i)}(r_1)\cap\xi^{(n-i+1)}(r_2)$. For $m=1$, $2$, let $e_m$ be a non-zero vector in $\xi^{(1)}(s_m)$, and write 
\[e_m=\sum_{k=1}^n\alpha_{m,k}f_k\]
for some real numbers $\alpha_{m,k}$. Then we can compute that
\begin{eqnarray*}
&&C^{\mathbf l}(\xi_t(\mathbf{a}))\\
&=& \frac{\det(f_1,\dots,\hat{f}_{l_1+1},\dots, f_n,d^{\mathbf k}_{\xi(\mathbf r)}(-t) \cdot e_2)}{\det(f_1,\dots,\hat{f}_{l_1+1},\dots, f_n, d^{\mathbf k}_{\xi(\mathbf r)}(t) \cdot e_1)}\cdot\frac{\det(f_1,\dots,\hat{f}_{l_1},\dots, f_n,d^{\mathbf k}_{\xi(\mathbf r)}(t) \cdot e_1)}{\det(f_1,\dots,\hat{f}_{l_1},\dots, f_n,d^{\mathbf k}_{\xi(\mathbf r)}(-t) \cdot e_2)}\\
&=& \frac{\alpha_{2,l_1+1}\det(f_1,\dots,\hat{f}_{l_1+1},\dots, f_n,d^{\mathbf k}_{\xi(\mathbf r)}(-t) \cdot f_{l_1+1})}{\alpha_{1,l_1+1}\det(f_1,\dots,\hat{f}_{l_1+1},\dots, f_n, d^{\mathbf k}_{\xi(\mathbf r)}(t) \cdot f_{l_1+1})}\cdot\\
&&\hspace{5cm}\frac{\alpha_{1,l_1}\det(f_1,\dots,\hat{f}_{l_1},\dots, f_n,d^{\mathbf k}_{\xi(\mathbf r)}(t) \cdot f_{l_1})}{\alpha_{2,l_1}\det(f_1,\dots,\hat{f}_{l_1},\dots, f_n,d^{\mathbf k}_{\xi(\mathbf r)}(-t) \cdot f_{l_1})}.
\end{eqnarray*}

If $l_1\neq k_1$, then $d^{\mathbf k}_{\xi(\mathbf r)}(t)$ scales $f_{l_1}$ and $f_{l_1+1}$ by the same amount, so
\begin{eqnarray*}
&&C^{\mathbf l}(\xi_t(\mathbf{a}))\\
&=&\frac{\alpha_{2,l_1+1}\det(f_1,\dots,\hat{f}_{l_1+1},\dots, f_n, f_{l_1+1})}{\alpha_{1,l_1+1}\det(f_1,\dots,\hat{f}_{l_1+1},\dots, f_n, f_{l_1+1})}\cdot\frac{\alpha_{1,l_1}\det(f_1,\dots,\hat{f}_{l_1},\dots, f_n,f_{l_1})}{\alpha_{2,l_1}\det(f_1,\dots,\hat{f}_{l_1},\dots, f_n,f_{l_1})}\\
&=&\frac{\det(f_1,\dots,\hat{f}_{l_1+1},\dots, f_n, e_2)}{\det(f_1,\dots,\hat{f}_{l_1+1},\dots, f_n,e_1)}\cdot\frac{\det(f_1,\dots,\hat{f}_{l_1},\dots, f_n,e_1)}{\det(f_1,\dots,\hat{f}_{l_1},\dots, f_n,e_2)}\\
&=&C^{\mathbf l}(\xi_t(\mathbf{a})).
\end{eqnarray*}

On the other hand, 
\begin{eqnarray*}
&&C^{\mathbf k}(\xi_t(\mathbf{a}))\\
&=&\frac{\alpha_{2,k_1+1}\det(f_1,\dots,\hat{f}_{k_1+1},\dots, f_n, e^{\frac{t}{2}}f_{k_1+1})}{\alpha_{1,k_1+1}\det(f_1,\dots,\hat{f}_{k_1+1},\dots, f_n, e^{-\frac{t}{2}}f_{k_1+1})}\cdot\frac{\alpha_{1,k_1}\det(f_1,\dots,\hat{f}_{k_1},\dots, f_n,f_{k_1})}{\alpha_{2,k_1}\det(f_1,\dots,\hat{f}_{k_1},\dots, f_n,f_{k_1})}\\
&=&e^t\cdot \frac{\det(f_1,\dots,\hat{f}_{k_1+1},\dots, f_n, e_2)}{\det(f_1,\dots,\hat{f}_{k_1+1},\dots, f_n,e_1)}\cdot\frac{\det(f_1,\dots,\hat{f}_{k_1},\dots, f_n,e_1)}{\det(f_1,\dots,\hat{f}_{k_1},\dots, f_n,e_2)}\\
&=&e^t\cdot C^{\mathbf k}(\xi_t(\mathbf{a})).
\end{eqnarray*}
This proves (1).

Proof of (2). We may assume without loss of generality that $r_1\leq y_1,z_1,y_2,z_2\leq r_2<r_1$. Then $\xi_t(y_m)=d^{\mathbf k}_{\xi(\mathbf r)}(t)\cdot\xi(y_m)$ and $\xi_t(z_m)=d^{\mathbf k}_{\xi(\mathbf r)}(t)\cdot\xi(z_m)$ for all $m=1$, $2$, and the projective invariance of the cross ratio immediately gives (2). The proof of (3) follows analoguously using the projective invariance of the triple ratio. 
\end{proof}

With Propositon \ref{prop:shearing projective invariants} we can now prove the following theorem using the same arguments (with obvious modifications) as in the proof of Theorem \ref{thm:eruption preserves Frenet}. We omit the proof to avoid repetition.

\begin{thm}\label{thm:shearing preserves Frenet}
Let $\mathbf k:=(k_1,k_2)$ be a pair of positive integers that sum to $n$, and let $\mathbf r:=(r_1,r_2)$ be a pair of distinct points in $S^1$. If $\xi$ is a Frenet curve, then for all $t\in\Rbbb$, the continuous curve $\xi_t=(\psi^\mathbf k_\mathbf r)_t(\xi)$ is also a Frenet curve. In particular, $\left(\psi^{\mathbf k}_{\mathbf r}\right)_t$, the $\mathbf k$-elementary shear flow with respect to $\mathbf r$, is a well-defined flow on $\widetilde{\mathrm{Fre}}(V)$.
\end{thm}

\subsection{The elementary flows descend}
The elementary flows on $\widetilde{\mathrm{Fre}}(V)$ in fact descend to well-defined flows on $\mathrm{Fre}(V)$. 

\begin{prop}
Let $\mathbf i:=(i_1,i_2,i_3)$ be a triple of positive integers that sum to $n$ and let $\mathbf x:=(x_1,x_2,x_3)$ be a cyclically ordered triple of points in $S^1$. Then the $\mathbf i$-elementary eruption flow with respect to $\mathbf{x}$, $\left(\epsilon^{\mathbf i}_{\mathbf x}\right)_t$, descends from $\widetilde{\mathrm{Fre}}(V)$ to a flow on $\mathrm{Fre}(V)$. 

Let $\mathbf k:=(k_1,k_2)$ be a pair of positive integers that sum to $n$ and $\mathbf r:=(r_1,r_2)$ a pair of distinct points in $S^1$. Then the $\mathbf k$-elementary shearing flow with respect to $\mathbf{r}$, 
 $\left(\psi^{\mathbf k}_{\mathbf r}\right)_t$, descends from $\widetilde{\mathrm{Fre}}(V)$ to a flow on $\mathrm{Fre}(V)$. 
\end{prop}

\begin{proof}
We only prove the statement for the elementary eruption flows; the proof for the elementary shearing flows is the same, with the obvious modifications. 

Let $\xi:S^1\to\Fmc(V)$ be any Frenet curve, and for $m=1$, $2$, $3$, let $\mathbf i_m:=(i_m,i_{m+1},i_{m-1})$ and $\mathbf x_m:=(x_m,x_{m+1},x_{m-1})$, where the arithmetic in the subscripts are done modulo $3$. For any projective transformation $g$ in $\PGL(V)$, observe that $gb^{\mathbf i_m}_{\xi(\mathbf x_m)}g^{-1}=b^{\mathbf i_m}_{g\cdot\xi(\mathbf x_m)}$. Hence, for any $p$ in $S^1$ and any $g$ in $\PGL(V)$, 
\begin{eqnarray*}
g\cdot\left(\epsilon^{\mathbf i}_{\mathbf x}\right)_t(\xi)(p)&=&\left\{
\begin{array}{ll}
gb^{\mathbf i_1}_{\xi(\mathbf x_1)}(t)\cdot \xi(p)&\text{if }x_2\leq p\leq x_3;\\
gb^{\mathbf i_2}_{\xi(\mathbf x_2)}(t)\cdot \xi(p)&\text{if }x_3\leq p\leq x_1;\\
gb^{\mathbf i_3}_{\xi(\mathbf x_3)}(t)\cdot \xi(p)&\text{if }x_1\leq p\leq x_2
\end{array}\right.\\
&=&\left\{
\begin{array}{ll}
b^{\mathbf i_1}_{g\cdot\xi(\mathbf x_1)}(t)g\cdot\xi(p)&\text{if }x_2\leq p\leq x_3;\\
b^{\mathbf i_2}_{g\cdot\xi(\mathbf x_2)}(t)g\cdot\xi(p)&\text{if }x_3\leq p\leq x_1;\\
b^{\mathbf i_3}_{g\cdot\xi(\mathbf x_3)}(t)g\cdot\xi(p)&\text{if }x_1\leq p\leq x_2
\end{array}\right.\\
&=&\left(\epsilon^{\mathbf i}_{\mathbf x}\right)_t(g\cdot\xi)(p).
\end{eqnarray*}
\end{proof}

We use the same notation and terminology for the flows on $\widetilde{\mathrm{Fre}}(V)$ and $\mathrm{Fre}(V)$.

\subsection{Properties of the elementary flows}
In this subsection, we establish some basic properties of elementary eruption and shearing flows.

\begin{prop}\label{prop:descend}
The elementary shearing and eruption flows on $ \mathrm{Fre}(V)$ satisfy the following properties. 
\begin{enumerate}
\item Let $\mathbf i:=(i_1,i_2,i_3)$ and $\mathbf j:=(j_1,j_2,j_3)$ be two triples of positive integers that sum to $n$, and let $\mathbf x:=(x_1,x_2,x_3)$ be any cyclically ordered triple of points in $S^1$. Then the flows $\left(\epsilon^{\mathbf i}_{\mathbf x}\right)_t$ and $\left(\epsilon^{\mathbf j}_{\mathbf x}\right)_{t'}$ on $\mathrm{Fre}(V)$ commute:
\[\left(\epsilon^{\mathbf i}_{\mathbf x}\right)_t\circ\left(\epsilon^{\mathbf j}_{\mathbf x}\right)_{t'}=\left(\epsilon^{\mathbf j}_{\mathbf x}\right)_{t'}\circ\left(\epsilon^{\mathbf i}_{\mathbf x}\right)_t\]
for all real numbers $t,t'$. 

\item Let $\mathbf i:=(i_1,i_2,i_3)$ and $\mathbf j:=(j_1,j_2,j_3)$ be two triples of positive integers that sum to $n$, and let $\mathbf x:=(x_1,x_2,x_3)$ be any cyclically ordered triple of points in $S^1$. If $\mathbf y:=(y_1,y_2,y_3)$ is a cyclically ordered triple of points in $S^1$ such that 
 $y_1,y_2,y_3$ lie in the closure of a connected component of $S^1\setminus\{x_1,x_2,x_3\}$, then the flows $\left(\epsilon^{\mathbf i}_{\mathbf x}\right)_t$ and $\left(\epsilon^{\mathbf j}_{\mathbf y}\right)_{t'}$ on $\mathrm{Fre}(V)$ commute:
 \[\left(\epsilon^{\mathbf i}_{\mathbf x}\right)_t\circ\left(\epsilon^{\mathbf j}_{\mathbf y}\right)_{t'}=\left(\epsilon^{\mathbf j}_{\mathbf y}\right)_{t'}\circ\left(\epsilon^{\mathbf i}_{\mathbf x}\right)_t\]
 for all real numbers $t,t'$.

\item For any pairs $\mathbf k:=(k_1,k_2)$ and $\mathbf l:=(l_1,l_2)$ of positive integers that sum to $n$, and pairs $\mathbf r:=(r_1,r_2)$ and $\mathbf s:=(s_1,s_2)$ of distinct points in $S^1$, such that 
$s_1,s_2$ lie in the closure of a connected component of $S^1\setminus\{r_1,r_2\}$, the flows $\left(\psi^{\mathbf k}_{\mathbf r}\right)_t$ and $\left(\psi^{\mathbf l}_{\mathbf s}\right)_{t'}$ commute: 
\[\left(\psi^{\mathbf k}_{\mathbf r}\right)_t\circ\left(\psi^{\mathbf l}_{\mathbf s}\right)_{t'}=\left(\psi^{\mathbf l}_{\mathbf s}\right)_{t'}\circ\left(\psi^{\mathbf k}_{\mathbf r}\right)_t\]
for all real numbers $t,t'$.
\item For any pair $\mathbf r:=(r_1,r_2)$ of distinct points in $S^1$ and cyclically ordered triple $\mathbf x:=(x_1,x_2,x_3)$, such that $x_1,x_2,x_3$ lie in the closure of a connected component of $S^1\setminus\{r_1,r_2\}$, the eruption flows associated to $\mathbf x$ and the shearing flows associated to $\mathbf{r}$ commute. i.e. 
\[\left(\psi^{\mathbf k}_{\mathbf r}\right)_t\circ\left(\epsilon^{\mathbf i}_{\mathbf x}\right)_{t'}=\left(\epsilon^{\mathbf i}_{\mathbf x}\right)_{t'}\circ\left(\psi^{\mathbf k}_{\mathbf r}\right)_t\]
for all pairs $\mathbf k:=(k_1,k_2)$ of positive integers that sum to $n$, all triples $\mathbf i:=(i_1,i_2,i_3)$ of positive integers that sum to $n$, and all real numbers $t,t'$.
\end{enumerate}
\end{prop}

\begin{proof}
For $m=1$, $2$, $3$, set
\[\begin{array}{ll}
\mathbf x_m:=(x_m,x_{m+1},x_{m-1}),&\mathbf y_m:=(y_m,y_{m+1},y_{m-1}),\\
\mathbf i_m:=(i_m,i_{m+1},i_{m-1}),& \mathbf j_m:=(j_m,j_{m+1},j_{m-1}),
\end{array}\] 
where arithmetic in the subscripts are done modulo $3$. Also, for $m=1$, $2$, set
\[\mathbf r_m:=(r_m,r_{m+1}),\,\,\, \mathbf s_m:=(s_m,s_{m+1}),\,\,\, \mathbf k_m:=(k_m,k_{m+1}),\,\,\,\mathbf l_m:=(l_m,l_{m+1})\] 
for $m=1$, $2$, where arithmetic in the subscripts are done modulo $2$.

Proof of (1). Set $\xi_1:=\left(\epsilon^{\mathbf i}_{\mathbf x}\right)_t\circ\left(\epsilon^{\mathbf j}_{\mathbf x}\right)_{t'}(\xi)$, $\xi_2:=\left(\epsilon^{\mathbf j}_{\mathbf x}\right)_{t'}\circ\left(\epsilon^{\mathbf i}_{\mathbf x}\right)_t(\xi)$, $\eta_1:=\left(\epsilon^{\mathbf j}_{\mathbf x}\right)_{t'}(\xi)$, and $\eta_2:=\left(\epsilon^{\mathbf i}_{\mathbf x}\right)_t(\xi)$. 
By Proposition \ref{prop:eruption projective invariants} and Proposition \ref{prop:parameterize triple}, there is a projective transformation $g$ in $\PGL(V)$ such that $g\cdot \xi_1(\mathbf x)=\xi_2(\mathbf x)$. We now show that $g\cdot\xi_1=\xi_2$.

Observe that for fixed $m=1$, $2$, $3$ and for all $x_{m+1}\leq p\leq x_{m-1}$, 
\[\xi_1(p)=b^{\mathbf i_m}_{\eta_1(\mathbf x_m)}(t)b^{\mathbf j_m}_{\xi(\mathbf x_m)}(t')\cdot\xi(p)\,\,\,\,\text{ and }\,\,\,\,\xi_2(p)=b^{\mathbf j_m}_{\eta_2(\mathbf x_m)}(t')b^{\mathbf i_m}_{\xi(\mathbf x_m)}(t)\cdot\xi(p).\]
It follows from the definitions that $b^{\mathbf i_m}_{\xi(\mathbf x_m)}(t)$, $b^{\mathbf j_m}_{\xi(\mathbf x_m)}(t')$, $b^{\mathbf i_m}_{\eta_1(\mathbf x_m)}(t)$ and $b^{\mathbf j_m}_{\eta_2(\mathbf x_m)}(t')$ fix $\xi^{(1)}(x_m)$. In particular, $\xi^{(1)}(x_m)=\xi_1^{(1)}(x_m)=\xi_2^{(1)}(x_m)$.

Moreover, the product 
\[b^{\mathbf j_m}_{\eta_2(\mathbf x_m)}(t')b^{\mathbf i_m}_{\xi(\mathbf x_m)}(t)b^{\mathbf j_m}_{\xi(\mathbf x_m)}(t')^{-1}b^{\mathbf i_m}_{\eta_1(\mathbf x_m)}(t)^{-1}\] 
maps $\xi_1(x_{m-1})$ to $\xi_2(x_{m-1})$ and $\xi_1(x_{m+1})$ to $\xi_2(x_{m+1})$. 
By Remark~\ref{rem:transitive} this completely determines the projective transformation $g$, which is 
\[g = b^{\mathbf j_m}_{\eta_2(\mathbf x_m)}(t')b^{\mathbf i_m}_{\xi(\mathbf x_m)}(t)b^{\mathbf j_m}_{\xi(\mathbf x_m)}(t')^{-1}b^{\mathbf i_m}_{\eta_1(\mathbf x_m)}(t)^{-1} .\] 
Thus, $g\cdot\xi_1(p)=\xi_2(p)$ for all $x_{m+1}\leq p\leq x_{m-1}$. Since this is true for all $m=1$, $2$, $3$, we see that $g\cdot\xi_1(p)=\xi_2(p)$ for all $p$ in $S^1$. This proves (1). 

The proofs of (2), (3) and (4) are similar, so we only give the proof of (2) to avoid repetition.

Proof of (2). We can assume without loss of generality that $x_3\leq y_1<y_2<y_3\leq x_1$. Set $\xi_1:=\left(\epsilon^{\mathbf i}_{\mathbf x}\right)_t\circ\left(\epsilon^{\mathbf j}_{\mathbf y}\right)_{t'}(\xi)$, $\xi_2:=\left(\epsilon^{\mathbf j}_{\mathbf y}\right)_{t'}\circ\left(\epsilon^{\mathbf i}_{\mathbf x}\right)_t(\xi)$, $\eta_1:=\left(\epsilon^{\mathbf j}_{\mathbf y}\right)_{t'}(\xi)$, and $\eta_2:=\left(\epsilon^{\mathbf i}_{\mathbf x}\right)_t(\xi)$. Applying Proposition \ref{prop:eruption projective invariants} and Proposition \ref{prop:Fock-Goncharov parametrization}, we deduce that there is some projective trasformation $g$ in $\PGL(V)$ such that $g\cdot\xi_1(\mathbf x)=\xi_2(\mathbf x)$ and $g\cdot\xi_1(\mathbf y)=\xi_2(\mathbf y)$. We now show that $g\cdot\xi_1=\xi_2$.

For all $x_1\leq p\leq x_2$, observe that
\[\xi_1(p)=b^{\mathbf i_3}_{\eta_1(\mathbf x_3)}(t)b^{\mathbf j_2}_{\xi(\mathbf y_2)}(t')\cdot\xi(p)\,\,\,
\text{ and } \,\,\,\xi_2(p)=b^{\mathbf j_2}_{\eta_2(\mathbf y_2)}(t')b^{\mathbf i_3}_{\xi(\mathbf x_3)}(t)\cdot\xi(p).\]
Similarly, 
\[\xi_1^{(1)}(x_3)=b^{\mathbf i_3}_{\eta_1(\mathbf x_3)}(t)b^{\mathbf j_2}_{\xi(\mathbf y_2)}(t')\cdot\xi^{(1)}(x_3)\,\,\,\text{ and }\,\,\,\xi_2^{(1)}(x_3)=b^{\mathbf j_2}_{\eta_2(\mathbf y_2)}(t')b^{\mathbf i_3}_{\xi(\mathbf x_3)}(t)\cdot\xi^{(1)}(x_3).\] 
Remark~\ref{rem:transitive} implies $g = b^{\mathbf i_3}_{\eta_1(\mathbf x_3)}(t)b^{\mathbf j_2}_{\xi(\mathbf y_2)}(t') \left(b^{\mathbf j_2}_{\eta_2(\mathbf y_2)}(t')b^{\mathbf i_3}_{\xi(\mathbf x_3)}(t)\right)^{-1}$, so $g\cdot\xi_1(p)=\xi_2(p)$ for all $x_1\leq p\leq x_2$. Repeating a similar argument for each of the intervals 
\[\{p\in S^1:x_2\leq p\leq x_3\},\,\,\,\{p\in S^1:y_1\leq p\leq y_2\},\,\,\,\{p\in S^1:y_2\leq p\leq y_3\},\] 
\[\text{and}\,\,\,\{p\in S^1:y_3\leq p\leq x_1\},\,\,\,\{p\in S^1:x_3\leq p\leq y_1\}\] 
shows that $g\cdot\xi_1(p)=\xi_2(p)$ for all $p\in S^1$.
\end{proof}

\section{Ideal triangulations and bridge systems} \label{sec:comb}
In this section we recall the definition of an ideal triangulations on $S$, and define the notion of a compatible bridge system. We explain how to combinatorially describe pairs of vertices in an ideal triangulation. This description is an important tool in Section \ref{sec:Hitchin_param}, Section \ref{sec:Hitchin}, and Section~\ref{sec:main theorem}.

\subsection{Ideal triangulations of $S$} \label{ideal triangulation}
We recall some facts about ideal triangulations on $S$. For more details, see \cite{Thurstonbook, Cassonbook, Pennerbook, Bo01}.

To describe ideal triangulations it is useful to endow $S$ with an auxiliary hyperbolic metric, such that we have a hyperbolic surface, which we denote by $X$. 
 An \emph{ideal triangulation} on $X$ is a family $\Tmc$ of finitely many disjoint, simple, unoriented geodesics on $X$, whose union is a closed set in $X$ that cuts $X$ into finitely many open sets, each of which is isometric to an ideal triangle in $\Hbbb^2$. We call the connected components of $X\setminus\bigcup_{c\in\Tmc}c$ the \emph{ideal triangles} of the ideal triangulation $\Tmc$. 

The geodesics in $\Tmc$ might be closed or not. Every non-closed geodesic $e$ in $\Tmc$ is isolated, i.e. there is an open set in $X$ containing $e$ that does not intersect any other geodesic in $\Tmc$. 
Every closed geodesic $c$ in $\Tmc$ is closed as a subset of $X$, and is not isolated; in fact for every collar neighborhood of $c$, both of the connected components of the complement of $c$ in this neighborhood intersect a non-closed geodesic in $\Tmc$. Therefore instead of closed geodesics in $\Tmc$ we talk about \emph{non-isolated edges} of $\Tmc$, and instead of non-closed geodesics we talk about \emph{isolated edges} of $\Tmc$. The advantage of this terminology is that this also makes sense when we lift the ideal triangulation of $X$ to an ideal triangulation $\widetilde{\Tmc}$ of $\widetilde {X} = \Hbbb^2$.
The ideal triangulation $\widetilde{\Tmc}$ is an infinite family of geodesics, whose complement is an infinite collection of ideal triangles, each of which is bijectively mapped to an ideal triangle of $\Tmc$ by the covering map. 

Even though we used a hyperbolic metric on $S$ to define it, an ideal triangulation is of combinatorial nature and can be described completely independent of a metric in terms of the Gromov boundary $\partial\Gamma$ of the fundamental group $\Gamma$. 
The hyperbolic metric induces a $\Gamma$-equivariant homeomorphism between $\partial\Gamma$ and the visual boundary $\partial \Hbbb^2$ of the hyperbolic plane. 
An unoriented geodesic on $\widetilde{X} =\Hbbb^2$ is determined by its two endpoints in $\partial\Gamma$. Choosing an ordering of this pair of points amounts to choosing an orientation on the corresponding geodesic. Thus we can identify the set of (unoriented) geodesics on the universal cover $\widetilde{S}$ with the set of unordered pairs of distinct points in $\partial \Gamma$. This allows to identify the ideal triangulation $\widetilde{\Tmc}$ with a subset of the space of unordered pairs of distinct points $\partial \Gamma$, and the set of ideal triangles in $\widetilde{\Tmc}$ with a subset of the spaces of pairwise distinct triples in $\partial\Gamma$. Since $\widetilde{\Tmc}$ is the lift of an ideal triangulation $\Tmc$ on $S$, these subsets are $\Gamma$-invariant. 

We denote by $\widetilde{\Qmc}$ the set of isolated edges in $\widetilde{\Tmc}$, by $\widetilde{\Pmc}$ the set of non-isolated edges in $\widetilde{\Tmc}$, and by $\widetilde{\Theta}$ the set of ideal triangles of $\widetilde{\Tmc}$. We denote by $\widetilde{\Vmc}$ the set of vertices of $\widetilde{\Tmc}$, i.e. the set of all points in $\partial\Gamma$ which arise as an endpoint of an edge of $\widetilde{\Tmc}$. All these sets are $\Gamma$-invariant, and there are natural identifications of their orbit spaces, namely of $\Qmc := \widetilde{\Qmc}/\Gamma$ with the set of isolated edges in $\Tmc$, of $\Pmc := \widetilde{\Pmc}/\Gamma$ with the set of non-isolated edges in $\Tmc$, and $\Theta := \widetilde{\Theta}/\Gamma$ with the set of triangles of $\Tmc$. Note that, by the Gauss-Bonnet theorem we have \[|\Theta|=4g-4\,\,,\,\,\, |\Qmc|=6g-6\,\,,\,\,\, 1\leq |\Pmc|\leq 3g-3\\,\,\,\text{ and }\,\,\,|\Tmc|=|\Qmc|+|\Pmc|,\] where $g$ is the genus of $S$.
When $|\Pmc|=3g-3$, $\Pmc$ is a \emph{pants decomposition} of $S$.

We collect the following useful observations: 
\begin{enumerate}
\item Any ideal triangle of $\widetilde{\Tmc}$ is bound by three isolated edges, i.e. if $\{x_1,x_2,x_3\}$ is an ideal triangle in $\widetilde{\Theta}$, then $\{x_i,x_{i+1}\}$ is an isolated edge in $\widetilde{\Tmc}$ for $i=1$, $2$, $3$. 
\item Any isolated edge bounds two triangles, i.e. if $\{r_1,r_2\}$ is an isolated edge in $\widetilde{\Tmc}$, then there are points $z_1$ and $z_2$ in $\partial\Gamma$ such that $\{r_1,r_2,z_1\}$ and $\{r_1,r_2,z_2\}$ are ideal triangles in $\widetilde{\Theta}$. 
\item Isolated edges in $\Tmc$ accumulate onto non-isolated edges, i.e. if $\{r_1,r_2\}$ is an isolated edge in $\widetilde{\Tmc}$, then there exist non-isolated edges $\{s_1,s_2\}$ and $\{y_1,y_2\}$ in $\widetilde{\Tmc}$ with $r_1=s_1$ and $r_2=y_2$. 
\item Every non-isolated edge in $\Tmc$ is a closed geodesic, i.e. if $\{r_1,r_2\}$ is a non-isolated edge in $\widetilde{\Tmc}$ , then there exists a group element $\gamma$ in $\Gamma$, whose attracting and repelling fixed point is $r_1$ and $r_2$ respectively. 
\item For every collar neighborhood $U$ of a non-isolated edge in $c$ in $\Tmc$, both connected components of the complement $U\setminus c$ intersect isolated edges of $\Tmc$. This means that if $\{r_1,r_2\}$ is a non-isolated edge in $\widetilde{\Tmc}$, then there are sequences $\{y_i\}_{i=1}^\infty,\{z_i\}_{i=1}^\infty\subset\widetilde{\Vmc}$ such that
\begin{itemize}
\item $(r_1,y_i,r_2,z_i)$ is cyclically ordered for all $i$,
\item $\{r_1,y_i\}$ is an isolated edge in $\widetilde{\Tmc}$ for all $i$ and $\lim_{i\to\infty}y_i=r_2$, or $\{r_2,y_i\}$ is an isolated edge in $\widetilde{\Tmc}$ for all $i$ and $\lim_{i\to\infty}y_i=r_1$, and
\item $\{r_1,z_i\}$ is an isolated edge in $\widetilde{\Tmc}$ for all $i$ and $\lim_{i\to\infty}z_i=r_2$, or $\{r_2,z_i\}$ is an isolated edge in $\widetilde{\Tmc}$ for all $i$ and $\lim_{i\to\infty}z_i=r_1$.
\end{itemize}
\end{enumerate}

We close this subsection with the statement that any Frenet curve is determined by its behaviour at the vertices of an ideal triangulation. The proof of this lemma is given in Appendix \ref{app:technical}.

\begin{lem}\label{lem:technical2}
Let $\Tmc$ be an ideal triangulation of $S$ and let $\widetilde{\Tmc}$ be the lift to the universal covering. 
Let $\widetilde{\Vmc} \subset S^1 = \partial \Gamma$ denote the set of vertices of $\widetilde{\Tmc}$, and let $(\xi_j)_{j = 1}^\infty$ be a family of Frenet curves in $\widetilde{\mathrm{Fre}}(V)$.
 If $\lim_{j\to\infty}\xi_j(p)=\xi_0(p)$ for all vertices $p$ in $\widetilde{\Vmc}$, then $\lim_{j\to\infty}\xi_j(p)=\xi_0(p)$ for all points $p$ in $\partial\Gamma$.
\end{lem}

\subsection{Bridge system}

 Let $\Tmc$ be an ideal triangulation on $S$. Using $\Tmc$, we now introduce the notion of a \emph{bridge system} on $S$ compatible with $\Tmc$. This is the choice of further topological data on $S$ that is needed to completely determine a marking on $S$. More precisely, if a homeomorphism from $S$ to itself preserves an ideal triangulation $\Tmc$ (viewed as a collection of $\Gamma$-orbits of pairs of points in $\partial\Gamma$), it is not necessarily isotopic to the identity; for example, one can perform a Dehn twist around a non-isolated edge in $\Tmc$. However, if the homeomorphism preserves both the ideal triangulation and a compatible bridge system, then it is isotopic to the identity. There are of course many ways to fix topological data on $S$ to remove the ambiguity of Dehn twists around the non-isolated edges in $\Tmc$; the way we choose to do so is specially suited for constructing flows on the Hitchin component.

\begin{definition}
Let $\widetilde{\Tmc}$ be an ideal triangulation of $\widetilde{S}$ and let $\{r_1,r_2\}$ be a non-isolated edge in $\widetilde{\Tmc}$.
\begin{itemize}
\item A \emph{bridge} across $\{r_1,r_2\}$ is an (unordered) pair $\{T_1,T_2\}$ of ideal triangles in $\widetilde{\Theta}$ such that for all $m=1$, $2$, one of the vertices of $T_m$ is $r_1$ or $r_2$, and the other two vertices of $T_1$ lie in a different connected component of $\partial\Gamma\setminus\{r_1,r_2\}$ from the other two vertices of $T_2$ (see Figure \ref{fig:idealtriangulation}).
\item A \emph{bridge system} compatible with $\widetilde{\Tmc}$, denoted $\widetilde{\Jmc}=\widetilde{\Jmc}_{\widetilde{\Tmc}}$, is a minimal $\Gamma$-invariant collection of bridges such that for every non-isolated edge $\{r_1,r_2\}$ in $\widetilde{\Tmc}$, there is a bridge $\{T_1,T_2\}$ across $\{r_1,r_2\}$ that lies in $\widetilde{\Jmc}$. We also say that $\Jmc=\Jmc_\Tmc:=\widetilde{\Jmc}/\Gamma$ is a \emph{bridge system} compatible with $\Tmc$.
\end{itemize}
\end{definition}

When we choose a hyperbolic metric on $S$ and work on the hyperbolic surface $X$, the bridge system $\Jmc$ can be realized as a collection of ``short" transverse geodesic segments, one for each closed geodesic in $\Pmc$. Each geodesic segment in $\Jmc$ intersects a unique closed geodesic in $\Pmc$ transversely, and each closed geodesic in $\Pmc$ intersects a unique geodesic segment in $\Jmc$ transversely. In our figures we therefore represent bridges by lines that are transverse to the non-isolated edges of the triangulation. The endpoints of the lines lie in the two triangles forming the bridge.

\begin{figure}[ht]
\centering
\includegraphics[scale=0.8]{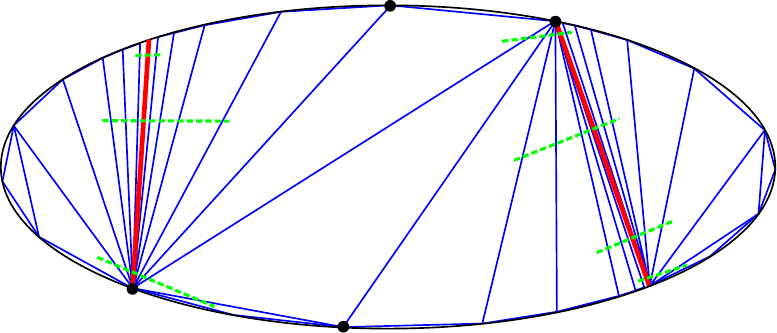}
\small
\put (-153, 130){$s_2$}
\put (-90, 124){$r_2$}
\put (-171, -4){$s_1$}
\put (-253, 10){$r_1$}
\caption{The thin blue lines represent isolated edges of $\widetilde{\Tmc}$, the thick red lines represent non-isolated edges of $\widetilde{\Tmc}$, and the green dotted lines represent bridges in $\widetilde{\Jmc}$.}\label{fig:idealtriangulation}
\end{figure}

\subsection{Combinatorial description of a pair of distinct vertices of $\widetilde{\Tmc}$}\label{sec:combinatorial description}
Let $\Tmc$ be an ideal triangulation on $S$. Next, we give a combinatorial description of an arbitrary pair $(x,y)$ of distinct vertices of $\widetilde{\Tmc}$. This is useful later (see Section \ref{sec:edge and triangle invariants}, Section~\ref{sec:warmup}, and Section~\ref{sec:semielementary}) to keep track of how the flags assigned to $x$ and $y$ by a Frenet curve change relative to each each other when we deform the Frenet curve.

We may choose a hyperbolic metric on $S$ to have at our disposal a hyperbolic surface $X$. As described in the previous subsections, all the constructions we make are of combinatorial nature and do not depend on this metric, but the choice of metric allows us to talk about geodesics as well as their intersection. 

For any pair $(x,y)$ of distinct vertices of $\widetilde{\Tmc}$, first consider the set 
\[\Emc'_{(x,y)}:=\big\{\{r_1,r_2\}\in\widetilde{\Tmc}:r_1<x<r_2<y<r_1\big\}.\]
The set $\Emc'_{(x,y)}$ is the (possibly infinite) collection of edges in $\widetilde{\Tmc}$ that transversely intersect the geodesic in the universal cover of $S$ whose endpoints are $x$ and $y$. If $\{x,y\}$ is an edge of the ideal triangulation $\widetilde{\Tmc}$, then $\Emc'_{(x,y)}$ is empty.
In general, $\Emc'_{(x,y)}$ admits a natural ordering $\leq$ that can be described as follows: orient both components of $\partial\Gamma\setminus\{x,y\}$ from $x$ to $y$, and define $\{r_1,r_2\}\leq\{r_1',r_2'\}$ if $r_1$ and $r_1'$ (hence $r_2$ and $r_2'$) lie in the same connected component of $\partial\Gamma\setminus\{x,y\}$, $r_1$ weakly precedes $r_1'$, and $r_2$ weakly precedes $r_2'$. 
With respect to this ordering, $\Emc'_{(x,y)}$ does not necessarily have a minimal (resp. maximal) element. Thus, we enlarge $\Emc'_{(x,y)}$ to a set $\Emc_{(x,y)}$ that has an ordering which restricts to the natural ordering on $\Emc'_{(x,y)}$, and always has a maximum and a minimum.

Note that $\Emc'_{(x,y)}$ does not have a minimum (resp. maximum) if and only if there is some vertex $p$ (resp. $q$) of $\widetilde{\Tmc}$ such that 
\begin{itemize}
\item the edge $\{x,p\}$ (resp. $\{y,q\}$) is a non-isolated edge of $\Tmc$, 
\item there is a sequence $\{z_i\}_{i=1}^\infty$ of vertices of $\widetilde{\Tmc}$ that converges to $x$ (resp. $y$), and 
\item the edge $\{z_i,p\}$ (resp. $\{z_i,p\}$) is contained in $\Emc'_{(x,y)}$ for all $i$.
\end{itemize} 
We therefore set
\[\Emc_{(x,y)}:=\left\{\begin{array}{ll}\Emc'_{(x,y)}&\text{if }\Emc'_{(x,y)}\text{ has a max and a min},\\
\Emc'_{(x,y)}\cup\big\{\{x,p\}\big\}&\text{if }\Emc'_{(x,y)}\text{ has a max but no min},\\
\Emc'_{(x,y)}\cup\big\{\{y,q\}\big\}&\text{if }\Emc'_{(x,y)}\text{ has a min but no max},\\
\Emc'_{(x,y)}\cup\big\{\{x,p\},\{y,q\}\big\}&\text{if }\Emc'_{(x,y)}\text{ has neither a max nor a min}.
\end{array}\right.\]

There are only finitely many non-isolated edges (possibly none) in $\widetilde{\Tmc}$ that lie in $\Emc_{(x,y)}$.
We now decompose the set $\Emc_{(x,y)}$ with respect to the these non-isolated edges. Suppose first that $\Emc_{(x,y)}$ contains a non-isolated edge. Let $e_1$, $\dots$, $e_k$ be the non-isolated edges in $\Emc_{(x,y)}$, enumerated according to the ordering on $\Emc_{(x,y)}$.
For any $s=1$, $\dots$, $k$ we set 
\[\Emc_{(x,y),s}=\Emc_s:=\{e\in\Emc_{(x,y)}:e\text{ shares a vertex with }e_s\}.\]
The set $\Emc_s$ is infinite, but has a well-defined minimum and maximum (see Figure~\ref{fig:abcd}).

If $e$ is an edge in $\Emc_{(x,y)}$ that shares a common vertex $w$ with some $e_s$, then every edge in $\Emc_{(x,y)}$ that lies between $e$ and $e_s$ also has $w$ as a vertex. Therefore if we set 
\[\Fmc_s^-:=\{e\in\Emc_{(x,y)}:e<e'\text{ for all }e'\in\Emc_s\},\]
\[\Fmc_s^+:=\{e\in\Emc_{(x,y)}:e>e'\text{ for all }e'\in\Emc_s\},\]
then $\Emc_{(x,y)}=\Fmc_s^-\cup\Emc_s\cup\Fmc_s^+$ is a disjoint union. The set $\Fmc_s^\pm$ might be empty.

We now set 
\[\Emc_{(x,y),s,s+1}=\Emc_{s,s+1}:=\left\{\begin{array}{ll}
\Fmc_1^-&\text{if }s=0;\\
\Fmc_s^+\cap\Fmc_{s+1}^-&\text{if }0<s<k;\\
\Fmc_k^+&\text{if }s=k\\
\end{array}\right.\]
(see Figure~\ref{fig:abcd1}). The set $\Emc_{s,s+1}$ is finite (possibly empty) for all $s=0$, $\dots$, $k$, and in particular has a minimum and maximum if it is non-empty. This gives us a decomposition of $\Emc_{(x,y)}$ into the disjoint union
\[\Emc_{(x,y)}=\bigcup_{s=1}^k\Emc_s\cup\bigcup_{s=0}^k\Emc_{s,s+1}\] 
(see Figure~\ref{Exy}).

In the case when $\Emc_{(x,y)}$ does not contain a non-isolated edge, we adopt the convention that $k=0$, i.e. $\Emc_{(x,y)}=\Emc_{(0,1)}$.

\begin{figure}[ht]
\centering
\includegraphics[scale=0.8]{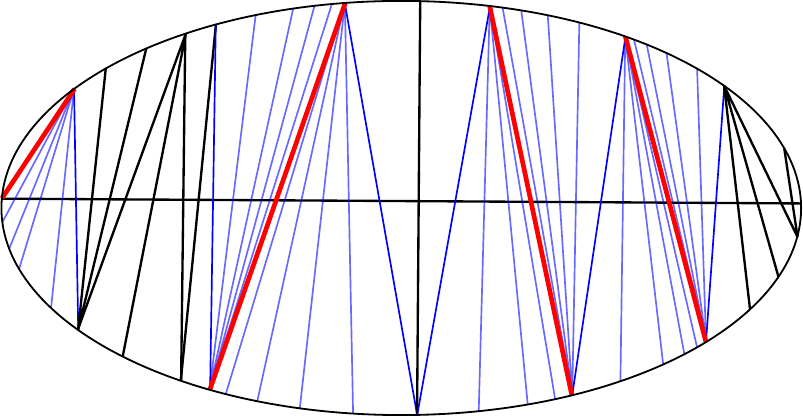}
\small
\put (-316, 82){$x$}
\put (2, 81){$y$}
\caption{The geodesics drawn above that intersect $\{x,y\}$ transversely represent the edges in $\Emc_{(x,y)}$. The thick red lines are non-isolated edges of $\widetilde{\Tmc}$, the thin blue lines geodesics are edges in $\Emc_s$ for some $s$, and the black lines are edges in $\Emc_{s,s+1}$ for some $s$. }\label{Exy}
\end{figure}

One can think of the data $\Emc_s$ as encoding the combinatorics of the geodesic with endpoints $x$ and $y$ in a ``neighborhood" of the non-isolated edge $e_s$, while $\Emc_{s,s+1}$ encodes the combinatorics as it ``moves between" $e_s$ and $e_{s+1}$.

\subsection{The ends of $\Emc_s$ and $\Emc_{s,s+1}$.}\label{sec:ends}
Recall that $(x,y)$ is a pair of distinct vertices of $\widetilde{\Tmc}$. In this section we introduce the backward and the forward end of the sets $\Emc_s=\Emc_{(x,y),s}$ and $\Emc_{s,s+1}=\Emc_{(x,y),s,s+1}$ defined in Section \ref{sec:combinatorial description}. These ends are usually ideal triangles in $\widetilde{\Theta}$, but might collapse to non-isolated edges in $\widetilde{\Tmc}$ in the cases when the end is the backward end (resp. forward end) of $\Emc_1$ (resp. $\Emc_k$) when the minimum (resp. maximum) of $\Emc_{(x,y)}$ is a non-isolated edge. The ends are defined in such a way that, provided $\Emc_{s,s+1}$ is non-empty, the forward end of $\Emc_{s,s+1}$ agrees with the backward end of $\Emc_{s+1}$, and the backward end of $\Emc_{s,s+1}$ agrees with the forward end of $\Emc_s$. If $\Emc_{s,s+1}$ is empty, then the forward end of $\Emc_s$ agrees with the backward end of $\Emc_{s+1}$. 

We use the set $\Emc_{(x,y)}$, the decomposition $\Emc_{(x,y)}=\bigcup_{s=1}^k\Emc_s\cup\bigcup_{s=0}^k\Emc_{s,s+1}$ defined in Section \ref{sec:combinatorial description}, and the forward and backward ends of $\Emc_s$ and $\Emc_{s,s+1}$to control the behaviour of a Frenet curve at $y$ when we are given its behaviour at $x$. 

Recall that $\widetilde{\Vmc}$ denotes the set of vertices of $\widetilde{\Tmc}$. For any $s=1$, $\dots$, $k$, consider $\Emc_s$ as defined in Section~\ref{sec:combinatorial description}. Let the edges $\{a_s,b_s\}$ and $\{c_s,d_s\}$ be the minimum and maximum of $\Emc_s$ respectively, such that $b_s$ and $d_s$ are vertices of $e_s$. If $s=1$, $\dots$, $k-1$, or $s=k$ and $\Emc_{k,k+1}$ is non-empty, let $d_s'$ be the vertex of $\widetilde{\Tmc}$ such that $\{c_s,d_s'\}$ is the successor of $\{c_s,d_s\}$ with respect to the natural ordering. Similarly, if $s=2$, $\dots$, $k$, or $s=1$ and $\Emc_{0,1}$ is non-empty, let $b_s'$ be the vertex of $\widetilde{\Tmc}$ in $\partial\Gamma$ such that $\{a_s,b_s'\}$ is the predecessor of $\{a_s,b_s\}$. On the other hand, if $s=k$ and $\Emc_{k,k+1}$ is empty, let $d_k':=y$ and if $s=1$ and $\Emc_{0,1}$ is empty, let $b_0':=x$ (see Figure \ref{fig:abcd}).

\begin{figure}[ht]
\centering
\includegraphics[scale=0.7]{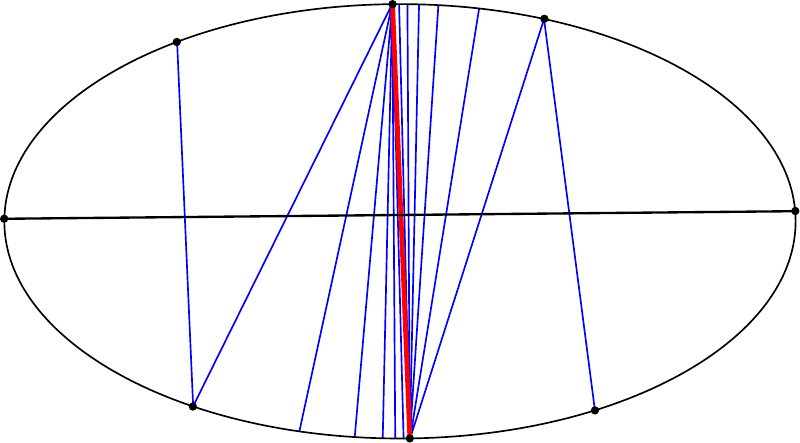}
\small
\put (-140, 151){$b_s$}
\put (-213, 139){$b_s'$}
\put (-207, 5){$a_s$}
\put (-135, -7){$d_s$}
\put (-70, 2){$d_s'$}
\put (-87, 146){$c_s$}
\put (-276, 74){$x$}
\put (1, 77){$y$}
\caption{The lines in $\Emc_s\cup\big\{\{a_s,b_s'\}\big\}\cup\big\{\{c_s,d_s'\}\big\}$, where the thick red line is the non-isolated edge $e_s$.}\label{fig:abcd}
\end{figure}

\begin{definition}\label{def:end} 
The triple $\{a_s,b_s,b_s'\}$ is the backward \emph{end} of $\Emc_s$, the triple $\{c_s,d_s,d_s'\}$ is the forward end of $\Emc_s$. 
\end{definition}

\begin{remark}\label{rem:special_ends}
If the minimum of $\Emc_{(x,y)}$ is a non-isolated edge of $\Tmc$, then $\Emc_{0,1}$ is empty and the two vertices $b_1$ and $b_1'$ of the backward end of $\Emc_1$ agree. Similarly, if the maximum $\Emc_{(x,y)}$ is a non-isolated edge of $\Tmc$, then $\Emc_{k,k+1}$ is empty and the two vertices $d_k$ and $d_k'$ of the forward end of $\Emc_k$ agree. In these two cases, the ends collapse to non-isolated edges in $\widetilde{\Tmc}$. In all other cases, the ends of $\Emc_s$ are ideal triangles in $\widetilde{\Theta}$.
\end{remark}

\begin{figure}[ht]
\centering
\includegraphics[scale=0.8]{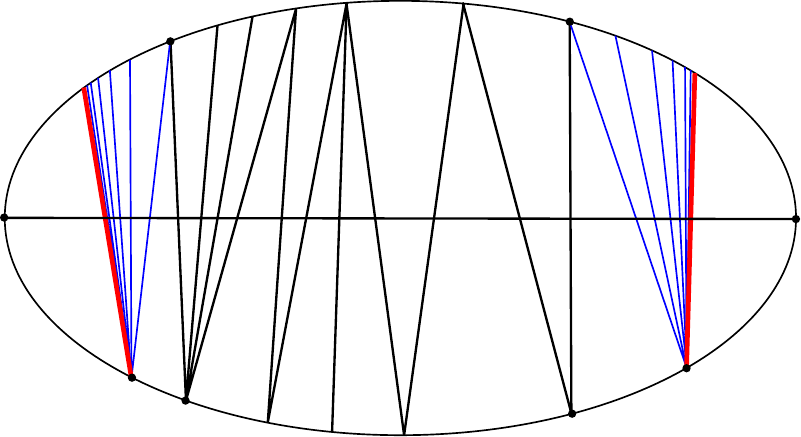}
\small
\put (-276, 18){$b'_{s,s+1}$}
\put (-266, 156){$a_{s,s+1}$}
\put (-254, 7){$b_{s,s+1}$}
\put (-44, 19){$d_{s,s+1}'$}
\put (-89, 1){$d_{s,s+1}$}
\put (-94, 164){$c_{s,s+1}$}
\put (-315, 82){$x$}
\put (1, 82){$y$}
\caption{The thick red lines are $e_s$ and $e_{s+1}$, the thin blue lines lie in $\Emc_s$ and $\Emc_{s+1}$, and the black lines lie in $\Emc_{s,s+1}$.}\label{fig:abcd1}
\end{figure}

Next, suppose that $s=0$, $\dots$, $k$ and that $\Emc_{s,s+1}$ is non-empty. 
To define the backward end of $\Emc_{s,s+1}$ for $s>0$, we consider the minimum $\{a_{s,s+1},b_{s,s+1}\}$ of $\Emc_{s,s+1}$, where $a_{s,s+1}$ is the common vertex of $\{a_{s,s+1},b_{s,s+1}\}$ and its predecessor. Then let $b'_{s,s+1}$ denote the vertex of its predecessor that is not $a_{s,s+1}$. Similarly, for the forward end of $\Emc_{s,s+1}$ for $s<k$, we consider the maximum $\{c_{s,s+1},d_{s,s+1}\}$ of $\Emc_{s,s+1}$, where $c_{s,s+1}$ is the common vertex of $\{c_{s,s+1},d_{s,s+1}\}$ and its successor. Then let $d'_{s,s+1}$ denote the vertex of its successor that is not $c_{s,s+1}$ (see Figure \ref{fig:abcd1}). For $s= 0$ we set $b'_{0,1}:=x$ and for $s=k$, we set $d'_{k,k+1}:=y$. 

\begin{definition}\label{def:end1} 
The triple $\{a_{s,s+1},b_{s,s+1},b_{s,s+1}'\}$ is called the backward \emph{end} of $\Emc_{s,s+1}$, and the triple $\{c_{s,s+1},d_{s,s+1},d_{s,s+1}'\}$ is called the forward end of $\Emc_{s,s+1}$. 
\end{definition}

Observe that for $s=1$, $\dots$, $k-1$, if $\Emc_{s,s+1}$ is non-empty, then the forward end of $\Emc_{s,s+1}$ agrees with the backward end of $\Emc_{s+1}$, and the backward end of $\Emc_{s,s+1}$ agrees with the forward end of $\Emc_s$. On the other hand, if $\Emc_{s,s+1}$ is empty, then the forward end of $\Emc_s$ agrees with the backward end of $\Emc_{s+1}$. All of these ends are ideal triangles in $\widetilde{\Theta}$.
%
\section{Parametrizing Frenet curves} \label{sec:Hitchin_param}
The goal of this section is to describe a new parametrization of $\Hit_V(S)$. This parametrization is a slight, but non-trivial modification of the Bonahon-Dreyer parametrization \cite{BD1}, which we also describe. The parametrization is with respect to a fixed ideal triangulation and a fixed compatible bridge system, and given by invariants associated to the edges and triangles of the triangulation using cross ratios and triple ratios. Along the way we show, thatthese edge and triangle invariants completely determine the projective class a Frenet curve $[\xi]$ in $\mathrm{Fre}(V)$, even if the curve it is not $\rho$-equivariant for some representation $\rho:\Gamma \to \PGL(V)$. 
%

\subsection{Edge and triangle invariants}\label{sec:edge and triangle invariants}
We fix an ideal triangulation $\Tmc$ and a compatible bridge system $\Jmc$. 
Given a Frenet curve $\xi:\partial\Gamma\to\Fmc(V)$, we associate invariants to the edges and ideal triangles of the ideal triangulation $\Tmc$. These invariants were introduced by Fock and Goncharov \cite{FockGoncharov}, and are based on the cross ratios and triple ratios described in Section~\ref{sec:projective}.
They are also used in the Bonahon-Dreyer parametrization \cite{BD1} of the Hitchin component.

\begin{figure}[ht]
\centering
\includegraphics[scale=0.6]{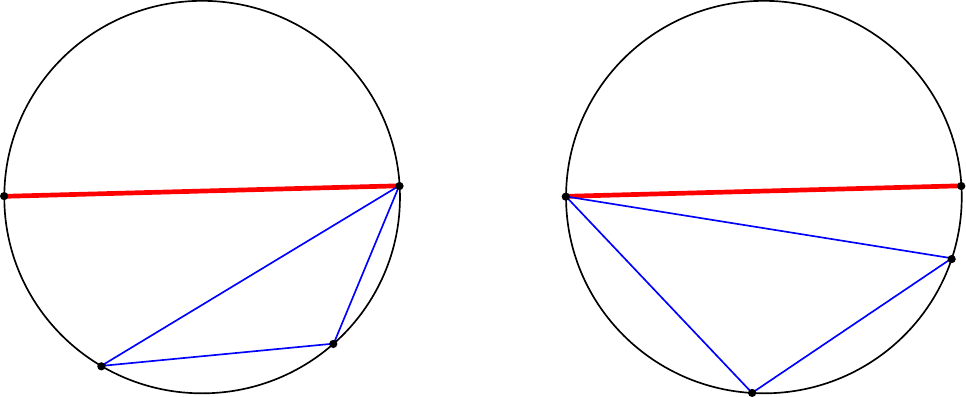}
\small
\put (-292, 58){$q_m$}
\put (-162, 60){$p_m$}
\put (-258, 4){$w_m$}
\put (-183, 10){$z_m$}
\put (-129, 58){$p_m$}
\put (1, 60){$q_m$}
\put (-70, -4){$z_m$}
\put (-3, 39){$w_m$}
\caption{The thick red line is the non-isolated edge $\{r_1,r_2\}$, and the blue triangle is $T_m$.}\label{fig:pq}
\end{figure}

\subsubsection{The edge invariants}\label{sec:edge}
Let $\{r_1,r_2\}$ be an edge of the ideal triangulation $\widetilde{\Tmc}$.
If $\{r_1,r_2\}$ is a non-isolated edge, let $J=\{T_1,T_2\}$ be a bridge in $\widetilde{\Jmc}$ across $\{r_1,r_2\}$. 
For $m=1$, $2$, we set $p_m$ (resp. $q_m$) to be the vertex of the edge $\{r_1,r_2\}$ that is (resp. is not) a vertex of $T_m$, and let $z_m,w_m$ be the other two vertices of $T_m$ such that either $(p_m,z_m,w_m,q_m)$ or $(q_m,w_m,z_m,p_m)$ is cyclically ordered (see Figure \ref{fig:pq}). By switching the roles of $T_1$ and $T_2$ if necessary, we may assume that $T_1$ (resp. $T_2$) lies to the left (resp. right) of $\mathbf r:=(r_1,r_2)$, which we view as an oriented geodesic from $r_1$ to $r_2$. This means that $(r_1,z_1,r_2,z_2)$ is a cyclically ordered quadruple of points in $\partial \Gamma$. 

If $\{r_1,r_2\}$ is an isolated edge, let $z_1$ and $z_2$ be the vertices of $\widetilde{\Tmc}$ such that $\{r_1,r_2,z_m\}$ is an ideal triangle of $\widetilde{\Tmc}$ for $m=1$, $2$, and $(r_1,z_1,r_2,z_2)$ is a cyclically ordered quadruple of points in $\partial\Gamma$ (see Figure \ref{fig:idealtriangulation}). 

\begin{definition}\label{def:edgeinvariant}
Let $\mathbf k:=(k_1,k_2)$ be a pair of positive integers that sum to $n$, let $\mathbf r:=(r_1,r_2)$ be an edge in $\widetilde{\Tmc}$, let $\mathbf z:=(z_1,z_2)$ be as defined above, and set $\mathbf a:=(r_1,z_1,r_2,z_2)$. The $\mathbf k$-\emph{edge invariant} along $\mathbf r$ is the function 
\[\sigma^{\mathbf k}_{\mathbf r}:\mathrm{Fre}(V)\to\Rbbb\] 
defined by $\sigma^{\mathbf k}_{\mathbf r}[\xi]:=\log\left(-C^{\mathbf k}(\xi(\mathbf a))\right)$.
\end{definition}

\begin{remark}
For a non-isolated edge $\{r_1,r_2\}$, the edge invariants along $\mathbf r:=(r_1,r_2)$ depend on the bridge $J$ in $\widetilde{\Jmc}$ across $\{r_1,r_2\}$. We write $\sigma^{\mathbf k}_{\mathbf r,J}[\xi]$ when we want to emphasize this dependence. When $\xi$ is $\rho$-equivariant for some representation $\rho:\Gamma\to\PGL(V)$, then the edge invariants along $\mathbf r$ do not depend on $J$ in $\widetilde{\Jmc}$. 
\end{remark}

By Theorem \ref{prop:positive} and the projective invariance of the cross ratio, the $\mathbf k$-edge invariant $\sigma^{\mathbf k}_{\mathbf r}$ is well-defined for all pairs of integers $\mathbf k:=(k_1,k_2)$ that sum to $n$ and all pairs $\mathbf r:=(r_1,r_2)$ of points in $S^1$ such that $\{r_1,r_2\}$ is an edge in $\widetilde{\Tmc}$.
From the symmetries of the cross ratio,
\[\sigma^{\mathbf k_1}_{\mathbf r_1}=\sigma^{\mathbf k_2}_{\mathbf r_2},\]
if $\mathbf k_m:=(k_m,k_{m+1})$, and $\mathbf r_m:=(r_m,r_{m+1})$ for $m=1$, $2$.

\subsubsection{The triangle invariants}\label{sec:triangle}
Recall that $\widetilde{\Theta}$ is the set of ideal triangles of $\widetilde{\Tmc}$.

\begin{definition}
Let $\mathbf i:=(i_1,i_2,i_3)$ be a triple of positive integers that sum to $n$, and let $\mathbf x:=(x_1,x_2,x_3)$ be a cyclically ordered triple of points in $\partial\Gamma$ such that $\{x_1,x_2,x_3\}$ is an ideal triangle in $\widetilde{\Theta}$. The $\mathbf i$-\emph{triangle invariant} of $\mathbf{x}$ is the function 
\[\tau^{\mathbf i}_{\mathbf x}:\mathrm{Fre}(V)\to\Rbbb, \] 
where $\tau^{\mathbf i}_{\mathbf x}[\xi]:=\log\left(T^{\mathbf i}(\xi(\mathbf x))\right)$.
\end{definition}

By Theorem \ref{prop:positive} and the projective invariance of the triple ratio, the triangle invariant $\tau^{\mathbf i}_{\mathbf x}$ is well-defined for all triples $\mathbf i:=(i_1,i_2,i_3)$ of positive integers that sum to $n$ and for all cyclically ordered triples $\mathbf x:=(x_1,x_2,x_3)$ in $\partial\Gamma$ such that $\{x_1,x_2,x_3\}$ is an ideal triangle in $\widetilde{\Tmc}$. From the symmetries of the triple ratio, we see that 
\[\tau^{\mathbf i_1}_{\mathbf x_1}=\tau^{\mathbf i_2}_{\mathbf x_2}=\tau^{\mathbf i_3}_{\mathbf x_3},\]
where $\mathbf i_m:=(i_m,i_{m+1},i_{m-1})$, and $\mathbf x_m:=(x_m,x_{m+1},x_{m-1})$ 
for $m=1$, $2$, $3$.

The edge and triangle invariants determine the $\PGL(V)$-orbit of a Frenet curve. 

\begin{prop}\label{prop:invariants determine curve}
Let $[\xi_1],[\xi_2]$ be projective classes of Frenet curves in $\mathrm{Fre}(V)$. Then $[\xi_1]=[\xi_2]$ if and only if all of the following hold:
\begin{itemize}
\item For all edges $\{r_1, r_2\}$ of $\widetilde{\Tmc}$ and all pairs of positive integers $\mathbf k:=(k_1,k_2)$ that sum to $n$, the $\mathbf k$-edge invariants of $[\xi_1]$ and $[\xi_2]$ along $\mathbf r:=(r_1,r_2)$ agree:
$\sigma^{\mathbf k}_{\mathbf r}[\xi_1]=\sigma^{\mathbf k}_{\mathbf r}[\xi_2]$.
\item For all ideal triangles $\{x_1,x_2,x_3\}$ in $\widetilde{\Theta}$ such that $\mathbf x:=(x_1,x_2,x_3)$ is cyclically ordered, and for all triples of positive integers $\mathbf i:=(i_1,i_2,i_3)$ that sum to $n$, the $\mathbf i$-triangle invariants of $\mathbf x$ at $[\xi_1]$ and $[\xi_2]$ agree: $\tau^{\mathbf i}_{\mathbf x}[\xi_1]=\tau^{\mathbf i}_{\mathbf x}[\xi_2]$.
\end{itemize}
In other words, the map 
\[\begin{array}{crcl}
\Phi:&\mathrm{Fre}(V)&\to&\Rbbb^{|\widetilde{\Pmc}|\cdot|\widetilde{\Jmc}|\cdot(n-1)}\times\Rbbb^{|\widetilde{\Qmc}|\cdot(n-1)}\times\Rbbb^{|\widetilde{\Theta}|\cdot\frac{(n-1)(n-2)}{2}}\\
 &[\xi]&\mapsto&(\Sigma_1,\Sigma_2,\Sigma_3)
\end{array}\]
is injective, where 
\begin{eqnarray*}
\Sigma_1&:=&\left(\sigma_{\mathbf r,J}^{\mathbf k}[\xi]\right)_{\mathbf k\in\Amc;\,\{r_1,r_2\}\in\widetilde{\Pmc};\,J\in\widetilde{\Jmc}\text{ across }\{r_1,r_2\}},\\
\Sigma_2&:=&\left(\sigma_{\mathbf r}^{\mathbf k}[\xi]\right)_{\mathbf k\in\Amc;\,\{r_1,r_2\}\in\widetilde{\Qmc}},\\
\Sigma_3&:=&\left(\tau^{\mathbf i}_{\mathbf x}[\xi]\right)_{\mathbf i\in\Bmc;\,\{x_1,x_2,x_3\}\in\widetilde{\Theta},\,x_1<x_2<x_3<x_1}.
\end{eqnarray*}
Here, recall that $\widetilde{\Pmc}$ and $\widetilde{\Qmc}$ are respectively the set of non-isolated and isolated edges in $\widetilde{\Tmc}$. Furthermore, $\Amc$ denotes the set of pairs of positive integers that sum to $n$, and $\Bmc$ denotes the set of triples of positive integers that sum to $n$.
\end{prop}

\begin{remark}
Let us stress that in Proposition \ref{prop:invariants determine curve} we do not assume any equivariance properties for the Frenet curves. In fact, the ideal triangulation $\widetilde{\Tmc}$ does not even need to be the lift of an ideal tirangulation of $S$. We will later apply Proposition \ref{prop:invariants determine curve} to non-equivariant Frenet curves in our construction of flows on $\Hit_V(S)$. 

In the case of equivariant Frenet curves, Proposition \ref{prop:invariants determine curve} has been proved by Bonahon and Dreyer \cite[Theorem 2]{BD1}. In this case they also determine the image, see Section~\ref{sec:Bonahon-Dreyer}. The analogue of Proposition \ref{prop:invariants determine curve} holds for equivariant positive maps, but it does not hold for general non-equivariant positive maps. 
\end{remark}

\begin{proof}[Proof of Proposition \ref{prop:invariants determine curve}]
Let $\{x_1,x_2,x_3\}$ be an ideal triangle of $\Tmc$, such that $\mathbf x:=(x_1,x_2,x_3)$ is cyclically ordered. Let $\xi_1$ and $\xi_2$ be the representatives of $[\xi_1]$ and $[\xi]_2$ respectively, normalized such that $\xi_1(x_1)=\xi_2(x_1)$, $\xi_1(x_2)=\xi_2(x_2)$ and $\xi_1^{(1)}(x_3)=\xi_2^{(1)}(x_3)$. Since 
\[T^{\mathbf i}(\xi_1(\mathbf x))=T^{\mathbf i}(\xi_2(\mathbf x))\]
for all positive triples of integers $\mathbf i:=(i_1,i_2,i_3)$ that sum to $n$, Proposition \ref{prop:Fock-Goncharov parametrization} implies that $\xi_1(x_3)=\xi_2(x_3)$. 

By Lemma \ref{lem:technical2}, it is sufficient to prove that $\xi_1(y) = \xi_2(y)$ for any vertex $y$ of $\widetilde{\Tmc}$ that is not $x_1$, $x_2$ or $x_3$. For this we use the set $\Emc_{(x_1,y)}$, which gives a combinatorial description of the pair $(x_1,y)$ as described in Section \ref{sec:combinatorial description}, and its decomposition as 
\[\Emc_{(x_1,y)}=\bigcup_{s=1}^k\Emc_{(x_1,y),s}\cup\bigcup_{s=0}^k\Emc_{(x_1,y),s,s+1}=\bigcup_{s=1}^k\Emc_s\cup\bigcup_{s=0}^k\Emc_{s,s+1}.\]

By relabelling $x_1$, $x_2$ and $x_3$ if necessary, we may assume that $\{x_2,x_3\}$ is the minimum of $\Emc_{(x_1,y)}$. Then the ideal triangle $\{x_1,x_2,x_3\}$ is the backward end of $\Emc_{0,1}$ (see Definition \ref{def:end} and Definition \ref{def:end1} for the definition of ends.) Furthermore, recall that all the other ends of $\Emc_{s,s+1}$ and $\Emc_s$ are also ideal triangles, except for possibly the forward end of $\Emc_k$ (in which case $\Emc_{k,k+1}$ is empty, see Remark \ref{rem:special_ends}). Since we have already established that $\xi_1$ and $\xi_2$ agree on this backward end of $\Emc_{0,1}$, and $y$ is a vertex of the forward end of $\Emc_{k,k+1}$ (or of $\Emc_k$ if $\Emc_{k,k+1}$ is empty), it is sufficient to show that the following hold for $\Emc=\Emc_{s,s+1}$ or $\Emc=\Emc_s$:
\begin{enumerate}
\item[($\dagger$)] If $\xi_1$ and $\xi_2$ agree on the backward end of $\Emc$, then they must agree on its forward end as well. 
\end{enumerate}

Since $\Emc_{s,s+1}$ is finite, Proposition \ref{prop:Fock-Goncharov parametrization} immediately implies ($\dagger$) when $\Emc=\Emc_{s,s+1}$.

To prove ($\dagger$) when $\Emc=\Emc_s$, let $\{b_s, d_s\}=e_s$ be the unique non-isolated edge in $\Emc_s$, let $\{ a_s,b_s,b'_s\}$ the backward end, and let $\{c_s,d_s,d'_s\}$ the forward end (see Figure~\ref{fig:abcd}). Observe that 
\[\Emc_s\cup\big\{\{a_s,b_s'\}\big\}\cup\big\{\{c_s,d_s'\}\big\} = \Emc_{s,-}\cup\Emc_{s,+}, \] 
where 
$\Emc_{s,-}:=\{e\in\Emc_s:e\leq e_s\}\cup\big\{\{a_s,b_s'\}\big\}$ and $\Emc_{s,+}:=\{e\in\Emc_s:e\geq e_s\}\cup\big\{\{c_s,d_s'\}\big\}$. (Observe that in the case when the maximum of $\Emc_s$ is a non-isolated edge, then $\Emc_{s,+}=\{e_s\}$. This can only happen when $s=k$ and $\Emc_{s,s+1}$ is empty, see Remark \ref{rem:special_ends}.) We denote by $\Vmc_{s,\pm}$ the vertices of $\Emc_{s,\pm}$. Proposition \ref{prop:Fock-Goncharov parametrization} and the continuity of $\xi_1$ and $\xi_2$ imply that there is a projective transformation $g_{\pm}$ in $\PGL(V)$ such that $g_{\pm}\cdot \xi_1(q)=\xi_2(q)$ for any vertex $q$ in $\Vmc_{s,\pm}$. 

Since $\xi_1$ and $\xi_2$ agree on the backward end of $\Emc_s$, which is contained in $\Emc_{s,-}$, Remark \ref{rem:transitive} implies that $g_-=\id$. Thus, $\xi_1(b_s)=\xi_2(b_s)$ and $\xi_1(d_s)=\xi_2(d_s)$. This finishes the proof when the maximum of $\Emc_s$ is a non-isolated edge. In the case when the maximum of $\Emc_s$ is an isolated edge, this allows us to deduce that $g_+$ fixes both $\xi_1(b_s)$ and $\xi_1(d_s)$.
 Let $\mathbf e:=(b_s,d_s)$ and let $J$ be any bridge across $e_s$. By assumption, for all pairs of positive integers $\mathbf k:=(k_1,k_2)$ that sum to $n$, $\sigma^{\mathbf k}_{\mathbf e,J}[\xi_1]=\sigma^{\mathbf k}_{\mathbf e,J}[\xi_2]$. Thus, there is some vertex $p$ of $\Emc_s$ that is not $b_s$ or $d_s$, such that $\xi_1^{(1)}(p)=\xi_2^{(1)}(p)$. This implies that $g_+$ fixes $\xi_1(b_s)$, $\xi_1(d_s)$ and $\xi_1^{(1)}(p)$, so Remark \ref{rem:transitive} implies that $g_+=\id$. Therefore $\xi_1$ and $\xi_2$ agree on the forward end of $\Emc_s$. This finishes the proof. 
\end{proof}

\subsection{The symplectic closed edge invariants}\label{sec:new invariants}
In this section we replace the edge invariants associated to the non-isolated edges in $\widetilde{\Tmc}$ by new invariants, which we call the \emph{symplectic closed edge invariants}. The symplectic closed edge invariants behave more naturally under the shearing flows along these non-isolated edges (see Lemma~\ref{lem:new invariant 3}). In the companion paper \cite{SunZhang}, it is shown that the symplectic closed edge invariant can be used to give an easy description of the Goldman symplectic structure on the Hitchin component $\Hit_V(S)$. Note that the symplectic closed edge invariants resemble, but are not the same as the edge invariants in the Bonahon-Dreyer parametrization in \cite{BD2}.

Recall that to define the edge invariants, we used that the cross ratios of a configuration of four cyclically oriented points along a Frenet curve is always negative. The first step to define the symplectic closed edge invariant is to control the sign of the cross ratio of particular configurations of points, that do not necessarily lie along a Frenet curve. 

For this, let $\{r_1,r_2\}$ be a non-isolated edge of $\widetilde{\Tmc}$ and let $J=\{T_1,T_2\}$ be the bridge across $\{r_1,r_2\}$, such that $T_1$ (resp. $T_2)$ lies to the left (resp. right) of the oriented edge $\mathbf r:=(r_1,r_2)$.
As before, for $m=1$, $2$, let $p_m$ (resp. $q_m$) be the vertex of the edge $\{r_1,r_2\}$ that is (resp. is not) a vertex of $T_m$, and let $z_m,w_m$ be the other two vertices of $T_m$ such that either $(p_m,z_m,w_m,q_m)$ or $(q_m,w_m,z_m,p_m)$ is cyclically ordered (see Figure \ref{fig:pq}). 

\begin{prop}\label{prop:new invariant well-defined}
Let $\xi:\partial\Gamma\to\Fmc(V)$ be any Frenet curve (without any equivariance assumptions), and let $u_m$ be the unique unipotent projective transformation in $\PGL(V)$ that fixes the flag $\xi(p_m)$ and sends the flag $\xi(z_m)$ to $\xi(q_m)$. Then, for all pairs $\mathbf k:=(k_1,k_2)$ of positive integers that sum to $n$, the cross ratio satisfies 
\[C^{\mathbf k}(\xi(r_1),u_2\cdot\xi(w_2),\xi(r_2),u_1\cdot\xi(w_1))<0.\]
\end{prop}

\begin{proof}
By Theorem \ref{prop:positive}, the Frenet curve $\xi$ is a positive map. For $m=1$, $2$, choose a basis $B_m=\{f_{m,1},\dots,f_{m,n}\}$ such that $[f_{m,i}]=\xi^{(i)}(p_m)\cap\xi^{(n-i+1)}(z_m)$ for $i=1$, $\dots$, $n-1$. By Definition~\ref{def:positive} we have unipotent projective transformations $v_{1,m}$ and $v_{2,m}$ in $\PGL(V)$ that are totally positive with respect to $B_m$, such that up to transforming everything by a projective transformation, $\xi(w_m)=v_{1,m}\cdot\xi(z_m)$ and $\xi(q_m)=v_{1,m}v_{2,m}\cdot\xi(z_m)$. In particular, $v_{1,m}v_{2,m}$ fixes $\xi(p_m)$ and sends $\xi(z_m)$ to $\xi(q_m)$, so this implies that $u_m=v_{1,m}v_{2,m}$. Hence, $u_m\cdot\xi(w_m)=v_{1,m}v_{2,m}v_{1,m}\cdot\xi(z_m)$, which means that any cyclic permutation of 
\[\left(\xi(p_m), \xi(z_m), \xi(q_m), u_m\cdot\xi(w_m)\right)\,\,\,\text{ or }\,\,\,\left(\xi(q_m), \xi(z_m), \xi(p_m), u_m\cdot\xi(w_m)\right)\] 
is a positive quadruple of flags. Hence, by Proposition \ref{prop:Fock-Goncharov parametrization}, we see that 
\begin{eqnarray*}
&&C^{\mathbf k}(\xi(r_1),u_2\cdot\xi(w_2),\xi(r_2),\xi(z_2)),\\
&&C^{\mathbf k}(\xi(r_1),u_1\cdot\xi(w_1),\xi(r_2),\xi(z_1)),\text{ and }\\
&&C^{\mathbf k}(\xi(r_1),\xi(z_2),\xi(r_2),\xi(z_1))
\end{eqnarray*}
are negative. This implies that
\begin{eqnarray*}
&&C^{\mathbf k}(\xi(r_1),u_2\cdot\xi(w_2),\xi(r_2),u_1\cdot\xi(w_1))\\
&=&\frac{C^{\mathbf k}(\xi(r_1),\xi(z_2),\xi(r_2),\xi(z_1))\cdot C^{\mathbf k}(\xi(r_1),u_2\cdot\xi(w_2),\xi(r_2),\xi(z_2))}{C^{\mathbf k}(\xi(r_1),u_1\cdot\xi(w_1),\xi(r_2),\xi(z_1))}<0.
\end{eqnarray*}
\end{proof}

\begin{remark}
If we defined $u_m$ with $w_m$ in place of $z_m$, then $u_2^{-1}u_1$ is the Bonahon-Dreyer slithering map \cite[Section 5]{BD2} associated to the pair $(T_1,T_2)$. 
\end{remark}

With this, we make the following definition.

\begin{definition}\label{def:symplectic closed-edge invariant}
For any non-isolated edge $\{r_1,r_2\}$ in $\widetilde{\Tmc}$, any bridge $J$ in $\widetilde{\Jmc}$ across $\{r_1,r_2\}$, and any pair of positive integers $\mathbf k:=(k_1,k_2)$ that sum to $n$, the $\mathbf k$-\emph{symplectic closed edge invariant} along $\mathbf r:=(r_1,r_2)$ is the function $\alpha^{\mathbf k}_{\mathbf r,J}:\mathrm{Fre}(V)\to\Rbbb$ defined by
\[\alpha^{\mathbf k}_{\mathbf r,J}[\xi]:=\log\left(-C^{\mathbf k}(\xi(r_1),u_2\cdot\xi(w_2),\xi(r_2),u_1\cdot\xi(w_1))\right),\]
where $u_m$ is the unique unipotent projective transformation in $\PGL(V)$ that fixes $\xi(p_m)$ and sends $\xi(z_m)$ to $\xi(q_m)$. 
\end{definition}

The projective invariance of the cross ratio implies that $\alpha^{\mathbf k}_{\mathbf r,J}[\xi]$ does not depend on the choice of representative $\xi$ in $[\xi]$, so Proposition \ref{prop:new invariant well-defined} implies that $\alpha^{\mathbf k}_{\mathbf r,J}[\xi]$ is indeed well-defined. Note that while the edge invariant $\sigma^\mathbf k_{\mathbf r,J}$ depends only on $\xi(r_1)$, $\xi(r_2)$, $\xi^{(1)}(z_1)$, and $\xi^{(1)}(z_2)$, the symplectic closed edge invariant $\alpha^\mathbf k_{\mathbf r,J}$ depends on $\xi(r_1)$, $\xi(r_2)$, $\xi(z_1)$, $\xi(z_2)$, $\xi^{(1)}(w_1)$, and $\xi^{(1)}(w_2)$.

Setting $\mathbf k_1:=(k_1,k_2)$, $\mathbf k_2:=(k_2,k_1)$, $\mathbf r_1:=(r_1,r_2)$ and $\mathbf r_2:=(r_2,r_1)$, we have $\alpha^{\mathbf k_1}_{\mathbf r_1,J}[\xi]=\alpha^{\mathbf k_2}_{\mathbf r_2,J}[\xi]$. 

We now establish the key property of the symplectic closed edge invariant, which the usual edge invariant does not satisfy. 

\begin{lem}\label{lem:new invariant 1}
Let $\xi$ and $\xi'$ are two Frenet curves normalized such that $\xi(r_1)=\xi'(r_1)$, $\xi(r_2)=\xi'(r_2)$, and $\xi^{(1)}(w_2)=\xi'^{(1)}(w_2)$. Suppose further that $\xi(z_2)=\xi'(z_2)$. Then the symplectic closed edge invariants $\alpha^{\mathbf k}_{\mathbf r,J}[\xi]$ and $\alpha^{\mathbf k}_{\mathbf r,J}[\xi']$ agree for all pairs of positive integers $\mathbf k:=(k_1,k_2)$ that sum to $n$, if and only if there exists a unipotent projective transformation $v$ in $\PGL(V)$ such that $v\cdot\xi(p_1)=\xi(p_1)$, $v\cdot\xi(z_1)=\xi'(z_1)$ and $v\cdot\xi^{(1)}(w_1)=\xi'^{(1)}(w_1)$.
\end{lem}

Lemma \ref{lem:new invariant 1} is an immediate consequence of the following lemma.

\begin{lem}\label{lem:new invariant 0}
Let $H_1,H_2$ be a transverse pair of flags in $\Fmc(V)$. For $m=1$, $2$, $3$, 
\begin{itemize}
\item let $F_m$ be a flag in $\Fmc(V)$ such that $(H_1,F_m,H_2)$ is a generic triple of flags in $\Fmc(V)^{[3]}$, \item let $P_m\in\Pbbb(V)$ be a line in $V$ such that $P_m+H_1^{(k_1)}+F_m^{(k_2-1)}=V$ for all pairs $\mathbf k:=(k_1,k_2)$ of integers that sum to $n$, with $k_1=0$, $\dots$, $n-1$, and 
\item let $u_m$ be the unipotent projective transformation in $\PGL(V)$ that fixes $H_1$ and satisfies $u_m\cdot F_m=H_2$.
\end{itemize} 
Then the equality $C^{\mathbf k}\big(H_1,u_1\cdot P_1,H_2,u_2\cdot P_2\big)=C^{\mathbf k}\big(H_1,u_1\cdot P_1,H_2,u_3\cdot P_3\big)$ holds for all $\mathbf k$ if and only if there exists a unipotent projective transformation $u$ in $\PGL(V)$ that fixes $H_1$ and satisfies $u\cdot F_2=F_3$ and $u\cdot P_2=P_3$. 
\end{lem}

\begin{proof}
First, observe that for $m=1$, $2$, $3$ and any $\mathbf k$, we have
\[V=u_m\cdot V=u_m\cdot P_m+u_m\cdot H_1^{(k_1)}+u_m\cdot F_m^{(k_2-1)}=u_m\cdot P_m+H_1^{(k_1)}+H_2^{(k_2-1)}.\]
Thus, both $C^{\mathbf k}\big(H_1,u_1\cdot P_1,H_2,u_2\cdot P_2\big)$ and $C^{\mathbf k}\big(H_1,u_1\cdot P_1,H_2,u_3\cdot P_3\big)$ are well-defined for all $\mathbf k$.

Now, suppose that the unipotent projective transformation $u$ exists. Then $u_2=u_3u$ because both $u_3u$ and $u_2$ are unipotent projective transformations that fix $H_1$ and send $F_2$ to $H_2$. In particular, we have $u_2\cdot P_2=u_3u\cdot P_2=u_3\cdot P_3$. This implies that $C^{\mathbf k}\big(H_1,u_1\cdot P_1,H_2,u_2\cdot P_2\big)=C^{\mathbf k}\big(H_1,u_1\cdot P_1,H_2,u_3\cdot P_3\big)$ for all pairs of positive integers $\mathbf k$ that sum to $n$.

Conversely, suppose that $C^{\mathbf k}\big(H_1,u_1\cdot P_1,H_2,u_2\cdot P_2\big)=C^{\mathbf k}\big(H_1,u_1\cdot P_1,H_2,u_3\cdot P_3\big)$ for all pairs of positive integers $\mathbf k$ that sum to $n$. A straightforward computation verifies that $u_2\cdot P_2=u_3\cdot P_3$. Setting $u:=u_3^{-1}u_2$, it is clear that $u\cdot H_1=H_1$, $u\cdot F_2=F_3$ and $u\cdot P_2=P_3$. Since $u_2$ and $u_3$ are both unipotent and fix $H_1$, this implies that $u$ is also unipotent.
\end{proof}

Next, we determine how the $\mathbf k$-symplectic closed edge invariant along $\mathbf r$ changes under the $\mathbf k$-elementary shearing flow along $\mathbf r$.

\begin{lem}\label{lem:new invariant 3}
Let $\xi:\partial\Gamma\to\Fmc(V)$ be a Frenet curve, let $\mathbf k:=(k_1,k_2)$ be a pair of positive integers that sum to $n$, and let $\left(\psi^{\mathbf k}_{\mathbf r}\right)_t$ be the $\mathbf k$-elementary shearing flow with respect to $\mathbf r$. We set $\xi_t:=\left(\psi^{\mathbf k}_{\mathbf r}\right)_t(\xi)$. Then 
\[\alpha^{\mathbf k}_{\mathbf r,J}[\xi_t]=\alpha^{\mathbf k}_{\mathbf r,J}[\xi]+t.\]
\end{lem}

\begin{proof}
For $m=1$, $2$, let $u_m$ be the unipotent group element in $\PGL(V)$ that fixes $\xi(p_m)$ and sends $\xi(z_m)$ to $\xi(q_m)$. Similarly, let $u_m(t)$ be the unipotent group element that fixes $\xi_t(p_m)$ and sends $\xi_t(z_m)$ to $\xi_t(q_m)$. Let $d(t):=d^{\mathbf k}_{\xi(\mathbf r)}(t)$, where $d^{\mathbf k}_{\xi(\mathbf r)}(t)$ is the projective transformation used to define the $\mathbf k$-elementary shearing flow along $\mathbf r$ (see Section \ref{sec:elementaryshearing}).
 By definition, 
\[d(t)\cdot\xi(z_1)=\xi_t(z_1),\,\,\, d(t)\cdot\xi(w_1)=\xi_t(w_1),\]
and $d(t)$ fixes both $\xi(r_1)=\xi_t(r_1)$ and $\xi(r_2)=\xi_t(r_2)$. 
Therefore the product $u_1(t)d(t)u_1^{-1}$ fixes $\xi(p_1)$ and 
\begin{eqnarray*}
u_1(t)d(t)u_1^{-1}\cdot\xi(q_1)&=&u_1(t)d(t)\cdot\xi(z_1)\\
&=&u_1(t)\cdot\xi_t(z_1)\\
&=&\xi_t(q_1)\\
&=&\xi(q_1).
\end{eqnarray*}

This implies that in the basis $\{f_1,\dots,f_n\}$ of $V$ such that, for all $i=1$, $\dots$, $n-1$, the vector $f_i$ lies in $\xi^{(i)}(p_1)\cap\xi^{(n-i+1)}(q_1)$, both $u_1(t)d(t)u_1^{-1}$ and $d(t)$ are represented by diagonal matrices and both $u_1(t)$ and $u_1$ are represented by upper-triangular unipotent matrices. Hence, $u_1(t)d(t)u_1^{-1}=d(t)$, so we can conclude that
\begin{eqnarray*}
u_1(t)\cdot\xi_t(w_1)&=&u_1(t)d(t)\cdot\xi(w_1)\\
&=&u_1(t)d(t)u_1^{-1}\cdot(u_1\cdot \xi(w_1))\\
&=&d(t)\cdot(u_1\cdot \xi(w_1)).
\end{eqnarray*}
Similarly, we also have that $u_2(t)\cdot\xi_t(w_2)=d(t)^{-1}\cdot(u_2\cdot\xi( w_2))$. This allows us to conclude that
\begin{eqnarray*}
\alpha^{\mathbf k}_{\mathbf r,J}[\xi_t]&=&\log\Big(-C^{\mathbf k}\big(\xi_t(r_1),u_2(t)\cdot\xi_t(w_2),\xi_t(r_2),u_1(t)\cdot\xi_t(w_1)\big)\Big)\\
&=&\log\Big(-C^{\mathbf k}\big(\xi(r_1),d(t)^{-1}\cdot(u_2\cdot\xi(w_2)),\xi(r_2),d(t)\cdot(u_1\cdot\xi(w_1))\big)\Big)\\
&=&\log\Big(-e^t\cdot C^{\mathbf k}\big(\xi(r_1),u_2\cdot\xi(w_2),\xi(r_2),u_1\cdot\xi(w_1)\big)\Big)\\
&=&\alpha^{\mathbf k}_{\mathbf r,J}[\xi]+t.
\end{eqnarray*}
where the second last equality follows from the same computation as the one performed in the proof of Proposition \ref{prop:shearing projective invariants} (1).
\end{proof}

\subsection{Parametrizing the Hitchin component}\label{sec:Bonahon-Dreyer}
In this section we review the Bonahon-Dreyer parametrization of the Hitchin component $\Hit_V(S)\subset\mathrm{Fre}(V)$, viewed as the locus of $\PGL(V)$-orbits of Frenet curves that are $\rho$-equivariant for some representation $\rho:\Gamma\to\PGL(V)$. We then give a slight reparametrization using the symplectic closed edge invariants.

We first observe, that for a Frenet curve $\xi:\partial\Gamma\to\Fmc(V)$ that is $\rho$-equivariant, the edge invariants along non-isolated edges do not depend on the chosen bridge. 
More precisely, let $\{r_1, r_2\}$ be a non-isolated edge of $\widetilde{\Tmc}$, and let $J_1,J_2$ be two bridges in $\widetilde{\Jmc}$ across $\{r_1,r_2\}$. (Recall that $\Jmc$ is the chosen bridge system that is compatible to the ideal triagulation $\Tmc$.) Then for any pair $\mathbf k:=(k_1,k_2)$ of positive integers that sum to $n$ the $\mathbf k$-edge invariants along $\mathbf r:=(r_1,r_2)$ agree: $\sigma^{\mathbf k}_{\mathbf r,J_1}=\sigma^{\mathbf k}_{\mathbf r,J_2}$. So, we can safely drop the dependence on the bridge from the notation.

Note that the edge invariants and triangle invariants are $\Gamma$-invariant: for any $\gamma\in\Gamma$, we have $\sigma_{\mathbf r}^{\mathbf k}=\sigma_{\gamma\cdot \mathbf r}^{\mathbf k}$ and $\tau^{\mathbf i}_{\mathbf x}=\tau^{\mathbf i}_{\gamma\cdot \mathbf x}$. Thus, the map given in Proposition~\ref{prop:invariants determine curve},
\[\Phi: \mathrm{Fre}(V)\to \Rbbb^{|\widetilde{\Pmc}|\cdot|\widetilde{\Jmc}|\cdot(n-1)}\times\Rbbb^{|\widetilde{\Qmc}|\cdot(n-1)}\times\Rbbb^{|\widetilde{\Theta}|\cdot\frac{(n-1)(n-2)}{2}},\]
where $\widetilde{\Qmc}$ and $\widetilde{\Pmc}$ are respectively the set of isolated and non-isolated edges in $\widetilde{\Tmc}$, restricts to an injective map 
\begin{equation}\label{eqn:para}
\Phi|_{\Hit_V(S)}:\Hit_V(S)\to\Rbbb^{|\Tmc|\cdot(n-1)}\times\Rbbb^{|\Theta|\cdot\frac{(n-1)(n-2)}{2}}.
\end{equation}

Bonahon and Dreyer \cite[Section 4]{BD1} explicitly characterize the image of the map, by a family of equalities, called the \emph{closed leaf equalities}, and a family of inequalities, called \emph{closed leaf inequalities}, which hold on the image. Each such equality and inequality is associated to a non-isolated edge in $\Tmc$.

To specify the closed leaf equalities and inequalities, recall that Labourie \cite[Theorem 1.5]{Labourie2006} showed that for any Hitchin representation $\rho:\Gamma \to \PGL(V)$ the following holds:
for any $\gamma\in\Gamma\setminus\{\id\}$, $\rho(\gamma)$ has a (necessarily unique) lift to $\SL(V)$ that is diagonalizable over $\Rbbb$ with positive, pairwise distinct eigenvalues, which we denote by 
\[\lambda_1(\rho(\gamma))>\dots>\lambda_n(\rho(\gamma)).\] 

To obtain the \emph{closed leaf equalities}, Bonahon and Dreyer observed that the eigenvalue data of $\rho(\gamma_m)$ can be expressed in terms of the triangle and edge invariants. In order to write this down, 
we introduce a notation to label the triangles to the left and the right of the (oriented) geodesic $\mathbf r:=(r_1, r_2)$, see the following Figure. (We will use this notation also in Section~\ref{sec:main theorem}, where we recall it, and advise the reader to then have a look back at Figure \ref{fig:closededge}.) 

\begin{figure}[ht]
\centering
\includegraphics[scale=0.8]{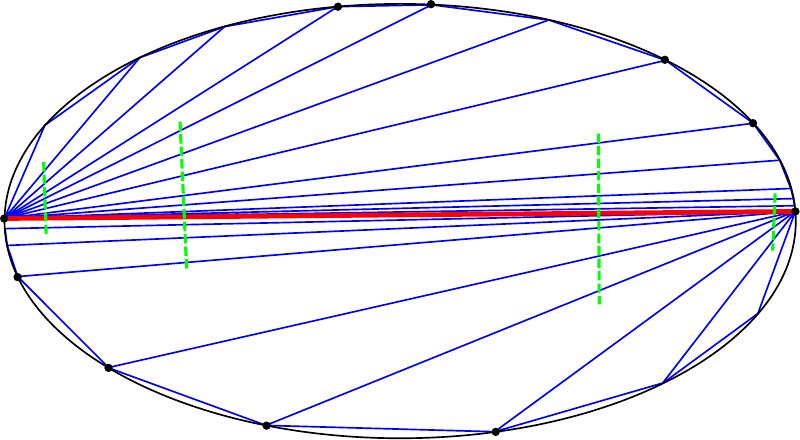}
\small
\put (-339, 85){$p_1=q_2$}
\put (-0, 86){$q_1=p_2$}
\put (-185, 154){$T_1$}
\put (-165, 147){$T_{1,2}$}
\put (-105, 124){$T_{1,H_1+1}=\gamma_1\cdot T_1$}
\put (-260, 49){$T_{2,1}=T_2$}
\put (-150, 14){$\gamma_2^{-1}\cdot T_2$}
\put (-205, 172){$z_1=z_{1,0}$}
\put (-150, 172){$w_1=z_{1,1}$}
\put (-51, 149){$\gamma_1\cdot z_1=z_{1,H_1}$}
\put (-17, 124){$\gamma_1\cdot w_1=z_{1,H_1+1}$}
\put (-220, -4){$\gamma_2^{-1}\cdot w_2$}
\put (-125, -6){$\gamma_2^{-1}\cdot z_2$}
\put (-340, 62){$w_2=z_{2,1}$}
\put (-300, 22){$z_2=z_{2,0}$}
\caption{The thick red line is $\{r_1,r_2\}$ and the thin blue lines are isolated edges in $\widetilde{\Qmc}$, and the green dotted lines are bridges in $\widetilde{\Jmc}$, and the second bridge from the left is $\{T_1,T_2\}$.}\label{fig:closededge}
\end{figure}

\begin{notation}\label{not:closed edge}
Let $J=\{T_1,T_2\}$ be a bridge across the non-isolated edge $\{r_1,r_2\}$, and let $T_1$ and $T_2$ lie to the left and right of $(r_1,r_2)$ respectively. For $m=1$, $2$, let $p_m$ (resp. $q_m$) be the vertex of the edge $\{r_1,r_2\}$ that is (resp. is not) a vertex of $T_m$. Then let $\gamma_m$ be the primitive group element in $\Gamma$ with $p_m$, $q_m$ as its repelling and attracting fixed points respectively. Also, for all integers $h$, let $T_{m,h}$ denote the ideal triangle in $\widetilde{\Theta}$ defined by the following properties (see Figure \ref{fig:closededge} when $p_1=q_2$):
\begin{itemize}
\item $T_{m,1}=T_m$,
\item $T_{m,h}$ has $p_m$ as a vertex for all integers $h$,
\item $T_{m,h}$ and $T_{m,h+1}$ share a common edge, denoted $e_{m,h}$, for all integers $h$,
\item there is a positive integer $H_m$ such that $T_{m,kH_m+h}=\gamma_m^k\cdot T_{m,h}$ for all integers $h,k$.
\end{itemize}
Let $z_{m,h}$ denote the vertex of $e_{m,h}$ that is not $p_m$, set $\mathbf p_{m,h}:=(p_m,z_{m,h})$, and set
\[\mathbf t_{m,h}:=\left\{\begin{array}{ll}
(p_m,z_{m,h-1},z_{m,h})&\text{if }p_m=r_m;\\
(p_m,z_{m,h},z_{m,h-1})&\text{if }p_m=r_{3-m}.
\end{array}
\right.\] 
\end{notation}

\begin{remark}
The triple $\mathbf t_{m,h}$ is cyclically ordered.
\end{remark}

\begin{definition}\label{def:closed edge subset}
Let $J=\{T_1,T_2\}$ be a bridge in $\widetilde{\Jmc}$. For $m=1$, $2$ and any integer $h$, let $T_{m,h}$ be the ideal triangle and let $H_m$ be the integer as defined in Notation \ref{not:closed edge}. The subset $\widetilde{\Theta}(J,T_m):=\{T_{m,1},T_{m,2},\dots,T_{m,H_m}\}$ of $\widetilde{\Theta}$ is a \emph{closed edge subset}.
\end{definition}

For any $i=1$, $\dots$, $n-1$ and $m=1$, $2$, set
\begin{equation}\label{eqn:length}
\ell^i_\rho(\gamma_m):=\log\frac{\lambda_i(\rho(\gamma_m))}{\lambda_{i+1}(\rho(\gamma_m))}.
\end{equation}
Bonahon and Dreyer computed a formula for $\ell^i_\rho(\gamma_m)$ in terms of the triangle invariants of the ideal triangles in the closed edge subset $\widetilde{\Theta}(J,T_m)$, and the edge invariants along the edges of the ideal triangles in $\widetilde{\Theta}(J,T_m)$. Explicitly,
\begin{equation}\label{eqn:length formula}
\ell^{n-i}_\rho(\gamma_m)=\left\{\begin{array}{ll}
\displaystyle\sum_{h=1}^{H_m}\left(\sigma_{\mathbf p_{m,h}}^{(i,n-i)}[\xi]+\sum_{j+k=n-i}\tau_{\mathbf t_{m,h}}^{(i,j,k)}[\xi]\right)&\text{if }p_m=r_{m};\\
\displaystyle-\sum_{h=1}^{H_m}\left(\sigma_{\mathbf p_{m,h}}^{(i,n-i)}[\xi]+\sum_{j+k=n-i}\tau_{\mathbf{t}_{m,h}}^{(i,j,k)}[\xi]\right)&\text{if }p_m=r_{3-m}.
\end{array}.\right.
\end{equation}

As an immediate consequence, the edge invariants and triangle invariants have to satisfy the following inequalities, called the \emph{closed leaf inequalities} associated to the non-isolated edge $\{r_1,r_2\}$ in $\widetilde{\Tmc}$:
\begin{itemize}
\item If $p_m=r_m$, then for all $i=1$, $\dots$, $n-1$,
\[\sum_{h=1}^{H_m}\left(\sigma_{\mathbf p_{m,h}}^{(i,n-i)}[\xi]+\sum_{j+k=n-i}\tau_{\mathbf t_{m,h}}^{(i,j,k)}[\xi]\right)>0.\]
\item If $p_m=r_{3-m}$, then for all $i=1$, $\dots$, $n-1$,
\[\sum_{h=1}^{H_m}\left(\sigma_{\mathbf p_{m,h}}^{(i,n-i)}[\xi]+\sum_{j+k=n-i}\tau_{\mathbf{t}_{m,h}}^{(i,j,k)}[\xi]\right)<0.\]
\end{itemize}

There are $n-1$ such inequalities for each non-isolated edge in $\Tmc$.

Also, since $\gamma_1=\gamma_2$ if $p_1=p_2$ and $\gamma_1=\gamma_2^{-1}$ if $p_1\neq p_2$, the edges invariants and triangle invariants have to satisfy the following equalities, called the \emph{closed leaf equalities} associated to the non-isolated edge $\{r_1,r_2\}$ in $\widetilde{\Pmc}$:

\begin{enumerate}
\item If $p_1\neq p_2$, then for all $i=1$, $\dots$, $n-1$,
\[\sum_{h=1}^{H_1}\left(\sigma_{\mathbf p_{1,h}}^{(i,n-i)}[\xi]+\sum_{j+k=n-i}\tau_{\mathbf{t}_{1,h}}^{(i,j,k)}[\xi]\right)=\sum_{h=1}^{H_2}\left(\sigma_{\mathbf p_{2,h}}^{(n-i,i)}[\xi]+\sum_{j+k=i}\tau_{\mathbf{t}_{2,h}}^{(n-i,j,k)}[\xi]\right).\]
\item If $p_1=p_2$, then for all $i=1$, $\dots$, $n-1$,
\[\sum_{h=1}^{H_1}\left(\sigma_{\mathbf p_{1,h}}^{(i,n-i)}[\xi]+\sum_{j+k=n-i}\tau_{\mathbf{t}_{1,h}}^{(i,j,k)}[\xi]\right)=-\sum_{h=1}^{H_2}\left(\sigma_{\mathbf p_{2,h}}^{(i,n-i)}[\xi]+\sum_{j+k=n-i}\tau_{\mathbf t_{2,h}}^{(i,j,k)}[\xi]\right).\]
\end{enumerate}

There are $n-1$ such identities for each non-isolated edge in $\Tmc$. These are sums of invariants associated to $\widetilde{\Theta}(J,T_1)$ versus those associated to $\widetilde{\Theta}(J,T_2)$. Observe also that the closed leaf equalities and inequalities are associated to the non-isolated edge $\{r_1,r_2\}$. They do not depend on the choice of bridge across $\{r_1,r_2\}$ because of the $\rho$-equivariance of $\xi$. 

\begin{remark}
In Bonahon and Dreyer's \cite{BD1} description of the closed leaf equalities and inequalities, they chose orientations on the edges of $\Tmc$. However, that choice is only for notational convenience, and is not necessary to specify the closed leaf equalities and inequalities.
\end{remark}

\begin{notation}\label{not:polytope notation}
Let $W_{\Tmc}$ be the vector subspace of $\Rbbb^{|\Tmc|\cdot(n-1)}\times\Rbbb^{|\Theta|\cdot\frac{(n-1)(n-2)}{2}}$ cut out by the closed leaf equalities, and let $C_{\Tmc}$ be the convex polytope in $W_{\Tmc}$ cut out by the closed leaf inequalities.
\end{notation}

The following theorem of Bonahon and Dreyer states that the closed leaf equalities and inequalities are the only relations between the edge and triangle invariants. 

\begin{thm}\cite[Theorem 17]{BD1}\label{thm:Bonahon-Dreyer}
The edge and triangle invariants give a real analytic diffeomorphism from $\Hit_V(S)$ to $C_{\Tmc}$. 
\end{thm}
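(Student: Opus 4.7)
The plan is to establish the theorem in four steps: the image lies in $P_\Tmc$, injectivity, surjectivity, and bianalyticity. Containment of the image is already essentially worked out in the excerpt. For every closed edge $\{x_1,x_2\}\in\widetilde{\Pmc}$ with associated group elements $\gamma_1,\gamma_2$, the $\rho$-equivariance of $\xi$ forces $\gamma_1=\gamma_2^{\pm 1}$, and formula (\ref{eqn:length formula}) shows that both sides of each closed leaf equality compute (up to sign) $\log|\lambda_i(\rho(\gamma_m))/\lambda_{i+1}(\rho(\gamma_m))|$, yielding both the equalities and, from Labourie's result that $\rho(\gamma_m)$ is purely loxodromic with eigenvalues of distinct absolute values, the strict inequalities. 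Injectivity follows at once from Proposition \ref{prop:invariants determine curve}, which says the invariants determine the Frenet curve up to $\PGL(V)$, combined with Theorem \ref{thm:Labourie,Guichard}, which says the Frenet curve determines the conjugacy class of $\rho$.

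For surjectivity, I would start from a point of $P_\Tmc$ and reconstruct a $\rho$-equivariant Frenet curve. Fix an initial ideal triangle $\{x_0,y_0,z_0\}\in\widetilde{\Theta}$ and a normalized triple of flags at its vertices realizing the prescribed triple ratios (Proposition \ref{prop:parameterize triple}). Since the dual graph of $\widetilde{\Tmc}$ is a tree, I can propagate outward triangle by triangle: each time we cross an edge of $\widetilde{\Tmc}$, Proposition \ref{prop:Fock-Goncharov parameterization} tells us there is a unique flag at the new vertex producing the prescribed edge invariant along that edge and the prescribed triangle invariants at the new triangle. This defines a map $\xi\colon \Vmc\to\Fmc(V)$ on the vertex set of $\widetilde{\Tmc}$, and by construction it realizes every prescribed edge and triangle invariant.

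Next I would construct the representation. For each $\gamma\in\Gamma$, the tuple of triangles adjacent to a chosen base triangle maps under $\gamma$ to another such tuple, and the invariants are $\Gamma$-invariant by construction; Remark \ref{rem:transitive} then gives a unique $\rho(\gamma)\in\PSL(V)$ with $\rho(\gamma)\cdot\xi|_\Vmc=\xi\circ\gamma|_\Vmc$, and uniqueness forces this to be a homomorphism. The only nontrivial consistency check is for primitive $\gamma$ whose axis is a closed edge $\{x_1,x_2\}\in\widetilde{\Pmc}$: the two ``walks around'' $\{x_1,x_2\}$ on its two sides must yield the same element of $\PSL(V)$, and this is exactly what the closed leaf equalities guarantee via formula (\ref{eqn:length formula}). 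The closed leaf inequalities additionally force $\rho(\gamma)$ to have simple real spectrum with pairwise distinct absolute values, so Bonahon-Dreyer's length formula is compatible with diagonalizability.

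Finally I would extend $\xi$ to a Frenet curve on all of $\partial\Gamma$ and check bianalyticity. The vertex set $\Vmc$ is dense in $\partial\Gamma$, and for a point $x\in\partial\Gamma\setminus\Vmc$ I would define $\xi(x)$ as the limit along any sequence of vertices converging to $x$; existence and well-definedness of this limit use positivity of all invariants (Theorem \ref{prop:positive} and Proposition \ref{prop:Fock-Goncharov}), together with Lemma \ref{lem:technical2} to guarantee that $\xi$ so obtained is Frenet. Real analyticity of $\Phi_{\Hit_V(S)}$ is immediate because the invariants are logs of polynomial expressions in matrix coefficients of $\rho$; real analyticity of the inverse follows because the whole reconstruction above is given by explicit rational-exponential-logarithmic formulas in the coordinates. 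The main obstacle will be the last extension step: one must control accumulation of vertices at the endpoints of closed leaves, and it is precisely the strict closed leaf inequalities that provide the hyperbolic contraction on these axes needed for the limit flags to exist and be mutually transverse in the required Frenet fashion.
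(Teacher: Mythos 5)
This theorem is stated in the paper as a cited result (\cite[Theorem~17]{BD1}); the paper contains no proof of its own, so there is nothing internal to compare your argument against. That said, your sketch follows the rough contours of the Bonahon--Dreyer strategy (containment via the closed leaf relations, injectivity via Proposition~\ref{prop:invariants determine curve} and Theorem~\ref{thm:Labourie,Guichard}, surjectivity by reconstruction, analyticity).

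There is, however, a genuine imprecision in the surjectivity step that needs addressing. You write that the dual graph of $\widetilde{\Tmc}$ is a tree and that you can ``propagate outward triangle by triangle,'' producing a new flag each time you cross an edge. But a \emph{closed} edge $\{x_1,x_2\}\in\widetilde{\Pmc}$ is, by definition, not shared by two adjacent triangles: no $z\in\partial\Gamma$ has both $\{x_1,z\}$ and $\{x_2,z\}$ in $\widetilde{\Tmc}$. Instead, triangles accumulate on the closed edge from both sides (as in Figure~\ref{fig:closededge}), so removing the closed edges disconnects the shared-edge dual graph into components, and the closed edge invariant $\sigma^{i,n-i}_{x_1,x_2,J}$ does not constrain an adjacent flag in a single step. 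It relates flags on opposite sides of $\{x_1,x_2\}$ via the bridge $J\in\widetilde{\Jmc}$, and it does so only after the endpoint flags $\xi(x_1),\xi(x_2)$ themselves have been produced as limits of the already-constructed flags along one side; this limit exists precisely because the closed leaf inequalities force hyperbolic contraction. You identify this issue, but you locate it at the final ``extend $\xi$ from $\Vmc$ to $\partial\Gamma$'' step, whereas it is already needed to define $\xi$ on $\Vmc$ itself and to make the cross-bridge propagation and the resulting holonomy consistency check (for $\rho(\gamma_m)$) meaningful. A correct account of surjectivity must therefore interleave the following three ingredients from the start: (i) finite propagation within each closed-leaf-bounded component via Proposition~\ref{prop:Fock-Goncharov parameterization}; (ii) the limit construction at closed-leaf endpoints using the strict closed leaf inequalities and formula~(\ref{eqn:length formula}); and (iii) the bridge-based transfer across closed edges, with the closed leaf equalities ensuring the two ``walks around'' $\gamma_m$ match. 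With that repair, the outline is sound.
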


In particular, this classifies the image of the map $\Phi|_{\Hit_V(S)}$ defined by (\ref{eqn:para}), and thus gives a real analytic parametrization of $\Hit_V(S)$ by $C_\Tmc$. 

\begin{remark}
Note that $C_{\Tmc}$ only depends on the ideal triangulation $\Tmc$, and not on a choice of associated bridge system $\Jmc$. However the explicit identification $\Phi|_{\Hit_V(S)}$ of $\Hit_V(S)$ with $C_\Tmc$ depends on the choice of $\Jmc$.
\end{remark}

The goal of the rest of this section is to prove that an analogous theorem holds when we replace the edge invariants along non-isolated edges by the symplectic closed edge invariants described in Section~\ref{sec:new invariants}.

\begin{notation}\label{not:index_set}
For each edge in $\Tmc$, we choose one representative $\{r_1,r_2\}$ in $\widetilde{\Tmc}$ and an orientation $(r_1,r_2)$ on $\{r_1,r_2\}$. Then let $\widehat{\Tmc}$ denote the collection of all such choices, one for each edge in $\Tmc$. Similarly, define $\widehat{\Pmc}$ and $\widehat{\Qmc}$ using only the non-isolated edges and isolated edges respectively. Also, for each ideal triangle in $\Theta$, we choose one representative $\{x_1,x_2,x_3\}$ in $\widetilde{\Theta}$, and an order $\mathbf x:=(x_1,x_2,x_3)$ on $\{x_1,x_2,x_3\}$ that is cyclically ordered. Then let $\widehat{\Theta}$ denote the collection of all such choices, one for each ideal triangle in $\Theta$. 
\end{notation}

\begin{thm}\label{thm:reparametrization}
The map 
\[\begin{array}{crcl}
\Omega=\Omega_{\Tmc,\Jmc}:&\Hit_V(S)&\to&\Rbbb^{|\Qmc|\cdot(n-1)}\times\Rbbb^{|\Pmc|\cdot(n-1)}\times\Rbbb^{|\Theta|\cdot\frac{(n-1)(n-2)}{2}}\\
 &[\xi]&\mapsto&(\Sigma_1,\Sigma_2,\Sigma_3), 
\end{array}\]
with 
\begin{eqnarray*}
\Sigma_1&:=&\left(\sigma_{\mathbf r}^{\mathbf k}[\xi]\right)_{\mathbf k\in\Amc;\,\mathbf r\in\widehat{\Qmc}},\\
\Sigma_2&:=&\left(\alpha_{\mathbf r}^{\mathbf k}[\xi]\right)_{\mathbf k\in\Amc;\,\mathbf r\in\widehat{\Pmc}},\\
\Sigma_3&:=&\left(\tau^{\mathbf i}_{\mathbf x}[\xi]\right)_{\mathbf i\in\Bmc;\,\mathbf x\in\widehat{\Theta}}.
\end{eqnarray*}
is a real-analytic diffeomorphism onto the convex polytope $C_\Tmc$ defined in Notation \ref{not:polytope notation}. Here, $\Amc$ is the set of pairs of positive integers that sum to $n$, and $\Bmc$ is the set of triples of positive integers that sum to $n$.
\end{thm}

\begin{remark}
From the symmetry properties of the triangle invariants, the edge invariants along isolated edges, and the symplectic closed edge invariant, we see that $\Omega$ does not depend on the choice of $\widehat{\Qmc}$, $\widehat{\Pmc}$ and $\widehat{\Theta}$.
\end{remark}

Using Lemma \ref{lem:new invariant 3}, we now prove the equivalence between the Bonahon-Dreyer edge invariants along non-isolated edges and the symplectic closed edge invariants. 

\begin{lem}\label{lem:new invariant 2}
Given any projective classes of Frenet curves $[\xi_0]$ in $\Hit_V(S)\subset \mathrm{Fre}(V)$, let $\Hit_V(S)^{[\xi_0]}$ denote the set of $[\xi]$ in $\Hit_V(S)$ such that all the triangle invariants and edge invariants along ISOLATED edges of $\widetilde{\Tmc}$ agree for $[\xi_0]$ and $[\xi]$. Then the map $A:\Hit_V(S)^{[\xi_0]}\to\Rbbb^{|\Pmc|\cdot(n-1)}$ given by
\[A:[\xi]\mapsto\big(\alpha^{\mathbf k}_{\mathbf r}[\xi]\big)_{\mathbf r\in\widehat{\Pmc}; \mathbf k\in\Amc}, \] 
where $\Amc$ is the set of pairs of positive integers that sum to $n$, is a real-analytic diffeomorphism. 
\end{lem}

Theorem \ref{thm:reparametrization} then follows immediately from Lemma \ref{lem:new invariant 2} and Theorem \ref{thm:Bonahon-Dreyer}.

\begin{proof}[Proof of Lemma \ref{lem:new invariant 2}]
It follows immediately from Theorem \ref{thm:Bonahon-Dreyer} that if $\xi_t$ is a one-parameter family of Frenet curves corresponding to a real-analytic family of Hitchin representations $\rho_t:\Gamma\to\PGL(V)$, then for any point $p$ in $\partial\Gamma$, $\xi_t(p)$ is a real-analytic path in $\Fmc(V)$. This implies the real-analyticity of $A$. 

First, we show that $A$ is surjective. For any pair $\mathbf r:=(r_1,r_2)$ in $\widehat{\Pmc}$ and any pair of positive integers $\mathbf k:=(k_1,k_2)$ that sum to $n$, recall that $\sigma_\mathbf r^\mathbf k$ denotes the Bonahon-Dreyer edge invariants along the non-isolated edge $\{r_1,r_2\}$. Since all the triangle invariants and edge invariants along isolated edges are constant on $\Hit_V(S)^{[\xi_0]}$, it follows from Lemma \ref{lem:new invariant 3} and (1) of Proposition \ref{prop:shearing projective invariants} that $\alpha_\mathbf r^\mathbf k-\sigma_\mathbf r^\mathbf k$ is also a constant on $\Hit_V(S)^{[\xi_0]}$. Since Theorem \ref{thm:Bonahon-Dreyer} implies that the map $\Hit_V(S)^{[\xi_0]}\to\Rbbb^{|\Pmc|\cdot(n-1)}$ given by $[\xi]\mapsto\big(\sigma^{\mathbf k}_{\mathbf r}[\xi]\big)_{\mathbf r\in\widehat{\Pmc}; \mathbf k\in\Amc}$ is surjective, the same is true for $A$.

Next, we show that $A$ is injective. Let $[\xi]\neq[\xi']$ be a pair of projective classes of Frenet curves in $\Hit_V(S)^{[\xi_0]}$. By Theorem \ref{thm:Bonahon-Dreyer}, there is a non-isolated edge $\{r_1,r_2\}$ in $\widetilde{\Tmc}$ and a pair of positive integers $\mathbf k:=(k_1,k_2)$ such that $\sigma_{\mathbf r}^{\mathbf k}[\xi]\neq\sigma_{\mathbf r}^{\mathbf k}[\xi']$, where $\mathbf r:=(r_1,r_2)$. For $m=1$, $2$, let $p_m$ (resp. $q_m$) be the vertex of the edge $\{r_1,r_2\}$ that is (resp. is not) a vertex of $T_m$, and let $z_m,w_m$ be the other two vertices of $T_m$ such that either $(p_m,z_m,w_m,q_m)$ or $(q_m,w_m,z_m,p_m)$ is cyclically ordered (see Figure~\ref{fig:pq}). Since all the triangle invariants and edge invariants along isolated edges for $[\xi]$ and $[\xi']$ agree, Proposition \ref{prop:Fock-Goncharov parametrization} and the continuity of Frenet curves imply that there are representatives $\xi$ and $\xi'$ of $[\xi]$ and $[\xi']$ respectively, and a projective transformation $g$ in $\PGL(V)$ such that 
\begin{itemize}
\item $\xi(r_1)=\xi'(r_1)$, $\xi(r_2)=\xi'(r_2)$, $\xi(z_2)=\xi'(z_2)$, $\xi(w_2)=\xi'(w_2)$, 
\item $g\cdot \xi(r_1)=\xi(r_1)$, $g\cdot \xi(r_2)=\xi(r_2)$, $g\cdot \xi(w_1)=\xi'(w_1)$ and $g\cdot\xi(z_1)=\xi'(z_1)$. 
\end{itemize}

We argue by contradiction and assume that $\alpha^{\mathbf k}_{\mathbf r}[\xi]=\alpha^{\mathbf k}_{\mathbf r}[\xi']$ for all pairs of positive integers $\mathbf k:=(k_1,k_2)$ that sum to $n$. By Lemma \ref{lem:new invariant 1}, we see that there is a unipotent projective transformation $u$ in $\PGL(V)$ such that $u\cdot \xi(p_1)=\xi(p_1)$, $u\cdot \xi(z_1)=\xi'(z_1)$ and $u\cdot\xi^{(1)}(w_1)=\xi'^{(1)}(w_1)$, which implies that $u=g$. In particular, $u$ fixes $\xi(r_1)$ and $\xi(r_2)$, so $q=u=\id$. However, this implies that $\sigma_{\mathbf r}^{\mathbf k}[\xi]=\sigma_{\mathbf r}^{\mathbf k}[\xi']$ for all $\mathbf k$, which is a contradiction and finishes the proof. 
\end{proof}

\section{Deforming Hitchin representations}\label{sec:Hitchin}
\label{sec:deform_Hitchin}
In this section we use the elementary eruption and shearing flows defined in Section \ref{sec:elementary} to construct flows on $\Hit_V(S)\subset\mathrm{Fre}(V)$. The idea is straight forward: we choose an ideal triangulation $\Tmc$ on $S$, and perform elementary eruption flows and shearing flows on the ideal triangles and edges of $\widetilde{\Tmc}$ in a ``$\Gamma$-invariant'' way to obtain a flow in $\Hit_V(S)$. However, the $\Gamma$-orbits of edges in $\widetilde{\Tmc}$ and triangles in $\widetilde{\Theta}$ are infinite, so defining these flows in a $\Gamma$-invariant way involves taking the product of infinitely many elementary eruption and shearing flows. In general, such products do not converge. The main goal of this section is to describe how to resolve these convergence issues.

\subsection{The tangent space to $\Hit_V(S)$}\label{sec:tangent}
Let us fix an ideal triangulation $\Tmc$ on $S$ and a compatible bridge system $\Jmc$. Recall that $\Theta$ denotes the set of triangles of $\Tmc$. We denote by 
\[W=W_{\Tmc}\subset \Rbbb^{|\Tmc|\cdot(n-1)}\times\Rbbb^{|\Theta|\cdot\frac{(n-1)(n-2)}{2}}\]
the linear subspace cut out by the closed leaf equalities (see Notation \ref{not:polytope notation}).
 The image of the diffeomorphism \[\Omega: \Hit_V(S)\to \Rbbb^{|\Tmc|\cdot(n-1)}\times\Rbbb^{|\Theta|\cdot\frac{(n-1)(n-2)}{2}}\] given by Theorem \ref{thm:reparametrization} is an open subset of $W$. Thus, for every $[\xi]$ in $\Hit_V(S)$ we identify $W$ with $T_{[\xi]}\Hit_V(S)$.
 
Our goal now is to assign to every tangent vector $\mu$ in $W=T_{[\xi]}\Hit_V(S)$ a flow $\phi^\mu_t$ on $\Hit_V(S)$ that is defined by performing the elementary shearing and eruption flows on $\mathrm{Fre}(V)$ in a ``$\Gamma$-invariant" way. More precisely, following Notation \ref{not:index_set}, we choose an oriented representative for each edge in $\Tmc=\widetilde{\Tmc}/\Gamma$, and denote the set of such choices by $\widehat{\Tmc}$. Similarly, we choose a cyclically ordered representative for each ideal triangle in $\Theta=\widetilde{\Theta}/\Gamma$, and denote the set of all such choices by $\widehat{\Theta}$. With this notation, a vector $\mu$ in $W$ is a tuple of real numbers 
\[\mu=\left(\left(\mu^{\mathbf k}_{\mathbf r}\right)_{\mathbf k\in\Amc;\mathbf r\in \widehat{\Tmc}},\left(\mu^{\mathbf i}_{\mathbf x}\right)_{\mathbf i\in\Bmc;\mathbf x\in\widehat{\Theta}}\right), \]
where $\Amc$ is the set of pairs of positive integers that sum to $n$, $\Bmc$ is the set of triples of positive integers that sum to $n$.
The map $\phi^\mu_t$ is an infinite product of the $\mathbf{k}$-elementary shearing flows along all $\Gamma$-translates of $\mathbf{r}$ in $\widehat{\Tmc}$, rescaled by $\mu^{\mathbf k}_{\mathbf r}$, and the $\mathbf{i}$-elementary eruption flows with respect to all $\Gamma$-translates of $\mathbf{x}$ in $\widehat{\Theta}$, rescaled by $\mu^{\mathbf i}_{\mathbf x}$. 
The key point is to show that this infinite product gives rise to a well-defined flow on $\Hit_V(S)$. 
In order to do this we decompose the product and write it as limits of special finite products of elementary flows. We call these finite products semi-elementary flows. Their definition relies on an exhaustion of the ideal triangulation $\widetilde{\Tmc}$ and the set of triangles $\widetilde{\Theta}$ by finite subsets. 

\subsection{Semi-elementary flows}\label{sec:closed_edge_subset}

We introduce the class of \emph{semi-elementary} flows on $\mathrm{Fre}(V)$, which are products of elementary flows on a larger and larger collection of ideal triangles and edges. 

We first specify this larger and larger collection. For this, recall that for any bridge $J=\{T_1,T_2\}$ in the bridge system $\widetilde{\Jmc}$ and any $m=1$, $2$, we defined the closed edge subset $\widetilde{\Theta}(J,T_m)\subset\widetilde{\Theta}$ (see Definition \ref{def:closed edge subset}). Denote the collection of all closed edge subsets of $\widetilde{\Theta}$ by $\Dmc$. To specify our larger and larger collection, we define a graph whose vertex set is $\Dmc$, and construct an exhaustion of this graph.

\begin{definition}\label{def:adjacent}\
\begin{itemize}
\item A pair of distinct ideal triangles $T,T'$ in $\widetilde{\Theta}$ are \emph{adjacent} if $T$ and $T'$ share a common edge in $\widetilde{\Tmc}$, or if $\{T,T'\}$ is a bridge in $\widetilde{\Jmc}$.
\item A pair of distinct closed edge subsets $\widetilde{\Theta}(J_1,T_1),\widetilde{\Theta}(J_2,T_2)$ in $\Dmc$ are \emph{adjacent} if there exist triangles $T$ in $\widetilde{\Theta}(J_1,T_1)$ and $T'$ in $\widetilde{\Theta}(J_2,T_2)$ such that $T$ and $T'$ are adjacent (see Figure \ref{fig:adjacent}).
\item A pair of distinct closed edge subsets $\widetilde{\Theta}(J_1,T_1)$ and $\widetilde{\Theta}(J_2,T_2)$ in $\Dmc$ are \emph{opposite} if $J_1=J_2=\{T_1,T_2\}$ (see Figure \ref{fig:opposite}).
\end{itemize}
\end{definition} 

\begin{figure}[ht]
\centering
\includegraphics[scale=0.3]{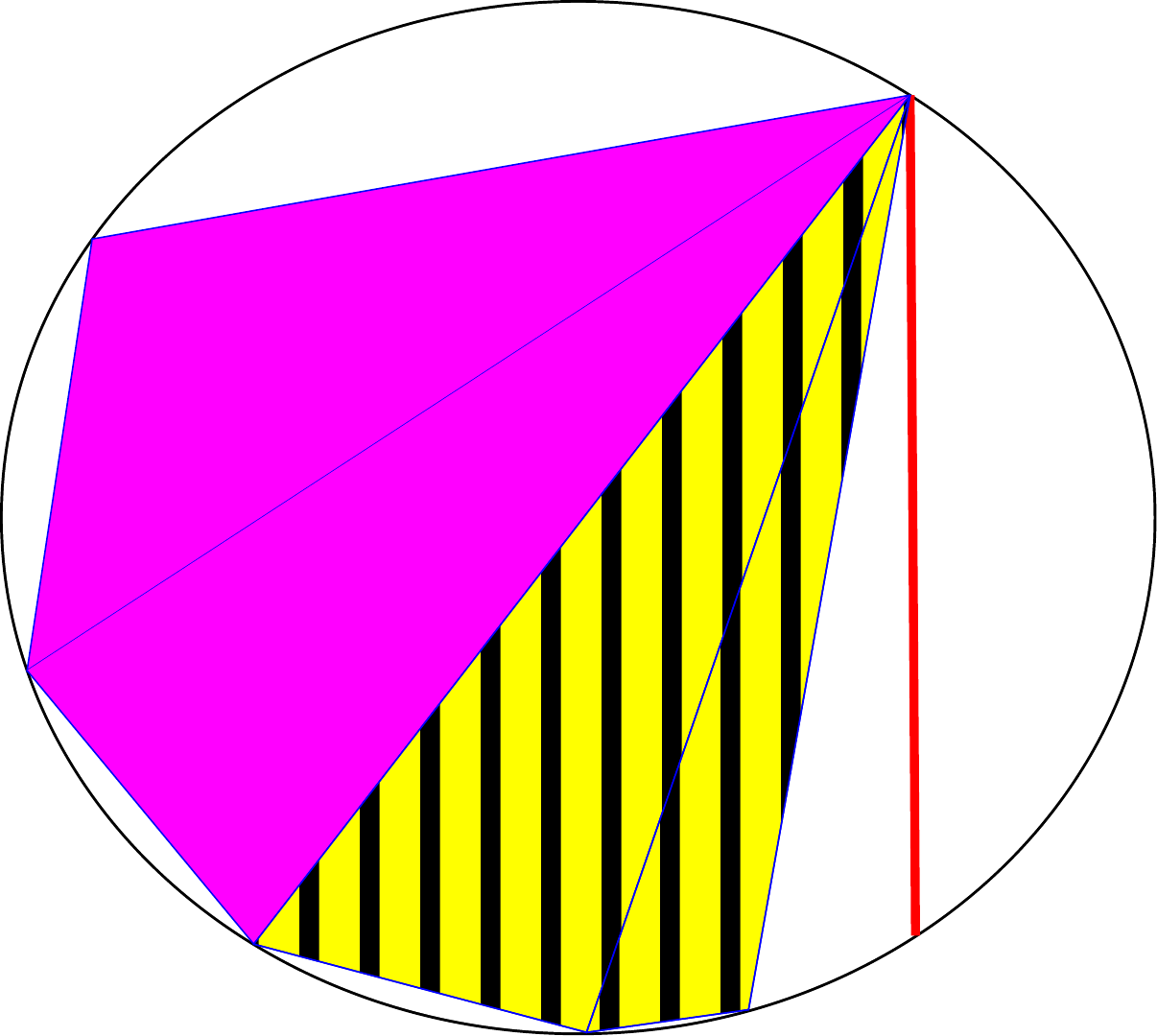}
\small
\caption{The two closed edge subsets given by the yellow striped region and the purple region are adjacent}\label{fig:adjacent}.
\end{figure}

\begin{figure}[ht]
\centering
\includegraphics[scale=0.3]{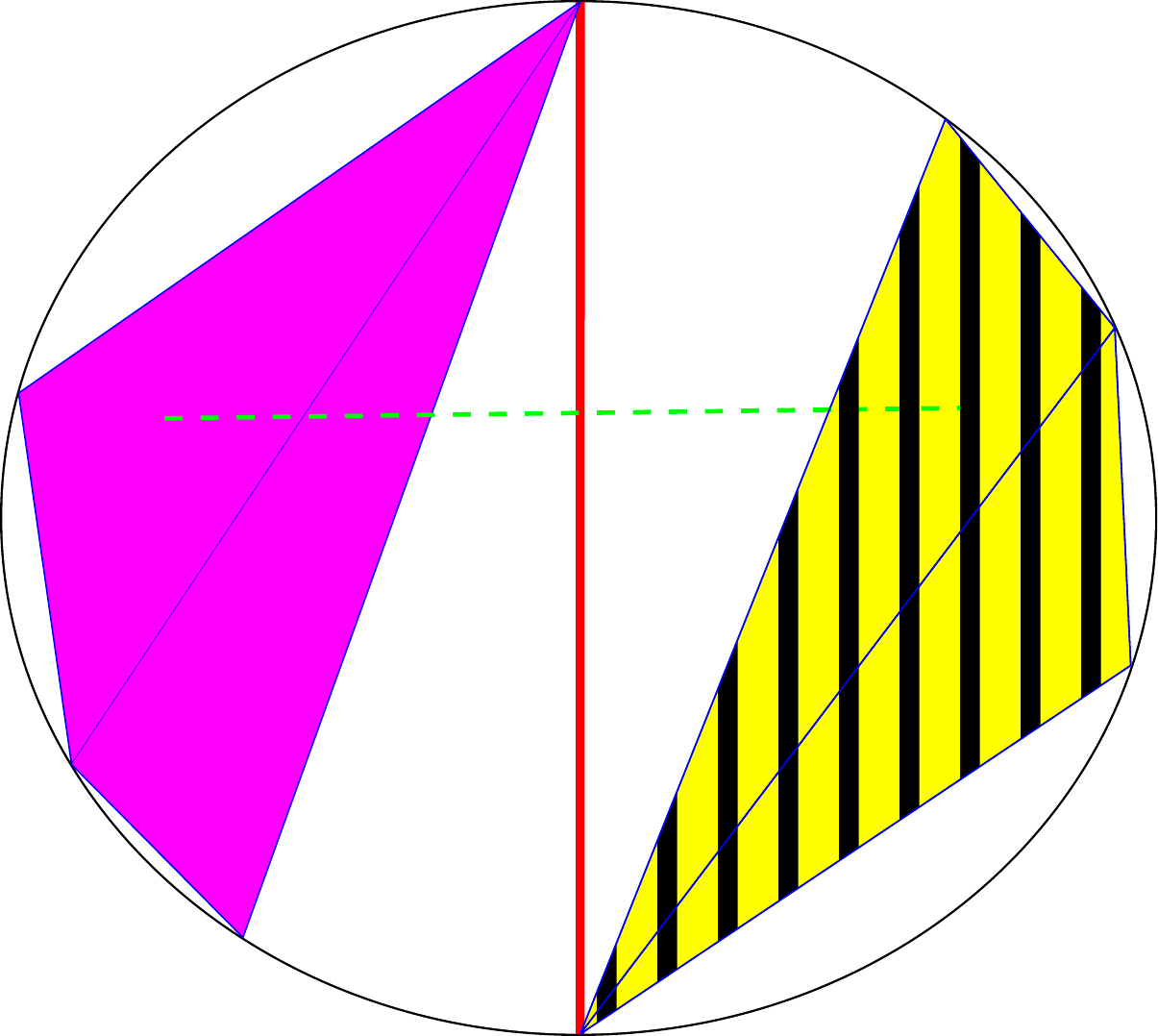}
\small
\caption{The two closed edge subsets given by the yellow striped region and the purple region are opposite.}
\label{fig:opposite}
\end{figure}

Being opposite defines an equivalence relation on $\Dmc$ where each equivalence class contains exactly two points in $\Dmc$. Furthermore, if two points in $\Dmc$ are opposite, then they are adjacent. Adjacency allows us to define the \emph{closed edge graph}. 
\begin{definition}
The \emph{closed edge graph} is the graph whose vertex set is $\Dmc$, and two vertices are joined by an edge if they are adjacent in the sense of Definition \ref{def:adjacent}. 
\end{definition} 

In order to define an exhaustion of the closed edge graph, we choose a closed edge subset $p_0$ in $\Dmc$ as a base point, and let $K_1:=\{p_0\}$. Then for all positive integers $j$, let 
\begin{eqnarray*}
K_j'&:=&K_j\cup\{p\in\Dmc:p\text{ is adjacent to some }p'\in K_j\},\\
K_{j+1}&:=&K_j'\cup\{p\in\Dmc:p\text{ is opposite to some }p'\in K_j'\}.
\end{eqnarray*}
This iteratively defines a nested sequence $(K_1,K_2,\dots)$ of subsets of $\Dmc$ such that 
\[\bigcup_{j=1}^\infty K_j=\Dmc.\] 

Recall that $\widetilde{\Qmc}$ denotes the set of isolated edges in $\widetilde{\Tmc}$. From the exhaustion $(K_1,K_2,\dots)$ of $\Dmc$, we get an exhaustion of $\widetilde{\Qmc}\cup\widetilde{\Theta}$ as follows. For each positive integer $j$, let $N_j$ denote the set of ideal triangles that lie in some closed edge subset in $K_j$, i.e.
\[N_j:=\{T\in\widetilde{\Theta}:T\in p\text{ for some }p\in K_j\}.\] 
Using this, define $M_j$ to be the union of the ideal triangles in $N_j$ and the edges of the ideal triangles in $N_j$, i.e.
\[M_j:=\{o\in\widetilde{\Qmc}\cup\widetilde{\Theta}:o\in N_j\text{ or }o\text{ is an edge of an ideal triangle in }N_j\}.\]
Note that each $ M_j$ is a finite set, and $(M_1,M_2,\dots)$ is a nested sequence whose union is $\widetilde{\Qmc}\cup\widetilde{\Theta}$. We use these sets $M_j$ to define the semi-elementary flows on $\mathrm{Fre}(V)$. 

Given a vector $\mu$ in $W$ and any $o$ in $\widetilde{\Tmc}\cup\widetilde{\Theta}$, we define the flow $\left(\phi^\mu_o\right)_t$ on $\mathrm{Fre}(V)$ as follows. Recall that $\Amc$ denotes the set of pairs of positive integers that sum to $n$, and $\Bmc$ denotes the set of triples of positive integers that sum to $n$.
\begin{itemize}
\item If $o=\{r_1,r_2\}$ is an edge in $\widetilde{\Tmc}$, let $\mathbf r_m:=(r_m,r_{m+1})$ for $m=1$, $2$, where the arithmetic in the subscripts are done modulo $2$. Define
\[\left(\phi^\mu_o\right)_t:=\prod_{\mathbf k\in\Amc}\left(\psi^{\mathbf k}_{\mathbf r_1}\right)_{\mu^{\mathbf k}_{\mathbf r_1}\cdot t}=\prod_{\mathbf k\in\Amc}\left(\psi^{\mathbf k}_{\mathbf r_2}\right)_{\mu^{\mathbf k}_{\mathbf r_2}\cdot t}, \] 
where $\left(\psi^{\mathbf k}_{\mathbf r}\right)_t$ is the $\mathbf k$-elementary shearing flow with respect to $\mathbf r$.

\item If $o=\{x_1,x_2,x_3\}$ is an ideal triangle in $\widetilde{\Theta}$ and $(x_1,x_2,x_3)$ is cyclically ordered, we set $\mathbf x_m:=(x_m,x_{m+1},x_{m-1})$ for $m=1$, $2$, $3$, where the arithmetic in the subscripts are done modulo $3$. Define
\[\left(\phi^\mu_o\right)_t:=\prod_{\mathbf i\in\Bmc}\left(\epsilon^{\mathbf i}_{\mathbf x_1}\right)_{\mu^{\mathbf i}_{\mathbf x_1}\cdot t}=\prod_{\mathbf i\in\Bmc}\left(\epsilon^{\mathbf i}_{\mathbf x_2}\right)_{\mu^{\mathbf i}_{\mathbf x_2}\cdot t}=\prod_{\mathbf i\in\Bmc}\left(\epsilon^{\mathbf i}_{\mathbf x_3}\right)_{\mu^{\mathbf i}_{\mathbf x_3}\cdot t}, \]
where $\left(\epsilon^{\mathbf i}_{\mathbf x}\right)_t$ is the $\mathbf i$-elementary eruption flow with respect to $\mathbf x$. 
\end{itemize}

\begin{definition} 
Let $M_j$ be the subset of $\widetilde{\Qmc}\cup\widetilde{\Theta}$ as defined above, and let $\mu$ be a vector in $W$ (see Notation \ref{not:polytope notation}). The $(\mu,M_j)$-\emph{semi elementary flow} is defined by
\[\left(\phi^\mu_{M_j}\right)_t:=\prod_{o\in M_j}(\phi^\mu_o)_t:\mathrm{Fre}(V)\to\mathrm{Fre}(V).\]
\end{definition}

Since $M_j$ is a finite set for all positive integers $j$, Proposition \ref{prop:descend} ensures that $\left(\phi^\mu_{M_j}\right)_t$ is a commuting product, and is well-defined for all real numbers $t$.

\subsection{$(\Tmc,\Jmc)$-parallel flows}\label{sec:parallel flows}
Using the semi-elementary flows just introduced, we now define the $(\Tmc,\Jmc)$-parallel flow on $\Hit_V(S)$ in the direction of a tangent vector $\mu$ in $W = T_{[\xi]}\Hit_V(S)$, where we identify $\Hit_V(S)$ with the locus of projective classes of Frenet curves in $\mathrm{Fre}(V)$ that are $\rho$-equivariant for some representation $\rho:\Gamma\to\PGL(V)$.

For this we first consider the flow 
\[ \left(\phi^\mu_{\Qmc,\Theta}\right)_t:\Hit_V(S)\to\Hit_V(S),\] which is defined to be 
\[\left(\phi^\mu_{\Qmc,\Theta}\right)_t[\xi]:=\lim_{j\to\infty}\left(\phi^\mu_{M_j}\right)_t[\xi], \] 
where $(M_1,M_2,\dots)$ is the exhaustion of $\widetilde{\Qmc} \cup \widetilde{\Theta}$ defined in Section \ref{sec:closed_edge_subset}. 

We prove in Proposition \ref{prop:general} that $\left(\phi^\mu_{\Qmc,\Theta}\right)_t$ is a well-defined flow and does not depend on the choice of the base point $p_0$ we chose to define the exhaustion $(M_1,M_2,\dots)$.
We emphasize that, as a flow on $\Hit_V(S)$, $\left(\phi^\mu_{\Qmc,\Theta}\right)_t$ is not necessarily defined for all $t$; it is possible to find a vector $\mu$ in $W$ for some group element $\gamma$ in $\Gamma$, the $i$-length $\ell^i_\rho(\gamma)$ of $\gamma$ defined by (\ref{eqn:length}) descreases linearly with $t$ under the flow $\left(\phi^\mu_{\Qmc,\Theta}\right)_t$. Also, note that the flow $\left(\phi^\mu_{\Qmc,\Theta}\right)_t$ does not yet involve the invariants along the non-isolated edges of $\Tmc$. 

We now define the counterpart which involves the invariants along the non-isolated edges of $\Tmc$. Let $c$ be a non-isolated edge of $\Tmc$, and let $\Gamma\cdot\{r_1, r_2\} \subset \widetilde \Tmc$ be the $\Gamma$-orbit determined by $c$. Choose an enumeration $\{o_1,o_2,\dots\}$ of $\Gamma\cdot\{r_1,r_2\}$. For any vector $\mu$ in $W$, consider the flow 
\[\left(\phi^\mu_c\right)_t:\Hit_V(S)\to\Hit_V(S),\] which is defined to be 
\[\left(\phi^\mu_c\right)_t[\xi]:=\prod_{l=1}^\infty\left(\phi^\mu_{o_l}\right)_t[\xi] = \lim_{r \to \infty} \prod_{l=1}^r\left(\phi^\mu_{o_l}\right)_t[\xi].\]

By Proposition \ref{prop:descend}, $\prod_{l=1}^r\left(\phi^\mu_{o_l}\right)_t$ is a commuting product for all positive integers $r$, and is a well-defined flow on $\mathrm{Fre}(V)$ for all real numbers $t$. The fact that $\left(\phi^\mu_c\right)_t$ is well-defined for all real numbers $t$ is a consequence of Proposition \ref{prop:shearing closed edge}.

\begin{remark}
The flows $\left(\phi^\mu_c\right)_t$ are examples of generalized twist flows which have been studied by Goldman in the context of representation varieties of surface groups into reductive Lie groups \cite{Goldman_twist}. 
\end{remark}

Using $\left(\phi^\mu_c\right)_t$ and $\left(\phi^\mu_{\Qmc,\Theta}\right)_t$, we can now give the proper definition of the $(\Tmc, \Jmc)$-parallel flow on $\Hit_V(S)$ associated to any vector $\mu$ in $W$. 

\begin{definition}\label{def:parallelflow}
For any vector $\mu$ in $W$, define the $(\Tmc,\Jmc)$-\emph{parallel flow} associated to $\mu$, $\phi^\mu_t:\Hit_V(S)\to\Hit_V(S)$ to be 
\begin{equation}\label{eqn:TJ}
\phi^\mu_t:=\left(\prod_{c\in\Pmc}\left(\phi^\mu_c\right)_t\right)\circ\left(\phi^\mu_{\Qmc,\Theta}\right)_t.
\end{equation}
\end{definition}

The next theorem relates the $(\Tmc,\Jmc)$-parallel flows to the parametrization $\Omega=\Omega_{\Tmc,\Jmc}$ of $\Hit_V(S)$ defined in Theorem \ref{thm:reparametrization}.

\begin{thm}\label{thm:main theorem}
Let $\Omega:\Hit_V(S)\to C_\Tmc$ be the real-analytic diffeomorphism defined in Theorem \ref{thm:reparametrization}. For any projective class of Frenet curves $[\xi]$ in $\Hit_V(S)$ and any vector $\mu$ in $W$, let 
\[I_{[\xi],\mu}:=\{t\in\Rbbb:\Omega[\xi]+t\cdot\mu\text{ satisfy the closed leaf inequalities}\}.\] 
Then for any $t\in I_{[\xi],\mu}$, 
\[\phi^\mu_t[\xi]=\Omega^{-1}\big(\Omega[\xi]+t\mu\big).\] 
\end{thm}

Theorem \ref{thm:main theorem} implies in particular that (\ref{eqn:TJ}) is a commuting product. A corollary of Theorem \ref{thm:main theorem} is the following.

\begin{cor}\label{cor:tjflowtangent}
Every pair of $(\Tmc,\Jmc)$-parallel flows on $\Hit_V(S)$ commute, and the space of $(\Tmc,\Jmc)$-parallel flows on $\Hit_V(S)$ is naturally in bijection with $T_{[\xi]}\Hit_V(S)$. 
In particular, the pair $(\Tmc,\Jmc)$ determines a trivialization of $T\Hit_V(S)$. 
\end{cor}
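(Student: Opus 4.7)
The plan is to deduce everything as a direct consequence of Theorem~\ref{thm:main theorem}, which identifies the $(\Tmc,\Jmc)$-parallel flow $\phi^\mu_t$ with the linear flow $t\mapsto\Omega[\xi]+t\mu$ on the image polytope $P_\Tmc\subset W$. Once this identification is in hand, commutativity, the bijection with the tangent space, and the global trivialization all reduce to standard facts about translations on an open subset of a vector space transported back via the real-analytic diffeomorphism $\Omega$.

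First, I would prove commutativity. Fix $\mu_1,\mu_2\in W$ and $[\xi]\in\Hit_V(S)$. For $s,t$ sufficiently small that both flows stay inside the interior of $P_\Tmc$, Theorem~\ref{thm:main theorem} gives
\[
\phi^{\mu_1}_s\circ\phi^{\mu_2}_t[\xi]
=\Omega^{-1}\bigl(\Omega[\xi]+t\mu_2+s\mu_1\bigr)
=\phi^{\mu_2}_t\circ\phi^{\mu_1}_s[\xi],
\]
using only that vector addition in $W$ is commutative. Since both sides are real-analytic in $s,t$ on their common domain of definition, the identity extends to all admissible $s,t$. This also shows that the finite product defining $\phi^\mu_t$ in Definition~\ref{def:parallelflow} is independent of the order of composition.

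Next I would establish the bijection between $(\Tmc,\Jmc)$-parallel flows and $T_{[\xi]}\Hit_V(S)$. By Theorem~\ref{thm:reparameterization}, the differential $d_{[\xi]}\Omega\colon T_{[\xi]}\Hit_V(S)\to W$ is a linear isomorphism for every $[\xi]\in\Hit_V(S)$. The map
\[
\mu\longmapsto\frac{d}{dt}\bigg|_{t=0}\phi^\mu_t[\xi]
\]
equals $(d_{[\xi]}\Omega)^{-1}(\mu)$ by Theorem~\ref{thm:main theorem}, hence is a linear bijection from $W$ onto $T_{[\xi]}\Hit_V(S)$. Since $(\Tmc,\Jmc)$-parallel flows are parametrized by $\mu\in W$, and distinct $\mu$'s give distinct initial tangent vectors (again by Theorem~\ref{thm:main theorem}), this produces the claimed bijection. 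For the final assertion, the map
\[
\Psi\colon\Hit_V(S)\times W\longrightarrow T\Hit_V(S),\qquad
([\xi],\mu)\longmapsto\frac{d}{dt}\bigg|_{t=0}\phi^\mu_t[\xi]=(d_{[\xi]}\Omega)^{-1}(\mu)
\]
is fiberwise a linear isomorphism, and is real-analytic because $\Omega$ is a real-analytic diffeomorphism onto an open subset of $W$. Thus $\Psi$ is a global real-analytic trivialization of $T\Hit_V(S)$.

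The only genuine work here is in Theorem~\ref{thm:main theorem} itself; given that theorem, the corollary is essentially formal, and the main thing to be careful about is that the flows $\phi^\mu_t$ are only defined on the open set $I_{[\xi],\mu}\subset\Rbbb$ of times for which the closed leaf inequalities remain satisfied, so commutativity statements must be formulated locally in $t$ and then promoted to the maximal domain by real-analyticity.
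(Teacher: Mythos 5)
Your proposal is correct and takes exactly the route the paper intends: the paper presents Corollary~\ref{cor:tjflowtangent} as an immediate consequence of Theorem~\ref{thm:main theorem}, and your argument simply unpacks that --- commutativity follows from commutativity of vector addition in $W$ under the identification $\phi^\mu_t[\xi]=\Omega^{-1}(\Omega[\xi]+t\mu)$, the bijection with $T_{[\xi]}\Hit_V(S)$ comes from $d_{[\xi]}\Omega$ being a linear isomorphism, and the trivialization of $T\Hit_V(S)$ is the fiberwise-linear real-analytic map $([\xi],\mu)\mapsto(d_{[\xi]}\Omega)^{-1}(\mu)$. Your closing caveat about the domain $I_{[\xi],\mu}$ is also the right thing to be careful about.
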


In the upcoming companion paper \cite{SunZhang}, it is shown that this trivialization of $T\Hit_V(S)$ is in fact symplectic with respect to the Goldman symplectic form on $\Hit_V(S)$, and every $(\Tmc,\Jmc)$-parallel flow is a Hamiltonian flow.

Theorem \ref{thm:main theorem} follows from Proposition \ref{prop:shearing closed edge} and Proposition \ref{prop:general} below.

\section{Well-definedness of $(\Tmc, \Jmc)$-parallel flows}\label{sec:main theorem}
We show now that the $(\Tmc, \Jmc)$-parallel flows are well-defined. For this we treat the flows $\left(\phi^\mu_c\right)_t$ associated to a non-isolated edge $c$, and the flows $\left(\phi^\mu_{\Qmc,\Theta}\right)_t$ separately. We start with the flows associated to a non-isolated edge, since here the situation is significantly simpler. Even though $\left(\phi^\mu_c\right)_t$ is a composition of infinitely many elementary shearing flows, the edges along which the shearing happen are locally finite. In the case of $\left(\phi^\mu_{\Qmc,\Theta}\right)_t$ however, the neighborhoods of the non-isolated edges are deformed by infinitely many elementary eruption and shearing flows.

\subsection{Well-definedness of $\left(\phi^\mu_c\right)_t$.}\label{sec:warmup}
We consider the linear subspace 
\[W=W_{\Tmc}\subset \Rbbb^{|\Tmc|\cdot(n-1)}\times\Rbbb^{|\Theta|\cdot\frac{(n-1)(n-2)}{2}}\]
that is cut out by the closed leaf equalities (see Notation \ref{not:polytope notation}). At every point $[\xi]$ in $\Hit_V(S)$, we identify the tangent space $T_{[\xi]}\Hit_V(S)$ with $W$, using the parametrization of $\Hit_V(S)$ given in Theorem~\ref{thm:reparametrization}. 

Recall that we chose for every edge in the ideal triangulation $\Tmc=\widetilde{\Tmc}/\Gamma$ an oriented representative, and denoted the set of these representatives by $\widehat{\Tmc}$. Similarly, for any ideal triangle in $\Theta=\widetilde{\Theta}/\Gamma$ we choose a cyclically ordered representative and denoted the set of these representatives by $\widehat{\Theta}$ (see Notation \ref{not:index_set}).
With this a tangent vector $\mu$ in $W$ can be identified with a tuple of real numbers 
\[\mu=\left(\left(\mu^{\mathbf k}_{\mathbf r}\right)_{\mathbf k\in\Amc;\mathbf r\in \widehat{\Tmc}},\left(\mu^{\mathbf i}_{\mathbf x}\right)_{\mathbf i\in\Bmc;\mathbf x\in\widehat{\Theta}}\right), \]
where $\Amc$ is the set of ordered pairs of positive integers that sum to $n$, $\Bmc$ is the set of ordered triples of positive integers that sum to $n$. 

Let $c$ be a non-isolated edge in $\Tmc$, and let $\mathbf r:=(r_1,r_2)$ be the corresponding ordered pair in $\widehat{\Tmc}$. We define 
\[W_c:=\left\{\mu\in W:\begin{array}{l}\mu^{\mathbf i}_{\mathbf x}=0\text{ for all }\mathbf x\in\widehat{\Theta}\text{ and }\mathbf i\in\Bmc\\
\mu^{\mathbf k}_{\mathbf s}=0\text{ for all }\mathbf s\in\widehat{\Tmc}\setminus\{\mathbf r\}\text{ and } \mathbf k\in\Amc
\end{array}\right\}.\]
Note that $W_c\subset W$ is a $(n-1)$-dimensional linear subspace. Furthermore, from the closed leaf inequalities, one verifies that $\Omega[\xi]+t\mu$ lies in the image of $\Omega: \Hit_V(S) \to W$ (see Theorem ~\ref{thm:reparametrization}) for any real number $t$, any vector $\mu$ in $W_c$, and any $[\xi]$ in $\Hit_V(S)$. In other words, $I_{[\xi],\mu}=\Rbbb$ for all vectors $\mu$ in $W_c$. 

Let $\Pi_c:W\to W_c$ be the projection defined by
\begin{itemize}
\item $\Pi_c(\mu)^{\mathbf k}_{\mathbf s}=\left\{\begin{array}{ll}
\mu^{\mathbf k}_{\mathbf s}&\text{if }\mathbf s=\mathbf r;\\
0&\text{otherwise}
\end{array}
\right.$ for all $\mathbf k\in\Amc$,
\item $\Pi_c(\mu)^{\mathbf i}_{\mathbf x}=0$ for all $\mathbf x\in\widehat{\Theta}$ and for all $\mathbf i\in\Bmc$.
\end{itemize}
Choose an enumeration $\Gamma\cdot\{r_1,r_2\}=\{o_1,o_2,\dots\}$, and observe that $(\phi_{o_l}^\mu)_t=\left(\phi_{o_l}^{\Pi_c(\mu)}\right)_t$ for all positive integers $l$. The next proposition implies a special case of Theorem \ref{thm:main theorem} when $\phi^\mu_t=(\phi^\mu_c)_t$, i.e. when $\mu$ lies in $W_c$.

\begin{prop}\label{prop:shearing closed edge}
Let $\xi$ be a representative of $[\xi]$ in $\Hit_V(S)$, let $\mu$ be a vector in $W$, let $t$ be a real number, let $c$ be a non-isolated edge in $\Tmc$, and let 
\[[\xi_0]:=\Omega^{-1}\left(\Omega[\xi]+t\Pi_c(\mu)\right)\in\Hit_V(S).\] 
Pick any triangle $\{x_1,x_2,x_3\}$ in $\widetilde{\Theta}$, and choose representatives $\xi_j$ (resp. $\xi_0$) of $\prod_{l=1}^j\left(\phi^\mu_{o_l}\right)_t[\xi]$ (resp. $[\xi_0]$) such that 
\[\xi_j(x_1)=\xi(x_1),\,\,\,\,\xi_j(x_2)=\xi(x_2)\,\,\,\,\text{ and }\,\,\,\,\xi_j^{(1)}(x_3)=\xi^{(1)}(x_3)\]
for all non-negative integers $j$. Then
\[\lim_{j\to\infty}\xi_j=\xi_0.\]
\end{prop}

\begin{remark}
Proposition \ref{prop:shearing closed edge} implies in particular that $\left(\phi^\mu_c\right)_t$ is well-defined, and does not depend on the enumeration of $\Gamma\cdot\{r_1,r_2\}$. 
\end{remark}

In order to prove Proposition \ref{prop:shearing closed edge} we use the following lemma that gives a convenient expression for how the Frenet curve $\xi_j = \prod_{l=1}^j\left(\phi^\mu_{o_l}\right)_t[\xi]$ changes with $t$. 

\begin{lem}\label{lem:o2}
Let $\xi:\partial\Gamma\to\Fmc(V)$ be Frenet, let $o=\{r_1,r_2\}$ be an edge in $\widetilde{\Tmc}$, and let $\{f_1,\dots,f_n\}$ be a basis of $V$ such that $\xi^{(l)}(r_1)=\Span_\Rbbb(f_1,\dots,f_l)$ for all $l=1$, $\dots$, $n-1$. For $m=1$, $2$, let $\mathbf r_m:=(r_m,r_{m+1})$. There is a representative $\xi_t$ of $\left(\phi^\mu_o\right)_t[\xi]$ such that 
\[\xi_t(x)=\left\{\begin{array}{ll}
\xi(x)&\text{if }r_2\leq x\leq r_1;\\
c_{\xi(\mathbf r_1)}(t)\cdot\xi(x)&\text{if }r_1\leq x\leq r_2
\end{array}
\right.\] 
where
\begin{equation}\label{eqn:short}
c_{\xi(\mathbf r_1)}(t):=\prod_{\mathbf k\in\Amc}c^{\mathbf k}_{\xi(\mathbf r_1)}\left(\mu^{\mathbf k}_{\mathbf r_1}\cdot t\right)\in\PGL(V).
\end{equation}
Here, recall that $c^{\mathbf k}_{\xi(\mathbf r_1)}$ is the projectivization of the linear map that acts as the identity on $\xi^{(k_1)}(r_1)$ and as scaling by $e^{-t}$ on $\xi^{(k_2)}(r_2)$ (see (\ref{eqn:c computation}) in Section~\ref{sec:elementaryshearing}).

In particular, $c_{\xi(\mathbf r_1)}(t)$ is represented by an upper triangular matrix 
\[\left[\begin{array}{ccccc}
\lambda_1&*&\dots&*&*\\
0&\lambda_2&\dots&*&*\\
\vdots&\vdots&\ddots&\vdots&\vdots\\
0&0&\dots&\lambda_{n-1}&*\\
0&0&\dots&0&\lambda_n
\end{array}\right]\]
in the basis $\{f_1,\dots,f_n\}$, where $\frac{\lambda_{k}}{\lambda_{k+1}}=\exp\left(\mu^{(k,n-k)}_{\mathbf r_1}\cdot t\right)$ for all $k=1$, $\dots$, $n-1$. Furthermore, if $\Span_\Rbbb\{f_n,\dots,f_{n-i+1}\}=\xi^{(i)}(r_2)$ for all $i=1$, $\dots$, $n-1$, then the matrix representing $c_{\xi(\mathbf r_1)}(t)$ in the basis $\{f_1,\dots,f_n\}$ is diagonal.
\end{lem}
\begin{proof}
This is a consequence of Lemma \ref{lem:other shearing}, and a straight forward computation. 
\end{proof}

\begin{proof}[Proof of Proposition~\ref{prop:shearing closed edge}]
By Lemma \ref{lem:technical2}, it is sufficient to prove that $\xi_j$ converges to $\xi_0$ on the vertices of $\widetilde{\Tmc}$. We may assume without loss of generality that $\mathbf x:=(x_1,x_2,x_3)$ is a cyclically ordered triple. Proposition \ref{prop:shearing projective invariants}(3), implies that for any triple of positive integers $\mathbf i := (i_1,i_2,i_3)$ that sum to $n$, we have 
\[T^{\mathbf i}(\xi_j(\mathbf x))=T^{\mathbf i}(\xi_0(\mathbf x)).\]
Proposition \ref{prop:Fock-Goncharov parametrization} then implies that $\xi_j(x_3)=\xi_0(x_3)$ for all $j$. 

Now, let $y$ be any vertex of $\widetilde{\Tmc}$ that is not $x_1$ $x_2$ or $x_3$. We use the combinatorial description of pairs of distinct points in $\widetilde{\Tmc}$ developed in Section~\ref{sec:combinatorial description} to prove that $\xi_j(y)$ converges to $\xi_0(y)$. After possibly relabelling the vertices of $\{x_1,x_2,x_3\}$, we may assume without loss of generality that $\{x_2,x_3\}$ is the minimal element of $\Emc_{(x_1,y)}$. We decompose
\[\Emc_{(x_1,y)}=\bigcup_{s=1}^k\Emc_{(x_1,y),s}\cup\bigcup_{s=0}^k\Emc_{(x_1,y),s,s+1}=\bigcup_{s=1}^k\Emc_s\cup\bigcup_{s=0}^k\Emc_{s,s+1}\]
as we described in Section \ref{sec:combinatorial description}. We have already established that for all $j$, $\xi_j$ and $\xi_0$ agree on $\{x_1,x_2,x_3\}$, which is the backward end of $\Emc_{0,1}$. Also, observe that all the ends of $\Emc_s$ and $\Emc_{s,s+1}$ are ideal triangles, except possibly the forward end of $\Emc_{k}$, in which case $\Emc_{k,k+1}$ is empty (see Remark \ref{rem:special_ends}). Thus, by Remark \ref{rem:transitive}, to prove that $\lim_{j\to\infty}\xi_j(y)=\xi_0(y)$, it is sufficient to prove that for $\Emc=\Emc_{s,s+1}$ or $\Emc=\Emc_s$, the following holds:
\begin{enumerate}
\item[($\dagger$)] If $\lim_{j\to\infty}\xi_j$ and $\xi_0$ agree on the backward end $\Delta_-$ of $\Emc$, then they must agree on its forward end $\Delta_+$ as well. 
\end{enumerate}

We first prove ($\dagger$) in the case when $\Emc=\Emc_{s,s+1}$. Since all the edges in $\Emc_{s,s+1}$ are isolated, it is clear that for all positive integers $j$, the sextuples $(\xi_j(\Delta_-),\xi_j(\Delta_+))$ and $(\xi(\Delta_-),\xi(\Delta_+))$ are projectively equivalent. Furthermore, Proposition \ref{prop:Fock-Goncharov parametrization} implies that $(\xi_0(\Delta_-),\xi_0(\Delta_+))$ and $(\xi(\Delta_-),\xi(\Delta_+))$ are projectively equivalent. Since $\lim_{j\to\infty}\xi_j(\Delta_-)=\xi_0(\Delta_-)$, Remark \ref{rem:transitive} immediately implies ($\dagger$) in this case.

Next, we prove ($\dagger$) when $\Emc=\Emc_s$. Recall that $\Gamma\cdot\{r_1,r_2\}$ is the $\Gamma$-orbit in $\widetilde{\Tmc}$ corresponding to the non-isolated edge $c$ in $\Tmc$. We orient the unique non-isolated edge $e_s$ in $\Emc_s$ such that $x_1$ and $y$ lie to the left and right of $e_s$ respectively. If $e_s$ does not lie in $\Gamma\cdot\{r_1,r_2\}$, or $e_s$ is the maximum of $\Emc_s$ (in which case $s=k$ and $\Emc_{k,k+1}$ is empty), then the same argument as above implies ($\dagger$). 

On the other hand, if $e_s=\gamma\cdot\{r_1,r_2\}$ for some group element $\gamma$ in $\Gamma$, we can assume without loss of generality that $\gamma\cdot r_1$ and $\gamma\cdot r_2$ are respectively the backward and forward endpoints of $e_s$. Let $\mathbf r:=(r_1,r_2)$, and let $\{f_1,\dots,f_n\}$ be a basis of $V$ such that $\Span_\Rbbb(f_i)=\xi^{(i)}(\gamma\cdot r_1)\cap\xi^{(n-i+1)}(\gamma\cdot r_2)$. Lemma \ref{lem:o2} implies that for sufficiently large $j$, $(\xi_j(\Delta_-),\xi_j(\Delta_+))$ and $(\xi(\Delta_-),c_{\xi(\gamma\cdot \mathbf r)}(t)\cdot \xi(\Delta_+))$ are projectively equivalent. 
Furthermore, $c_{\xi(\gamma\cdot\mathbf r)}(t)$ is represented in the basis $\{f_1,\dots,f_n\}$ by the diagonal matrix 
\begin{equation}\label{eqn:diagonal}
\left[\begin{array}{ccc}
\lambda_1&\dots&0\\
\vdots&\ddots&\vdots\\
0&\dots&\lambda_n
\end{array}\right]
\end{equation}
where $\frac{\lambda_{k_1}}{\lambda_{k_1+1}}=\exp\left(\mu^{\mathbf k}_{\mathbf r}\cdot t\right)$ for all pairs of positive integers $\mathbf k:=(k_1,k_2)$ that sum to $n$.

Let $J=\{T_1,T_2\}$ be a bridge in $\widetilde{\Jmc}$ across $\{r_1,r_2\}$, such that $T_1$ and $T_2$ lie to the left and right of $(r_1,r_2)$ respectively. Let $\gamma_m$ be the primitive element that fixes $r_1$ and $r_2$, where the repelling fixed point of $\gamma_m$ is the point which is a vertex of the triangle $T_m$. Proposition \ref{prop:Fock-Goncharov parametrization} implies that for $m=1$, $2$, there is a unique projective transformation $g_m$ in $\PGL(V)$ such that $\xi_0(p)=g_m\cdot\xi(p)$ for every vertex $p$ of the ideal triangles in 
\[\bigcup_{d=-\infty}^{\infty}\gamma_m^d\cdot\widetilde{\Theta}(J,T_m), \]
the union of the $\gamma_m$ translates of the closed edge subset $\widetilde{\Theta}(J,T_m)$ (see Definition \ref{def:closed edge subset}).

Since $r_1$ and $r_2$ are the accumulation points of the vertices of these ideal triangles, the continuity of $\xi$ and $\xi_0$ implies that 
\[\xi_0(r_1)=g_m\cdot\xi(r_1)\,\,\,\,\text{ and }\,\,\,\,\xi_0(r_2)=g_m\cdot\xi(r_2)\]
for both $m=1$, $2$. In particular, the projective transformation $g_1^{-1}g_2$ is represented in the basis $\{f_1,\dots,f_n\}$ by a diagonal matrix, and $(\xi_0(\Delta_-),\xi_0(\Delta_+))$ and $(\xi(\Delta_-),g_1^{-1}g_2\cdot \xi(\Delta_+))$ are projectively equivalent.

Since by assumption $\lim_{j\to\infty}\xi_j(\Delta_-)=\xi_0(\Delta_-)$, to finish the proof of ($\dagger$) in this case, it is sufficient to show that $(\xi_j(\Delta_-),\xi_j(\Delta_+))$ and $(\xi_0(\Delta_-),\xi_0(\Delta_+))$ are projectively equivalent for sufficiently large $j$, which is equivalent to showing that $g_1^{-1}g_2=c_{\xi(\gamma\cdot\mathbf r)}(t)$. Let 
\[\eta_1(x):=\left\{\begin{array}{ll}
\xi(x)&\text{if }r_1\leq x\leq r_2;\\
c_{\xi(\gamma\cdot\mathbf r)}(t)\cdot \xi(x)&\text{if }r_2\leq x\leq r_1,
\end{array}
\right.\]
and 
\[\eta_2(x):=\left\{\begin{array}{ll}
\xi(x)&\text{if }r_1\leq x\leq r_2;\\
g_1^{-1}g_2\cdot \xi(x)&\text{if }r_2\leq x\leq r_1.
\end{array}
\right.\]

By Lemma \ref{lem:new invariant 3}
\[\alpha^{\mathbf k}_{\mathbf r,J}[\eta_1]=\alpha^{\mathbf k}_{\mathbf r,J}[\xi]+\mu^{\mathbf k}_{\mathbf r}\cdot t=\alpha^{\mathbf k}_{\mathbf r,J}[\xi_0]=\alpha^{\mathbf k}_{\mathbf r,J}[\eta_2]\]
for all pairs of positive integers $\mathbf k:=(k_1,k_2)$ that sum to $n$. Denote the vertices of $T_2$ by $p_2,z_2,w_2$ such that $p_2$ lies in $\{r_1,r_2\}$ and either $(p_2,z_2,w_2,q_2)$ or $(q_2,w_2,z_2,p_2)$ are cyclically ordered, where $q_2$ is the point in $\{r_1,r_2\}$ that is not $p_2$. Lemma \ref{lem:new invariant 1} then implies that there is some unipotent projective transformation $u$ in $\PGL(V)$ that fixes $\xi(p_2)$, sends $g_1^{-1}g_2\cdot\xi(z_2)$ to $c_{\xi(\gamma\cdot\mathbf r)}(t)\cdot\xi(z_2)$, and sends $g_1^{-1}g_2\cdot\xi^{(1)}(w_2)$ to $c_{\xi(\gamma\cdot\mathbf r)}(t)\cdot\xi^{(1)}(w_2)$. Since $g_1^{-1}g_2$ and $c_{\xi(\gamma\cdot\mathbf r)}(t)$ both fix $\xi(p_2)$, this allows us, using Remark \ref{rem:transitive}, to deduce that $ug_1^{-1}g_2=c_{\xi(\gamma\cdot\mathbf r)}(t)$. In the basis $\{f_1,\dots,f_n\}$, $g_1^{-1}g_2$ and $c_{\xi(\gamma\cdot\mathbf r)}(t)$ are both diagonal, and $u$ is a unipotent matrix. Hence, $u=\id$ and $g_1^{-1}g_2=c_{\xi(\gamma\cdot\mathbf r)}(t)$. This ends the proof. 
\end{proof}


\subsection{Behavior near the non-isolated edges.}\label{sec:closed edge}
We would like to apply a similar argument as in the proof Proposition \ref{prop:shearing closed edge} to prove that $\left(\phi^\mu_{\Qmc,\Theta}\right)_t$ is well-defined. However, the argument is more delicate now, since neighbourhoods around non-isolated edges are deformed by infinitely many elementary eruption and shearing flows. We thus have to more precisely describe the limit of the semi-elementary flows near the non-isolated edges. 


\subsubsection{Main technical statement} \label{technical}
Our next goal is to state the main technical theorem we need to prove that $\left(\phi^\mu_{\Qmc,\Theta}\right)_t$ is well-defined. We first need the following lemma, which gives an explicit description of a representative in the projective class of Frenet curves $\left(\phi^\mu_o\right)_t[\xi]$ when $o$ is an ideal triangle in $\widetilde{\Theta}$. This is the counterpart of Lemma~\ref{lem:o2}, which gives such an explicit description when $o$ is an edge in $\widetilde{\Tmc}$. 

\begin{lem}\label{lem:o}
Let $\xi:\partial\Gamma\to\Fmc(V)$ be Frenet, and let $o$ be an ideal triangle in $\widetilde{\Theta}$ with vertices $x_1,x_2,x_3$, enumerated such that $\mathbf x:=(x_1,x_2,x_3)$ is cyclically ordered. For $m=1$, $2$, $3$, let $\mathbf x_m:=(x_m,x_{m+1},x_{m-1})$. If $\{f_1,\dots,f_n\}$ is a basis of $V$ such that $\xi^{(l)}(x_1)=\Span_\Rbbb(f_1,\dots,f_l)$ for all $l=1$, $\dots$, $n-1$, then there is a representative $\xi_t$ of $\left(\phi^\mu_o\right)_t[\xi]$ such that 
\[\xi_t(x)=\left\{\begin{array}{ll}
a'_{\xi(\mathbf x_3)}(t)\cdot\xi(x)&\text{if }x_2\leq x\leq x_3;\\
\xi(x)&\text{if }x_3\leq x\leq x_1;\\
a_{\xi(\mathbf x_1)}(t)\cdot\xi(x)&\text{if }x_1\leq x\leq x_2,
\end{array}
\right.\]
where
\[a_{\xi(\mathbf x_1)}(t):=\prod_{\mathbf i\in\Bmc}a^{\mathbf i}_{\xi(\mathbf x_1)}\left(\mu^{\mathbf i}_{\mathbf x_1}\cdot t\right)\in\PGL(V)\]
and
\[a'_{\xi(\mathbf x_3)}(t):=\prod_{\mathbf i\in\Bmc}a^{\mathbf i}_{\xi(\mathbf x_3)}\left(-\mu^{\mathbf i}_{\mathbf x_3}\cdot t\right)\in\PGL(V).\] 
Here $a^{\mathbf i}_{\xi(\mathbf x)}(t)$ is the projectivization of the linear map that acts as scaling by $e^{\frac{2t}{3}}$ on $\xi^{(i_1)}(x_1)$ and scaling by $e^{-\frac{t}{3}}$ on $\xi^{(i_2)}(x_2)+\xi^{(i_3)}(x_3)$ (see (\ref{eqn:a computation}) in Section~\ref{sec:elementaryeruption}).

In particular, $a_{\xi(\mathbf x_1)}(t)$ is represented by an upper-triangular matrix 
\[\left[\begin{array}{ccccc}
\lambda_1&*&\dots&*&*\\
0&\lambda_2&\dots&*&*\\
\vdots&\vdots&\ddots&\vdots&\vdots\\
0&0&\dots&\lambda_{n-1}&*\\
0&0&\dots&0&\lambda_n
\end{array}\right]\]
in the basis $\{f_1,\dots,f_n\}$, where 
\[\frac{\lambda_k}{\lambda_{k+1}}=\exp\left(\sum_{i_2+i_3=n-k}\mu^{(k,i_2,i_3)}_{\mathbf x_1}\cdot t\right)\] 
for all $k=1$, $\dots$, $n-1$. Furthermore, $a_{\xi(\mathbf x_1)}(t)$ fixes $\xi^{(1)}(x_3)$.
\end{lem}

\begin{proof}
This is a consequence of Lemma \ref{lem:other eruption}, and a straight forward computation. 
\end{proof}

Recall that for any non-isolated edge $\{r_1, r_2\}$ in $\widetilde{\Tmc}$, any bridge $J=\{T_1, T_2\}$ across $\{r_1,r_2\}$, and any $m=1$, $2$, we defined the closed edge subset $\widetilde{\Theta}(J,T_m)\subset\widetilde{\Theta}$ (see Definition \ref{def:closed edge subset}). Assume without loss of generality that $T_1$ and $T_2$ lie to the left and right of the oriented edge $(r_1,r_2)$ respectively, and let $p_m$ be the vertex of $T_m$ that is either $r_1$ or $r_2$. Recall that in Notation \ref{not:closed edge}, we defined a collection of ideal triangles $\{\dots,T_{m,-1},T_{m,0},T_{m,1},T_{m,2},\dots\}$ such that all these triangles share $p_m$ as a common vertex, $T_{m,1}=T_m$, and $T_{m,h}$ shares a common edge $e_{m,h}$ with $T_{m,h+1}$. Also, we defined a positive integer $H_m$ such that $\widetilde{\Theta}(J,T_m)=\{T_{m,1},T_{m,2},\dots,T_{m,H_m}\}$ (see Figure \ref{fig:closededge}). Let $z_{m,h}$ denote the vertex of $e_{m,h}$ that is not $p_m$, set $\mathbf p_{m,h}:=(p_m,z_{m,h})$, and set
\[\mathbf t_{m,h}:=\left\{\begin{array}{ll}
(p_m,z_{m,h-1},z_{m,h})&\text{if }p_m=r_m;\\
(p_m,z_{m,h},z_{m,h-1})&\text{if }p_m=r_{3-m}.
\end{array}
\right.\]

Let $[\xi]$ be a projective class of Frenet curves in $\Hit_V(S)$, let $\mu$ be a vector in $W$, and let $t$ be a real number. For each triple in $\mathbf t_{m,h}$, define
\begin{equation}\label{eqn:triangle transform}
a_{\xi(\mathbf t_{m,h})}(t):=\left\{\begin{array}{rl}
\displaystyle\prod_{\mathbf i\in\Bmc}a^{\mathbf i}_{\xi(\mathbf t_{m,h})}\left(\mu^{\mathbf i}_{\mathbf t_{m,h}}\cdot t\right)&\text{if }p_m=r_m;\\
\displaystyle\prod_{\mathbf i\in\Bmc}a^{\mathbf i}_{\xi(\mathbf t_{m,h})}\left(-\mu^{\mathbf i}_{\mathbf t_{m,h}}\cdot t\right)&\text{if }p_m=r_{3-m},\\
\end{array}\right.
\end{equation}
and for each pair $\mathbf p_{m,h}$, define 
\begin{equation}\label{eqn:edge transform}
c_{\xi(\mathbf p_{m,h})}(t):=\left\{\begin{array}{rl}
\displaystyle\prod_{\mathbf i\in\Amc}c^{\mathbf k}_{\xi(\mathbf p_{m,h})}\left(\mu^{\mathbf k}_{\mathbf p_{m,h}}\cdot t\right)&\text{if }p_m=r_m;\\
\displaystyle\prod_{\mathbf i\in\Amc}c^{\mathbf k}_{\xi(\mathbf p_{m,h})}\left(-\mu^{\mathbf k}_{\mathbf p_{m,h}}\cdot t\right)&\text{if }p_m=r_{3-m}.
\end{array}\right.
\end{equation}
Here, recall that $\mu^\mathbf i_{\mathbf t_{m,h}}$ and $\mu^\mathbf k_{\mathbf p_{m,h}}$ are the entries of the vector $\mu$ (see Section \ref{sec:tangent}).

Using this, we set 
\begin{equation}\label{eqn:closed edge subset transform}
a_m(t)=a_{\xi,m}(t):=a_{\xi(\mathbf t_{m,1})}(t)c_{\xi(\mathbf p_{m,1})}(t)\dots a_{\xi(\mathbf t_{m,H_m})}(t)c_{\xi(\mathbf p_{m,H_m})}(t).
\end{equation}
It follows from Lemma \ref{lem:o2} and Lemma \ref{lem:o} that each $a_{\xi(\mathbf t_{m,h})}(t)$ and each $c_{\xi(\mathbf p_{m,h})}(t)$ fixes $\xi(p_m)$, so $a_m(t)$ also fixes $\xi(p_m)$. In other words, if $\{f_1,\dots,f_n\}$ is a basis of $V$ such that $f_i$ lies in $\xi^{(i)}(r_1)\cap\xi^{(n-i+1)}(r_2)$, then $a_m(t)$ is upper triangular if $p_m=r_1$ and lower triangular if $p_m=r_2$. Let $u_m(t)=u_{\xi,m}(t)$ be the unipotent projective transformation that fixes $\xi(p_m)$ such that 
\[a_m(t)=u_m(t)h_m(t)\] 
for some projective transformation $h_m(t)=h_{\xi,m}(t)$ that fixes both $\xi(r_1)$ and $\xi(r_2)$.

We consider the exhaustion $(M_1,M_2,\dots)$ of $\widetilde{\Qmc}\cup\widetilde{\Theta}$ used to define $\left(\phi^\mu_{\Qmc,\Theta}\right)_t$ (see Section \ref{sec:closed_edge_subset}).The following is the main technical theorem that we prove in Section \ref{sec:closed edge}. Informally, it tells us how $\left(\phi^\mu_{\Qmc,\Theta}\right)_t$, evaluated at the vertices of a non-isolated edge and at the vertices of the two ideal triangles associated to a bridge across this non-isolated edge, changes with $t$.

Let $q_m$ be the vertex of the non-isolated edge $\{r_1,r_2\}$ that is not $p_m$, and let $\gamma_m$ be the primitive element in $\Gamma$ that has $p_m$ and $q_m$ as its repellor and attractor respectively (see Notation \ref{not:closed edge}). 

\begin{thm}\label{thm:unipotent closed edge}
Let $\{r_1,r_2\}$ be a non-isolated edge in $\widetilde{\Pmc}$ and let $J=\{T_1,T_2\}$ be a bridge in $\widetilde{\Jmc}$ across $\{r_1,r_2\}$. Suppose that $[\xi]$ is a projective class of Frenet curves in $\Hit_V(S)$ and let $\xi$ be a representative of $[\xi]$. Let $\xi_j$ be the representative of $\left(\phi^\mu_{M_j}\right)_t[\xi]$ normalized such that $\xi_j(p_1)=\xi(p_1)$, $\xi_j(z_{1,0})=\xi(z_{1,0})$ and $\xi_j^{(1)}(z_{1,1})=\xi^{(1)}(z_{1,1})$. 
\begin{enumerate}
\item The infinite product 
\begin{equation}\label{eqn:unipotent}
u_{m,\infty}(t):=\lim_{d\to\infty}\big(u_m(t)h_m(t)\rho(\gamma_m)\big)^d\cdot\big(h_m(t)\rho(\gamma_m)\big)^{-d}
\end{equation}
 is a well-defined unipotent projective transformation in $\PGL(V)$ that fixes $\xi(p_m)$. Furthermore,
\begin{eqnarray*}
\lim_{j\to\infty}\xi_j(x_1)&=&u_{1,\infty}(t)\cdot\xi(r_1),\\
\lim_{j\to\infty}\xi_j(x_2)&=&u_{1,\infty}(t)\cdot\xi(r_2),\\
\lim_{j\to\infty}\xi_j(z_{2,0})&=&u_{1,\infty}(t)u_{2,\infty}(t)^{-1}\cdot\xi(z_{2,0}),\\
\lim_{j\to\infty}\xi_j^{(1)}(z_{2,1})&=&u_{1,\infty}(t)u_{2,\infty}(t)^{-1}\cdot\xi^{(1)}(z_{2,1}).
\end{eqnarray*}
\item $\displaystyle\lim_{d\to\infty}\lim_{j\to\infty}\xi_j(\gamma_m^d\cdot z_{m,0})=\lim_{j\to\infty}\xi_j(q_m)$.
\end{enumerate}
\end{thm}

\subsubsection{The proof of Theorem \ref{thm:unipotent closed edge}} 
Recall that $\widetilde{\Theta}(J,T_m)$ denote a closed edge subset (see Definition~\ref{def:closed edge subset}). We denote by $\widetilde{\Tmc}(J,T_m)$ the set of edges of the triangles contained in the closed edge subset $\widetilde{\Theta}(J,T_m)$, $\widetilde{\Tmc}(J,T_m)=\{e_{m,1},\dots,e_{m,H_m}\}$.

It is a direct consequence of the definition of the sets $M_j$ in the exhaustion of $\widetilde{\Theta} \cup \widetilde{Q}$ that there is an integer $D'$ such that for sufficiently large $j$ 
\begin{itemize}
\item $\gamma_m^d\cdot\left(\widetilde{\Theta}(J,T_m)\cup\widetilde{\Tmc}(J,T_m)\right)\subset M_j$ for all $0\leq d\leq j+D'$, and
\item $\gamma_m^d\cdot\left(\widetilde{\Theta}(J,T_m)\cup\widetilde{\Tmc}(J,T_m)\right)\cap M_j=\emptyset$ for all $d> j+D'$.
\end{itemize}
For any sufficiently large $j$, set $D:=j+D'$, and let $E:=2(D+1)(H_1+H_2)$. 
 We enumerate the set 
\[\bigcup_{d=0}^D\bigcup_{m=1}^2\gamma_m^d\cdot\left(\widetilde{\Theta}(J,T_m)\cup\widetilde{\Tmc}(J,T_m)\right)=\{o_1,o_2,o_3,\dots o_E\}\subset\widetilde{\Qmc}\cup\widetilde{\Theta},\] 
in the following order 
\[\begin{array}{l}
T_{1,1},\,\,e_{1,1},\,\,\dots\,\,,\,\,T_{1,H_1},\,\,e_{1,H_1},\\
T_{2,1},\,\,e_{2,1},\,\,\dots\,\,,\,\,T_{2,H_2},\,\,e_{2,H_2},\\
T_{1,H_1+1},\,\,e_{1,H_1+1},\,\,\dots\,\,,\,\,T_{1,2H_1},\,\,e_{1,2H_1},\\
T_{2,H_2+1},\,\,e_{2,H_2+1},\,\,\dots\,\,,\,\,T_{2,2H_2},\,\,e_{2,2H_2},\\
\hspace{3cm}\vdots\\
T_{1,DH_1+1},\,\,e_{1,DH_1+1},\,\,\dots\,\,,\,\,T_{1,(D+1)H_1},\,\,e_{1,(D+1)H_1},\\
T_{2,DH_2+1},\,\,e_{2,DH_2+1},\,\,\dots\,\,,\,\,T_{2,(D+1)H_2},\,\,e_{2,(D+1)H_2}.
\end{array}\]

\begin{remark}\label{rem:simplify}
Fix a sufficiently large positive integer $j$. Observe that the projective classes of Frenet curves
\[\left(\phi^\mu_{M_j}\right)_t[\xi]\,\,\,\text{ and }\,\,\,\prod_{h=1}^{E}\left(\phi^\mu_{o_h}\right)_t[\xi]\] 
contain representatives that when restricted to the vertices of the ideal triangles in 
\[\bigcup_{d=-\infty}^{\infty}\gamma_m^d\cdot\widetilde{\Theta}(J,T_m),\]
are projectively equivalent. To prove Theorem \ref{thm:unipotent closed edge}, we can thus work with $\prod_{h=1}^{E}\left(\phi^\mu_{o_h}\right)_t$ in place of $\left(\phi^\mu_{M_j}\right)_t$. 

\begin{notation}\label{not:xi}
For all positive integers $h$, let $\xi_{m,h}$ be the representative of 
\[\prod_{r=1}^h\left(\phi^\mu_{o_r}\right)_t[\xi]\] 
such that $\xi_{m,h}(p_m)=\xi(p_m)$, $\xi_{m,h}(z_{m,0})=\xi(z_{m,0})$, and $\xi_{m,h}^{(1)}(z_{m,1})=\xi^{(1)}(z_{m,1})$.
\end{notation}

The first step is to prove the following lemma, which is an analog of Theorem \ref{thm:unipotent closed edge} for the flows $\prod_{h=1}^{2(H_1+H_2)}\left(\phi^\mu_{o_h}\right)_t$, i.e. it tells us how $\xi_t:=\prod_{h=1}^{2(H_1+H_2)}\left(\phi^\mu_{o_h}\right)_t(\xi)$, evaluated at $r_1$ and $r_2$ and at the vertices of the two ideal triangles associated to a bridge across $\{r_1,r_2\}$, changes with $t$.
\end{remark}

\begin{lem}\label{lem:computation1}
Let $J$ be a bridge in $\widetilde{\Jmc}$ across the non-isolated edge $\{r_1,r_2\}$ of $\widetilde{\Tmc}$. Let $\xi:\partial\Gamma\to\Fmc(V)$ be any Frenet curve, and let $\{f_1,\dots,f_n\}$ be a basis of $V$ such that $f_i$ lies in $\xi^{(i)}(r_1)\cap\xi^{(n-i+1)}(r_2)$ for all $i=1$, $\dots$, $n$. Then for all real numbers $t$, the projective transformation $a_m(t)$ in $\PGL(V)$ defined by (\ref{eqn:closed edge subset transform}) satisfies the following properties. 
\begin{enumerate}
\item $a_m(t)$ fixes the flag $\xi(p_m)$.
\item Let $\lambda_{m,1},\dots,\lambda_{m,n}$ be the diagonal entries down the diagonal of the matrix representing $a_m(t)$ in the basis $\{f_1,\dots,f_n\}$. Then for all $k=1$, $\dots$, $n-1$,
\begin{equation}\label{eqn:closed_leaf_par1}
\frac{\lambda_{1,k}}{\lambda_{1,k+1}}=\left\{\begin{array}{ll}
\displaystyle\exp\left(\sum_{h=1}^{H_1}\left(\mu^{(k,n-k)}_{\mathbf p_{1,h}}+\sum_{i_2+i_3=n-k}\mu^{(k,i_2,i_3)}_{\mathbf t_{1,h}}\right) t\right)&\text{if }p_1=r_1;\\
\displaystyle\exp\left(-\sum_{h=1}^{H_1}\left(\mu^{(n-k,k)}_{\mathbf p_{1,h}}+\sum_{i_2+i_3=k}\mu^{(n-k,i_2,i_3)}_{\mathbf t_{1,h}}\right) t\right)&\text{if }p_1=r_2,\\
\end{array}\right.
\end{equation}
and 
\begin{equation}\label{eqn:closed_leaf_par2}
\frac{\lambda_{2,k}}{\lambda_{2,k+1}}=\left\{\begin{array}{ll}
\displaystyle\exp\left(-\sum_{h=1}^{H_2}\left(\mu^{(k,n-k)}_{\mathbf p_{2,h}}+\sum_{i_2+i_3=n-k}\mu^{(k,i_2,i_3)}_{\mathbf t_{2,h}}\right) t\right)&\text{if }p_2=r_1;\\
\displaystyle\exp\left(\sum_{h=1}^{H_2}\left(\mu^{(n-k,k)}_{\mathbf p_{2,h}}+\sum_{i_2+i_3=k}\mu^{(n-k,i_2,i_3)}_{\mathbf t_{2,h}}\right) t\right)&\text{if }p_2=r_2.
\end{array}\right.
\end{equation}
In particular, the closed leaf equalities imply that all the diagonal entries of $a_1(t)$ and $a_2(t)$ agree.
\item $\xi_{m,2(H_1+H_2)}(z)=a_m(t)\cdot\xi(z)$ for all 
\[z\in\bigcup_{d=1}^\infty\bigcup_{m=1}^2\gamma_m^d\cdot\{z_{m,1},\dots,z_{m,H_m}\}\cup\{r_1,r_2\}.\] 
\item $\xi_{m,2(H_1+H_2)}(z_{3-m,0})=a_m(t)a_{3-m}(t)^{-1}\cdot\xi(z_{3-m,0})$ 
\item $\xi_{m,2(H_1+H_2)}^{(1)}(z_{3-m,1})=a_m(t)a_{3-m}(t)^{-1}\cdot\xi^{(1)}(z_{3-m,1})$
\end{enumerate}
\end{lem}

\begin{proof} 
(1) and (2) are immediate consequences of Lemma \ref{lem:o2} and Lemma~\ref{lem:o} and a straightforward computation. We prove (3), (4), (5) in the case when $m=1$; the case when $m=2$ is identical. 

By Lemma \ref{lem:o}, we see that $\xi_{1,1}(p_1)=\xi(p_1)$, $\xi_{1,1}(z_{1,0})=\xi(z_{1,0})$, $\xi_{1,1}^{(1)}(z_{1,1})=\xi^{(1)}(z_{1,1})$, and $\xi_{1,1}(z)=a_{\xi(\mathbf t_{1,1})}(t)\cdot\xi(z)$,
for all 
\[z\in\bigcup_{d=0}^\infty\gamma_1^d\cdot\{z_{1,1},\dots,z_{1,H_1}\}\cup\bigcup_{d=-\infty}^\infty\gamma_2^d\cdot\{z_{2,1},\dots,z_{2,H_2}\}\cup\{r_1,r_2\},\]
where $a_{\xi(\mathbf t_{1,1})}(t)$ is the projective transformation in $\PGL(V)$ defined by (\ref{eqn:triangle transform}). 

By definition, $\xi_{1,2}(p_1)=\xi_{1,1}(p_1)=\xi(p_1)$, $\xi_{1,2}(z_{1,0})=\xi_{1,1}(z_{1,0})=\xi(z_{1,0})$, $\xi_{1,2}^{(1)}(z_{1,1})=\xi_{1,1}^{(1)}(z_{1,1})=\xi^{(1)}(z_{1,1})$, and $\xi_{1,2}(z_{1,1})=\xi_{1,1}(z_{1,1})$. Then by Lemma \ref{lem:o2}, $\xi_{1,2}(z)=c_{\xi_{1,1}(\mathbf p_{1,1})}(t)\cdot\xi_{1,1}(z)$ for all 
\[z\in\{z_{1,2},\dots,z_{1,H_1}\}\cup\bigcup_{d=1}^\infty\gamma_1^d\cdot\{z_{1,1},\dots,z_{1,H_1}\}\cup\bigcup_{d=-\infty}^\infty\gamma_2^d\cdot\{z_{2,1},\dots,z_{2,H_2}\}\cup\{r_1,r_2\},\]
where $c_{\xi_{1,1}(\mathbf p_{1,1})}(t)$ is the projective transformation in $\PGL(V)$ defined by (\ref{eqn:edge transform}). 
The following is a key observation for this proof:
\[\xi_{1,2}(z)=c_{\xi_{1,1}(\mathbf p_{1,1})}(t)a_{\xi(\mathbf t_{1,1})}(t)\cdot\xi(z)=a_{\xi(\mathbf t_{1,1})}(t)c_{\xi(\mathbf p_{1,1})}(t)\cdot\xi(z),\]
for all 
\[z\in\{z_{1,2},\dots,z_{1,H_1}\}\cup\bigcup_{d=1}^\infty\gamma_1^d\cdot\{z_{1,1},\dots,z_{1,H_1}\}\cup\bigcup_{d=-\infty}^\infty\gamma_2^d\cdot\{z_{2,1},\dots,z_{2,H_2}\}\cup\{r_1,r_2\}.\]

Iterating the above procedure $H_1$ times proves that $\xi_{1,2H_1}$ satisfies the following:
\begin{itemize}
\item $\xi_{1,2H_1}(p_1)=\xi(p_1)$, $\xi_{1,2H_1}(z_{1,0})=\xi(z_{1,0})$, and $\xi_{1,2H_1}^{(1)}(z_{1,1})=\xi^{(1)}(z_{1,1})$,
\item $\xi_{1,2H_1}(z)=a_1(t)\cdot\xi(z)$ for all 
\[z\in\bigcup_{d=1}^\infty\gamma_1^d\cdot\{z_{1,1},\dots,z_{1,H_1}\}\cup\bigcup_{d=-\infty}^\infty\gamma_2^d\cdot\{z_{2,1},\dots,z_{2,H_2}\}\cup\{r_1,r_2\},\]
\item $\xi_{1,h}(z)=\xi_{1,2H_1}(z)$ for all $z\in\{p_1,z_{1,0},z_{1,1},\dots,z_{1,H_1}=\gamma_1\cdot z_{1,0}\}$ and $h\geq 2H_1$, and
\item $\xi_{1,h}^{(1)}(\gamma_1\cdot z_{1,1})=\xi_{1,2H_1}^{(1)}(\gamma_1\cdot z_{1,1})$ for all $h\geq 2H_1$.
\end{itemize}

From the above properties of $\xi_{1,2H_1}$, we see that $a_1(t)^{-1}$ sends $\xi_{2,2H_1}(p_2)=\xi(p_2)$, $\xi_{2,2H_1}(z_{2,0})=\xi(z_{2,0})$, and $\xi_{2,2H_1}^{(1)}(z_{2,1})=\xi^{(1)}(z_{2,1})$ to $\xi_{1,2H_1}(p_2)$, $\xi_{1,2H_1}(z_{2,0})$, and $\xi_{1,2H_1}^{(1)}(z_{2,1})$ respectively. Since $\xi_{1,2H_1}$ and $\xi_{2,2H_2}$ differ by a projective transformation, Remark \ref{rem:transitive} implies that $\xi_{2,2H_1}=a_1(t)^{-1}\cdot\xi_{1,2H_1}$. We can thus repeat the above argument for the case when $m=2$ (with $\xi_{2,2H_1}$ in place of $\xi$) to prove that $\xi_{2,2(H_1+H_2)}$ satisfies the following:
\begin{itemize}
\item $\xi_{2,2(H_1+H_2)}(p_2)=\xi(p_2)$, $\xi_{2,2(H_1+H_2)}(z_{2,0})=\xi(z_{2,0})$, and 
$\xi_{2,2(H_1+H_2)}^{(1)}(z_{2,1})=\xi^{(1)}(z_{2,1})$,
\item $\xi_{2,2(H_1+H_2)}(z)=a_2(t)\cdot\xi_{2,2H_1}(z)$ for all 
\[z\in\bigcup_{d=1}^\infty\gamma_2^d\cdot\{z_{2,1},\dots,z_{2,H_2}\}\cup\bigcup_{d=-\infty}^\infty\gamma_1^d\cdot\{z_{1,1},\dots,z_{1,H_1}\}\cup\{r_1,r_2\},\]
\item $\xi_{2,h}(z)=\xi_{2,2(H_1+H_2)}(z)$ for all $z\in\{p_2,z_{2,0},z_{2,1},\dots,z_{2,H_2}=\gamma_2\cdot z_{2,0}\}$ and $h\geq 2(H_1+H_2)$, and
\item $\xi_{2,h}^{(1)}(\gamma_2\cdot z_{2,1})=\xi_{2,2(H_1+H_2)}^{(1)}(\gamma_2\cdot z_{2,1})$ for all $h\geq 2(H_1+H_2)$.
\end{itemize}

Again from the above properties of $\xi_{2,2(H_1+H_2)}$, observe that $a_1(t)a_2(t)^{-1}$ sends $\xi_{2,2(H_1+H_2)}(p_1)$, $\xi_{2,2(H_1+H_2)}(z_{1,0})$, and $\xi_{2,2(H_1+H_2)}^{(1)}(z_{1,1})$ to $\xi(p_1)=\xi_{1,2(H_1+H_2)}(p_1)$, $\xi(z_{1,0})=\xi_{1,2(H_1+H_2)}(z_{1,0})$, and $\xi^{(1)}(z_{1,1})=\xi_{1,2(H_1+H_2)}^{(1)}(z_{1,1})$ respectively. Since $\xi_{2,2(H_1+H_2)}$ and $\xi_{1,2(H_1+H_2)}$ differ by a projective transformation, Remark \ref{rem:transitive} implies that $a_1(t)a_2(t)^{-1}\cdot\xi_{2,2(H_1+H_2)}=\xi_{1,2(H_1+H_2)}$. As a consequence, we see that $\xi_{1,2(H_1+H_2)}(z)=a_1(t)a_2(t)^{-1}\cdot\xi_{2,2(H_1+H_2)}(z)=\xi_{1,2H_1}(z)=a_1(t)\cdot\xi(z)$ for all
\[z\in\bigcup_{d=1}^\infty\bigcup_{m=1}^2\gamma_1^d\cdot\{z_{m,1},\dots,z_{m,2H_1}\}\cup\{x_1,x_2\},\] 
so (3) holds. Similarly,
\[\xi_{1,2(H_1+H_2)}(z_{2,0})=a_1(t)a_2(t)^{-1}\cdot\xi_{2,2(H_1+H_2)}(z_{2,0})=a_1(t)a_2(t)^{-1}\cdot\xi(z_{2,0}),\]
so (4) holds. The same computation proves (5).
\end{proof}

When the Frenet curve $\xi:\partial\Gamma\to\Fmc(V)$ is $\rho$-equivariant for some representation $\rho:\Gamma\to\PGL(V)$, we can prove the following lemma, which is an analog of Lemma \ref{lem:computation1} for the flows $\prod_{h=1}^{E}\left(\phi^\mu_{o_h}\right)_t$. Here, recall that $E:=2(D+1)(H_1+H_2)$.

\begin{lem}\label{lem:computation2}
Let $J$ be a bridge in $\widetilde{\Jmc}$ across the non-isolated edge $\{r_1,r_2\}$ of $\widetilde{\Tmc}$. Let $\xi:\partial\Gamma\to\Fmc(V)$ be a $\rho$-equivariant Frenet curve for some representation $\rho:\Gamma\to\PGL(V)$ and let $a_m(t)$ be the projective transformation in $\PGL(V)$ defined by (\ref{eqn:closed edge subset transform}). For all non-negative integers $d$, set
\[a'_{m,d}(t):=\rho(\gamma_m)^da_m(t)\rho(\gamma_m)^{-d}\,\,\,\,\text{ and }\,\,\,\,a_{m,d}(t):=a'_{m,0}(t)a'_{m,1}(t)\dots a'_{m,d}(t).\] 
Then the following statements hold:
\begin{enumerate}
\item $a_{m,d}(t)$ fixes the flag $\xi(p_m)$.
\item $\xi_{m,E}(\gamma_m^d\cdot z_{m,0})=a_{m,d-1}(t)\cdot\xi(\gamma_m^d\cdot z_{m,0})$ and $\xi_{m,E}^{(1)}(\gamma_m^d\cdot z_{m,1})=a_{m,d-1}(t)\cdot\xi^{(1)}(\gamma_m^d\cdot z_{m,1})$ for all $d=0$, $\dots$, $D$, where $a_{m,-1}(t):=\id$.
\item $\xi_{m,E}(z)=a_{m,D}(t)\cdot\xi(z)$ for all 
\[z\in\bigcup_{d=D+1}^\infty\bigcup_{m=1}^2\gamma_m^d\cdot\{z_{m,1},\dots,z_{m,H_m}\}\cup\{r_1,r_2\}.\]
\item $\xi_{m,E}(z_{3-m,0})=a_{m,D}(t)a_{3-m,D}(t)^{-1}\cdot\xi(z_{3-m,0})$.
\item $\xi_{m,E}^{(1)}(z_{3-m,1})=a_{m,D}(t)a_{3-m,D}(t)^{-1}\cdot\xi^{(1)}(z_{3-m,1})$.
\end{enumerate}
\end{lem}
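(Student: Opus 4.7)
The plan is to prove the lemma by induction on the number of $\gamma_m$-shifts applied, using Lemma \ref{lem:computation1} as the base case and $\rho$-equivariance to propagate the argument through successive shifts. The essential technical input is the conjugation equivariance of the flow generators: for any $g\in\PGL(V)$,
\[a^{i,j,k}_{g\cdot F_1,g\cdot F_2,g\cdot F_3}(t)=g\,a^{i,j,k}_{F_1,F_2,F_3}(t)\,g^{-1},\qquad b^{i,n-i}_{g\cdot F_1,g\cdot F_2}(t)=g\,b^{i,n-i}_{F_1,F_2}(t)\,g^{-1},\]
which is immediate from the matrix definitions in bases associated to the flags. Applied with $g=\rho(\gamma_m)^d$, combined with $\rho$-equivariance $\xi(\gamma_m^d z)=\rho(\gamma_m)^d\xi(z)$, the $\Gamma$-invariance of $\mu$, and the fact that $\gamma_m$ fixes $p_m$ (so $\rho(\gamma_m)^d$ preserves $\xi(p_m)$), this yields $a_{\xi,\gamma_m^d T_{m,c}}(t)=\rho(\gamma_m)^d\, a_{\xi,T_{m,c}}(t)\,\rho(\gamma_m)^{-d}$, and an analogous identity for edges.

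For the inductive step, I would assume that after applying all flows through shift $d-1$, the resulting curve $\eta$ satisfies $\eta(z)=a_{m,d-1}(t)\xi(z)$ for $z\in\gamma_m^{d'}\cdot\{z_{m,1},\dots,z_{m,C_m}\}$ with $d'\geq d$ and for $z\in\{x_1,x_2\}$, together with the matching identities for smaller $d'$. The $m=1$ and $m=2$ shift-$d$ flows commute by Propositions \ref{prop:descend} and \ref{prop:descend2}, since they act on sides of $\{x_1,x_2\}$ that meet only at $\{p_1,p_2\}$, so I would apply them in sequence. For the $m=1$ batch, the inductive hypothesis and $\rho$-equivariance show that the vertex flags at the shift-$d$ triangles and edges are $a_{1,d-1}(t)\rho(\gamma_1)^d\cdot\xi(\cdot)$; the conjugation identity above then expresses each elementary flow transformation as the conjugate by $a_{1,d-1}(t)\rho(\gamma_1)^d$ of the corresponding transformation for $\xi$. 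The same iterative composition as in the proof of Lemma \ref{lem:computation1} produces the cumulative factor $a_{1,d-1}(t)\,a'_{1,d}(t)\,a_{1,d-1}(t)^{-1}$, which upon multiplication by the pre-existing $a_{1,d-1}(t)$ yields the desired $a_{1,d}(t)$ on all points in later shifts and on $\{x_1,x_2\}$. A symmetric analysis for $m=2$ closes the induction and establishes (1) and (2).

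The main obstacle will be bookkeeping the $m$-dependent normalization of $\xi_{m,E}$ from Notation \ref{not:xi}, particularly for the cross-$\{x_1,x_2\}$ identities (3) and (4). These are handled by the same renormalization device used at the end of the proof of Lemma \ref{lem:computation1}: the curves normalized at $(p_1,z_{1,0},z_{1,1})$ versus at $(p_2,z_{2,0},z_{2,1})$ differ by left-multiplication by exactly $a_{m,D}(t)a_{3-m,D}(t)^{-1}$, since this element fixes the flags at $x_1$ and $x_2$ but transports the chosen normalization from one side of the closed edge to the other. Propagating this identification through the induction then converts the "$m=2$"-normalized statement $\eta(z_{3-m,0})=a_{m,D}(t)\xi(z_{3-m,0})$ (which is (2) applied on the opposite side) into the "$m=1$"-normalized formula of (3), and likewise for (4).
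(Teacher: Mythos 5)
Your proposal is correct and follows essentially the same route as the paper: the paper's proof is a one-line instruction to ``apply the iterative procedure up to $(D+1)C_m$ times'' together with the observation that $\rho(\gamma_m)^d a_m(t)\rho(\gamma_m)^{-d}$ equals the product of the transformations at the shift-$d$ triangles and edges, and your conjugation-equivariance computation (using $\rho$-equivariance of $\xi$ and $\Gamma$-invariance of $\mu$) is precisely the mechanism behind that observation. Your handling of (3) and (4) via the renormalization device from the end of the proof of Lemma~\ref{lem:computation1} likewise matches what the paper implicitly does.
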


\begin{proof}
In the proof of Lemma \ref{lem:computation1}, we stopped the iterative procedure after $D(H_1+H_2)$ iterations. To prove this lemma, apply the iterative procedure $(D+1)(H_1+H_2)$ times, and use the observation that since $\xi$ is $\rho$-equivariant, we have 
\[\rho(\gamma_m)^{d}a_m(t)\rho(\gamma_m)^{-d}=a_{\xi(\mathbf t_{m,dH_m+1})}(t)c_{\xi(\mathbf p_{m,dH_m+1})}(t)\dots a_{\xi(\mathbf t_{m,(d+1)H_m})}(t)c_{\xi(\mathbf p_{m,(d+1)H_m})}(t)\]
for all $m=1$, $2$ and $d=0$, $\dots$, $D$. 
\end{proof}

Observe that $a_{m,d}(t)$ defined in Lemma \ref{lem:computation2} can be rewritten as 
\[a_{m,d}(t)=(a_m(t)\rho(\gamma_m))^{d}\rho(\gamma_m)^{-d}.\] 
We need to understand the limit as $d$ grows to infinity of the unipotent part of $a_{m,d}(t)$. This motivates the following lemma.

\begin{lem}\label{lem:matrix}
Let $X$ be an $n\times n$ diagonal matrix whose diagonal entries are positive and strictly decreasing down the diagonal, and let $U$ be an $n\times n$ unipotent upper triangular matrix, i.e.
\[X=\left(\begin{matrix}
\lambda_1 &0 &\dots&0&0 \\ 
0 & \lambda_2 &\dots &0&0 \\ 
\vdots&\vdots&\ddots&\vdots&\vdots\\
0&0&\dots& \lambda_{n-1}&0\\
0&0&\dots& 0&\lambda_n
\end{matrix}\right),\,\,\,\,\,\,\,\, 
U=\left(\begin{matrix}
1 & u_{1,2} & \cdots &u_{1,n-1}& u_{1,n} \\ 
0 & 1 & \dots &u_{2,n-1}&u_{2,n}\\
\vdots & \vdots &\ddots&\vdots & \vdots\\ 
 0& 0 &\dots& 1 & u_{n-1,n} \\ 
 0& 0 & \dots&0 & 1
\end{matrix} \right)\] 
where $0<\lambda_1<\cdots <\lambda_n$.
Then the sequence 
\[\{V_d := (UX)^dX^{-d} \}_{d=1}^\infty\] 
converges to a unipotent upper-triangular matrix whose $(i,j)$-entry for all $j>i$ is 
\[\sum_{k=0}^{j-i} \sum_{i=t_0<t_1< \cdots <t_{k}=j} \left(\prod_{s=1}^{k-1}\frac{u_{t_s,t_{s+1}}\left(\frac{\lambda_{t_{s}}}{\lambda_{t_k}}\right)}{1-\frac{\lambda_{t_{s}}}{\lambda_{t_k}}}\right) \cdot \frac{u_{t_0,t_1}}{1-\frac{\lambda_{t_0}}{\lambda_{t_k}}}.\]
\end{lem}
\begin{proof}
First, by induction on $d$, one can prove that $V_d$ is a unipotent upper triangular matrix whose $(i,j)$-entry $V_{i,j,d}$ is given by
\[\begin{array}{l}
\displaystyle V_{i,j,d} = \sum_{k=1}^{j-i} \sum_{i=t_0< \cdots <t_{k}=j} \left( u_{t_0,t_1} u_{t_1,t_2} \cdots u_{t_{k-1},t_k}\cdot\right.\\
\left.\hspace{4cm}\displaystyle \sum_{k\leq l_1+\cdots + l_k \leq d} \left(\frac{\lambda_{t_0}}{\lambda_{t_k}}\right)^{l_1-1} \left(\frac{\lambda_{t_1}}{\lambda_{t_k}}\right)^{l_2} \cdots \left(\frac{\lambda_{t_{k-1}}}{\lambda_{t_k}}\right)^{l_k}\right).
 \end{array}\]
for all $j>i$. Here, the second summation is over positive integers $t_1,\cdots, t_{k-1}$ such that $i=t_0< t_1<\cdots < t_{k-1}<t_k=j$, and the last summation is over positive integers $l_1,\cdots, l_k$ such that $k \leq l_1+\cdots + l_k \leq d$.

Since $0<\lambda_1<\dots<\lambda_n$, it is easy to see that if $i=t_0<t_1<\dots<t_k=j$, then
\[\lim_{d\to\infty}\left(\sum_{k\leq l_1+\cdots + l_k \leq d} \left(\frac{\lambda_{t_0}}{\lambda_{t_k}}\right)^{l_1-1} \left(\frac{\lambda_{t_1}}{\lambda_{t_k}}\right)^{l_2} \cdots \left(\frac{\lambda_{t_{k-1}}}{\lambda_{t_k}}\right)^{l_k}\right)=\prod_{s=1}^{k-1}\frac{\frac{\lambda_{t_s}}{\lambda_{t_k}}}{1-\frac{\lambda_{t_s}}{\lambda_{t_k}}}\cdot \frac{1}{1-\frac{\lambda_{t_0}}{\lambda_{t_k}}}.\] 
Thus, 
\begin{equation}\label{eqn:Vijd}
\lim_{d \rightarrow\infty} V_{i,j,d} = \sum_{k=0}^{j-i} \sum_{i=t_0<t_1< \cdots <t_{k}=j} \left(\prod_{s=1}^{k-1}\frac{u_{t_s,t_{s+1}}\left(\frac{\lambda_{t_{s}}}{\lambda_{t_k}}\right)}{1-\frac{\lambda_{t_{s}}}{\lambda_{t_k}}}\right) \cdot \frac{u_{t_0,t_1}}{1-\frac{\lambda_{t_0}}{\lambda_{t_k}}}
\end{equation}
for all $j>i$. Since $V_d$ is upper triangular and unipotent for all $d$, the same is true for $\lim_{d\to\infty}V_d$.
\end{proof}

By making an appropriate change of basis, we see that Lemma \ref{lem:matrix} also holds if $U$ is a unipotent lower triangular matrix and $X$ is a diagonal matrix whose entries are decreasing down the diagonal. 

\begin{proof}[Proof of Theorem \ref{thm:unipotent closed edge}]
Let $\{f_1,\dots,f_n\}$ be the basis of $V$ such that $f_i$ lies in $\xi(r_1)^{(i)}\cap\xi(r_2)^{(n-i+1)}$ for all $i=1$, $\dots$, $n$.

Proof of (1). For $m=1$, $2$, let $a_m(t)$ be the projective transformation in $\PGL(V)$ defined by (\ref{eqn:closed edge subset transform}). Then we can write 
\[a_m(t)=u_m(t)h_m(t)\]
where $u_m(t)$ is a unipotent projective transformation that fixes $\xi(p_m)=\xi_{m,2H_m}(p_m)$ and $h_m(t)$ is a projective transformation that fixes $\xi(r_1)$ and $\xi(r_2)$. By Lemma \ref{lem:computation1}(2), $h_1(t)=h_2(t)$ is explicitly represented in the basis $\{f_1,\dots,f_n\}$ by the unique determinant $1$, diagonal matrix whose diagonal entries $\lambda_1$, $\dots$, $\lambda_n$ down the diagonal satisfy, and in fact are determined by, the equation (\ref{eqn:closed_leaf_par1}) and (\ref{eqn:closed_leaf_par2}) for all $k=1$, $\dots$, $n-1$.

Observe that $h_m(t)$ commutes with $\rho(\gamma_m)$ since they have the same attracting and repelling fixed flags. Also, since $t\in I_{[\xi],\mu}$, we see from the description of the closed leaf inequalities in Section \ref{sec:Bonahon-Dreyer} that the matrix representative of $\rho(\gamma_m)h_m(t)$ in the basis $\{f_1,\dots,f_n\}$ has diagonal entries that are increasing down the diagonal if $p_m=r_1$ and decreasing down the diagonal if $p_m=r_2$. At the same time, the matrix representing $u_m(t)$ in the basis $\{f_1,\dots,f_n\}$ is upper triangular when $p_m=r_1$ and lower triangular when $p_m=r_2$. Thus, by Lemma \ref{lem:matrix}, the limit
\[u_{m,\infty}(t):=\lim_{d\to\infty}\big(u_m(t)h_m(t)\rho(\gamma_m)\big)^d\big(h_m(t)\rho(\gamma_m)\big)^{-d}\]
exists, and is a unipotent projective transformation that fixes $\xi(p_m)$.

We previously observed that for the purposes of Theorem \ref{thm:unipotent closed edge}, we may use $\prod_{h=1}^{E}\left(\phi^\mu_{o_h}\right)_t$ in place of $\left(\phi^\mu_{M_j}\right)_t$ (see Remark \ref{rem:simplify}). Also, recall that we denote by $\xi_{m,h}$ the representative of 
\[\prod_{r=1}^h\left(\phi^\mu_{o_r}\right)_t[\xi]\] 
such that $\xi_{m,h}(p_m)=\xi(p_m)$, $\xi_{m,h}(z_{m,0})=\xi(z_{m,0})$, and $\xi_{m,h}^{(1)}(z_{m,1})=\xi^{(1)}(z_{m,1})$ (see Notation \ref{not:xi}). It is thus sufficient to show that
\begin{eqnarray*}
\lim_{D\to\infty}\xi_{1,E}(q_1)&=&u_{1,\infty}(t)\cdot\xi(q_1),\\
\lim_{D\to\infty}\xi_{1,E}(z_{2,0})&=&u_{1,\infty}(t)u_{2,\infty}(t)^{-1}\cdot\xi(z_{2,0}),\\
\lim_{D\to\infty}\xi_{1,E}^{(1)}(z_{2,1})&=&u_{1,\infty}(t)u_{2,\infty}(t)^{-1}\cdot\xi^{(1)}(z_{2,1}).
\end{eqnarray*}

By Lemma \ref{lem:computation2}(3), 
\begin{eqnarray}\label{eqn:unipotent compute}
\lim_{D\to\infty}\xi_{1,E}(q_1)&=&\lim_{D\to\infty}a_{1,D}(t)\cdot\xi(q_1)\nonumber\\
&=&\lim_{D\to\infty}\big(a_1(t)\rho(\gamma_1)\big)^{D}\rho(\gamma_1)^{-D}\cdot\xi(q_1)\nonumber\\
&=&\lim_{D\to\infty}\big(u_1(t)h_1(t)\rho(\gamma_1)\big)^{D}\big(h_1(t)\rho(\gamma_1)\big)^{-D}\cdot\xi(q_1)\\
&=&u_{1,\infty}(t)\cdot\xi(q_1).\nonumber
\end{eqnarray}
Since $h_1(t)=h_2(t)$ commute with $\rho(\gamma_1)$ and $\rho(\gamma_2)$, Lemma \ref{lem:computation2}(4) implies
\begin{eqnarray*}
&&\lim_{D\to\infty}\xi_{1,E}(z_{2,0})\\
&=&\lim_{D\to\infty}a_{1,D}(t)a_{2,D}(t)^{-1}\cdot\xi(z_{2,0})\\
&=&\lim_{D\to\infty}\big(u_1(t)h_1(t)\rho(\gamma_1)\big)^{D+1}\rho(\gamma_1)^{-D-1}\rho(\gamma_2)^{D+1}\big(u_2(t)h_2(t)\rho(\gamma_2)\big)^{-D-1}\cdot\xi(z_{2,0})\\
&=&u_{1,\infty}(t)u_{2,\infty}(t)^{-1}\cdot\xi(z_{2,0}).
\end{eqnarray*}
Similarly, using Lemma \ref{lem:computation2}(5) in place of Lemma \ref{lem:computation2}(4),
\[\lim_{D\to\infty}\xi_{1,E}^{(1)}(z_{2,1})=u_{1,\infty}(t)u_{2,\infty}(t)^{-1}\cdot\xi^{(1)}(z_{2,1}).\]

(2) Using Lemma \ref{lem:computation2}(3), the same computation as the equation (\ref{eqn:unipotent compute}) proves that
\[\lim_{D\to\infty} \xi_{m,E}(q_m)=u_{m,\infty}(t)\cdot\xi(q_m).\]
At the same time, Lemma \ref{lem:computation2}(2) tells us that for sufficiently large integers $D$ and any integer $d\geq 1$, 
\begin{eqnarray*}
&&\xi_{m,E}(\gamma_m^d\cdot z_{m,0})\\
&=&a_{m,d-1}(t)\cdot\xi(\gamma_m^d\cdot z_{m,0})\\
&=&\big(u_m(t)h_m(t)\rho(\gamma_m)\big)^d\rho(\gamma_m)^{-d}\cdot\xi(\gamma_m^d\cdot z_{m,0})\\
&=&\big(u_m(t)h_m(t)\rho(\gamma_m)\big)^d\big(h_m(t)\rho(\gamma_m)\big)^{-d}\big(h_m(t)\rho(\gamma_m)\big)^d\cdot\xi(z_{m,0}).
\end{eqnarray*}
Since the matrix representative of $\rho(\gamma_m)h_m(t)$ in the basis $\{f_1,\dots,f_n\}$ has diagonal entries that are increasing down the diagonal if $p_m=x_1$ and decreasing down the diagonal if $p_m=x_2$, we see that $\lim_{d\to\infty}\big(h_m(t)\rho(\gamma_m)\big)^d\cdot\xi(z_{m,0})=\xi(q_m)$. Hence,
\[\lim_{d\to\infty}\lim_{D\to\infty}\xi_{m,E}(\gamma_m^d\cdot z_{m,0})=u_{m,\infty}(t)\xi(q_m).\]
This proves (2).
\end{proof}

\subsection{Well-definedness of $\left(\phi^\mu_{\Qmc,\Theta}\right)_t$.}\label{sec:semielementary}
Using Theorem \ref{thm:unipotent closed edge}, we are now ready to prove that $\left(\phi^\mu_{\Qmc,\Theta}\right)_t$ defined in Section \ref{sec:parallel flows} is well-defined. 

We consider the subspace $W_{\Qmc,\Theta}$ of $W = T_\xi\Hit_V(S)$ given by 
\[W_{\Qmc,\Theta}:=\{\mu\in W:\mu^{\mathbf k}_{\mathbf r}=0\text{ for all }\mathbf r\in\widehat{\Pmc},\,\mathbf k\in\Amc\},\] 
where $\Amc$ is the set of pairs of positive integers that sum to $n$. Also, let $\Pi:W\to W_{\Qmc,\Theta}$ be the projection such that 
\begin{itemize}
\item $\Pi(\mu)^{\mathbf k}_{\mathbf r}=\mu^{\mathbf k}_{\mathbf r}$ for all $\mathbf r\in\widehat{\Qmc}$ and $\mathbf k\in\Amc$, 
\item $\Pi(\mu)^{\mathbf k}_{\mathbf r}=0$ for all $\mathbf r\in\widehat{\Pmc}$ and $\mathbf k\in\Amc$, 
\item $\Pi(\mu)^{\mathbf i}_{\mathbf x}=\mu^{\mathbf i}_{\mathbf x}$ for all $\mathbf x\in\widehat{\Theta}$ and $\mathbf i\in\Bmc$.
\end{itemize}

Since the closed leaf equalities only involve the invariants associated to $\Qmc\cup\Theta$, $\Pi$ is indeed well-defined. Observe that, if defined, $\left(\phi^\mu_{\Qmc,\Theta}\right)_t=\left(\phi^{\Pi(\mu)}_{\Qmc,\Theta}\right)_t$ because $I_{[\xi],\mu}=I_{[\xi],\Pi(\mu)}$, and the sequence $(M_1,M_2,\dots)$ defined in Section \ref{sec:closed_edge_subset} is an exhaustion of $\widetilde{\Qmc}\cup\widetilde{\Theta}$. The next proposition is the analog of Proposition \ref {prop:shearing closed edge}, and relates the flow $\left(\phi^\mu_{\Qmc,\Theta}\right)_t$ with the parametrization $\Omega$ of $\Hit_V(S)$ given in Theorem \ref{thm:reparametrization}. In particular, it implies that $\left(\phi^\mu_{\Qmc,\Theta}\right)_t$ is well-defined, and does not depend on the choice of base point $p_0$ in $\Dmc$, even though the semi-elementary flows $\left(\phi^\mu_{M_j}\right)_t$ obviously do. 

\begin{prop}\label{prop:general}
Let $\xi$ be a representative of $[\xi]$ in $\Hit_V(S)$, let $\mu$ be a vector in $W$, let $t$ be a real number in $I_{[\xi],\mu}$, and let 
\[[\xi_0]:=\Omega^{-1}\left(\Omega[\xi]+t\Pi(\mu)\right)\in\Hit_V(S).\] 
Pick any ideal triangle $\{x_1,x_2,x_3\}$ in $\widetilde{\Theta}$, and choose representatives $\xi_j$ (resp. $\xi_0$) of $\left(\phi^\mu_{M_j}\right)_t[\xi]$ (resp. $[\xi_0]$) such that 
\[\xi_j(x_1)=\xi(x_1),\,\,\,\,\xi_j(x_2)=\xi(x_2)\,\,\,\,\text{ and }\,\,\,\,\xi_j^{(1)}(x_3)=\xi^{(1)}(x_3)\]
for all non-negative integers $j$. Then
\[\lim_{j\to\infty}\xi_j=\xi_0.\]
\end{prop}

\begin{proof}
By Proposition \ref{lem:technical2}, it is sufficient to prove that $\xi_j$ converges to $\xi_0$ on the vertices of $\widetilde{\Tmc}$. Since
\[\lim_{j\to\infty}T^{\mathbf i}(\xi_j(x_1),\xi_j(x_2),\xi_j(x_3))=T^{\mathbf i}(\xi_0(x_1),\xi_0(x_2),\xi_0(x_3))\] 
for all ordered triples of positive integers $\mathbf i$ that sum to $n$, Proposition \ref{prop:Fock-Goncharov parametrization}, implies that $\lim_{j\to\infty}\xi_j(x_3)=\xi_0(x_3)$ for all $j$.

Let $y$ be any vertex of $\widetilde{\Tmc}$ that is not $x_1$, $x_2$ or $x_3$. We again use the combinatorial description of pairs of distinct points in $\widetilde{\Tmc}$ developed in Section~\ref{sec:combinatorial description} to prove that $\xi_j(y)$ converges to $\xi_0(y)$. After possibly relabelling the vertices of $\{x_1,x_2,x_3\}$, we may assume without loss of generality that $\{x_2,x_3\}$ is the minimal element of $\Emc_{(x_1,y)}$. Recall the decomposition 
\[\Emc_{(x_1,y)}=\bigcup_{s=1}^k\Emc_{(x_1,y),s}\cup\bigcup_{s=0}^k\Emc_{(x_1,y),s,s+1}=\bigcup_{s=1}^k\Emc_s\cup\bigcup_{s=0}^k\Emc_{s,s+1}.\]
We just observed that, $\lim_{j\to\infty}\xi_j$ and $\xi_0$ agree on $\{x_1,x_2,x_3\}$, which is the backward end of $\Emc_{0,1}$. Also, observe that all the ends of $\Emc_s$ and $\Emc_{s,s+1}$ are ideal triangles, except possibly the forward end of $\Emc_{k}$, in which case $\Emc_{k,k+1}$ is empty. Thus, by Remark \ref{rem:transitive}, to prove that $\lim_{j\to\infty}\xi_j(y)=\xi_0(y)$, it is sufficient to prove that for $\Emc=\Emc_{s,s+1}$ or $\Emc=\Emc_s$, the following holds:
\begin{enumerate}
\item[($\dagger$)] If $\lim_{j\to\infty}\xi_j$ and $\xi_0$ agree on the backward end $\Delta_-$ of $\Emc$, then they must agree on its forward end $\Delta_+$ as well. 
\end{enumerate}

First, we prove ($\dagger$) when $\Emc=\Emc_{s,s+1}$. Recall that the number of vertices of the edges in $\Emc_{s,s+1}$ is finite. We can thus apply Proposition \ref{prop:eruption projective invariants}, Proposition \ref{prop:shearing projective invariants} and Proposition \ref{prop:Fock-Goncharov parametrization} to these vertices to deduce that the sequence of sextuples $\big\{(\xi_j(\Delta_-),\xi_j(\Delta_+)\big\}_{j=1}^\infty$ converges to $(\xi_0(\Delta_-),\xi_0(\Delta^+))$ up to projective transformations. This immediately implies ($\dagger$).

Next, we prove ($\dagger$) when $\Emc=\Emc_s$. We orient the unique non-isolated edge $e_s$ in $\Emc_s$ such that $x_1$ and $y$ lie to the left and right of $e_s$ respectively. Let $r_1$ and $r_2$ be respectively the backward and forward endpoints of $e_s$ equipped with its orientation, and let $\{f_1,\dots,f_n\}$ be a basis of $V$ such that $f_i$ lies in $\xi^{(i)}(r_1)\cap\xi^{(n-i+1)}(r_2)$. Let $J=\{T_1,T_2\}$ be a bridge across $e_s$, $T_1$ and $T_2$ lie to the left and right of $\mathbf r:=(r_1,r_2)$ respectively. The same argument that we used above proves ($\dagger$) when $e_s$ is the maximum of $\Emc_s$, in which case $s=k$ and $\Emc_{k,k+1}$ is empty, and also that 
\begin{itemize}
\item $\lim_{j\to\infty}\xi_j(\Delta_-)=\xi_0(\Delta_-)$ if and only if $\lim_{j\to\infty}\xi_j(T_1)=\xi_0(T_1)$,
\item $\lim_{j\to\infty}\xi_j(\Delta_+)=\xi_0(\Delta_+)$ if and only if $\lim_{j\to\infty}\xi_j(T_2)=\xi_0(T_2)$.
\end{itemize}
Thus, we may assume that $e_s$ is not the maximum of $\Emc_s$, and it is sufficient to prove that if $\lim_{j\to\infty}\xi_j(T_1)=\xi_0(T_1)$, then $\lim_{j\to\infty}\xi_j(T_2)=\xi_0(T_2)$. To prove this, we recall the following notation.

For $m=1$, $2$, let $p_m$ (resp. $q_m$) be the vertex of the edge $e_s$ that is (resp. is not) a vertex of $T_m$, and let $\gamma_m$ be the primitive group element in $\Gamma$ with $p_m$ and $q_m$ as its repelling and attracting fixed points respectively. Recall that the collection of ideal triangles $\{\dots,T_{m,-1},T_{m,0},T_{m,1},T_{m,2},\dots\}$, specified in Notation \ref{not:closed edge} all share $p_m$ as a common vertex, $T_{m,1}=T_m$, and $T_{m,h}$ shares a common edge $e_{m,h}$ with $T_{m,h+1}$. Also, $H_m$ is the positive integer such that the closed edge subset $\widetilde{\Theta}(J,T_m)$ (Definition \ref{def:closed edge subset}) is given by $\widetilde{\Theta}(J,T_m)=\{T_{m,1},T_{m,2},\dots,T_{m,H_m}\}$ (see Figure \ref{fig:closededge}). Let $z_{m,h}$ denote the vertex of $e_{m,h}$ that is not $p_m$, set $\mathbf p_{m,h}:=(p_m,z_{m,h})$, and set
\[\mathbf t_{m,h}:=\left\{\begin{array}{ll}
(p_m,z_{m,h-1},z_{m,h})&\text{if }p_m=r_m;\\
(p_m,z_{m,h},z_{m,h-1})&\text{if }p_m=r_{3-m}.
\end{array}
\right.\] 

Let $\hat{\xi}_j$ be the representative of $\left(\phi^\mu_{M_j}\right)_t[\xi]$ such that $\hat{\xi}_j(p_1)=\xi(p_1)$, $\hat{\xi}_j(z_{1,0})=\xi( z_{1,0})$, and $\hat{\xi}_j^{(1)}(z_{1,1})=\xi^{(1)}(z_{1,1})$. By Theorem \ref{thm:unipotent closed edge}(1),
\begin{eqnarray*}
\lim_{j\to\infty}\hat{\xi}_j(q_1)&=&u_{1,\infty}(t)\cdot\xi(q_1),\\
\lim_{j\to\infty}\hat{\xi}_j(z_{2,0})&=&u_{1,\infty}(t)u_{2,\infty}(t)^{-1}\cdot\xi( z_{2,0}),\\
\lim_{j\to\infty}\hat{\xi}_j^{(1)}(z_{2,1})&=&u_{1,\infty}(t)u_{2,\infty}(t)^{-1}\cdot\xi^{(1)}(z_{2,1}).
\end{eqnarray*}
for some unipotent elements $u_{1,\infty}(t)$ and $u_{2,\infty}(t)$ in $\PGL(V)$ that fix $\xi(p_1)$ and $\xi(p_2)$ respectively. In particular, Lemma \ref{lem:new invariant 1} implies that for all pairs of positive integers $\mathbf k:=(k_1,k_2)$ that sum to $n$, $\lim_{j\to\infty}\alpha^{\mathbf k}_{\mathbf r}[\xi_j]=\alpha^{\mathbf k}_{\mathbf r}[\xi]=\alpha^{\mathbf k}_{\mathbf r}[\xi_0]$.

By Proposition \ref{prop:Fock-Goncharov parametrization}, Proposition \ref{prop:eruption projective invariants} and Proposition \ref{prop:shearing projective invariants}, we see that for $m=1$, $2$, there is a projective transformation $h_m$ in $\PGL(V)$ such that $h_m\cdot\lim_{j\to\infty}\xi_j(p_{m})=\xi_0(p_{m})$ and
\[h_m\cdot \lim_{j\to\infty}\xi_j(\gamma_m^d\cdot z_{m,0})=\xi_0(\gamma_m^d\cdot z_{m,0})\] 
 and all integers $d$. By Theorem \ref{thm:unipotent closed edge}(2), we have
\begin{eqnarray*}
h_m\cdot\lim_{j\to\infty}\xi_j(q_m)&=&\lim_{d\to\infty}\lim_{j\to\infty}\xi_j(\gamma_m^d\cdot z_{m,0})\\
&=&\lim_{d\to\infty}\xi_0(\gamma_m^d\cdot z_{m,0})\\
&=&\xi_0(q_m).
\end{eqnarray*}
In particular, $h_m\cdot\lim_{j\to\infty}\xi_j(r_m)=\xi_0(r_m)$ for both $m=1$, $2$. 

The assumption that $\lim_{j\to\infty}\xi_j(T_1)=\xi_0(T_1)$, together with Remark \ref{rem:transitive}, implies that $h_1=\id$, i.e. $\lim_{j\to\infty}\xi_j(r_m)=\xi_0(r_m)$ for $m=1$, $2$ and $\lim_{j\to\infty}\xi_j(\gamma_1^d\cdot z_{1,0})=\xi_0(\gamma_1^d\cdot z_{1,0})$ for all non-negative integers $d$. In particular, $h_2$ fixes both $\lim_{j\to\infty}\xi_j(p_2)=\xi_0(p_2)$ and $\lim_{j\to\infty}\xi_j(q_2)=\xi_0(q_2)$. 

Now, since $\lim_{j\to\infty}\alpha^{\mathbf k}_{\mathbf r}[\xi_j]=\alpha^{\mathbf k}_{\mathbf r}[\xi_0]$ for all pairs of positive integers $\mathbf k:=(k_1,k_2)$ that sum to $n$, Lemma \ref{lem:new invariant 0} implies that there is a unipotent projective transformation $u$ that fixes $\xi_0(p_2)$, sends $\lim_{j\to\infty}\xi_j(z_{2,0})$ to $\xi_0(z_{2,0})$, and sends $\lim_{j\to\infty}\xi_j^{(1)}(z_{2,1})$ to $\xi_0^{(1)}(z_{2,1})$. It follows from Remark \ref{rem:transitive} that $u=h_2$, so $h_2=u=\id$. Thus, $\lim_{j\to\infty}\xi_j(T_2)=\xi_0(T_2)$.
\end{proof} 

\subsection{Closed-form formulas for the $(\Tmc,\Jmc)$-parallel flows}\label{sec:closedform}
Fix an ideal triangulation $\Tmc$ of $S$ and a compatible bridge system $\Jmc$. Let $\Theta$ denote the set of ideal triangles of $\Tmc$. Also, fix a vector $\mu$ in $W_\Tmc$ (see Notation \ref{not:polytope notation}). Recall that $\phi_t^\mu$ denotes the $(\Tmc,\Jmc)$-parallel flow associated to the $\mu$. Our proof of Theorem \ref{thm:main theorem} allows us to describe $\phi_t^\mu$ explicitly, in the sense of the following pair of propositions.

Recall that $\Amc$ is the set of ordered pairs of positive integers that sum to $n$, $\Bmc$ is the set of ordered triples of positive integers that sum to $n$. Also, recall that for all triples of positive integers that sum to $n$ and all cyclically ordered triples $\mathbf x:=(x_1,x_2,x_3)$ in $\partial\Gamma$, $a^{\mathbf i}_{\xi(\mathbf x)}$ is the projective transformation defined by (\ref{eqn:a computation}) in Section \ref{sec:elementaryeruption}. Similarly, for all pairs of positive integers that sum to $n$ and all distinct pairs $\mathbf r:=(r_1,r_2)$ in $\partial\Gamma$, $c^{\mathbf k}_{\xi(\mathbf r)}$ is the projective transformation defined by (\ref{eqn:c computation}) in Section \ref{sec:elementaryshearing} respectively. 

\begin{prop}\label{prop:secondpaper1}
Let $\mathbf x:=(x_1,x_2,x_3)$ and $\mathbf y:=(y_1,y_2,y_3)$ be cyclically ordered triples of points in $\partial\Gamma$ such that $\{x_1,x_2,x_3\}$ and $\{y_1,y_2,y_3\}$ are ideal triangles in $\widetilde{\Theta}$. Let $\xi$ be a Frenet curve whose projective class lies in $\Hit_V(S)$, let $t\in I_{[\xi],\mu}$, and let $\xi_t$ be the representative of $\phi^\mu_t[\xi]$ such that $\left(\xi(x_1),\xi(x_2),\xi^{(1)}(x_3) \right)=\left(\xi_t(x_1),\xi_t(x_2),\xi_t^{(1)}(x_3) \right)$.
\begin{enumerate}
\item If $x_1=y_2$ and $x_2=y_1$, let $\mathbf p:=(y_1,y_2)$. Then
\[\left(\xi_t(y_1),\xi_t(y_2),\xi_t^{(1)}(y_3)\right)=\prod_{\mathbf k\in\Amc}c^{\mathbf k}_{\xi(\mathbf p)}\left(-\mu^{\mathbf k}_{\mathbf p}\cdot t\right)\cdot\left(\xi(y_1),\xi(y_2),\xi^{(1)}(y_3)\right).\]
\item If $x_2=y_1$, $x_3=y_2$ and $x_1=y_3$, then 
\[\left(\xi_t(y_1),\xi_t(y_2),\xi_t^{(1)}(y_3)\right)=\prod_{\mathbf i\in\Bmc}a^{\mathbf i}_{\xi(\mathbf y)}\left(\mu^{\mathbf i}_{\mathbf y}\cdot t\right)\cdot\left(\xi(y_1),\xi(y_2),\xi^{(1)}(y_3)\right)\]
where the product is taken in decreasing order of $i_2$. This makes sense because $a^{\mathbf i}_{\xi(\mathbf y)}$ and $a^{\mathbf j}_{\xi(\mathbf y)}$ commute if $i_2=j_2$, $\mathbf i:=(i_1,i_2,i_3)$ and $\mathbf j:=(j_1,j_2,j_3)$.
\end{enumerate}
Furthermore, if $\{f_1,\dots,f_n\}$ is a basis for $V$ such that $f_i$ lies in $\xi^{(i)}(x_1)\cap\xi^{(n-i+1)}(x_2)$ for all $i=1$, $\dots$, $n-1$, then
\[\prod_{\mathbf k\in\Amc}c^{\mathbf k}_{\xi(\mathbf p)}\left(\mu^{\mathbf k}_{\mathbf p}\cdot t\right)\,\,\,\text{ and }\,\,\,\prod_{\mathbf i\in\Bmc}a^{\mathbf i}_{\xi(\mathbf x)}\left(\mu^{\mathbf i}_{\mathbf x}\cdot t\right)\]
can be written as matrices whose entries have explicit algebraic formulas in terms of the coordinates of the vector $e^{t\mu}$.
\end{prop}

\begin{proof}
This follows from a computation using Lemma \ref{lem:other eruption} and Lemma \ref{lem:other shearing}.
\end{proof}

The second proposition, Proposition \ref{prop:secondpaper2}, is the analogous statement for bridges. 

\begin{prop}\label{prop:secondpaper2}
Let $\{r_1,r_2\}$ be an edge in $\widetilde{\Tmc}$ and let $J=\{T_1,T_2\}$ be any bridge across $\{r_1,r_2\}$ such that $T_1$ and $T_2$ lie to the left and right of $\mathbf r:=(r_1,r_2)$ respectively. For $m=1$, $2$, let $p_m$ (resp. $q_m$) be the vertex of the edge $\{r_1,r_2\}$ that is (resp. is not) a vertex of $T_m$, and let $z_m$ and $w_m$ be the other two vertices of $T_m$ such that either $(p_m,z_m,w_m,q_m)$ or $(q_m,w_m,z_m,p_m)$ is cyclically ordered (see Figure \ref{fig:pq}). Let $\xi$ be a Frenet curve whose projective class lies in $\Hit_V(S)$, let $t\in I_{[\xi],\mu}$, and let $\xi_t$ be the representative of $\phi^\mu_t[\xi]$ such that $\left(\xi_t(p_1),\xi_t(z_1),\xi_t^{(1)}(w_1)\right)=\left(\xi(p_1),\xi(z_1),\xi^{(1)}(w_1)\right)$. Then
\[\left(\xi_t(p_2),\xi_t(z_2),\xi_t^{(1)}(w_2)\right)=u_{1,\infty}(t)\cdot c_{\xi(\mathbf r)}(t)^{-1}\cdot u_{2,\infty}(t)^{-1}\cdot\left(\xi(p_2),\xi(z_2),\xi^{(1)}(w_2)\right),\]
where $u_{m,\infty}(t)$ is the unipotent projective transformation in $\PGL(V)$ defined by (\ref{eqn:unipotent}) in Theorem \ref{thm:unipotent closed edge} and $c_{\xi(\mathbf r)}(t)$ is the projective transformation defined by (\ref{eqn:short}) in Lemma \ref{lem:o2}. Furthermore, if $\{f_1,\dots,f_n\}$ is a basis for $V$ such that $f_i$ lies in $\xi^{(i)}(r_1)\cap\xi^{(n-i+1)}(r_2)$ for all $i=1$, $\dots$, $n-1$, then $u_{1,\infty}(t)\cdot c_{\xi(\mathbf r_2)}(t)\cdot u_{2,\infty}(t)^{-1}$ can be written as matrices whose entries have explicit algebraic formulas in terms of the coordinates of the vector $e^{t\mu}$.
\end{prop}

\begin{proof}[Proof of Proposition \ref{prop:secondpaper2}]
Let $\overline{\xi}_t$ be the representative of $\left(\phi^\mu_{\Qmc,\Theta}\right)_t[\xi]$ such that $\overline{\xi}_t(p_1)=\xi(p_1)$, $\overline{\xi}_t(z_1)=\xi(z_1)$ and $\overline{\xi}_t^{(1)}(w_1)=\xi^{(1)}(w_1)$. Then observe that 
\begin{align*}
\xi_t(p_2)&=c_{\overline{\xi}_t(\mathbf r)}(t)^{-1}\cdot \overline{\xi}_t(p_2),\\ 
\xi_t(z_2)&=c_{\overline{\xi}_t(\mathbf r)}(t)^{-1}\cdot \overline{\xi}_t(z_2),\text{ and }\\
\xi_t^{(1)}(w_2)&=c_{\overline{\xi}_t(\mathbf r)}(t)^{-1}\cdot \overline{\xi}_t^{(1)}(w_2).
\end{align*}
By Theorem \ref{thm:unipotent closed edge}(1), $\overline{\xi}_t(r_1)=u_{1,\infty}(t)\cdot \xi(r_1)$ and $\overline{\xi}_t(r_2)=u_{1,\infty}(t)\cdot \xi(r_2)$, so 
\[c_{\overline{\xi}_t(\mathbf r)}(t)^{-1}=u_{1,\infty}(t)c_{\xi(\mathbf r)}(t)^{-1}u_{1,\infty}(t)^{-1}.\] 
At the same time, Theorem \ref{thm:unipotent closed edge} implies that
\begin{align*}
\overline{\xi}_t(p_2)&=u_{1,\infty}(t)\cdot \xi(p_2)=u_{1,\infty}(t)u_{2,\infty}(t)^{-1}\cdot \xi(p_2),\\ 
\overline{\xi}_t(w_2)&=u_{1,\infty}(t)u_{2,\infty}(t)^{-1}\cdot\xi(w_2),\text{ and }\\
\overline{\xi}_t^{(1)}(z_2)&=u_{1,\infty}(t)u_{2,\infty}(t)^{-1}\cdot\xi^{(1)}(z_2).
\end{align*}
This proves that
\[\left(\xi_t(p_2),\xi_t(z_2),\xi_t^{(1)}(w_2)\right)=u_{1,\infty}(t)\cdot c_{\xi(\mathbf r)}(t)^{-1}\cdot u_{2,\infty}(t)^{-1}\cdot\left(\xi(p_2),\xi(z_2),\xi^{(1)}(w_2)\right).\]

From the proof of Theorem \ref{thm:unipotent closed edge}, we see that (\ref{eqn:Vijd}) in the proof of Lemma \ref{lem:matrix} gives an explicit algebraic formula for the entries of $u_{m,\infty}(t)$, and hence of 
\begin{equation}\label{eqn:explicit3}
u_{1,\infty}(t)\cdot c_{\xi(\mathbf r)}(t)^{-1}\cdot u_{2,\infty}(t)^{-1},
\end{equation}
in terms of the coordinates of $\Omega([\xi])$ and $e^{t\mu}$. 
\end{proof}

\section{Pants decompositions, flows and Darboux coordinates}\label{sec:Goldman}
In this section we consider a particular ideal triangulation and a particular compatible bridge system, that are subordinate to a pants decomposition of $S$. Using this fixed data, we specify a family of \emph{special} $(\Tmc,\Jmc)$-parallel flows on $\Hit_V(S)$. This family consists of $(2g-2)(n^2-1)=\dim(\Hit_V(S))$ flows (that necessarily pairwise commute) whose tangent fields form a global frame of the tangent bundle $T\Hit_V(S)$. Using this, we construct a particular coordinate system on $\Hit_V(S)$ whose coordinate functions are explicitly given in terms of the parametrization $\Omega=\Omega_{\Tmc,\Jmc}$ of the Hitchin component. In the case when $n=2$, this coordinate system agrees with the Fenchel-Nielsen coordinates on Teichm\"uller space (up to scaling). 

In the upcoming companion paper \cite{SunZhang}, it is shown that at every point in $\Hit_V(S)$, the tangent fields to the special $(\Tmc,\Jmc)$-parallel flows give a Darboux basis of the tangent space to $\Hit_V(S)$ with respect to the Goldman symplectic structure. It follows that the coordinate system we define is a global Darboux coordinate system for $\Hit_V(S)$, and every coordinate function is the Hamiltonian function of a special $(\Tmc,\Jmc)$-parallel flow, see Section \ref{sec:Hamiltonian} for more details. 

There are four different types of special $(\Tmc,\Jmc)$-parallel flows: the families of $n-1$ {\em twist flows} and of $n-1$ {\em length flows} that are associated to simple closed curves in the pants decomposition, and the families of $\frac{(n-1)(n-2)}{2}$ {\em eruption flows} and of $\frac{(n-1)(n-2)}{2}$ {\em hexagon flows} that are associated to the pairs of pants. These are explained in more detail in Section \ref{sec:special}.

\subsection{Triangulation subordinate to a pants decomposition}\label{sec:special1}
We specify the ideal triangulation $\Tmc$ and the compatible bridge system $\Jmc$ subordinate to a pair of pants decompositions of $S$. 
Let us fix an auxiliary hyperbolic structure on $S$. Let $\Pmc$ be a family of $3g-3$ pairwise non-intersecting simple closed geodesics on $S$. They define a decomposition of $S$ into $2g-2$ pairs of pants. We denote by $\Pbbb$ the set of these $2g-2$ pairs of pants. For each pair of pants $P$ in $\Pbbb$, we choose peripheral group elements $\alpha_{P,1},\alpha_{P,2},\alpha_{P,3}$ in $\pi_1(P)$ such that $\alpha_{P,1}\alpha_{P,2}\alpha_{P,3}=\id$, and $P$ lies to the right of its boundary components, oriented according to $\alpha_{P,1}$, $\alpha_{P,2}$ and $\alpha_{P,3}$. 

By choosing base points, the inclusion of $P$ into $S$ induces an inclusion of $\pi_1(P)$ into $\Gamma$, so we can view $\alpha_{P,1}$, $\alpha_{P,2}$ and $\alpha_{P,3}$ as group elements in $\Gamma$. 
We can then define a $\Gamma$-invariant ideal triangulation of the universal cover of $S$ by 
\[\widetilde{\Tmc}:=\bigcup_{P\in\Pbbb}\bigcup_{m=1}^3\Gamma\cdot\big\{\{\alpha_{P,m}^-,\alpha_{P,m}^+\},\{\alpha_{P,m}^-,\alpha_{P,m+1}^-\}\big\},\]
where $\gamma^-,\gamma^+$ denote the repelling and attracting fixed points of the group element $\gamma$ in $\Gamma$ respectively, and arithmetic in the subscripts are done modulo $3$. Then $\Tmc:=\widetilde{\Tmc}/\Gamma$ is an ideal triangulation. The set of non-isolated (equivalently closed) edges of $\Tmc$ is exactly $\Pmc$. The set of ideal triangles $\widetilde{\Theta}$ of the triangulation $\widetilde{\Tmc}$ is given by 
\begin{equation}\label{eqn:pants triangles}
\widetilde{\Theta}=\bigcup_{P\in\Pbbb}\Gamma\cdot\big\{\{\alpha_{P,1}^-, \alpha_{P,2}^-, \alpha_{P,3}^-\},\;\{\alpha_{P,1}^-, \alpha_{P,3}^-, \alpha_{P,3}\cdot\alpha_{P,2}^-\}\big\}.
\end{equation}

We fix a bridge system $\Jmc$ compatible with $\Tmc$, such that both endpoints of all the bridges in $\widetilde{\Jmc}$ lie in the ideal triangles in $\widetilde{\Theta}$ of the form $\gamma\cdot\{\alpha_{P,1}^-, \alpha_{P,2}^-, \alpha_{P,3}^-\}$ for some $P$ in $\Pbbb$ and some $\gamma$ in $\Gamma$.

For the rest of this article, we assume that $(\Tmc,\Jmc)$ is of this type. Note that the ideal triangulation and the bridge system do not depend on the auxiliary hyperbolic metric on $S$.

\begin{figure}[ht]
\centering
\includegraphics[scale=0.5]{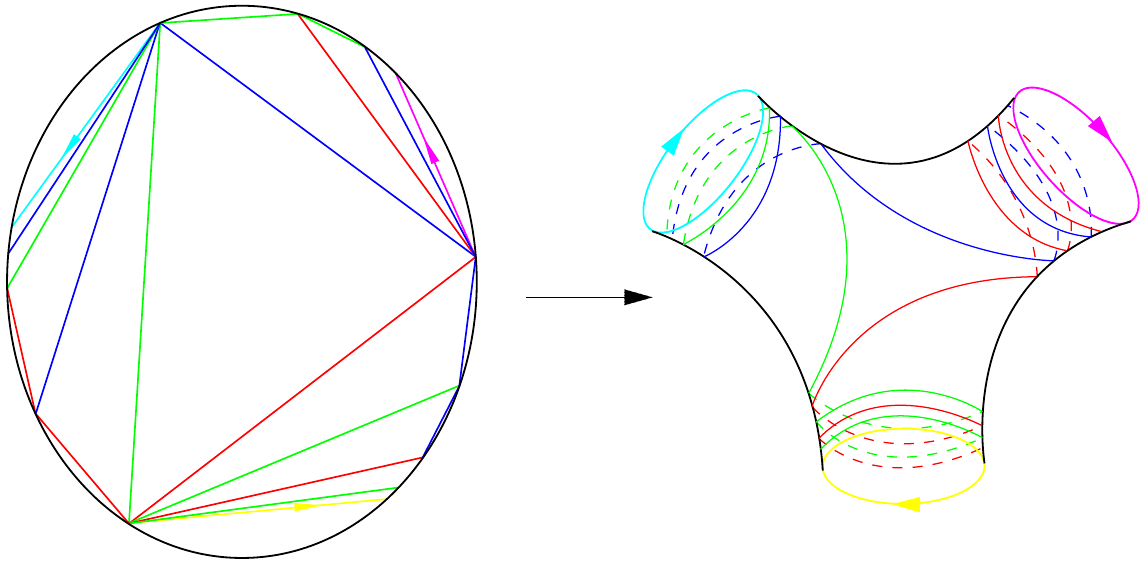}
\small
\caption{This figure shows the ideal triangulation of a single pair of pants and its lift to the universal covering.}\label{fig:triangulation}
\end{figure}

\subsection{The special flows}\label{sec:special}
Next, we describe the special flows associated to $(\Tmc,\Jmc)$.

Recall that we identify the tangent space to $\Hit_V(S)$ with 
\[W=W_\Tmc\subset\Rbbb^{(9g-9)(n-1)}\times\Rbbb^{(2g-2)(n-1)(n-2)},\] 
the linear subspace cut out by the $(3g-3)(n-1)$ closed leaf equalities (see Section \ref{sec:tangent}). We denote an arbitrary vector $\mu$ in $W$ by
\[\begin{array}{l}
\mu=\left(\left(\mu^{\mathbf k}_{\mathbf r}\right)_{\mathbf k\in\Amc;\mathbf z\in\widehat{\Tmc}},\left(\mu^{\mathbf i}_{\mathbf x}\right)_{\mathbf i\in\Bmc;\mathbf x\in\widehat{\Theta}}\right),
\end{array}\]
where $\Amc$ is the set of ordered pairs of positive integers that sum to $n$, $\Bmc$ is the set of ordered triples of positive integers that sum to $n$, $\widehat{\Tmc}$ is a set of oriented representatives for each $\Gamma$-orbit in $\widetilde{\Tmc}$, and $\widehat{\Theta}$ is the set of cyclically ordered representatives for each $\Gamma$-orbit in $\widetilde{\Theta}$ (see Notation \ref{not:index_set}). Recall that 
\[\mu^{\mathbf k_1}_{\gamma\cdot \mathbf r_1}=\mu^{\mathbf k_1}_{\mathbf r_1}=\mu^{\mathbf k_2}_{\mathbf r_2},\] 
for any group element $\gamma$ in $\Gamma$, and any $\mathbf k_m:=(k_m,k_{m+1})$ and $\mathbf r_m:=(r_m,r_{m+1})$ for $m=1$, $2$. Similarly, 
\[\mu^{\mathbf i_1}_{\gamma\cdot \mathbf x_1}=\mu^{\mathbf i_1}_{\mathbf x_1}=\mu^{\mathbf i_2}_{\mathbf x_2}=\mu^{\mathbf i_3}_{\mathbf x_3}\]
for any group element $\gamma$ in $\Gamma$, and $\mathbf i_m:=(i_m,i_{m+1},i_{m-1})$ and $\mathbf x_m:=(x_m,x_{m+1},x_{m-1})$ for all $m=1$, $2$, $3$.

In the previous sections, we split the parameters of the parametrization $\Omega_{\Tmc,\Jmc}$ into two types, parameters associated to edges in $\Tmc$, and parameters associated to ideal triangles in $\Theta$. 
Here, where we construct flows associated to the non-isolated edges (simple closed curves) in $\Pmc$ on the one hand, and to the pair of pants in $\Pbbb$ on the other hand, it is more convenient to split the parameters in a different way. Namely into those that are associated to non-isolated edges, and those that are associated to ideal triangles in $\Theta$. For this note that any isolated edge of $\Tmc$ is the side of an ideal triangle in $\Theta$. Therefore a convenient way is to consider $\overline{\Bmc}$, the set of ordered triples $\mathbf j:=(j_1,j_2,j_3)$ of non-negative integers that sum to $n$, such that $0\leq j_1,j_2, j_3\leq n-1$.

We can then denote an arbitrary vector $\mu$ in $W$ by
\[\begin{array}{l}
\mu=\left(\left(\mu^{\mathbf k}_{\mathbf r}\right)_{\mathbf k\in\Amc;\mathbf z\in\widehat{\Pmc}},\left(\mu^{\mathbf j}_{\mathbf x}\right)_{\mathbf j\in\overline{\Bmc};\mathbf x\in\widehat{\Theta}}\right).
\end{array}\]
So, for example if $\mathbf{j} = (0, j_2, j_3)$, then $\mu^{\mathbf j}_{\mathbf x} = \mu^{(j_2,j_3)}_{(x_2,x_3)}$ is the parameter associated to the isolated edge represented by $(x_2,x_3)$.

We now first introduce the eruption and the hexagon flow associated to a pair of pants $P$ in $\Pbbb$. 

For every pair of pants $P$ in $\mathbb{P}$, we need a way to label the two ideal triangles in which it is cut. For this, set $x_m:=\alpha_{P,m}^-$ for $m=1$, $2$, $3$, and let $y_1:=\alpha_{P,1}^-$, $y_2:=\alpha_{P_3}^-$, $y_3:=\alpha_{P,3}\cdot\alpha_{P,2}^-$ in $\partial\Gamma$, where $\alpha_{P,1}$, $\alpha_{P,2}$ and $\alpha_{P,3}$ are the group elements in $\pi_1(P)$ corresponding to the peripheral curves in $P$ as described in Section \ref{sec:special1}. Observe that
\begin{itemize}
\item $x_1=y_1<x_2<x_3=y_2<y_3<x_1=y_1$,
\item $[x_1,x_2,x_3]$ and $[y_1,y_2,y_3]$ are the two ideal triangles that lie in $P$,
\item if $\{T_1,T_2\}$ is a bridge in $\widetilde{\Jmc}$ across a lift of a boundary curve of $P$, and $[T_1]$ is the triangle in $P$, then $[T_1]=[x_1,x_2,x_3]$.
\end{itemize}

Note that the conjugacy classes $[\alpha_{P,1}]$, $[\alpha_{P,2}]$, $[\alpha_{P,3}]$ are naturally in bijection with the three boundary components $c_1,c_2,c_3$ of $P$, equipped with the orientation so that $P$ lies to the right of each boundary component. Therefore, specifying an order on $\{c_1,c_2,c_3\}$ induces an order on $\{x_1,x_2,x_3\}$.

\begin{definition}
Let $c_1,c_2,c_3$ be the non-isolated edges in $\Pmc$ that are the three boundary components of $P$. An order on $\{c_1,c_2,c_3\}$ is said to be \emph{cyclic} if the induced order on the vertices of the ideal triangle $\{x_1,x_2,x_3\}$ is cyclically ordered.
\end{definition}

The only cyclic orders on $\{c_1,c_2,c_3\}$ are $(c_m,c_{m+1},c_{m-1})$ for $m=1$, $2$, $3$.

We set $\mathbf{P}:=(P,c_1,c_2,c_3)$, where $P$ is a pair of pants in $\Pbbb$, and $(c_1,c_2,c_3)$ is a cyclic order on the boundary components of $P$. We then have a cyclic order 
$\mathbf x:=(x_1,x_2,x_3)$ on the triangle $[x_1,x_2,x_3]$. We will refer to any such choice of $\mathbf x$ as a triple \emph{associated} to $\mathbf P$.

\begin{figure}[ht]
\centering
\includegraphics[scale=0.5]{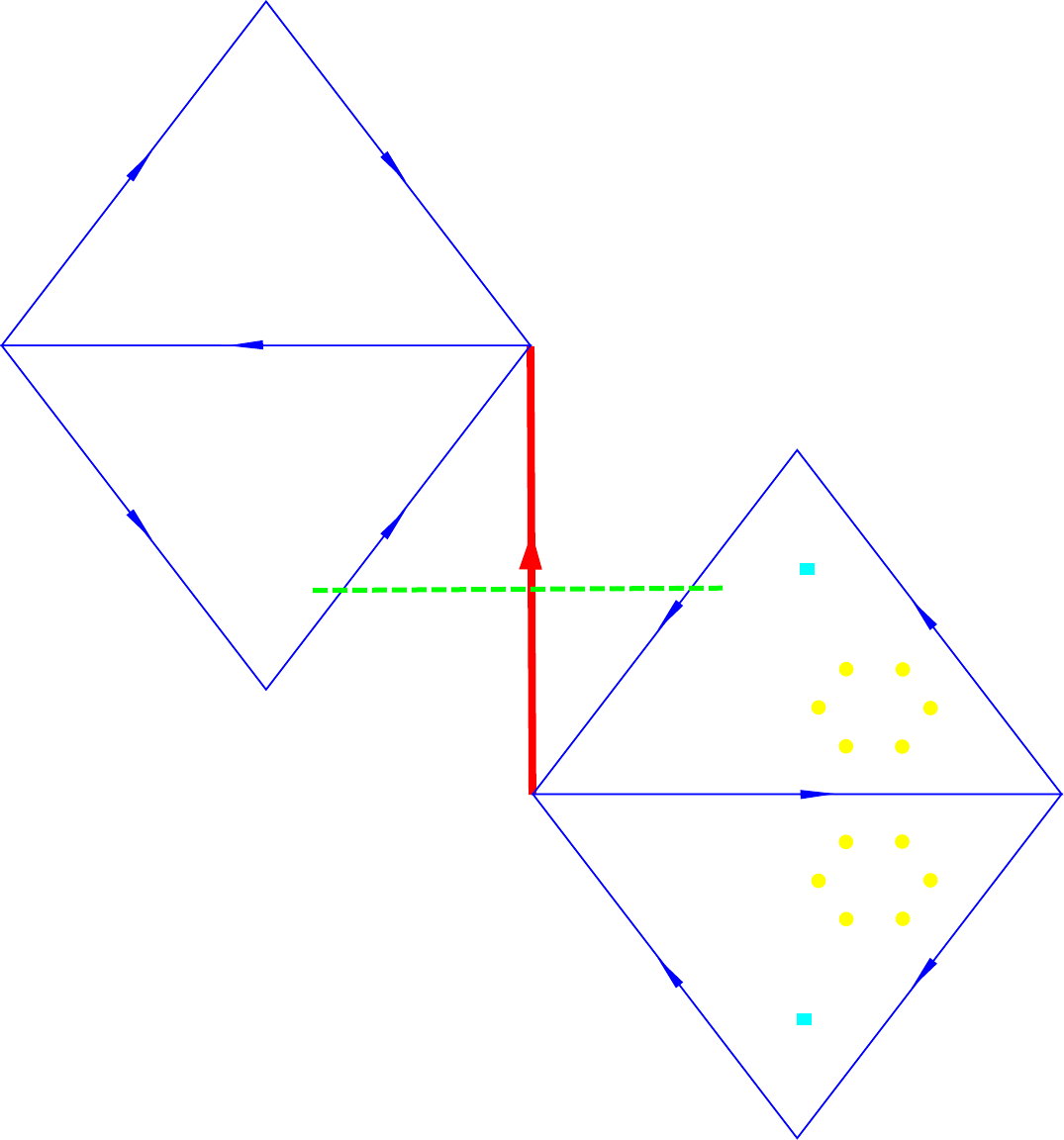}
\footnotesize
\put (-158, 84){$x_1=y_1$}
\put (-70, 171){$x_2$}
\put (1, 83){$x_3=y_2$}
\put (-69, -5){$y_3$}
\put (-66, 146){$\frac{1}{2}$}
\put (-70, 20){$-\frac{1}{2}$}
\put (-55, 119){$1$}
\put (-46, 119){$-1$}
\put (-55, 88){$1$}
\put (-46, 88){$-1$}
\put (-31, 103){$1$}
\put (-72, 103){$-1$}
\put (-55, 76){$1$}
\put (-46, 76){$-1$}
\put (-55, 46){$1$}
\put (-46, 46){$-1$}
\put (-31, 61){$1$}
\put (-72, 61){$-1$}
\caption{A non-isolated edge of $\widetilde{\Pmc}$ is drawn in red (thick), a bridge in $\widetilde{\Jmc}$ across the non-isolated edge is drawn in green (dotted), and isolated edges are draw in blue (thin). Each colored dot in an ideal triangle represents an ordered triple of positive integers that sum to $n$. The picture gives a diagramatic representation for the eruption flows (turquoise squares) and hexagon flows (yellow discs). The numbers above each of the colored dots are the corresponding coordinates of the vector $\mu$ in $W$.}\label{fig:specialadmissiblelabellingsEH}
\end{figure}

\begin{notation}\label{not:overline}
For any triple of positive integers $\mathbf i:=(i_1,i_2,i_3)$ that sum to $n$, let $\overline{\mathbf i}:=(i_1,i_3,i_2)$.
\end{notation}

\begin{definition}
Let $\mathbf P$, $\mathbf x$, $\mathbf y$ be as above, and let $\mathbf i:=(i_1,i_2,i_3)$ be a triple of positive integers that sum to $n$. The $\mathbf i$-{\em eruption flow} associated to $\mathbf P$ is the $(\Tmc, \Jmc)$ parallel flow $\phi^\mu_t$ as defined in Definition \ref{def:parallelflow}, where $\mu$ is the vector in $W$ that satisfies
\begin{itemize}
\item for all pairs $\mathbf r:=(r_1,r_2)$ such that $\{r_1,r_2\}$ is a non-isolated edge in $\widetilde{\Tmc}$, and all pairs of positive integers $\mathbf l:=(l_1,l_2)$ that sum to $n$, 
\[\mu^{\mathbf l}_{\mathbf r}=0,\]
\item for all cyclically ordered triples $\mathbf t:=(t_1,t_2,t_3)$ such that $\{t_1,t_2,t_3\}$ is an ideal triangle in $\widetilde{\Theta}$, and all triples of integers $\mathbf j:=(j_1,j_2,j_3)$ that sum to $n$ and $0\leq j_1,j_2, j_3\leq n-1$,
\[\mu^{\mathbf j}_{\mathbf t}=\begin{cases}\displaystyle\frac{1}{2}&\text{if }\mathbf t=\mathbf x\text{ and }\mathbf j=\mathbf i;
\cr-\displaystyle\frac{1}{2}&\text{if }\mathbf t=\mathbf y\text{ and }\mathbf j=\overline{\mathbf i}; \cr0 &\text{otherwise.}\end{cases}.\]
\end{itemize}
(see Figure \ref{fig:specialadmissiblelabellingsEH}). Let $\mathcal{E}_{\mathbf P}^{\mathbf i}$ denote the tangent vector field of this flow. Collectively, we refer to the $\mathbf i$-eruption flows associated to $\mathbf P$ as the \emph{eruption flows associated to }the pair of pants $P$.
\end{definition}

It is straightforward to verify that $\mathcal{E}_{\mathbf P}^{\mathbf i}$ does not depend on the choices made.

More informally, the $\mathbf i$-eruption flow associated to $\mathbf P$ is the flow with the property that in time $t$, it increases $\tau^\mathbf i_\mathbf x$ by $\frac{1}{2}t$, decreases $\tau^{\overline{\mathbf i}}_\mathbf y$ by $\frac{1}{2}t$, and keeps all other coordinate functions of $\Omega$ constant. Observe that if we set $\mathbf i_m:=(i_m,i_{m+1},i_{m-1})$ and $\mathbf P_m:=(P,c_m,c_{m+1},c_{m-1})$ for $m=1$, $2$, $3$, then
$\mathcal{E}_{\mathbf P_1}^{\mathbf i_1}=\mathcal{E}_{\mathbf P_2}^{\mathbf i_2}=\mathcal{E}_{\mathbf P_3}^{\mathbf i_3}$. Thus, there are $(n-1)(n-2)(g-1)$ eruption flows, $\frac{(n-1)(n-2)}{2}$ for each pair of pants in $\Pbbb$. 

To define the hexagon flows, we use the following notation.

\begin{notation}\label{not:shift}
Let $\mathbf i:=(i_1,i_2,i_3)$ be a triple of positive integers that sum to $n$. For any triple of integers $(a_1,a_2,a_3)$ that sum to $0$ such that $-i_m\leq a_m\leq n-i_m-1$ for $m=1$, $2$, $3$, denote $\mathbf i(a_1,a_2,a_3):=(i_1+a_1,i_2+a_2,i_3+a_3)$.
\end{notation}

\begin{definition}
Let $\mathbf P$, $\mathbf x$, $\mathbf y$ be as above, and let $\mathbf i:=(i_1,i_2,i_3)$ be a triple of positive integers that sum to $n$. The $\mathbf i$-{\em hexagon flow} associated to $\mathbf P$ is the $(\Tmc, \Jmc)$-parallel flow $\phi^\mu_t$, where $\mu$ is the vector in $W$ that satisfies 
\begin{itemize}
\item for all pairs $\mathbf r:=(r_1,r_2)$ such that $\{r_1,r_2\}$ is a non-isolated edge in $\widetilde{\Tmc}$, and all pairs of positive integers $\mathbf l:=(l_1,l_2)$ that sum to $n$, 
\[\mu^{\mathbf l}_{\mathbf r}=0,\]
\item for all cyclically ordered triples $\mathbf t:=(t_1,t_2,t_3)$ such that $\{t_1,t_2,t_3\}$ is an ideal triangle in $\widetilde{\Theta}$, and all triples of integers $\mathbf j:=(j_1,j_2,j_3)$ that sum to $n$ and $0\leq j_1,j_2, j_3\leq n-1$,
\[\mu^{\mathbf j}_{\mathbf t}=\left\{\begin{array}{ll}
1&\begin{array}{l}\text{if }\mathbf t=\mathbf x \text{ and } \mathbf j=\mathbf i(0,1,-1),\; \mathbf i(-1,0,1)\text{ or }\mathbf i(1,-1,0);\end{array}\vspace{.2cm}\\
1&\begin{array}{l}\text{if }\mathbf t=\mathbf y \text{ and } \mathbf j=\overline{\mathbf i}(0,-1,1),\; \overline{\mathbf i}(1,0,-1)\text{ or }\overline{\mathbf i}(-1,1,0);\end{array}\vspace{.2cm}\\
-1&\begin{array}{l}\text{if }\mathbf t=\mathbf x\text{ and }\mathbf j=\mathbf i(0,-1,1),\; \mathbf i(1,0,-1)\text{ or }\mathbf i(-1,1,0);\end{array}\vspace{.2cm}\\
-1&\begin{array}{l}\text{if }\mathbf t=\mathbf y\text{ and }\mathbf j=\overline{\mathbf i}(0,1,-1),\; \overline{\mathbf i}(-1,0,1)\text{ or }\overline{\mathbf i}(1,-1,0);\end{array}\vspace{.2cm}\\
0&\hspace{.1cm}\text{otherwise.}
\end{array}\right.\]
\end{itemize}
(see Figure \ref{fig:specialadmissiblelabellingsEH}). Let $\mathcal{H}_{\mathbf P}^{\mathbf i}$ denote the tangent vector field of this flow. Collectively, we refer to the $\mathbf i$-hexagon flows associated to $\mathbf P$ as the \emph{hexagon flows associated to }the pair of pants $P$.
\end{definition}

Just like the eruption flows, observe that $\mathcal{H}_{\mathbf P}^{\mathbf i}$ does not depend on any of the choices made. Also, if we set $\mathbf i_m:=(i_m,i_{m+1},i_{m-1})$ and $\mathbf P_m:=(P,c_m,c_{m+1},c_{m-1})$ for $m=1$, $2$, $3$, then $\mathcal{H}_{\mathbf P_1}^{\mathbf i_1}=\mathcal{H}_{\mathbf P_2}^{\mathbf i_2}=\mathcal{H}_{\mathbf P_3}^{\mathbf i_3}$. Thus, there are $(g-1)(n-1)(n-2)$ hexagon flows, $\frac{(n-1)(n-2)}{2}$ for each pair of pants in $\Pbbb$. 

The following lemma is an easy consequence of the definition of the eruption and hexagon flows.

\begin{lem}\label{lem:pants_length}
Let $P$ be a pair of pants in $\Pbbb$. The eruption flows and the hexagon flows associated to $P$ do not change the representation restricted to $S\backslash P$ (up to conjugation). In particular they preserve the eigenvalues of the holonomy along all pants curves in $\Pmc$.
\end{lem}

\begin{remark}
 When $n=3$, there is only one eruption flow and one hexagon flow for each pair of pants $P$ in $\Pbbb$. In this case, any representation in the Hitchin component is the holonomy of a convex real projective structure on $S$. The eruption flow has a geometric realization which can be seen as changing certain gluing parameters of convex projective triangles. The hexagon flow can be realized as a bulging flow with respect to an edge of the triangulation that lies inside of $P$. We refer the reader to \cite{Wienhard_Zhang} for a geometric description of these flows. 
\end{remark}

Next, we define the twist flows and the length flows, which are associated to the pants curve in $\Pmc$. 

For any curve $c$ in $\mathcal{P}$, let $\mathbf c$ be $c$ equipped with an orientation, and let $P_1$ and $P_2$ be the pairs of pants that share $c$ as a common boundary component (possibly $P_1=P_2$), such that $P_1$ and $P_2$ lie to the right and left of $\mathbf c$ respectively. Choose an oriented, non-isolated edge $\mathbf d:=(x_{1,1},x_{1,2})$ in $\widetilde{\Tmc}$ that is a lift of $\mathbf c$. We may assume without loss of generality that the group element $\alpha_{P_m}$ in $\pi_1(P_m)$ has $x_{1,m}$ and $x_{1,m+1}$ as its repelling and attracting fixed point respectively, and that $\{T_1,T_2\}$ is a bridge in $\widetilde{\Jmc}$, where $T_m:=\{\alpha_{P_m}^-,\beta_{P_m}^-,\gamma_{P_m}^-\}$. Then let $x_{2,m}:=\beta_{P_m}^-$, $x_{3,m}:=\gamma_{P_m}^-$ and $y_{1,m}:=\alpha_{P_m}^-$, $y_{2,m}:=\gamma_{P_m}^-$, $y_{3,m}:=\gamma_{P_m}\cdot\beta_{P_m}^-$ in $\partial\Gamma$. Observe that 
\begin{itemize}
\item $x_{1,1}<x_{2,2}<x_{3,2}<x_{1,2}<x_{2,1}<x_{3,1}<x_{1,1}$,
\item $[x_{1,m},x_{2,m},x_{3,m}]$ and $[y_{1,m},y_{2,m},y_{3,m}]$ are the ideal triangles in $P_m$.
\end{itemize}
We will refer to any such choice of $\mathbf d$ as a pair \emph{associated} to $\mathbf c$.

\begin{definition}
Let $\mathbf c$ be an oriented simple closed curve in $\Pmc$ and $\mathbf d$ a pair associated to $\mathbf c$. Let $\mathbf k:=(k_1,k_2)$ be a pair of positive integers that sum to $n$. The $\mathbf k$-{\em twist flow} associated to $\mathbf c$ is the $(\Tmc, \Jmc)$-parallel flow $\phi^\mu_t$, where $\mu$ is the vector in $W$ that satisfies 
\begin{itemize}
\item for all pairs $\mathbf r:=(r_1,r_2)$ such that $\{r_1,r_2\}$ is a non-isolated edge in $\widetilde{\Tmc}$, and all pairs of positive integers $\mathbf l:=(l_1,l_2)$ that sum to $n$, 
\[\mu^{\mathbf l}_{\mathbf r}=\begin{cases}\displaystyle-\frac{1}{2}&\mathbf l=\mathbf k\text{ and }\mathbf r=\mathbf d;\cr0 &\text{otherwise}\end{cases}\] 
\item for all cyclically ordered triples $\mathbf t:=(t_1,t_2,t_3)$ such that $\{t_1,t_2,t_3\}$ is an ideal triangle in $\widetilde{\Theta}$, and all triples of integers that sum to $n$ and $0\leq j_1,j_2, j_3\leq n-1$, 
\[\mu^{\mathbf j}_{\mathbf t}=0,\] 
(see Figure \ref{fig:specialadmissiblelabellingsSY}).
\end{itemize} 
Let $\mathcal{S}_{\mathbf c}^\mathbf k$ denote the tangent vector field of this flow. We collectively refer to the $\mathbf k$-twist flows associated to $\mathbf c$ as the \emph{twist flows associated to} the simple closed curve $c$ in $\Pmc$.
\end{definition}

\begin{figure}[ht]
\centering
\includegraphics[scale=0.5]{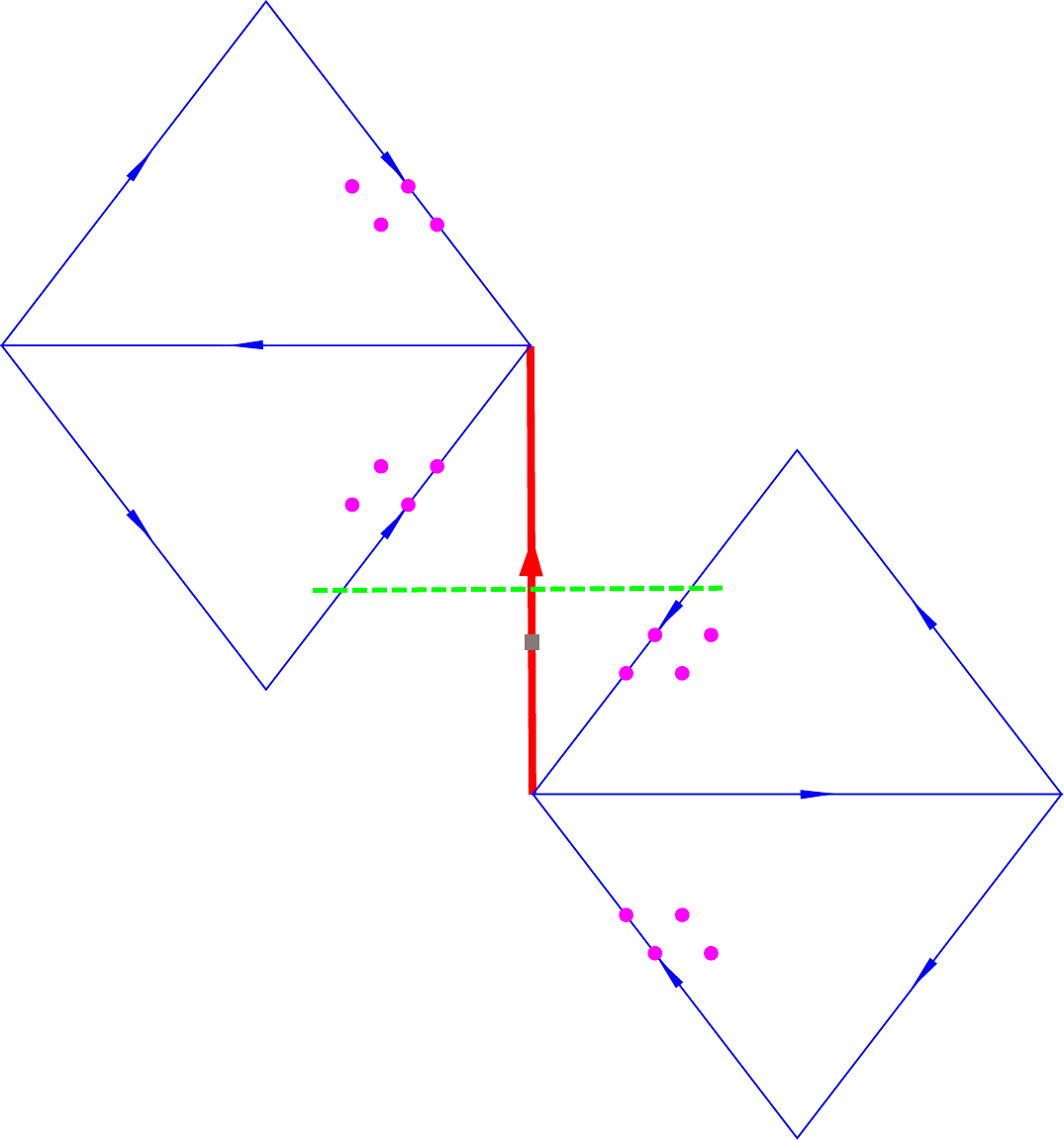}
\footnotesize
\put (-170, 84){$x_{1,1}=y_{1,1}$}
\put (-70, 171){$x_{2,1}$}
\put (1, 83){$x_{3,1}=y_{2,1}$}
\put (-69, -5){$y_{3,1}$}
\put (-129, 192){$x_{1,2}=y_{1,2}$}
\put (-199, 105){$x_{2,2}$}
\put (-300, 192){$x_{3,2}=y_{2,2}$}
\put (-199, 281){$y_{3,2}$}
\put (-145, 121){$-\frac{1}{2}$}
\put (-105, 122){$1$}
\put (-84, 122){$-1$}
\put (-119, 112){$-1$}
\put (-92, 112){$1$}
\put (-105, 44){$1$}
\put (-84, 44){$-1$}
\put (-119, 53){$-1$}
\put (-92, 53){$1$}
\put (-172, 163){$1$}
\put (-151, 163){$-1$}
\put (-185, 153){$-1$}
\put (-158, 153){$1$}
\put (-172, 221){$1$}
\put (-151, 221){$-1$}
\put (-185, 231){$-1$}
\put (-158, 231){$1$}
\caption{A non-isolated edge of $\widetilde{\Pmc}$ is drawn in red (thick), a bridge in $\widetilde{\Jmc}$ across the non-isolated edge is drawn in green (dotted), and isolated edges are draw in blue (thin). Each colored dot in an ideal triangle represents an ordered triple of positive integers that sum to $n$, and each dot along an edge represents an ordered pair of integers that sum to $n$. The picture gives a diagramatic representation for the lozenge flows (purple discs) and twist flows (grey squares). The numbers above each of the colored dots are the corresponding coordinates of the vector $\mu$ in $W$. 
}\label{fig:specialadmissiblelabellingsSY}
\end{figure}

Observe that $\mathcal{S}_{\mathbf c}^{\mathbf k}$ does not depend on any of the choices made. 

If we let $\mathbf c_1$ and $\mathbf c_2$ be the two orientations of $c$, and set $\mathbf k_m:=(k_m,k_{m+1})$ for $m=1$, $2$, then $\mathcal{S}_{\mathbf c_1}^{\mathbf k_1}=\mathcal{S}_{\mathbf c_2}^{\mathbf k_2}$. As such, in total, there are $(3g-3)(n-1)$ twist flows, $n-1$ for each simple closed curve in $\Pmc$. 

These twist flows are generalized twist flows in the sense of Goldman \cite[Section 1]{Goldman_twist}. It is straightforward to verify the following lemma.

\begin{lem}\label{lem:twist_length}
Let $c$ be a simple closed curve in the pair of pants decomposition $\Pmc$. The twist flows associated to $c$ do not change (up to conjugation) the representation restricted to $S\backslash c$. They hence preserve the eigenvalues of the holonomy along all pants curves in $\Pmc$. 
\end{lem} 

To define the length flows, note that for any pair of positive integers $\mathbf k:=(k_1,k_2)$ that sum to $n$, we denote $\mathbf k^1:=(0,k_1,k_2)$, $\mathbf k^2:=(k_1,0,k_2)$, and $\mathbf k^3:=(k_1,k_2,0)$.

\begin{definition}
Let $\mathbf c$ be an oriented simple closed curve in $\Pmc$ and $\mathbf d$ a pair associated to $\mathbf c$. Let $\mathbf k:=(k_1,k_2)$ be a pair of positive integers that sum to $n$. For $m=1$, $2$, set $\mathbf x(m):=(x_{1,m},x_{2,m},x_{3,m})$, and $\mathbf y(m):=(y_{1,m},y_{2,m},y_{3,m})$, and let $\mathbf k_m:=(k_m,k_{m+1})$. 

\begin{enumerate}
\item The $\mathbf k$-{\em lozenge flow} associated to $\mathbf c$ is the $(\Tmc, \Jmc)$-parallel flow $\phi^\mu_t$, where $\mu$ is the vector in $W$ that satisfies 
\begin{itemize}
\item for all pairs $\mathbf r:=(r_1,r_2)$ such that $\{r_1,r_2\}$ is a non-isolated edge in $\widetilde{\Tmc}$, and all pairs of positive integers $\mathbf l:=(l_1,l_2)$ that sum to $n$, 
\[\mu^{\mathbf l}_{\mathbf r}=0\] 
\item for all cyclically ordered triples $\mathbf t:=(t_1,t_2,t_3)$ such that $\{t_1,t_2,t_3\}$ is an ideal triangle in $\widetilde{\Theta}$, and all triples of integers $\mathbf j:=(j_1,j_2,j_3)$ that sum to $n$ and $0\leq j_1,j_2, j_3\leq n-1$, 
\[\mu^{\mathbf j}_{\mathbf t}=\left\{\begin{array}{ll}
1&\text{if }\mathbf t=\mathbf x(1)\text{ and }\mathbf j=\mathbf k_1^3\text{ or }\mathbf k_1^3(0,-1,1);\\
1&\text{if }\mathbf t=\mathbf y(1)\text{ and }\mathbf j=\mathbf k_1^2\text{ or }\mathbf k_1^2(0,1,-1),\\
1&\text{if }\mathbf t=\mathbf x(2)\text{ and }\mathbf j=\mathbf k_2^3\text{ or }\mathbf k_2^3(0,-1,1);\\
1&\text{if }\mathbf t=\mathbf y(2)\text{ and }\mathbf j=\mathbf k_2^2\text{ or }\mathbf k_2^2(0,1,-1);\\
-1&\text{if }\mathbf t=\mathbf x(1)\text{ and }\mathbf j=\mathbf k_1^3(1,-1,0)\text{ or }\mathbf k_1^3(-1,0,1);\\
-1&\text{if }\mathbf t=\mathbf y(1)\text{ and }\mathbf j=\mathbf k_1^2(1,0,-1)\text{ or }\mathbf k_1^2(-1,1,0);\\
-1&\text{if }\mathbf t=\mathbf x(2)\text{ and }\mathbf j=\mathbf k_2^3(1,-1,0)\text{ or }\mathbf k_2^3(-1,0,1);\\
-1&\text{if }\mathbf t=\mathbf y(2)\text{ and }\mathbf j=\mathbf k_2^2(1,0,-1)\text{ or }\mathbf k_2^2(-1,1,0);\\
0&\text{otherwise;}
\end{array}\right.\]
where any positive triple of integers $\mathbf i:=(i_1,i_2,i_3)$ that sum to $n$, $\mathbf i(a_1,a_2,a_3):=(i_1+a_1,i_2+a_2,i_3+a_3)$, see Figure \ref{fig:specialadmissiblelabellingsSY}. Let $\mathcal{Z}_{\mathbf c}^{\mathbf k}$ denote the tangent vector field of this flow. 
\end{itemize}


\item The $\mathbf k$-{\em length flow} associated to $\mathbf c$ is the flow whose tangent vector field $\mathcal{Y}_{\mathbf c}^\mathbf k$ is given by
\[\mathcal{Y}_{\mathbf c}^\mathbf k:=\mathcal{Z}_{\mathbf c}^\mathbf k+ \mathcal{E}_{\mathbf x(1)}^{\mathbf k_1^3(0,-1,1)}-\mathcal{E}_{\mathbf x(1)}^{\mathbf k_1^3(-1,0,1)}+ \mathcal{E}_{\mathbf x(2)}^{\mathbf k_2^3(0,-1,1)}-\mathcal{E}_{\mathbf x(2)}^{\mathbf k_2^3(-1,0,1)}.\]
\end{enumerate}
We collectively refer to the $\mathbf k$-length flows associated to $\mathbf c$ as the \emph{length flows associated to} the simple closed curve $c\in\Pmc$.
\end{definition}

The lozenge flow is simply an intermediate flow used to define the length flow. Just like the twist flows, $\mathcal{Y}_{\mathbf c}^{\mathbf k}$ and $\mathcal{Z}_{\mathbf c}^{\mathbf k}$ do not depend on the choices made. Also, if we let $\mathbf c_1$ and $\mathbf c_2$ be the two orientations of $c$, and set $\mathbf k_m:=(k_m,k_{m+1})$ for $m= 1$, $2$, then we have $\mathcal{Y}_{\mathbf c_1}^{\mathbf k_1}=\mathcal{Y}_{\mathbf c_2}^{\mathbf k_2}$ and $\mathcal{Z}_{\mathbf c_1}^{\mathbf k_1}=\mathcal{Z}_{\mathbf c_2}^{\mathbf k_2}$. Hence, there are $(3g-3)(n-1)$ length flows, $n-1$ for each simple closed curve in $\Pmc$. 

The following lemma is immediate.

\begin{lem}
The length flows associated to $c$ do not change the representation restricted to $S\setminus(P_1\cup P_2)$.
\end{lem}

\begin{definition}
A $(\Tmc,\Jmc)$-parallel flow is said to be \emph{special} if it is an eruption, hexagon, twist or length flow. The tangent vector fields of these flows are respectively called the \emph{eruption, hexagon, twist and length fields}, and are collectively referred to the \emph{special} $(\Tmc,\Jmc)$-parallel vector fields.
\end{definition}

\subsection{A new coordinate system}\label{sec:Hamiltonian} 
For every special $(\Tmc,\Jmc)$-parallel vector field $\Xmc$, let $\Xmc^*$ be the special $(\Tmc,\Jmc)$-parallel vector field given by
\[\Xmc^*:=\left\{\begin{array}{ll}
\Smc^\mathbf k_\mathbf c&\text{if }\Xmc=\Ymc^\mathbf k_\mathbf c;\\
-\Ymc^\mathbf k_\mathbf c&\text{if }\Xmc=\Smc^\mathbf k_\mathbf c;\\
\Emc^\mathbf i_\mathbf P&\text{if }\Xmc=\Hmc^\mathbf i_\mathbf P;\\
-\Hmc^\mathbf i_\mathbf P&\text{if }\Xmc=\Emc^\mathbf i_\mathbf P.
\end{array}\right.\]

\begin{remark}\label{rem:symplectic}
This notation is chosen because in the upcoming companion paper \cite{SunZhang}, it is shown that the pairing of any pair of special $(\Tmc,\Jmc)$-parallel vector fields $\Xmc_1$, $\Xmc_2$ under the Goldman symplectic form $\omega$ on $\Hit_V(S)$ is given by
\[\omega(\Xmc_1,\Xmc_2)=\left\{\begin{array}{ll}
1&\text{if }\Xmc_1=\Xmc_2^*;\\
0&\text{otherwise.}
\end{array}\right.\]
\end{remark}

In order to define the coordinate system we show the following 
\begin{prop} 
For any special $(\Tmc,\Jmc)$-parallel vector field $\Xmc$, there exists a (necessarily unique) real-analytic function 
\[H(\Xmc):\Hit_V(S)\to\Rbbb\] 
whose derivative in the direction of $\Xmc^*$ is $1$, and whose derivative in the direction of all other special $(\Tmc,\Jmc)$-parallel flows is $0$. Furthermore, $H(\Xmc)$ can be written in the coordinate functions of the parametrization $\Omega$ given in Theorem \ref{thm:reparametrization}. In particular, the $(\Tmc,\Jmc)$-parallel vector fields define a global frame of the tangent bundle $T\Hit_V(S)$.
\end{prop}
This is proven in Theorem \ref{thm:Darboux} and Theorem \ref{thm:Darboux2} below. Since the special $(\Tmc,\Jmc)$-parallel flows pairwise commute and their tangent vector fields form a basis of the tangent space to $\Hit_V(S)$ at every point, we have the following corollary.

\begin{cor}\label{cor:coordinates}
The collection of functions
\[\{H(\Xmc):\Xmc\text{ is a special }(\Tmc,\Jmc)\text{-parallel vector field}\}\]
defines a global coordinate system on $\Hit_V(S)$. 
\end{cor}

\begin{remark}
Using the results in \cite{SunZhang} discussed in Remark \ref{rem:symplectic}, the global coordinate system in Corollary \ref{cor:coordinates} is a Darboux coordinate system for the Goldman symplectic form. As a consequence, we have the following corollary.
\end{remark}

\begin{cor}
The following submanifolds of $\Hit_V(S)$ are Lagrangian submanifolds:
\begin{enumerate}
\item The submanifold spanned by the $n-1$ twist flows associated to the $3g-3$ simple closed curves in $\Pmc$ and the $\frac{(n-1)(n-2)}{2}$ eruption flows associated to the $2g-2$ pairs of pants in $\Pbbb$.
\item The submanifold spanned by the $n-1$ twist flows associated to the $3g-3$ simple closed curves in $\Pmc$ and the $\frac{(n-1)(n-2)}{2}$ hexagon flows associated to the $2g-2$ pairs of pants in $\Pbbb$.
\item The submanifold spanned by the $n-1$ length flows associated to the $3g-3$ simple closed curves in $\Pmc$ and the $\frac{(n-1)(n-2)}{2}$ eruption flows associated to the $2g-2$ pairs of pants in $\Pbbb$.
\item The submanifold spanned by the $n-1$ length flows associated to the $3g-3$ simple closed curves in $\Pmc$ and the $\frac{(n-1)(n-2)}{2}$ hexagon flows associated to the $2g-2$ pairs of pants in $\Pbbb$.
\end{enumerate}
\end{cor}
The Lagrangian submanifolds in (1) and (2) are of particular interest since the length functions of all simple closed curves in $\Pmc$ are constant on them. 

The functions $H(\Xmc)$ are explicitely given in the following two theorems. 

\begin{thm}
\label{thm:Darboux}
Let $\mathbf c$ be an oriented, non-isolated edge in $\Tmc$, let $\mathbf d:=(d_1,d_2)$ be a lift of $\mathbf c$, and let $\mathbf k:=(k_1,k_2)$ be a pair of positive integers that sum to $n$.
\begin{enumerate}
\item Let $\mathcal{Y}_{\mathbf c}^{\mathbf k}$ be the tangent vector field of the $\mathbf k$-length flow with respect to $\mathbf c$. Then 
\[H(\mathcal{Y}_{\mathbf c}^{\mathbf k})=-2\alpha_{\mathbf d}^{\mathbf k},
\] 
where $\alpha_{\mathbf d}^{\mathbf k}:\Hit_V(S)\to\Rbbb$ is the symplectic closed edge invariant defined in Definition \ref{def:symplectic closed-edge invariant}. 
\item
Let $\mathcal{S}_{\mathbf c}^\mathbf k$ be the tangent vector field of the $\mathbf k$-twist flow with respect to $\mathbf c$. 
 Let $\gamma$ be the primitive group element in $\Gamma$ with $d_{1}$ and $d_{2}$ as its repelling and attracting fixed points respectively. For any pair of positive integers $\mathbf j:=(j_1,j_2)$ that sum to $n$, let $\ell^\mathbf j_{\mathbf c}:\Hit_V(S)\to\Rbbb^+$ be the function defined by $\ell^{\mathbf j}_{\mathbf c}[\rho]:=\ell^{j_1}_\rho(\gamma)$ (see (\ref{eqn:length})). Then
\[H(\mathcal{S}_{\mathbf c}^\mathbf k)=\sum_{j_1=1}^{k_1}\frac{k_2j_1}{2n} \cdot \ell_{\mathbf c}^{\mathbf j}+
\sum^{k_2-1}_{j_2=1}\frac{ k_1j_2}{2n} \cdot \ell_{\mathbf c}^{\mathbf j}
\]
\end{enumerate}
\end{thm}

In the proof of Theorem \ref{thm:Darboux}, we need the following notation.
\begin{notation}\label{not:shift2}
For any pair of positive integers $\mathbf k:=(k_1,k_2)$ that sum to $n$, and for any integer $a$ satisfying $-k_1<a<k_2$, set $\mathbf k(a,-a):=(k_1+a,k_2-a)$.
\end{notation}

\begin{proof}
To prove (1), we set $I^\mathbf k_{\mathbf c}:=-2\alpha_{\mathbf d}^{\mathbf k}$. It is sufficient to show that if $\Xmc$ is a special $(\Tmc,\Jmc)$-parallel flow, then
\[\mathcal{X}\big(I^\mathbf k_\mathbf c\big)=
\begin{cases}
1&\text{if }\mathcal{X}=\Smc^\mathbf k_{\mathbf c};\cr
0 & \text{otherwise.}
\end{cases}\]
This follows immediately from Theorem \ref{thm:main theorem}. 

To prove (2), we set 
\[L^\mathbf k_{\mathbf c}:= \sum_{j_1=1}^{k_1}\frac{k_2j_1}{2n} \cdot \ell_{\mathbf c}^{\mathbf j}+\sum^{k_2-1}_{j_2=1}\frac{ k_1j_2}{2n} \cdot \ell_{\mathbf c}^{\mathbf j}\] 
 and show that if $\Xmc$ is a special $(\Tmc,\Jmc)$-parallel flow, then
\[\mathcal{X}\big(L^\mathbf k_\mathbf c\big)=
\begin{cases}
-1&\text{if }\mathcal{X}=\Ymc^\mathbf k_{\mathbf c};\cr
0 & \text{otherwise.}
\end{cases}\]
We previously observed in Lemma \ref{lem:pants_length} and Lemma \ref{lem:twist_length} that the derivative of $\ell_{\mathbf c}^{\mathbf j}$, and hence that of $L^\mathbf k_\mathbf c$, in the direction of any eruption, hexagon or twist field is zero. Also, one can compute using Theorem \ref{thm:main theorem} that for any length field $\mathcal{Y}_{\mathbf b}^\mathbf l$,
\[\mathcal{Y}_{\mathbf b}^\mathbf l(\ell^\mathbf j_{\mathbf c} )=\mathcal{Z}_{\mathbf b}^\mathbf l(\ell^\mathbf j_{\mathbf c} ) = \begin{cases}2&\text{if }\mathbf l=\mathbf j(-1,1)\text{ or }\mathbf j(1,-1),\text{ and }\mathbf c=\mathbf b;\cr-4&\text{if }\mathbf l=\mathbf j \text{ and }\mathbf c=\mathbf b;\cr 0&\text{otherwise.}\end{cases}\] 
It follows that
\[\mathcal{Y}_{\mathbf b}^\mathbf l(L^\mathbf k_{\mathbf c})=\mathcal{Z}_{\mathbf b}^\mathbf l(L^\mathbf k_{\mathbf c}) = \begin{cases}-1&\text{if }\mathbf l=\mathbf k\text{ and }\mathbf c=\mathbf b;\cr 0&\text{otherwise.}\end{cases}\] 
\end{proof}

\begin{notation} Recall that for any pair of positive integers $\mathbf k:=(k_1,k_2)$ that sum to $n$, we denoted $\mathbf k^1:=(0,k_1,k_2)$, $\mathbf k^2:=(k_1,0,k_2)$, and $\mathbf k^3:=(k_1,k_2,0)$. Similarily, for any cyclically ordered triple of points $\mathbf x:=(x_1,x_2,x_3)$ in $\partial\Gamma$, we denote $\mathbf x_1:=(x_2,x_3)$, $\mathbf x_2=(x_1,x_3)$, and $\mathbf x_3:=(x_1,x_2)$. 
With this notation, the edge invariants along isolated edges $\sigma_{\mathbf{r}}^{\mathbf{k}}$, that were defined in Section~\ref{sec:edge} can be written as degenerate triangle invariants: 
\[\tau_{\mathbf x}^{\mathbf k_1}:= \sigma_{\mathbf x_1}^{\mathbf k},\,\,\, \tau_{\mathbf x}^{\mathbf k_2}:= \sigma_{\mathbf x_2}^{\mathbf k}\,\,\,\text{ and }\,\,\, \tau_{\mathbf x}^{\mathbf k_3}:= \sigma_{\mathbf x_3}^{\mathbf k}.\]
\end{notation}

\begin{notation}\label{not:3}
For any triple of positive integers $\mathbf i:=(i_1,i_2,i_3)$ that sum to $n$, denote $\mathbf i':=(i_1,i_2+i_3)$.
\end{notation}

\begin{thm}\label{thm:Darboux2}
Let $\mathbf{P}:=(P,c_1,c_2,c_3)$ where $P$ is a pair of pants in $\Pbbb$, and $(c_1,c_2,c_3)$ is a cyclic order on the boundary components of $P$. Let $\mathbf x:=(x_1,x_2,x_3)$ be a triple of points in $\partial\Gamma$ associated to $\mathbf P$, and let $\mathbf y:=(y_1,y_2,y_3)$ be the triple of points in $\partial\Gamma$, such that $y_1=x_1$, $y_2=x_3$, and $[y_1,y_2,y_3]$ is the ideal triangle in $\Theta$ whose union with $[x_1,x_2,x_3]$ is $P$. Let $\mathbf i:=(i_1,i_2,i_3)$ be a triple of positive integers that sum to $n$, and for $m=1$, $2$, $3$, set $\mathbf i_m:=(i_m,i_{m+1},i_{m-1})$. 
\begin{enumerate}
\item For $m=1$, $2$, $3$, let $\mathbf c_m$ be the orientation on $c_m$ such that $P$ lies to the right of $\mathbf c_m$. For all positive integers $k$, we set 
\[\delta_{k}:=\left\{\begin{array}{ll} 
1&\text{ if }k=1,\\
0&\text{ otherwise}. 
\end{array}\right.\]
Then
\begin{eqnarray*}
H(\mathcal{H}_{\mathbf P}^{\mathbf i})&=&\tau_{\mathbf x}^{\mathbf i}- \tau_{\mathbf y}^{\overline{\mathbf i}}+\delta_{i_3}\big(H(\mathcal{S}_{\mathbf c_1}^{\mathbf i_1'})-H(\mathcal{S}_{\mathbf c_1}^{\mathbf i_1'(1,-1)})\big) \\
&&+ \delta_{i_1}\big(H(\mathcal{S}_{\mathbf c_2}^{\mathbf i_2'})-H(\mathcal{S}_{\mathbf c_2}^{\mathbf i_2'(1,-1)})\big)+\delta_{i_2}\big(H(\mathcal{S}_{\mathbf c_3}^{\mathbf i_3'})-H(\mathcal{S}_{\mathbf c_3}^{\mathbf i_3'(1,-1)})\big)
\end{eqnarray*}
Here, recall that for any pair of positive integers $\mathbf k:=(k_1,k_2)$ that sum to $n$, $\mathbf k(a,-a):=(k_1+a,k_1-a)$.
\item Let $\overline{\Bmc}$ denote the set of ordered triples of integers $\mathbf j = ( j_1, j_2, j_3)$, $0 \leq j_1, j_2, j_3\leq n-1$ that sum to $n$ and define
\begin{eqnarray*}
\overline{\Bmc}_1&:=&\{\mathbf j\in\overline{\Bmc}:j_1\geq i_1\text{ and }j_2\leq i_2\},\\
\overline{\Bmc}_2&:=&\{\mathbf j\in\overline{\Bmc}:j_2\geq i_2\text{ and }j_3\leq i_3\},\\
\overline{\Bmc}_3&:=&\{\mathbf j\in\overline{\Bmc}:j_3\geq i_3\text{ and }j_1\leq i_1\}
\end{eqnarray*}
(see Figure \ref{fig:Tn}). Further define for all triples $\mathbf j:=(j_1,j_2,j_3)$ in $\overline{\Bmc}$ 
\begin{equation*}
c_{\mathbf i}^{\mathbf j}:= \begin{cases}\displaystyle\frac{i_1j_3+i_1j_2+i_3j_2}{2n}&\text{if }\mathbf j\in \overline{\Bmc}_1;\vspace{0.1cm}
\cr \displaystyle\frac{i_2j_1+i_2j_3+i_1j_3}{2n}&\text{if }\mathbf j \in \overline{\Bmc}_2;\vspace{0.1cm}
\cr\displaystyle\frac{i_3j_2+i_3j_1+i_2j_1}{2n}&\text{if }\mathbf j \in \overline{\Bmc}_3.\end{cases}
\end{equation*}
Then
\[H(\mathcal{E}_{\mathbf P}^{\mathbf i})=\sum_{\mathbf j \in \overline{\Bmc}} c_{\mathbf i}^{\mathbf j} \cdot \big(\tau_{\mathbf x}^{\mathbf j} +\tau_{\mathbf y}^{\overline{\mathbf j}}\big).
\]
Here, recall that if $\mathbf j:=(j_1,j_2,j_3)$ is a triple of positive integers that sum to $n$, then $\overline{\mathbf j}:=(j_1,j_3,j_2)$.

\end{enumerate}
\begin{figure}[ht]
\centering
\includegraphics[scale=0.5]{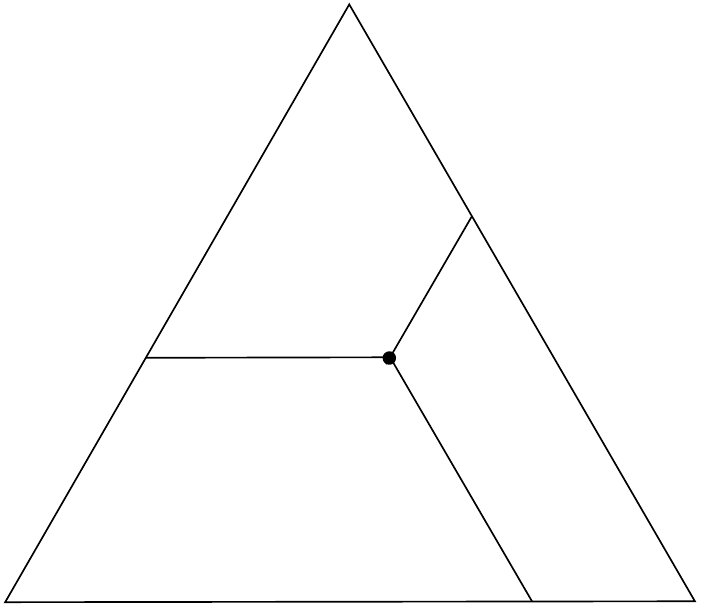}
\small
\put (-177, 0){$x_1$}
\put (0, 0){$x_3$}
\put (-87, 149){$x_2$}
\put (-80, 64){$\mathbf i$}
\put (-110, 25){$\overline{\Bmc}_1$}
\put (-90, 95){$\overline{\Bmc}_2$}
\put (-45, 40){$\overline{\Bmc}_3$}
\caption{Hamiltonian function for the eruption flow $\mathcal{E}_{\mathbf x}^{\mathbf i}$}.\label{fig:Tn}
\end{figure}
\end{thm}

\begin{proof}
For (1) we set 
\begin{eqnarray*}G^{\mathbf i}_{\mathbf P}&:=&\tau_{\mathbf x}^{\mathbf i}- \tau_{\mathbf y}^{\overline{\mathbf i}}+\delta_{i_3}\big(H(\mathcal{S}_{\mathbf c_1}^{\mathbf i_1'})-H(\mathcal{S}_{\mathbf c_1}^{\mathbf i_1'(1,-1)})\big) \\
&&+ \delta_{i_1}\big(H(\mathcal{S}_{\mathbf c_2}^{\mathbf i_2'})-H(\mathcal{S}_{\mathbf c_2}^{\mathbf i_2'(1,-1)})\big)+\delta_{i_2}\big(H(\mathcal{S}_{\mathbf c_3}^{\mathbf i_3'})-H(\mathcal{S}_{\mathbf c_3}^{\mathbf i_3'(1,-1)})\big).
\end{eqnarray*}

 It is sufficient to prove that for any special $(\Tmc,\Jmc)$-vector field $\Xmc$, 
\[\mathcal{X}\big(G^\mathbf i_\mathbf P\big)=
\begin{cases}
1&\text{if }\mathcal{X}=\Emc^{\mathbf i}_{\mathbf P};\cr
0 & \text{otherwise.}
\end{cases}\]

By Theorem \ref{thm:main theorem}, the derivative of $G^{\mathbf i}_{\mathbf P}$ in the direction of any twist field is zero. Also, by Theorem \ref{thm:main theorem} and the symmetry of the hexagon fields, we see that the derivative of $\tau_{\mathbf x}^{\mathbf i}- \tau_{\mathbf y}^{\overline{\mathbf i}}$ in the direction of any hexagon field is zero. Furthermore, by Theorem~\ref{thm:Darboux}(2), we know that the derivative of $F^{\mathbf i}_{\mathbf P}:= G^{\mathbf i}_{\mathbf P}-\tau_{\mathbf x}^{\mathbf i}+ \tau_{\mathbf y}^{\overline{\mathbf i}}$ in the direction of any hexagon field is also zero. Hence, the derivative of $G^{\mathbf i}_{\mathbf P}$ in the direction of any hexagon field is zero.

For any eruption field $\Emc^\mathbf j_\mathbf P$, let $\mathbf t$ be the triple of points in $\partial\Gamma$ associated to $\mathbf P$. Using Theorem \ref{thm:main theorem}, one can compute that 
\begin{equation}\label{eqn:eruption length}
\Emc^{\mathbf j}_{\mathbf P}(\tau_{\mathbf x}^{\mathbf i}-\tau_{\mathbf y}^{\overline{\mathbf i}})=\left\{\begin{array}{ll}
1&\text{if }\mathbf j=\mathbf i\text{ and }\mathbf t=\mathbf x\\
0&\text{otherwise.}
\end{array}\right.
\end{equation}
Furthermore, we know by (2) of Theorem \ref{thm:Darboux} that the derivative of $F^{\mathbf i}_{\mathbf P}$ in the direction of any eruption vector field is zero. Thus,
\[\Emc^{\mathbf j}_{\mathbf P}(G^{\mathbf i}_{\mathbf P})=\left\{\begin{array}{ll}
1&\text{if }\mathbf j=\mathbf i\text{ and }\mathbf t=\mathbf x\\
0&\text{otherwise.}
\end{array}\right.\]

To finish the proof of (1), we need to show that $\mathcal{Y}_{\mathbf c}^\mathbf k(G^{\mathbf i}_{\mathbf P})=0$ for any length field $\mathcal{Y}_{\mathbf c}^\mathbf k$. Let $\mathcal{X}_{\mathbf c}^\mathbf k:=\mathcal{Y}_{\mathbf c}^\mathbf k-\mathcal{Z}_{\mathbf c}^\mathbf k$ and write
\[\Ymc^\mathbf k_{\mathbf c}(G^{\mathbf i}_{\mathbf P})=\Ymc^\mathbf k_{\mathbf c}(F^{\mathbf i}_{\mathbf P})+\Xmc^\mathbf k_{\mathbf c}(\tau_{\mathbf x}^{\mathbf i}- \tau_{\mathbf y}^{\overline{\mathbf i}})+\Zmc^\mathbf k_{\mathbf c}(\tau_{\mathbf x}^{\mathbf i}- \tau_{\mathbf y}^{\overline{\mathbf i}}).\]
Again by Theorem \ref{thm:main theorem} and the symmetry of the lozenge fields, $\mathcal{Z}_{\mathbf c}^\mathbf k(\tau_{\mathbf x}^{\mathbf i}- \tau_{\mathbf y}^{\overline{\mathbf i}})=0$. Also, if $\mathbf x:=(x_1,x_2,x_3)$ is a triple of points in $\partial\Gamma$ associated to $\mathbf P$ and $\mathbf d:=(d_1,d_2)$ is a triple of points in $\partial\Gamma$ associated to $\mathbf c$, then by (2) of Theorem \ref{thm:Darboux}, $\mathcal{Y}_{\mathbf c}^\mathbf k(F^{\mathbf i}_{\mathbf P})=a_1+a_2+a_3$, where
\[a_m=\left\{\begin{array}{ll}
1&\text{if }i_{m-1}=1, \mathbf k=\mathbf i_m'(1,-1) \text{ and }d_1=x_m;\\
-1&\text{if }i_{m-1}=1, \mathbf k=\mathbf i_m' \text{ and }d_1=x_m;\\
0&\text{otherwise,}
\end{array}\right.\]
and $\mathbf c=:(c_1,c_2)$. Since $\Xmc^\mathbf k_\mathbf c$ is a sum of eruption fields and
\[a_m=\left\{\begin{array}{ll}
1&\text{if }i_{m-1}=1, \mathbf k^3(-1,0,1)=\mathbf i_m \text{ and }d_1=x_m;\\
-1&\text{if }i_{m-1}=1, \mathbf k^3(0,-1,1)=\mathbf i_m \text{ and }d_1=x_m;\\
0&\text{otherwise,}
\end{array}\right.\] 
we can compute using (\ref{eqn:eruption length}) that $\mathcal{X}_{\mathbf c}^\mathbf k(\tau_{\mathbf x}^{\mathbf i}- \tau_{\mathbf y}^{\overline{\mathbf i}})=-a_1-a_2-a_3$. Thus, $\Ymc^\mathbf k_{\mathbf c}(G^{\mathbf i}_{\mathbf P})=\Ymc^\mathbf k_{\mathbf c}(F^{\mathbf i}_{\mathbf P})+\Xmc^\mathbf k_{\mathbf c}(\tau_{\mathbf x}^{\mathbf i}- \tau_{\mathbf y}^{\overline{\mathbf i}})=0$.


For (2) we set 
\[K^{\mathbf i}_{\mathbf P}:=\sum_{\mathbf j \in \overline{\Bmc}} c_{\mathbf i}^{\mathbf j} \cdot \big(\tau_{\mathbf x}^{\mathbf j} +\tau_{\mathbf y}^{\overline{\mathbf j}}\big).\] 
We need to show that for any special $(\Tmc,\Jmc)$-parallel vector field $\Xmc$,
\[\mathcal{X}\big(K^{\mathbf i}_{\mathbf P}\big)=
\begin{cases}
-1&\text{if }\mathcal{X}=\Hmc^{\mathbf i}_{\mathbf P};\cr
0 & \text{otherwise.}
\end{cases}\]

By Theorem \ref{thm:Darboux}(2), the derivative of $K^{\mathbf i}_{\mathbf P}$ in the direction of any twist field is zero. Theorem \ref{thm:Darboux}(2) and the anti-symmetry of the eruption fields, imply that the derivative of $K^{\mathbf i}_{\mathbf P}$ in the direction of any eruption vector field is zero. 

We now prove that $\Ymc^\mathbf k_{\mathbf c}(K^{\mathbf i}_{\mathbf P})=0$ for any length vector field $\Ymc^\mathbf k_{\mathbf c}$. Let $\mathbf x:=(x_1,x_2,x_3)$ be a triple of points in $\partial\Gamma$ associated to $\mathbf P$ and let $\mathbf d:=(d_1,d_2)$ be a pair of points in $\partial\Gamma$ associated to $\mathbf c$. This is an obvious consequence of Theorem \ref{thm:main theorem} when neither $d_1$ nor $d_2$ is a $\Gamma$-translate of $x_1,x_2$ or $x_3$. Thus, we can assume that either $d_1$ or $d_2$ is $x_1,x_2$ or $x_3$. Further assume without loss of generality that $c_1=x_1$; the other cases are similar. 

For any $\mathbf i:=(i_1,i_2,i_3)\in\overline{\Bmc}$, denote $\mathbf i'':=(i_1+i_2,0,i_3)$ and $\mathbf i''':=(i_1+i_3,i_2,0)$. Observe that if $\mathbf j:=(j_1,j_2,j_3)$ is a triple in $\overline{\Bmc}_1 \cup\overline{\Bmc}_2$, then 
\begin{equation}
\label{equation:cLozenge}
c_{\mathbf i}^{\mathbf j}= c_{\mathbf i}^{\mathbf j''}+ c_{\mathbf i}^{\mathbf j'''}
\end{equation}
(see Notation \ref{not:3}). Thus,
\[-c_{\mathbf i}^{\mathbf k^3(1,-1,0)}+ c_{\mathbf i}^{\mathbf k^3(0,-1,1)} = c_{\mathbf i}^{(n-1,0,1)} = c_{\mathbf i}^{\mathbf k^3(-1,0,1)}-c_{\mathbf i}^{\mathbf k^3}\]
(see Notation \ref{not:shift}). Theorem \ref{thm:main theorem} then implies that $\Zmc^\mathbf k_{\mathbf c}(K^{\mathbf i}_{\mathbf P})=0$. Since we already know that the derivative of $K^{\mathbf i}_{\mathbf P}$ is zero in the direction of any eruption field, this proves that $\Ymc^\mathbf k_{\mathbf c}(K^{\mathbf i}_{\mathbf P})=0$.

Next, we consider $\Hmc^{\mathbf j}_{\mathbf P'}(K^{\mathbf i}_{\mathbf P})$ for any hexagon field $\Hmc^{\mathbf j}_{\mathbf P'}$. Let $\mathbf y=:(y_1,y_2,y_3)$ be the triple of points in $\partial\Gamma$ associated to $\mathbf P'$. It follows from Theorem \ref{thm:main theorem} that if $[y_1,y_2,y_3]\neq[x_1,x_2,x_3]$ as ideal triangles in $\Theta$, then $\Hmc^{\mathbf j}_{\mathbf P'}(K^{\mathbf i}_{\mathbf P})=0$. Thus, we may assume without loss of generality that $\mathbf y=\mathbf x$. If $\mathbf j\neq \mathbf i$, then one of $\overline{\Bmc}_1 \cup \overline{\Bmc}_2$, $\overline{\Bmc}_2 \cup \overline{\Bmc}_3$ or $\overline{\Bmc}_3 \cup \overline{\Bmc}_1$ contains 
\[\mathbf j(0,1,-1),\,\,\,\mathbf j(-1,1,0),\,\,\,\mathbf j(-1,0,1),\,\,\,\mathbf j(0,-1,1),\,\,\,\mathbf j(1,-1,0)\,\,\,\text{ and }\,\,\,\mathbf j(1,0,-1).\]
Without loss of generality, suppose that they lie in $\overline{\Bmc}_1 \cup \overline{\Bmc}_2$. Then by (\ref{equation:cLozenge}),
\begin{eqnarray*}
\Hmc^{\mathbf j}_{\mathbf P}(K^{\mathbf i}_{\mathbf P})&=&c_{\mathbf i}^{\mathbf j(0,1,-1)} - c_{\mathbf i}^{\mathbf j(-1,1,0)} + c_{\mathbf i}^{\mathbf j(-1,0,1)} - c_{\mathbf i}^{\mathbf j(0,-1,1)} + c_{\mathbf i}^{\mathbf j(1,-1,0)} - c_{\mathbf i}^{\mathbf j(1,0,-1)}
\\&=&c_{\mathbf i}^{\mathbf j''(1,0,-1)}+c_{\mathbf i}^{\mathbf j'''(-1,1,0)} - c_{\mathbf i}^{\mathbf j''}-c_{\mathbf i}^{\mathbf j'''(-1,1,0)} + c_{\mathbf i}^{\mathbf j''(-1,0,1)}+c_{\mathbf i}^{\mathbf j'''} 
\\&&- c_{\mathbf i}^{\mathbf j''(-1,0,1)}-c_{\mathbf i}^{\mathbf j'''(1,-1,0)} + c_{\mathbf i}^{\mathbf j''}+ c_{\mathbf i}^{\mathbf j'''(1,-1,0)} - c_{\mathbf i}^{\mathbf j''(1,0,-1)}-c_{\mathbf i}^{\mathbf j'''}
\\&=&0.
\end{eqnarray*}
Finally, if $\mathbf k=\mathbf i$, then
\begin{eqnarray*}
\Hmc^{\mathbf i}_{\mathbf P}(K^{\mathbf i}_{\mathbf P})&=&2\left(c_{\mathbf i}^{\mathbf i(0,1,-1)} - c_{\mathbf i}^{\mathbf i(-1,1,0)} + c_{\mathbf i}^{\mathbf i(-1,0,1)} - c_{\mathbf i}^{\mathbf i(0,-1,1)} + c_{\mathbf i}^{\mathbf i(1,-1,0)} - c_{\mathbf i}^{\mathbf i(1,0,-1)}\right)\\
&=&2\left(\frac{i_2i_1+(i_1+i_2)(i_3-1)}{2n}-\frac{i_2(i_1-1)+(i_1+i_2)i_3}{2n}\right.\\
&&+\frac{i_3i_2+(i_2+i_3)(i_1-1)}{2n}-\frac{i_3(i_2-1)+(i_2+i_3)i_1}{2n}\\
&&\left.+\frac{i_1i_3+(i_1+i_3)(i_2-1)}{2n}-\frac{i_1(i_3-1)+(i_1+i_3)i_2}{2n}\right)\\
&=&-1.
\end{eqnarray*}
\end{proof}

\appendix
\section{Proof of Proposition \ref{prop:Fock-Goncharov}}\label{app:Fock-Goncharov}
In this appendix, we give a proof of Proposition \ref{prop:Fock-Goncharov}, which we restate here for the convenience of the reader. 

\begin{prop*}
Let $(F_1,\dots,F_k)$ be a positive $k$-tuple of flags in $\Fmc(V)$. Then for any positive integers $n_1$, $\dots$, $n_k$ that sum to $d\leq n$, we have that 
\[\dim\left(\sum_{j=1}^kF_j^{(n_j)}\right)=d.\]
\end{prop*}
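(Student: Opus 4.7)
The plan is to proceed by induction on $k$, after making two simplifying reductions. The base cases $k=2$ and $k=3$ are immediate: for $k=2$, the statement follows from the transversality condition $F_1^{(n_1)}+F_2^{(n-n_1)}=V$ since $F_2^{(n_2)}\subseteq F_2^{(n-n_1)}$ when $n_2\leq n-n_1$; for $k=3$, Theorem~\ref{thm:ktuple} shows that all triple ratios are positive, hence finite and non-zero, which by the wedge-product formula for $T_{i_1,i_2,i_3}$ forces $(F_1,F_2,F_3)\in\Fmc(V)^{(3)}$, giving the claim for $n_1+n_2+n_3=n$ and hence (by monotonicity) for all smaller sums.

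For the inductive step, suppose the statement holds for positive $(k-1)$-tuples. First, if some $n_j=0$, the corresponding summand drops out, and we apply the inductive hypothesis to the $(k-1)$-tuple obtained by removing $F_j$; this sub-tuple is still positive, since by Theorem~\ref{thm:ktuple} positivity is characterized by the positivity of all triple ratios, a condition inherited by any cyclically ordered subset. Second, if $d<n$, we may increase some $n_{j^*}$ to $n_{j^*}+1$: the larger sum has dimension at most one more than the original, so if the larger attains $d+1$, the original attains at least $d$, hence exactly $d$ by the trivial upper bound. Iterating, we reduce to the case $d=n$ with all $n_j\geq 1$.

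In this reduced case, we must show $\sum_{j=1}^k F_j^{(n_j)}=V$, or equivalently, that the determinant
\[
\det\big(f_{1,1},\dots,f_{1,n_1},f_{2,1},\dots,f_{2,n_2},\dots,f_{k,1},\dots,f_{k,n_k}\big)
\]
is non-zero, where $\{f_{j,1},\dots,f_{j,n}\}$ is a basis associated to $F_j$. The strategy is to fix a triangulation $\Tmc$ of the convex planar $k$-gon with cyclically labeled vertices $F_1,\ldots,F_k$, and to expand the above determinant as a product of three kinds of factors by Laplace expansions along the edges of $\Tmc$: (i) triple ratios $T_{i_1,i_2,i_3}(F_a,F_b,F_c)$ at the triangles of $\Tmc$, which are strictly positive by Theorem~\ref{thm:ktuple}; (ii) cross ratios along the interior edges, which are negative by Proposition~\ref{prop:Fock-Goncharov parameterization} applied to the positive $k$-tuple; and (iii) basic transversality determinants $F_a^{(i)}\wedge F_b^{(j)}\wedge F_c^{(k)}$ for triangles of $\Tmc$, which are non-zero by the $k=3$ base case. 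Since every factor is non-zero, the full determinant is non-zero, as required.

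The main obstacle is the combinatorial bookkeeping for this factorization; one has to do a careful induction on the triangulation (removing one ear-triangle at a time and tracking how the $n_j$'s redistribute among the remaining vertices and across the cut edge). This step is essentially the same analysis that Fock-Goncharov perform in the weaker setting of \cite[Proposition 9.4]{FockGoncharov}, where the fact that generically the flags are in general position is extended, via the positivity hypothesis, to a statement that holds for \emph{every} positive configuration and every dimension distribution $(n_1,\dots,n_k)$.
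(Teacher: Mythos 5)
Your two reductions at the start (dropping the $n_j=0$ terms via positivity of cyclic sub-tuples, and padding up to $d=n$ via the one-dimensional-step argument) are correct, and the $k=2,3$ base cases are fine. The problem is the inductive engine. A Laplace expansion of
\[
\det\bigl(f_{1,1},\dots,f_{1,n_1},\,f_{2,1},\dots,f_{2,n_2},\,\dots,\,f_{k,1},\dots,f_{k,n_k}\bigr)
\]
along a block of columns produces a \emph{sum} of products of complementary minors, not a product. There is no factorization of this determinant as a monomial in triple ratios, cross ratios and small transversality determinants; what is true (and is the content of the cluster-algebra Laurent phenomenon for Fock--Goncharov moduli spaces) is that it is a Laurent \emph{polynomial} in such quantities, and the non-vanishing you need comes from a positivity-of-coefficients statement, not from a factorization. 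So the step "since every factor is non-zero, the full determinant is non-zero" does not go through as written.

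The paper's proof fills this gap with two ingredients your outline is missing. First, it is not enough that each triple $(F_a,F_b,F_c)$ is transverse; one needs a single, \emph{simultaneous} choice of associated bases $\{f_{j,i}\}$ for all $F_j$ so that every $3$-term wedge $f_{j_1}^{i_1}\wedge f_{j_2}^{i_2}\wedge f_{j_3}^{i_3}$ is \emph{positive} (not merely non-zero) with respect to one fixed volume form. This consistent positive normalization is non-trivial and is exactly the surjectivity of the map $\Amc_{D,\SL}(\Rbbb^+)\to\Xmc_{D,\PSL}(\Rbbb^+)$ from \cite[Lemma 2.4]{FockGoncharov}. Second, once all $3$-term wedges are strictly positive, the paper bootstraps to $4$-term and then $s$-term wedges not by factorization but by the three-term Pl\"ucker relation
\[
\Delta_1\cdot\Delta_2 \;=\; \Delta_3\cdot\Delta_4 \;+\; \Delta_5\cdot\Delta_6,
\]
inducting on a suitable parameter and observing that positivity of five of the six minors forces positivity of the sixth. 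Your proposal cites \cite[Proposition 9.4]{FockGoncharov} as if it were a routine combinatorial extension, but the Pl\"ucker-relation induction and the existence of a global positive lift are precisely where the real work is, and your outline currently has no substitute for either.
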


We start with the following two notions.

\begin{definition}\
\begin{enumerate}
\item A map $\zeta:S^1\to\Pbbb(V)$ is \emph{convex} if $\zeta(x_1)+\dots+\zeta(x_k)$ is a direct sum for all $k\leq n:=\dim(V)$ and for all pairwise distinct $x_1$, \dots, $x_k$ in $S^1$.
\item Let $F$ be a flag in $\Fmc(V)$ and let $\zeta:S^1\to\Pbbb(V)$ be a convex curve. We say $\zeta$ \emph{osculates} $F$ if there is some point $x$ in $S^1$ such that $\zeta(x)=F^{(1)}$, and for all $l=1$, \dots, $d-1$, 
\[\lim_{i\to\infty}\zeta(x_{1,i})+..+\zeta(x_{l,i})=F^{(l)}\]
for all pairwise distinct points $l$-tuples of points $(x_{1,i},...,x_{l,i})$ in $S^1$ such that $\lim_{i\to\infty}(x_{1,i},\dots,x_{l,i})=(x,\dots,x)$.
\end{enumerate}
\end{definition}

For the proof, we use the following facts. The first is due to Fock and Goncharov \cite[Theorem 1.3 and Section 9.11]{FockGoncharov}. 

\begin{thm}[\cite{FockGoncharov}]
Let $(F_1,\dots,F_k)$ be a positive triple of flags in $\Fmc(V)$. Then there is a continuous convex map $\zeta:S^1\to\Fmc(V)$ that osculates $F_l$ for all $l=1$, \dots, $k$.
\end{thm}

The second is an immediate consequence of Proposition \ref{prop:parameterize triple}.

\begin{prop}
The space $\Fmc(V)^3_+$ of positive triples of flags in $V$ is a connected component of the space of $\Fmc(V)^{[3]}$ of generic triples of flags in $V$.
\end{prop}

Using these, we prove the following key lemma.

\begin{lem}\label{lem:induction}
Let $k\geq 4$ be an integer, let $(F_1,\dots,F_k)$ be a positive $k$-tuple of flags in $\Fmc(V)$ and let $m=1$, \dots, $n-1$. Let $H$ be the flag defined by
\[H^{(i)}:=\left\{\begin{array}{ll}
F_2^{(i)}&\text{if } i\leq m;\\
F_2^{(m)}+F_3^{(i-m)}&\text{if }i>m.
\end{array}\right.\]
Then $(F_1,H,F_4,\dots,F_k)$ is a positive $(k-1)$-tuple of flags.
\end{lem}

\begin{proof}
Note that $(F_1,F_2,F_4,\dots,F_k)$ is a positive $k$-tuple of flags and $H^{(1)}=F_2^{(1)}$. Thus, by Proposition \ref{prop:Fock-Goncharov parametrization}, it is sufficient to prove that $(F_1,H,F_4)$ is a positive triple of flags.

Let $\zeta$ be a continuous convex map that osculates the flags $F_1$, \dots, $F_4$. For all $l=1$, $\dots$, $4$, let $x_l$ be the point in $S^1$ such that $\zeta(x_l)=F_l^{(1)}$. It is a consequence of Proposition \ref{prop:Fock-Goncharov parametrization} that $\Fmc(V)^4_+$ is open in the space of pairwise transverse quadruple of flags. Thus, there are pairwise disjoint open intervals $I_1$, \dots, $I_4\subset S^1$ such that $I_l$ contains $x_l$, and satisfy the following property: For all cyclically ordered $(n-1)$-tuple of points $\mathbf y_l:=(y_{l,1},\dots,y_{l,n-1})$ in $I_l$, let $F_{\mathbf y_l}$ be the flag in $\Fmc(V)$ given by $F_{\mathbf y_l}^{(i)}:=\sum^i_{j=1}\zeta(y_{l,j})$. Then $(F_{\mathbf y_1},\dots,F_{\mathbf y_4})$ is a positive quadruple of flags. 

Now, for each $i=1$, \dots, $n-1$, let $f_i:[0,1]\to S^1$ be continuous paths such that
\begin{itemize}
\item $f_i(0)=y_{2,i}$
\item $f_i(1)=\left\{\begin{array}{ll}
y_{2,i}&\text{if } i\leq m\\
y_{3,i-m}&\text{if }i>m
\end{array}\right.$
\item $f_i(0)\leq f_i(t)\leq f_i(1)\leq f_i(0)$ for all $t$ in $[0,1]$.
\item $\big(f_1(t),\dots,f_{n-1}(t)\big)$ is a cyclically ordered $(n-1)$-tuple of points in $\partial\Gamma$ for all $t$ in $[0,1]$.
\end{itemize}
Such paths $f_i$ exist because $(\mathbf y_2,\mathbf y_3)$ is a cyclically ordered $(2n-2)$-tuple of points.

Since $\zeta$ is convex, we may define $G(t)=G(\mathbf y_2,\mathbf y_3,t)$ to be the flag in $\Fmc(V)$ given by $G(t)^{(j)}:=\sum^j_{i=1}\zeta(f_i(t))$. Since $G(0)=F_{\mathbf y_2}$, we see that $(F_{\mathbf y_1},G(0),F_{\mathbf y_4})$ is a positive triple of flags. The convexity of $\zeta$ also implies that $(F_{\mathbf y_1},G(t),F_{\mathbf y_4})$ is a generic for all $t$. Since $\Fmc(V)^3_+$ is a connected component of $\Fmc(V)^{[3]}$ and $t\mapsto G(t)$ is continuous, it follows that $(F_{\mathbf y_1},G(1),F_{\mathbf y_4})$ is also a positive triple of flags.

Observe that 
\begin{itemize}
\item for $l=1$, $4$, the limit of $F_{\mathbf y_l}$ as $\mathbf y_l$ converges to $(x_l,\dots,x_l)$ is $F_l$,
\item the limit of $G(\mathbf y_2,\mathbf y_3,1)$ as $(\mathbf y_2,\mathbf y_3)$ converges to $(x_2,\dots,x_2,x_3,\dots,x_3)$ is $H$.
\end{itemize}
Since $\Fmc(V)^3_+$ is closed in the space of pairwise transverse triple of flags, this implies that $(F_1,H,F_4)$ is a positive triple of flags.
\end{proof}

\begin{proof}[Proof of Proposition \ref{prop:Fock-Goncharov}]
We prove this by induction on $k$. For the base case $k=3$, this is an immediate consequence of Proposition \ref{prop:parameterize triple}.

Next, we prove the inductive step. Suppose that $k\geq 4$. Let $H$ be the flag in $\Fmc(V)$ defined by
\[H^{(i)}:=\left\{\begin{array}{ll}
F_2^{(i)}&\text{if } i\leq n_2;\\
F_2^{(n_2)}+F_3^{(i-n_2)}&\text{if }i> n_2.
\end{array}\right.\]
By Lemma \ref{lem:induction}, we see that $(F_1,H,F_4,\dots,F_k)$ is a positive $(k-1)$-tuple of flags. We further observe that 
\[\sum_{j=1}^kF_j^{(n_j)}=F_1^{(n_1)}+H^{(n_2+n_3)}+\sum_{j=4}^kF_j^{(n_j)}.\] 
The statement now follows by applying the inductive hypothesis to the right hand side.
\end{proof}

\section{Proof of Lemma \ref{lem:technical2}}\label{app:technical}

In this appendix, we prove Lemma \ref{lem:technical2}, which we restate here for the reader's convenience.

\begin{lem*}
Let $\Vmc$ denote the set of vertices of $\widetilde{\Tmc}$, and for non-negative integers $j$, let $\xi_j$ be a Frenet curve in $\widetilde{\mathrm{Fre}}(V)$. If $\lim_{j\to\infty}\xi_j(p)=\xi_0(p)$ for all vertices $p$ in $\widetilde{\Vmc}$, then $\lim_{j\to\infty}\xi_j(p)=\xi_0(p)$ for all points $p$ in $\partial\Gamma$.
\end{lem*}
 
Let $x,y,z$ be the vertices of a triangle in $\widetilde{\Theta}$, let $w$ be any point in $\partial\Gamma\setminus\Vmc$, and assume without loss of generality that $y<z<x<w<y$ in this cyclic order in $S^1$. By Proposition \ref{prop:Fock-Goncharov parametrization}, to conclude that $\lim_{j\to\infty}\xi_j(w)=\xi_0(w)$, it is sufficient to show that 
\begin{enumerate}
\item For any pair $\mathbf i$ of positive integers that sum to $n$,
\[C^{\mathbf i}(\xi_0(x),\xi_0(z),\xi_0(y),\xi_0(w))=\lim_{j\to\infty}C^{\mathbf i}(\xi_j(x),\xi_j(z),\xi_j(y),\xi_j(w)).\]
\item For any triple of positive integers $\mathbf i$ that sum to $n$, 
\[ T^{\mathbf i}(\xi_0(x),\xi_0(w),\xi_0(y))=\lim_{j\to\infty}T^{\mathbf i}(\xi_j(x),\xi_j(w),\xi_j(y)).\]
\end{enumerate}

Since $\Vmc$ is dense in $\partial\Gamma$, there are sequences $\{a_k\}_{k=1}^\infty$ and $\{b_k\}_{k=1}^\infty$ in $\Vmc$ such that $\lim_{k\to\infty}a_k=\lim_{k\to\infty}b_k=w$ and $y<z<x<a_k<w<b_k<y$ for all $k$. 

\begin{proof}[Proof of (1)]

Since $\xi_0$ is continuous, we have
\begin{eqnarray*}
\lim_{k\to\infty}C^{\mathbf i}(\xi_0(x),\xi_0(z),\xi_0(y),\xi_0(a_k))&=&\lim_{k\to\infty}C^{\mathbf i}(\xi_0(x),\xi_0(z),\xi_0(y),\xi_0(b_k))\\
&=&C^\mathbf {i}(\xi_0(x),\xi_0(z),\xi_0(y),\xi_0(w))
\end{eqnarray*}
for all pairs of positive integers $\mathbf i$ that sum to $n$. Furthermore, it is also well-known (see for example Proposition 2.12 of \cite{Zhang_internal}) that 
\[C^\mathbf i(\xi_j(x),\xi_j(a_k),\xi_j(y),\xi_j(w)),\,\,\,\,C^\mathbf i(\xi_j(x),\xi_j(w),\xi_j(y),\xi_j(b_k))>1\]
for all non-negative integers $j$ and all positive integers $k$. In particular, 
\[\begin{array}{l}
C^{\mathbf i}(\xi_j(x),\xi_j(z),\xi_j(y),\xi_j(a_k))>C^{\mathbf i}(\xi_j(x),\xi_j(z),\xi_j(y),\xi_j(w))\\
\hspace{7cm}>C^{\mathbf i}(\xi_j(x),\xi_j(z),\xi_j(y),\xi_j(b_k)).
\end{array}\]

Since $\lim_{j\to\infty}\xi_j(p)=\xi_0(p)$ for any vertex $p$ in $\Vmc$, we see that 
\begin{eqnarray*}
C^{\mathbf i}(\xi_0(x),\xi_0(z),\xi_0(y),\xi_0(w))&=&\lim_{k\to\infty}C^{\mathbf i}(\xi_0(x),\xi_0(z),\xi_0(y),\xi_0(a_k))\\
&=&\lim_{k\to\infty}\lim_{j\to\infty}C^{\mathbf i}(\xi_j(x),\xi_j(z),\xi_j(y),\xi_j(a_k))\\
&\geq&\lim_{j\to\infty}C^{\mathbf i}(\xi_j(x),\xi_j(z),\xi_j(y),\xi_j(w))\\
&\geq&\lim_{k\to\infty}\lim_{j\to\infty}C^{\mathbf i}(\xi_j(x),\xi_j(z),\xi_j(y),\xi_j(b_k))\\
&=&\lim_{k\to\infty}C^{\mathbf i}(\xi_0(x),\xi_0(z),\xi_0(y),\xi_0(b_k))\\
&=&C^{\mathbf i}(\xi_0(x),\xi_0(z),\xi_0(y),\xi_0(w)).
\end{eqnarray*}
This proves (1).
\end{proof}

To prove (2), we need to use the following lemma. Let $\mathbf i:=(i_1,i_2,i_3)$ be a triple of positive integers that sum to $n$. For any generic quadruple of flags $F_1,F_2,F_3,F_4$ in $\Fmc(V)$, define 
\[\begin{array}{l}T^{(i_1,i_2,F_4,i_3)}(F_1,F_2,F_3):=\\
\displaystyle\frac{F_1^{(i_1+1)}\wedge F_2^{(i_2)}\wedge F_3^{(i_3-1)}\cdot F_1^{(i_1-1)}\wedge F_2^{(i_2)}\wedge F_4^{(1)}\wedge F_3^{(i_3)}\cdot F_1^{(i_1)}\wedge F_2^{(i_2-1)}\wedge F_3^{(i_3+1)}}{F_1^{(i_1+1)}\wedge F_2^{(i_2-1)}\wedge F_3^{(i_3)}\cdot F_1^{(i_1)}\wedge F_2^{(i_2)}\wedge F_4^{(1)}\wedge F_3^{(i_3-1)}\cdot F_1^{(i_1-1)}\wedge F_2^{(i_2)}\wedge F_3^{(i_3+1)}}
\end{array}\]

\begin{lem}\label{lem:monotonicity triple ratio}
Let $\xi:S^1\to\Fmc(V)$ be a Frenet curve and let $x_1<x_4<x_2<x_5<x_3<x_1$ lie in $S^1$ in this cyclic order. For each $m=1$, $\dots$, $5$, let $F_m:=\xi(x_m)$, then
\[T^{(i_1,i_2,F_4,i_3)}(F_1,F_2,F_3)>T^{\mathbf i}(F_1,F_2,F_3)>T^{(i_1,i_2,F_5,i_3)}(F_1,F_2,F_3).\]
(Recall that we assume $\dim(V)\geq 3$.)
\end{lem}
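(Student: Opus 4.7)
The plan is to identify the ratio
\[R \;:=\; \frac{T_{i_1,i_2,F_4,i_3}(F_1,F_2,F_3)}{T_{i_1,i_2,i_3}(F_1,F_2,F_3)}\]
with $1-C$ for a suitable cross ratio $C$ of four points in a $2$-dimensional quotient of $V$, and then extract the sign (and magnitude) of $C$ from the positivity of wedge products provided by Proposition \ref{prop:Fock-Goncharov} and its proof in Appendix \ref{app:Fock-Goncharov}.

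First I would set $W := F_1^{(i_1-1)}+F_2^{(i_2)}+F_3^{(i_3-1)}\subset V$, which has dimension $n-2$ by Proposition \ref{prop:Fock-Goncharov} applied to $(F_1,F_2,F_3)$, so that $V':=V/W$ is $2$-dimensional. Let $\pi:V\to V'$ be the projection and define
\[\ell_1:=\pi(F_1^{(i_1)}),\;\ \ell_2:=\pi(F_2^{(i_2+1)}),\;\ \ell_3:=\pi(F_3^{(i_3)}),\;\ \ell_4:=\pi(F_4^{(1)}),\]
each a line in $V'$ by the same proposition applied to $(F_1,F_4,F_2,F_3)$. After cancellation in the six defining $n$-fold wedges, $R$ is a ratio of the two ``new'' wedges $F_1^{(i_1-1)}\wedge F_2^{(i_2)}\wedge F_4^{(1)}\wedge F_3^{(i_3)}$ and $F_1^{(i_1)}\wedge F_2^{(i_2+1)}\wedge F_3^{(i_3-1)}$ in the numerator over $F_1^{(i_1-1)}\wedge F_2^{(i_2+1)}\wedge F_3^{(i_3)}$ and $F_1^{(i_1)}\wedge F_2^{(i_2)}\wedge F_4^{(1)}\wedge F_3^{(i_3-1)}$ in the denominator. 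Writing each of these as the fixed $(n-2)$-fold wedge $F_1^{(i_1-1)}\wedge F_2^{(i_2)}\wedge F_3^{(i_3-1)}$ together with two extra vectors and pushing through $\pi$, a direct computation with careful sign tracking yields
\[R \;=\; \frac{(\ell_4\wedge\ell_3)(\ell_1\wedge\ell_2)}{(\ell_2\wedge\ell_3)(\ell_1\wedge\ell_4)}.\]
The Pl\"ucker identity $(\ell_1\wedge\ell_2)(\ell_3\wedge\ell_4) - (\ell_1\wedge\ell_3)(\ell_2\wedge\ell_4) + (\ell_1\wedge\ell_4)(\ell_2\wedge\ell_3) = 0$, valid in the $2$-dimensional space $V'$, then simplifies this to
\[R \;=\; 1 - C,\qquad C := \frac{(\ell_1\wedge\ell_3)(\ell_2\wedge\ell_4)}{(\ell_1\wedge\ell_4)(\ell_2\wedge\ell_3)}.\]

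The lemma reduces to showing $C<0$ in the $F_4$ case and $0<C<1$ in the $F_5$ case. I would work in the bases for the positive quadruple $(F_1,F_4,F_2,F_3)$ (respectively $(F_1,F_2,F_5,F_3)$) produced in the proof of Proposition \ref{prop:Fock-Goncharov}, where every $n$-fold wedge of $F$-pieces with indices listed in cyclic order is strictly positive. Each of the four factors $(\ell_i\wedge\ell_j)$ appearing in $C$ equals such an $n$-fold wedge times a parity sign arising from permuting two extra vectors into cyclic position; a careful tally shows that these four parity signs combine to $-1$ in the $F_4$ case, giving $C<0$, and to $+1$ in the $F_5$ case, giving $C>0$. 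For the strict bound $C<1$ in the $F_5$ case, a second application of the Pl\"ucker identity reduces it to the sign of $(\ell_1\wedge\ell_2)(\ell_3\wedge\ell_5)$, which the same bookkeeping handles.

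The main obstacle is this final sign tally. Each factor $(\ell_i\wedge\ell_j)$ picks up several parities of the form $(-1)^{i_k}$ from reordering wedge factors, and one must verify that they combine correctly across all four factors of $C$. The computation is direct but somewhat involved; it can be verified in the case $n=3$ with $i_1=i_2=i_3=1$, where the parities organize themselves into exactly the claimed cancellations.
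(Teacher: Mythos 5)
Your proposal is correct, but it takes a genuinely different route from the paper. The paper reduces to the case $\dim(V)=3$: it projects through $K=F_1^{(i_1-1)}+F_2^{(i_2-1)}+F_3^{(i_3-1)}$ onto a $3$-dimensional complement $H$, checks that the induced curve $\xi'$ in $\Fmc(H)$ is Frenet, observes that the relevant (generalized) triple ratios are preserved, and then cites the rank-$3$ computation from an external reference. You instead quotient by the $(n-2)$-dimensional space $W=F_1^{(i_1-1)}+F_2^{(i_2)}+F_3^{(i_3-1)}$, which collapses the whole statement to a rank-$2$ cross-ratio identity; your key formula $R=\frac{(\ell_4\wedge\ell_3)(\ell_1\wedge\ell_2)}{(\ell_2\wedge\ell_3)(\ell_1\wedge\ell_4)}$ is correct (the reordering signs $(-1)^{i_3-1}$ and $(-1)^{i_2}$ cancel between numerator and denominator), the Pl\"ucker step giving $R=1-C$ is correct, and the sign tally you defer does work out: putting the four wedges underlying $C$ into the cyclic order of the positive quadruple $(F_1,F_4,F_2,F_3)$ produces signs $(-1)^{i_2+i_3-1}$, $(-1)^{i_2+1}$, $+1$, $(-1)^{i_3-1}$, whose combination is $-1$, while for $(F_1,F_2,F_5,F_3)$ the combination is $+1$. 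What your approach buys is self-containedness: everything reduces to the wedge-positivity established in Appendix \ref{app:Fock-Goncharov} plus a two-dimensional Pl\"ucker relation, whereas the paper's reduction is shorter on the page but leans on the unverified assertion that $\xi'$ is Frenet and on an outside computation for $n=3$. Two small remarks: first, the sign bookkeeping is the entire content of the argument and must actually be carried out, not just checked for $n=3$ (though as noted it does close); second, the upper bound $C<1$ in the $F_5$ case is superfluous --- since $T_{i_1,i_2,i_3}>0$, the inequality $T_{i_1,i_2,i_3}>T_{i_1,i_2,F_5,i_3}$ is equivalent to $C>0$ alone, so your "second application of the Pl\"ucker identity" can be dropped.
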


\begin{proof}
Let $K:=F_1^{(i_1-1)}+F_2^{(i_2-1)}+F_3^{(i_3-1)}$. For $m=1$, $2$, $3$, let $L_{m}\subset V$ be a line such that $F_m^{(i_m-1)}+L_{m}=F_m^{(i_m)}$, and let $P_{m}\subset V$ be a plane such that $F_m^{(i_m-1)}+P_{m}=F_m^{(i_m+1)}$. For any point $x$ in $S^1$, let
\[L_x:=\left\{\begin{array}{ll}
\xi^{(1)}(x)&\text{if }x\neq x_1,\,x_2,\,x_3\\
L_{m}&\text{if }x=x_m; m=1,\,2,\,3
\end{array}\right.,\,\,\,P_x:=\left\{\begin{array}{ll}
\xi^{(2)}(x)&\text{if }x\neq x_1,\,x_2,\,x_3\\
P_{m}&\text{if }x=x_m; m=1,\,2,\,3
\end{array}\right.\]
and let $H:=L_{x_1}+L_{x_2}+L_{x_3}(=L_1+L_2+L_3)$. Then define $\xi':S^1\to\Fmc(H)$ by
\[\xi'^{(1)}(x):=(K+L_x)\cap H, \,\,\,\,\,\xi'^{(2)}(x):=(K+P_x)\cap H.\]
One can verify that $\xi'$ does not depend on the choices of $L_{m}$ and $P_{m}$, and is Frenet. 

Furthermore, from the definition of the triple ratio, we see that 
\begin{eqnarray*}
T^{(i_1,i_2,F_4,i_3)}(F_1,F_2,F_3)&=&T^{(1,1,\xi'(x_4),1)}(\xi'(x_1),\xi'(x_2),\xi'(x_3)),\\
T^{\mathbf i}(F_1,F_2,F_3)&=&T^{(1,1,1)}(\xi'(x_1),\xi'(x_2),\xi'(x_3))\\
T^{(i_1,i_2,F_5,i_3)}(F_1,F_2,F_3)&=&T^{(1,1,\xi'(x_5),1)}(\xi'(x_1),\xi'(x_2),\xi'(x_3)).
\end{eqnarray*}
Thus, it is sufficient to prove this lemma in the case when $\dim(V)=3$. That is a straightforward computation (see Proposition 2.3.4 of \cite{Zhang_thesis}).
\end{proof}

\begin{proof}[Proof of (2)]
The Frenet property of $\xi_0$ implies that
\begin{eqnarray*}
\lim_{k\to\infty}T^{(i_1,i_2,\xi_0(a_k),i_3)}(\xi_0(x),\xi_0(w),\xi_0(y))
&=&T^{\mathbf i}(\xi_0(x),\xi_0(w),\xi_0(y))\\
 &=&\lim_{k\to\infty}T^{(i_1,i_2,\xi_0(b_k),i_3)}(\xi_0(x),\xi_0(w),\xi_0(y))
\end{eqnarray*}
for all ordered triples of positive integers $\mathbf i:=(i_1,i_2,i_3)$ that sum to $n$. Also, by Lemma \ref{lem:monotonicity triple ratio}, we have
\[\begin{array}{l}
T^{(i_1,i_2,\xi_j(a_k),i_3)}(\xi_j(x),\xi_j(w),\xi_j(y))>T^{\mathbf i}(\xi_j(x),\xi_j(w),\xi_j(y))\\
\hspace{7cm}>T^{(i_1,i_2,\xi_j(b_k),i_3)}(\xi_j(x),\xi_j(w),\xi_j(y))
\end{array}\]
for all non-negative integers $j$ and all positive integers $k$. 

Since $\lim_{j\to\infty}\xi_j(p)=\xi_0(p)$ for all vertices $p$ in $\Vmc$, this implies that
\begin{eqnarray*}
T^{\mathbf i}(\xi_0(x),\xi_0(w),\xi_0(y))&=&\lim_{k\to\infty}T^{(i_1,i_2,\xi_0(a_k),i_3)}(\xi_0(x),\xi_0(w),\xi_0(y))\\
&=&\lim_{k\to\infty}\lim_{j\to\infty}T^{(i_1,i_2,\xi_j(a_k),i_3)}(\xi_j(x),\xi_j(w),\xi_j(y))\\
&\geq&\lim_{j\to\infty}T^{\mathbf i}(\xi_j(x),\xi_j(w),\xi_j(y))\\
&\geq&\lim_{k\to\infty}\lim_{j\to\infty}T^{(i_1,i_2,\xi_j(b_k),i_3)}(\xi_j(x),\xi_j(w),\xi_j(y))\\
&=&\lim_{k\to\infty}T^{(i_1,i_2,\xi_0(b_k),i_3)}(\xi_0(x),\xi_0(w),\xi_0(y))\\
&=&T^{\mathbf i}(\xi_0(x),\xi_0(w),\xi_0(y)).
\end{eqnarray*}
This proves (2).
\end{proof}

\bibliographystyle{amsalpha}
\bibliography{ref}

\end{document}